\setlist[enumerate]{label=\normalfont{(\roman*)}}
\newcommand\cyr
\renewcommand\rmdefault{wncyr}
\renewcommand\sfdefault{wncyss}
\renewcommand\encodingdefault{OT2}
\DeclareTextFontCommand{\textcyr}{\cyr}
\newcommand{\red}[1]{\textcolor{red}{#1}}
\newtheorem{theorem}{Theorem}[section]
\newtheorem{lemma}[theorem]{Lemma}
\newtheorem{proposition}[theorem]{Proposition}
\newtheorem{prop}[theorem]{Proposition}
\newtheorem{corollary}[theorem]{Corollary}
\newtheorem*{maintheorem}{Theorem~\ref{maintheorem}}
\newtheorem*{htpytheorem}{Corollary~\ref{htpytheorem}}
\newtheorem*{diffeotheorem}{Corollary~\ref{diffeotheorem}}
\newtheorem*{bridgethm}{Theorem~\ref{bridgethm}}
\theoremstyle{definition}
\newtheorem{definition}[theorem]{Definition}
\newtheorem{remark}[theorem]{Remark}
\newtheorem{exercise}[theorem]{Exercise}
\newtheorem{observation}[theorem]{Observation}
\let\t\relax
\newcommand{\t}{\mathrm}
\newcommand{\proj}{\t{proj}}
\let\int\relax
\newcommand{\int}{\mathring}
\newcommand{\boundary}{\partial}
\newcommand{\D}{\mathcal{D}}
\newcommand{\K}{\mathcal{K}}
\newcommand{\R}{\mathbb{R}}
\newcommand{\tri}{\mathcal{T}}
\title[Band diagrams of immersed surfaces in 4--manifolds]{Band diagrams of immersed surfaces in 4--manifolds}
    \author[Mark Hughes]{Mark Hughes}
    \address{Brigham Young University\\Provo, UT, 84602 USA}
    \email{hughes@mathematics.byu.edu}
    \author[Seungwon Kim]{Seungwon Kim}
    \address{Institute for Basic Science -- Center for Geometry and Physics\\Postech, \mbox{Pohang-si}, Gyeongsangbuk-do, South Korea}
    \email{math751@gmail.com}
    \author[Maggie Miller]{Maggie Miller}
    \address{Stanford University\\Stanford, CA, 94305 USA}
    \email{maggie.miller.math@gmail.com}
 \subjclass{57K45 (primary); 57K40 (secondary)}
\thanks{MM was supported by NSF Grant DGE-1656466 at Princeton University, then NSF Grant DMS-2001675 at Massachusetts Institute of Technology, and even later by a Clay Research Fellowship.}
\let\oldtocsection=\tocsection
\let\oldtocsubsection=\tocsubsection
\let\oldtocsubsubsection=\tocsubsubsection
\renewcommand{\tocsection}[1]{\hspace{0em}\oldtocsection{#1}}
\renewcommand{\tocsubsection}[2]{\hspace{1em}\oldtocsubsection{#1}{#2}}
\renewcommand{\tocsubsubsection}[2]{\hspace{2em}\oldtocsubsubsection{#1}{#2}}
\begin{document}
\newgeometry{top=1.2in}

\maketitle

\begin{abstract}
We study immersed surfaces in smooth 4--manifolds via singular banded unlink diagrams. Such a diagram consists of a singular link with bands inside a Kirby diagram of the ambient 4--manifold, representing a level set of the surface with respect to an associated Morse function.  We show that every self-transverse immersed surface in a smooth, orientable, closed 4--manifold can be represented by a singular banded unlink diagram, and that such representations are uniquely determined by the ambient isotopy or equivalence class of the surface up to a set of singular band moves which we define explicitly. By introducing additional finger, Whitney, and cusp diagrammatic moves, we can use these singular band moves to describe homotopies or regular homotopies as well.

Using these techniques, we introduce bridge trisections of immersed surfaces in arbitrary trisected 4--manifolds and prove that such bridge trisections exist and are unique up to simple perturbation moves. We additionally give some examples of how singular banded unlink diagrams may be used to perform computations or produce explicit homotopies of surfaces.
\end{abstract}

\tableofcontents
\addtocontents{toc}{\protect\setcounter{tocdepth}{1}}

\restoregeometry

\section{Introduction}\label{sec:intro}
	
Immersed surfaces are fundamental objects of in low--dimensional topology, showing up frequently in the study of 4--manifolds.  For example, immersed disks play a key role in Freedman's proof of the topological $h$--cobordism theorem and the classification of simply-connected smooth 4--manifolds \cite{freedman1982topology}.  One reason for the prominent part they play lies in how abundant they are when compared to their embedded counterparts.  In particular, maps of surfaces into smooth 4--manifolds can always be perturbed slightly to yield smooth immersions with transverse double points.

Despite their importance, immersed surfaces and their isotopies are difficult to describe explicitly outside of a few concrete examples.  While diagrammatic techniques have been developed to describe both smooth 4--manifolds and embedded surfaces (see, e.g.\ \cite{carter1993reidemeister,carter1998knotted,bandpaper,kamada19932,kamada2002braid,meier2017bridge,meier2018bridge,roseman1998reidemeister}), methods of studying immersed surfaces diagrammatically have not been established as fully in the literature, aside from a few examples (see, e.g.\ \cite{kamada2018presentation} for a diagrammatic framework for representing immersed surfaces in $\mathbb{R}^4$ via marked graph diagrams).

In this paper we introduce a new diagrammatic system for describing immersed surfaces in smooth, oriented, closed 4--manifolds called {\emph{singular banded unlink diagrams}}.   Such a diagram consists of a Kirby diagram for the ambient 4--manifold along with a decorated singular (4-valent) link with bands attached away from vertices (see Section~\ref{sec:surfaces} for details).  As a Kirby diagram of $X$ is uniquely determined by a Morse function $h$ and its gradient $\nabla h$, given two singular banded unlink diagrams in the same Kirby diagram (induced by the same Morse function on $X$), it makes sense to ask whether they determine isotopic surfaces. Even with singular banded unlink diagrams in two different Kirby diagrams of $X$, we can still ask whether they describe equivalent surfaces.  With this in mind, we define a set of moves called \emph{singular band moves} in Figures~\ref{fig:oldisomoves} and~\ref{fig:newisomoves}, which allow us to relate the diagrams of any two immersed surfaces which are ambiently isotopic.  When combined with Kirby moves to the ambient diagram, these moves are also sufficient to relate equivalent surfaces.

This work generalizes earlier results in \cite{bandpaper}, where the authors define \emph{banded unlink diagrams} of smoothly \emph{embedded} surfaces in smooth 4--manifolds, and present a family of moves (called \emph{band moves}) to describe isotopies between such surfaces.  More precisely, given a smoothly embedded surface $\Sigma$ in a smooth oriented closed 4--manifold $X$ and a self-indexing Morse function $h:X\rightarrow \mathbb{R}$, we obtain a diagram $\D(\Sigma)$ which is well-defined up to band moves and depends only on the ambient isotopy class of $\Sigma$ inside $X$.  Furthermore, given the diagram $\D(\Sigma)$ we may recover the pair $(X,\Sigma)$ up to diffeomorphism.  If, in addition, we also specify the Morse function $h:X\rightarrow \mathbb{R}$ then the surface $\Sigma \subset X$ is determined up to isotopy. In the special case that $X^4=S^4$ and $h$ is a standard (i.e.\ $h$ has no index 1, 2, or 3 critical points), these results are originally due to Swenton \cite{swenton} and Kearton--Kurlin  \cite{KK}.

 Unless otherwise stated we will assume that $X$ is a closed, smooth, oriented 4--manifold.  Our main theorems are as follows.
 
 \begin{maintheorem}
 Let $\Sigma$ be a smoothly immersed, self-transverse surface in a 4--manifold $X$.  Then any choice of a self-indexing Morse function $h:X\to \R$ (with one index 0 point) and a gradient-like vector field $\nabla h$ on $X$ induces a singular banded unlink diagram $\D(\Sigma)$ of $(X,\Sigma)$ that is well-defined up to singular band moves.  
 
Furthermore, let $\D (\Sigma)$ and $\D(\Sigma')$ be singular banded unlink diagrams of immersed surfaces $\Sigma$ and $\Sigma'$ in $X$.  

 \begin{enumerate}
     \item\label{main1} The diagrams $\D (\Sigma)$ and $\D(\Sigma')$ are related by band moves and Kirby moves if and only if there is a diffeomorphism $(X,\Sigma)\cong (X,\Sigma')$.
     
\item\label{main2} If $\D (\Sigma)$ and $\D(\Sigma')$ are induced by the same self-indexing Morse function $h$ and gradient-like vector field $\nabla h$, then $\D(\Sigma)$ and $\D(\Sigma')$ are related by band moves if and only if $\Sigma$ and $\Sigma'$ are ambiently isotopic.
 \end{enumerate}
 \end{maintheorem}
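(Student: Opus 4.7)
The plan is to follow the template of the embedded case treated in \cite{bandpaper}, treating the double points of $\Sigma$ as extra decorations on the cross-sectional links that must be tracked in general position with respect to the Morse function. The key observation is that after a small perturbation one can arrange $h|_\Sigma$ to be Morse with distinct critical values while keeping $\Sigma$ self-transverse, and one can simultaneously arrange all double points of $\Sigma$ to occur at regular levels of $h|_\Sigma$ whose $h$-values are distinct from all critical values of both $h$ and $h|_\Sigma$. With such genericity in place, slicing by a regular level just above the $0$- and $1$-handle contributions of $h$ produces a link with bands in a connected sum of copies of $S^1\times S^2$, and the transverse double points of $\Sigma$ translate into the $4$-valent vertices of a singular link sitting inside the standard Kirby diagram.

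For existence of $\D(\Sigma)$, I would first use transversality to perturb $\Sigma$ into this generic position relative to $(h,\nabla h)$, then read off the singular banded unlink diagram layer by layer exactly as in the embedded case, with the additional data that at each double-point level one local strand passes transversely through another, producing a $4$-valent vertex in the cross-sectional singular link. One then checks that any two generic perturbations of $\Sigma$ are joined by a generic $1$-parameter family, and every codimension-$1$ degeneration in such a family corresponds to one of the singular band moves of Figures~\ref{fig:oldisomoves}--\ref{fig:newisomoves}; this establishes that $\D(\Sigma)$ is well-defined up to singular band moves.

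To prove (\ref{main2}), I would take an ambient isotopy $\Sigma_t$ and perturb it to a generic $1$-parameter family of self-transverse immersions, all in general position with respect to the fixed $(h,\nabla h)$ except at finitely many times at which precisely one codimension-$1$ event occurs. These events fall into several classes: (a) the events already appearing in \cite{bandpaper} (births/deaths and slides of bands, band swims, critical-value crossings of $h|_\Sigma$), which produce the ordinary band moves; (b) two distinct double points momentarily sharing an $h$-level, or a single double point crossing a critical value of $h|_\Sigma$; and (c) a double point interacting with a band or with another strand of the cross-sectional link, for instance a band passing across a double-point arc. Each such event must be matched with a singular band move and, conversely, every singular band move must be realized by such a local isotopy. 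Part (\ref{main1}) then follows by additionally allowing a Cerf-theoretic deformation of $(h,\nabla h)$, which contributes the usual Kirby moves to the ambient diagram, combined with the embedded-case analysis from \cite{bandpaper}.

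The main obstacle is verifying that the list of singular band moves is complete, that is, that every codimension-$1$ event in a generic $1$-parameter family of immersed surfaces (and, for (\ref{main1}), in a generic family of Morse data) is realized by a tabulated move. This requires a careful enumeration of how double points can interact with the Morse-theoretic structure and with the bands, and in particular a verification that no essentially new phenomena appear beyond those of the embedded setting combined with the local behavior of a double point passing through a band, through a handle attaching region, or through another singular level. Once this enumeration is carried out, well-definedness and sufficiency follow from the same stratification argument used in \cite{bandpaper}.
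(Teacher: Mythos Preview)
Your proposal is correct and follows essentially the same strategy as the paper: perturb $\Sigma$ to a generic position, read off a diagram, and then analyze a generic $1$-parameter family to show that the codimension-$1$ degenerations are exactly the singular band moves. The paper formalizes your genericity conditions via the language of ascending and descending manifolds (their ``$0$-standard'' and ``$1$-standard'' surfaces), which in particular clarifies that the relevant events are governed by flow lines of $\nabla h$ between pairs of index-$1$ critical points of $h|_\Sigma$, self-intersections, and index-$2$ critical points of $h$, rather than mere coincidences of $h$-values; this bookkeeping is what makes the enumeration (their Propositions~\ref{getmovesprop1}--\ref{getmovesprop5}) clean and complete.
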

 
Note that part~\ref{main2} of Theorem~\ref{maintheorem} clearly implies part~\ref{main1}, so we will focus on proving part~\ref{main2}.  Furthermore, since Kirby diagrams of two 4--manifolds can be related by a sequence of Kirby moves if and only if they are diffeomorphic, we obtain the following corollary:
 
 \begin{diffeotheorem}
 Let $\D$ and $\D'$ be singular banded unlink diagrams of surfaces $\Sigma$ and $\Sigma'$ immersed in diffeomorphic 4--manifolds $X$ and $X'$. There is a diffeomorphism taking  $(X,\Sigma)$ to $(X',\Sigma')$ if and only if there is a sequence of singular band moves and Kirby moves taking $\D$ to $\D'$.
 \end{diffeotheorem}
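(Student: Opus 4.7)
The plan is to derive this corollary directly from part~\ref{main1} of Theorem~\ref{maintheorem} together with the standard Kirby calculus fact that handle diagrams of diffeomorphic 4--manifolds are related by sequences of Kirby moves. No additional surface-theoretic content is required; what remains is to translate diagrams sitting inside Kirby diagrams of $X$ and of $X'$ into a common ambient 4--manifold, so that Theorem~\ref{maintheorem} can be applied directly.

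For the forward direction, suppose $\Phi:(X,\Sigma)\to (X',\Sigma')$ is a diffeomorphism of pairs. I would transport the self-indexing Morse function $h$ and gradient-like vector field $\nabla h$ on $X$ used to produce $\D$ along $\Phi$, obtaining the Morse function $h' = h\circ \Phi^{-1}$ and transported gradient on $X'$. Because a Kirby diagram is the combinatorial record of the handle decomposition induced by $(h,\nabla h)$, this transported data yields a Kirby diagram of $X'$ literally equal to that of $\D$, inside which $\Phi(\Sigma)=\Sigma'$ is represented by the same singular banded unlink $\D$. Now $\D$ and $\D'$ are both singular banded unlink diagrams of $\Sigma'$ in $X'$, so applying part~\ref{main1} of Theorem~\ref{maintheorem} to the identity diffeomorphism of $(X',\Sigma')$ yields the desired sequence of singular band moves and Kirby moves from $\D$ to $\D'$.

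For the reverse direction, suppose $\D$ and $\D'$ are related by such a sequence. Forgetting the surface data, the underlying Kirby moves on the handle diagrams furnish a diffeomorphism $\Psi:X\to X'$ via the standard correspondence between Kirby move sequences and diffeomorphisms. Carrying $\Sigma$ along via $\Psi$ produces a surface $\Psi(\Sigma)\subset X'$, which by tracking the moves is represented by $\D'$ in the terminal Kirby diagram. Thus $\D'$ represents both $\Psi(\Sigma)$ and $\Sigma'$ in $X'$, and part~\ref{main1} of Theorem~\ref{maintheorem} supplies a diffeomorphism $(X',\Psi(\Sigma))\cong (X',\Sigma')$; composing with $\Psi$ produces the desired diffeomorphism $(X,\Sigma)\cong(X',\Sigma')$.

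There is effectively no substantial obstacle here: the heavy lifting is carried out by Theorem~\ref{maintheorem} itself, which already establishes that singular band moves suffice to track a surface through every elementary Kirby move on the ambient handle diagram. The only points requiring explicit care are the (essentially tautological) remark that a diffeomorphism of pairs pushes a Morse-theoretic diagram to a combinatorially identical Morse-theoretic diagram in the target, and the observation that one may interleave singular band moves with Kirby moves in any order while still recovering a well-defined surface-and-4--manifold pair from the final diagram.
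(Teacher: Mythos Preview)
Your proposal is correct and follows essentially the same route as the paper, which treats this corollary as an immediate consequence of part~\ref{main1} of Theorem~\ref{maintheorem} together with the standard fact that Kirby diagrams of diffeomorphic 4--manifolds are related by Kirby moves; the paper does not spell out the argument in any more detail than that. Your reverse direction is slightly over-engineered---once you observe that each singular band move and each Kirby move preserves the diffeomorphism type of the pair, you are done without a second appeal to Theorem~\ref{maintheorem}---but the logic is sound.
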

 
 Without much extra work, we may also extend Theorem~\ref{maintheorem} to consider homotopy instead of isotopy.
 
 \begin{htpytheorem}
 Let $\Sigma$ and $\Sigma'$ be self-transverse surfaces smoothly immersed in $X$, and let $\D(\Sigma)$ and $\D(\Sigma')$ be singular banded unlink diagrams induced by the same self-indexing Morse function and gradient-like vector field on $X$.  
 \begin{enumerate}
\item The surfaces $\Sigma$ and $\Sigma'$ are regularly homotopic if and only if $\D(\Sigma)$ and $\D(\Sigma')$ can be related by a sequence of singular band moves and the finger/Whitney moves illustrated in Figure~\ref{fig:newhtpymoves}.
 
 \item  The surfaces $\Sigma$ and $\Sigma'$ are homotopic (without specifying regularity) if and only if $\D(\Sigma)$ and $\D(\Sigma')$ are related by singular band moves, finger/Whitney moves, and cusp moves as illustrated in Figure~\ref{fig:newhtpymoves}.
 \end{enumerate}
 \end{htpytheorem}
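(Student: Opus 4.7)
The plan is to reduce both parts of the corollary to the Main Theorem via a one-parameter genericity argument, treating part~\ref{main2} of Theorem~\ref{maintheorem} as a black box for the ``ambient isotopy portion'' of any (regular) homotopy, and using the finger, Whitney, and cusp moves to account for the remaining critical events.

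The easier direction is a direct inspection: each of the listed moves is a local modification of $\Sigma$ supported in a small ball of $X$. Singular band moves correspond to ambient isotopies by part~\ref{main2} of Theorem~\ref{maintheorem}, hence to regular homotopies. The finger and Whitney moves in Figure~\ref{fig:newhtpymoves} are the standard local models for the birth and death of a pair of transverse double points through immersions, so they realize regular homotopies; and the cusp moves correspond to local homotopies passing through an isolated cusp, altering the self-intersection count by one, so they realize homotopies that are not regular. Thus any sequence of the indicated moves realizes a (regular) homotopy between $\Sigma$ and $\Sigma'$.

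For the converse, I would begin with a (regular) homotopy $H\colon \Sigma\times [0,1]\to X$ from $\Sigma$ to $\Sigma'$ and put it in generic position using the standard theory of one-parameter families of maps of surfaces into $4$--manifolds. After such a perturbation, a regular homotopy between self-transverse immersions passes only through self-transverse immersions, except at finitely many critical times $0<t_1<\dots<t_n<1$, at each of which exactly one of the following occurs: a finger event, a Whitney event, or a tangency between the double-point locus and a level set of $h$ (the last of which can be absorbed by an arbitrarily small isotopy into ordinary singular band moves). For a (non-regular) homotopy we additionally allow cusp births and cusp deaths at isolated times. On each subinterval $[t_i,t_{i+1}]$ the path is a family of self-transverse immersions which, by isotopy extension applied to the time-derivative vector field along the image, integrates to an ambient isotopy of $X$; part~\ref{main2} of Theorem~\ref{maintheorem} then realizes the resulting change of diagram by a sequence of singular band moves.

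The principal obstacle I foresee is showing that the local events at each $t_i$ are diagrammatically realized precisely by the pictures in Figure~\ref{fig:newhtpymoves}. Each event takes place inside a small ball, and by a further ambient isotopy (itself realized by singular band moves) this ball can be placed in a standard chart transverse to the level sets of $h$ and disjoint from all other bands and vertices. In that chart the finger, Whitney, or cusp event has exactly the prescribed local diagrammatic model. Arguing that \emph{every} generic position can be reduced to such a standard one is closely analogous to the standardization step in Roseman-type move theorems for knotted surfaces; the cusp case requires slightly more care because the family fails to be an immersion at the critical time, but a cusp is a codimension-one local singularity that can be isolated within a small ball disjoint from all other diagrammatic features, so the reduction to the local model still goes through.
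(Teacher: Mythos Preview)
Your approach is correct but takes a genuinely different route from the paper's. The paper does not analyze a generic one-parameter family directly. Instead, it invokes a structural fact (attributed to \cite{fq}): for regularly homotopic self-transverse surfaces $\Sigma$ and $\Sigma'$, there exist finger moves along framed arcs taking $\Sigma$ to some $\Sigma_3$ and finger moves taking $\Sigma'$ to some $\Sigma''$, with $\Sigma_3$ and $\Sigma''$ ambiently isotopic. The finger arcs are then isotoped into $h^{-1}(3/2)$ (via singular band moves on the diagram) and shrunk to the standard local picture, after which Theorem~\ref{maintheorem} is applied a single time to the isotopy $\Sigma_3\simeq\Sigma''$. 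For the non-regular case, the paper first applies cusp moves to equalize the signed self-intersection numbers (using Hirsch--Smale to reduce to the regular case), rather than treating cusps as inline events. What the paper's approach buys is that it avoids any Cerf-type movie analysis and applies the Main Theorem exactly once; what your approach buys is a more self-contained argument that does not need the \cite{fq} decomposition as a black box.

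Two small comments on your version. First, the item ``a tangency between the double-point locus and a level set of $h$'' does not belong in your list of critical events: failures of self-transversality of the immersion are intrinsic to the homotopy and have nothing to do with $h$, and interactions with $h$ are already absorbed into the singular band moves via the Main Theorem. Second, your appeal to isotopy extension for a path of self-transverse immersions is correct but not quite standard, since the time-derivative is double-valued at self-intersections; the paper actually carries out this argument carefully (in the proof of Proposition~\ref{1stdfamgeneric}) by working in product neighborhoods of the double points, so you should either cite that or reproduce it.
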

 
One application of the authors' results in \cite{bandpaper} was to prove the uniqueness of bridge trisections of surfaces in arbitrary trisected 4--manifolds up to perturbation. In Section~\ref{sec:trisectionimmerse} we define the notions of \emph{bridge position} and \emph{bridge trisections} for immersed surfaces in trisected 4--manifolds, and in Section~\ref{sec:trisectionunique} we prove an analogous uniqueness statement.

\begin{bridgethm}
Let $(X^4,\tri)$ be a trisected 4--manifold. Let $\Sigma$ be a self-transverse immersed surface in $X^4$. Then $\Sigma$ can be isotoped into bridge position with respect to $\tri$, yielding a bridge trisection of $\Sigma$ with respect to $\tri$. Moreover, any two bridge trisections of $\Sigma$ with respect to $\tri$ are related by $\tri$--preserving isotopy, perturbations, and vertex pertubations (and their inverses).
\end{bridgethm}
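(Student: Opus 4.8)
The plan is to mirror the argument for embedded surfaces in \cite{bandpaper}, using the singular banded unlink diagrams supplied by Theorem~\ref{maintheorem} as the dictionary between the combinatorial and the geometric pictures. For existence, I would start from the self-indexing Morse function $h:X\to\R$ (with a single index $0$ critical point) and gradient-like vector field canonically associated to the trisection $\tri$, for which the trisection surface and the central genus-$g$ surface occur as regular level sets separating the three $4$--dimensional sectors. Feeding $\Sigma$ and this $(h,\nabla h)$ into Theorem~\ref{maintheorem} produces a singular banded unlink diagram $\D(\Sigma)$ whose underlying singular link lies in the central fiber. I would then isotope $\Sigma$ so that this singular link is in bridge position with respect to the central surface --- in particular pushing each $4$--valent vertex, i.e.\ each double point of $\Sigma$, into a small ball meeting that surface in a standard local model --- and so that the bands are split compatibly among the three handlebodies of the spine. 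Tracking the induced Morse function on $\Sigma$ through the sectors then shows that $\Sigma$ meets each $4$--dimensional piece in a trivial immersed disk tangle in the sense of Section~\ref{sec:trisectionimmerse}, which is exactly the assertion that $\Sigma$ is in bridge position; the double points may be collected in a single sector or distributed, as the definition permits.

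For uniqueness the strategy has three steps. First, to any bridge trisection $\mathcal{B}$ of $\Sigma$ I associate a singular banded unlink diagram $\D(\mathcal{B})$, built from the tangles and handlebodies of $\mathcal{B}$ exactly as in the embedded case. Second, $\D(\mathcal{B})$ agrees, up to singular band moves, with the diagram $\D(\Sigma)$ that Theorem~\ref{maintheorem} assigns to $\Sigma$ and the trisection's (canonical, up to isotopy) Morse data; hence two bridge trisections $\mathcal{B},\mathcal{B}'$ of the \emph{same} surface $\Sigma$ yield diagrams related by singular band moves (together with Kirby moves if one does not insist on fixing the ambient Morse data, in which case Corollary~\ref{diffeotheorem} is what is needed). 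Third, I would show that each elementary move relating $\D(\mathcal{B})$ to $\D(\mathcal{B}')$ --- each singular band move of Figures~\ref{fig:oldisomoves} and~\ref{fig:newisomoves}, and each Kirby move compatible with $\tri$ --- lifts to a finite sequence of $\tri$--preserving isotopies, perturbations, and vertex perturbations (and their inverses) of the bridge trisection, and conversely that a perturbation or a vertex perturbation alters $\D(\mathcal{B})$ only by a composite of such moves. Combining the three steps with part~\ref{main2} of Theorem~\ref{maintheorem} yields the uniqueness statement.

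The main obstacle is the third step, and within it the moves that genuinely involve the double points: isotoping a $4$--valent vertex across the central surface or across a trisection surface. This is precisely the phenomenon the vertex perturbation move is designed to encode, so the heart of the proof is a local analysis showing that sliding a double point, together with its two local sheets, from one sector into an adjacent one is realized by a vertex perturbation, possibly after ordinary perturbations that create extra bridge points near the vertex to make room. A secondary difficulty is the passage from ``related by singular band moves and Kirby moves'' to ``related by $\tri$--preserving isotopy and (vertex) perturbations'': one must choose the intervening Kirby moves and handleslides compatibly with the fixed trisection, which I expect to handle by a relative Cerf-theoretic interpolation between the two handle decompositions of $(X,\tri)$ induced by $\mathcal{B}$ and $\mathcal{B}'$, absorbing births, deaths, and handleslides into perturbations of the bridge trisection, in parallel with the stabilization-uniqueness proof for trisections of a fixed $4$--manifold. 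Finally, some care is needed to keep the argument within the setting of isotopy rather than homotopy: tracking the signed count of double points of $\Sigma$ through the interpolation rules out the finger/Whitney/cusp behavior that could otherwise sneak in near a vertex.
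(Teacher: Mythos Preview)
Your approach is essentially the paper's: produce a singular banded unlink diagram from the $\tri$--compatible Morse data, put it in bridge position with respect to the central surface to obtain existence, and for uniqueness invoke Theorem~\ref{maintheorem} to relate the two diagrams by singular band moves and then realize each move by perturbations and $\tri$--regular isotopy via a case-by-case local analysis.

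Two points deserve correction. First, your ``secondary difficulty'' involving Kirby moves and a Cerf-theoretic interpolation of handle decompositions is not there: a trisection determines a $\tri$--compatible Morse function and gradient-like vector field up to $\tri$--preserving isotopy, so you fix one such $(h,\nabla h)$ at the outset and apply part~\ref{main2} of Theorem~\ref{maintheorem}, which yields only singular band moves and no Kirby moves. Likewise your worry about finger/Whitney/cusp behavior is moot, since Theorem~\ref{maintheorem} already concerns isotopy and produces only the moves of Figures~\ref{fig:oldisomoves} and~\ref{fig:newisomoves}. Second, your step~3 hides a genuine lemma that you do not name: before and after each singular band move you must re-place the diagram in bridge position with respect to the central surface, and you need to know that two \emph{isotopic} singular banded links in bridge position give bridge-trisected surfaces related by (de)perturbation and vertex perturbation. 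The paper handles this by proving a singular version of the Meier--Zupan common-stabilization theorem for banded links in bridge position (encoding vertices as framed contraction arcs on the positive resolution and applying the embedded result), and this is what makes the case analysis go through.
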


The moves  referenced in Theorem~\ref{bridgethm} are defined in Section~\ref{sec:trisectionembed}. For experts, we will say now that the perturbation move is the standard perturbation move that increases the number of disks of $\Sigma$ in one section of the trisection, while vertex perturbation is supported in a neighborhood of the trisection surface and simply passes a self-intersection of $\Sigma$ from one piece of the trisection to another.

\subsection*{Organization}
\vspace{10pt}
\begin{itemize}
\item[$\circ$] In {\bf{Section~\ref{sec:singularbandedunlinkdiagrams}}} we lay out the framework of singular banded unlink diagrams.  
\\
\\
\noindent We begin in {\bf{Section~\ref{sec:markedlinks}}} with a discussion on marked singular banded links.
 In {\bf{Section~\ref{sec:surfaces}}}, we describe how to use these decorated singular links to obtain immersed surfaces. In {\bf{Section~\ref{sec:0and1std}}} we discuss two subclasses of immersed surfaces that will be needed to prove Theorem~\ref{maintheorem} and its corollaries in {\bf{Section~\ref{sec:isotopy}}}.   
\\
 \item[$\circ$]
In {\bf{Section~\ref{sec:bridgetrisections}}} we turn our attention to bridge trisections. 
\\
\\
\noindent We review the theory of bridge trisections of embedded surfaces in {\bf{Section~\ref{sec:trisectionembed}}}.  In {\bf{Section~\ref{sec:trisectionimmerse}}} we adapt  the notions of trivial tangles and bridge position to singular links, before defining bridge position for immersed surfaces in {\bf{Section~\ref{sec:trisectionbridgeimmerse}}} and showing that every immersed surface in a smooth 4--manifold can be arranged in this position.  It is here that we define the various moves on immersed bridge trisections referenced in Theorem~\ref{bridgethm}.  In {\bf{Section~\ref{sec:bridgesingularlink}}} we then proceed to adapt the singular banded unlinks developed in {\bf{Section~\ref{sec:singularbandedunlinkdiagrams}}} to bridge trisections, before using the uniqueness results for singular banded unlinks to prove Theorem~\ref{bridgethm} in {\bf Section~\ref{sec:trisectionunique}}.
\\
\item[$\circ$] In {\bf{Section~\ref{sec:examples}}} we give some additional sample applications of the usefulness of singular banded unlink diagrams.
\\
\\
\noindent In {\bf Section~\ref{sec:kirk}} we show how one may compute the Kirk invariant (see \cite{schneiderman}) of a spherical link using these diagrams.  In {\bf Section~\ref{sec:stabilization}} we prove that homologous immersed oriented surfaces with the same number of positive and negative self-intersections are stably isotopic (i.e.\ become isotopic after surgery along some collection of arcs).  Finally, in {\bf Section~\ref{sec:cassonwhitney}} we show that certain 2--spheres embedded in $S^4$ can be trivialized by a single finger and Whitney move (recovering a fact originally proved in \cite{joseph2020unknotting}).
\end{itemize}

\subsection*{Acknowledgements}

This project began in Spring 2019 when the second and third authors visited the first at Brigham Young University.

MM thanks Peter Teichner and Mark Powell for useful discussions in Fall 2020, which influenced the viewpoint of this paper, particularly in Sections~\ref{sec:0and1std} and~\ref{sec:isotopy}.

\addtocontents{toc}{\protect\setcounter{tocdepth}{2}}

\newpage
\section{Singular banded unlink diagrams}\label{sec:singularbandedunlinkdiagrams}

\subsection{Marked singular banded links}\label{sec:markedlinks}
\label{section:diagramsexistanceanduniqueness}

In this section we introduce marked singular banded links, which are the combinatorial objects we will use to describe self-transverse immersed surfaces in 4--manifolds.  In what follows, all manifolds and maps between them should be assumed to be smooth.  All isotopies of immersed (or embedded) submanifolds are assumed to be ambient isotopies unless otherwise specified. Note that we are isotoping the images of immersions rather than immersions themselves.   

\subsubsection{Marked singular links} We begin by defining special singular links with additional data recorded at their double points.
\begin{definition}
Let $M^3$ be an orientable 3--manifold.  A \emph{singular link} $L$ in $M$ is the image of an immersion $\iota: S^1 \sqcup \cdots \sqcup S^1 \rightarrow M$ which is injective except at isolated double points that are not tangencies. At every double point $p$ we include a small disk $v\cong D^2$ embedded in $M$ such that $(v,v \cap L) \cong (D^2,\{(x,y)\in D^2\,|\,xy=0\})$.  We refer to these disks as the {\emph{vertices}} of $L$. 
 \end{definition}
 (Equivalently, a singular link is a 4--valent fat-vertex graph smoothly embedded in $M$.) For now, our motivating idea is that $M$ will correspond to some level set of a 4--manifold $X$, and the double points of a singular link $L$ in $M$ will correspond to the isolated double points an immersed surface in $X$. Because these double points are isolated, we expect the singularities of $L$ to be resolved away from the level set $M$. We must make a choice of how to resolve each double point.

\begin{definition}
A \emph{marked singular link} $(L, \sigma)$ in $M$ is a singular link $L$ along with decorations $\sigma$ on the vertices of $L$, as follows: say that $v$ is a vertex of $L$, with $\boundary v\cap \overline{(L\setminus v)}$ consisting of the four points $p_1, p_2, p_3, p_4$ in cyclic order. Choose a co-orientation of the disk $v$. On the positive side of $v$, add an arc connecting $p_1$ and $p_3$. On the negative side of $v$, add an arc connecting $p_2$ and $p_4$. See Figure~\ref{fig:singularlink}, left. A choice of $\sigma$ involves making a fixed choice of decoration on $v$, for all vertices $v$ of $L$.
\end{definition}

\begin{figure}
\begingroup%
  \makeatletter%
  \providecommand\color[2][]{%
    \errmessage{(Inkscape) Color is used for the text in Inkscape, but the package 'color.sty' is not loaded}%
    \renewcommand\color[2][]{}%
  }%
  \providecommand\transparent[1]{%
    \errmessage{(Inkscape) Transparency is used (non-zero) for the text in Inkscape, but the package 'transparent.sty' is not loaded}%
    \renewcommand\transparent[1]{}%
  }%
  \providecommand\rotatebox[2]{#2}%
  \newcommand*\fsize{\dimexpr\f@size pt\relax}%
  \newcommand*\lineheight[1]{\fontsize{\fsize}{#1\fsize}\selectfont}%
  \ifx\svgwidth\undefined%
    \setlength{\unitlength}{296.64333794bp}%
    \ifx\svgscale\undefined%
      \relax%
    \else%
      \setlength{\unitlength}{\unitlength * \real{\svgscale}}%
    \fi%
  \else%
    \setlength{\unitlength}{\svgwidth}%
  \fi%
  \global\let\svgwidth\undefined%
  \global\let\svgscale\undefined%
  \makeatother%
  \begin{picture}(1,0.35553509)%
    \lineheight{1}%
    \setlength\tabcolsep{0pt}%
    \put(0,0){\includegraphics[width=\unitlength,page=1]{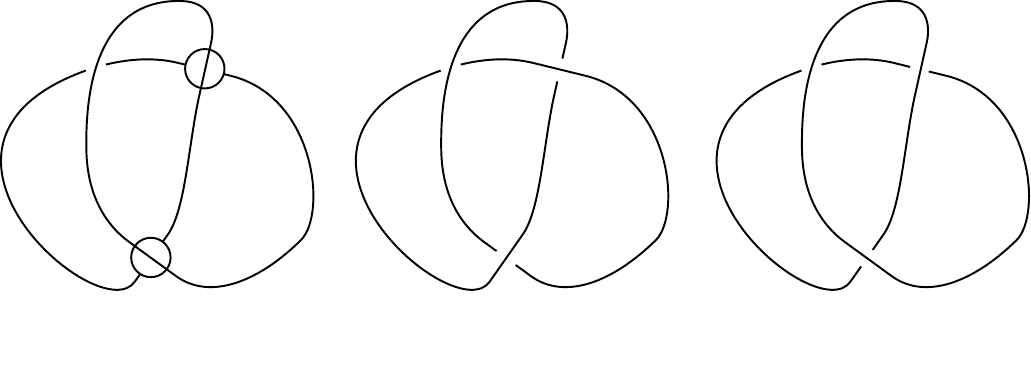}}%
    \put(0.08211034,0.00755746){\color[rgb]{0,0,0}\makebox(0,0)[lt]{\lineheight{1.25}\smash{\begin{tabular}[t]{l}$(L,\sigma)$\end{tabular}}}}%
    \put(0.45795165,0.00755746){\color[rgb]{0,0,0}\makebox(0,0)[lt]{\lineheight{1.25}\smash{\begin{tabular}[t]{l}$L^-$\end{tabular}}}}%
    \put(0.80426262,0.00755746){\color[rgb]{0,0,0}\makebox(0,0)[lt]{\lineheight{1.25}\smash{\begin{tabular}[t]{l}$L^+$\end{tabular}}}}%
  \end{picture}%
\endgroup%

\caption{{\bf{Left:}} A marked singular link $(L,\sigma)$. {\bf{Middle and {\bf{Right:}}}} The negative and positive resolutions of $(L,\sigma)$, respectively.}\label{fig:singularlink}
\end{figure}

Note that if $L$ has $n$ vertices, there are $2^n$ choices of decorations $\sigma$ so that $(L,\sigma)$ is a marked singular link.

\begin{definition}
Let $(L,\sigma)$ be a marked singular link in a 3--manifold $M$. Let $v$ be a vertex of $L$; say that on the positive side of $v$, there is an arc with endpoints $p_1$ and $p_3$ and on the negative side of $v$ there is an arc with endpoints $p_2$ and $p_4$.

Let $L^+$ denote the link in $M$ obtained from $(L,\sigma)$ by pushing the arc of $L$ between $p_1$ and $p_3$ off $v$ in the positive direction, and repeating for each vertex in $L$. We call $L^+$ the {\emph{positive resolution}} of $(L,\sigma)$ (see Figure~\ref{fig:singularlink}).

Similarly, let $L^-$ denote the link in $M$ obtained from $(L,\sigma)$ by pushing the arc of $L$ between $p_1$ and $p_3$ off $v$ in the negative direction, and repeating for each vertex in $L$. We call $L^-$ the {\emph{negative resolution}} of $(L,\sigma)$ (see Figure~\ref{fig:singularlink}).

Informally, $L^+$ is obtained from $(L,\sigma)$ by turning the decorations of $\sigma$ into new overstrands while $L^-$ is obtained by turning the decorations of $\sigma$ into new understrands.
\end{definition}

To ease notation, from now on we will always take singular links to be marked. We will generally not specify the decorations $\sigma$, and will instead write ``$L$ is a marked singular link", with $\sigma$ implicitly fixed.

 \subsubsection{Banded singular links}

 Let $L$ be a singular link, and let $\Delta_L$ denote the union of the vertices of $L$.  A \emph{band} $b$ attached to $L$ is the image of an embedding $\phi : I \times I \hookrightarrow M\backslash \Delta_L$, where $I = [-1,1]$, and $b \cap L = \phi (\{ -1,1\} \times I)$.  We call $\phi(I \times \{\tfrac{1}{2}\})$ the \emph{core} of the band $b$.  Let $L_b$  be the singular link defined by 
\[
L_b = (L \backslash \phi (\{-1,1\} \times I)) \cup \phi (I \times \{-1,1\}).
\]
Then we say that $L_b$ is the result of performing \emph{band surgery} to $L$ along $b$.  If $B$ is a finite family of pairwise disjoint bands for $L$, then we will let $L_B$ denote the link we obtain by performing band surgery along each of the bands in $B$.  We say that $L_B$ is the result of \emph{resolving} the bands in $B$.  Note that the self-intersections of $L_B$ naturally correspond to those of $L$, so a choice of markings for $L$ yields markings for $L_B$. A triple $(L,\sigma, B)$, where $(L,\sigma)$ is a marked singular link and $B$ is a family of disjoint bands for $L$ is called a \emph{marked singular banded link}. To ease notation, we may refer to the pair $(L,B)$ as a {\emph{singular banded link}} and implicitly remember that $L$ is actually a {\emph{marked}} singular link.

\subsection{Singular banded links describing surfaces}\label{sec:surfaces}
In this section, we use marked singular banded links  to describe surfaces in 4--manifolds.   Thinking of $M$ as a level set of the 4--manifold $X$, we'll begin by defining what the surface looks like in a product tubular neighborhood of $M$.
\subsubsection{Realizing surfaces in  $M^3\times [0,1]$}

Let $(L, B)$ be a marked singular banded link in the oriented 3--manifold $M$.  We will describe how to construct a surface $\Sigma$ in $M \times [0,1]$ using  $(L,B)$.

Recall first that $L^-$ is the (nonsingular) link obtained by negatively resolving each vertex of $L$.  Also notice that $L^-$ differs from $L^+$ only in a neighborhood of the vertices of $L$, where at each vertex a single strand of $L$ is pushed in the positive direction to give $L^+$, and the negative direction to give $L^-$.  For each vertex $v$ of $L$, these two opposite push-offs form a bigon in a neighborhood of $v$, which bounds an embedded disk $D_v$.  This disk $D_v$ can be chosen so that its interior intersects $L$ transversely in a single point near $v$.  For each vertex $v$ select such a disk $D_v$ (ensuring that all of these disks are pairwise disjoint), and let $D_L$ denote the union of all of these embedded disks.

We can then define the surface $\Sigma \subset M \times [0,1]$ as follows:

\begin{figure}
    \centering
\begingroup%
  \makeatletter%
  \providecommand\color[2][]{%
    \errmessage{(Inkscape) Color is used for the text in Inkscape, but the package 'color.sty' is not loaded}%
    \renewcommand\color[2][]{}%
  }%
  \providecommand\transparent[1]{%
    \errmessage{(Inkscape) Transparency is used (non-zero) for the text in Inkscape, but the package 'transparent.sty' is not loaded}%
    \renewcommand\transparent[1]{}%
  }%
  \providecommand\rotatebox[2]{#2}%
  \newcommand*\fsize{\dimexpr\f@size pt\relax}%
  \newcommand*\lineheight[1]{\fontsize{\fsize}{#1\fsize}\selectfont}%
  \ifx\svgwidth\undefined%
    \setlength{\unitlength}{188.53657159bp}%
    \ifx\svgscale\undefined%
      \relax%
    \else%
      \setlength{\unitlength}{\unitlength * \real{\svgscale}}%
    \fi%
  \else%
    \setlength{\unitlength}{\svgwidth}%
  \fi%
  \global\let\svgwidth\undefined%
  \global\let\svgscale\undefined%
  \makeatother%
  \begin{picture}(1,0.9375165)%
    \lineheight{1}%
    \setlength\tabcolsep{0pt}%
    \put(0,0){\includegraphics[width=\unitlength,page=1]{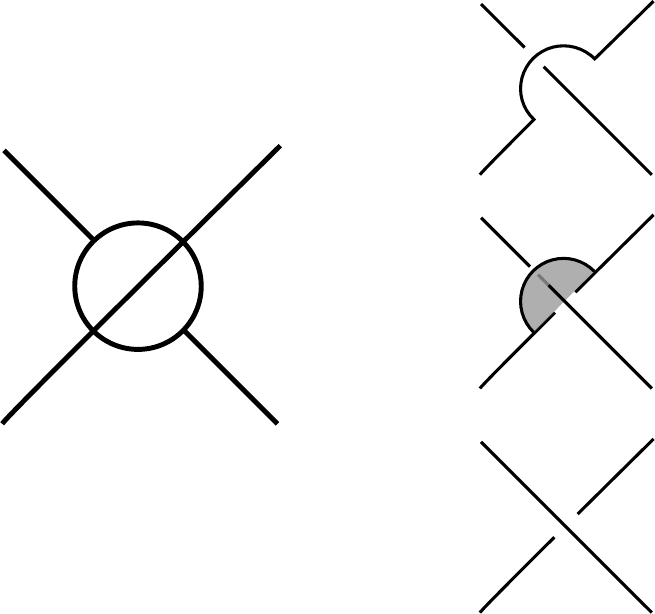}}%
    \put(1.05830315,0.12820036){\color[rgb]{0,0,0}\makebox(0,0)[lt]{\lineheight{1.25}\smash{\begin{tabular}[t]{l}$L^-\times\{0\}$\end{tabular}}}}%
    \put(1.05830315,0.44777915){\color[rgb]{0,0,0}\makebox(0,0)[lt]{\lineheight{1.25}\smash{\begin{tabular}[t]{l}$\Sigma\cap(M\times\{1/3\})$\end{tabular}}}}%
    \put(1.05830315,0.77745389){\color[rgb]{0,0,0}\makebox(0,0)[lt]{\lineheight{1.25}\smash{\begin{tabular}[t]{l}$L^+\times\{1/2\}$\end{tabular}}}}%
  \end{picture}%
\endgroup%

    \caption{{\bf{Left:}} A vertex $v$ of a marked singular link $(L,B)$. {\bf{Right:}} Part of the surface $\Sigma$ built from $(L,B)$ near $v$.}
    \label{fig:getsurface}
\end{figure}

\begin{align*}
\Sigma\cap (M\times[0,1/3)) &=L^-\times[0,1/3) \\
\Sigma\cap (M\times\{1/3\})&=(L^-\cup D_L)\times\{1/3\}\\
\Sigma\cap (M\times(1/3,2/3))&=L^+\times(1/3,2/3)\\
\Sigma\cap (M\times\{2/3\})&=(L^+\cup B)\times\{2/3\}\\
\Sigma\cap (M\times(2/3,1])&=L^+_B\times(2/3,1].
\end{align*}

In total, $\Sigma$ is a surface properly immersed in $M\times [0,1]$ with boundary $(L^-\times\{0\})\sqcup({L^+}_B\times\{1\})$, and with isolated transverse self-intersections all contained in $M\times \{1/3\}$ corresponding to the vertices of $L$. 

\begin{definition}\label{def:realizingsurfacesegment}
Let $\overline{\Sigma}{(L,B)}$ be a surface properly immersed in $M\times [0,1]$ obtained from $\Sigma$ by smoothing corners. We refer to $\overline{\Sigma}{(L,B)}$ as a {\emph{surface segment realizing $(L,B)$}}. 
\end{definition}

\begin{proposition} \label{prop:surfacesegment}
Up to ambient isotopy of $M\times [0,1]$, the surface segment $\overline{\Sigma}(L,B)$ depends only on the singular banded link $(L,B)$. 
\end{proposition}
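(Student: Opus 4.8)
The plan is to track every auxiliary choice entering the construction of the surface $\Sigma$ (and hence $\overline{\Sigma}(L,B)$) just before Definition~\ref{def:realizingsurfacesegment}, and to show that changing any one of them alters the result only by an ambient isotopy of $M\times[0,1]$ supported near $\Delta_L\times\{1/3\}$ or near $B\times\{2/3\}$. This suffices, since away from a small neighborhood of those two loci the surface is literally a product $L^-\times(\text{interval})$, $L^+\times(\text{interval})$, or $L^+_B\times(\text{interval})$, which is canonically determined by $(L,B)$; so everything reduces to the local models near each vertex and near each band.

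First I would list the choices: (a) a representative of each push-off $L^+$, $L^-$ within its isotopy class in a neighborhood of the vertices; (b) the bigon disks $D_v$, equivalently the family $D_L$; and (c) the corner-smoothing in Definition~\ref{def:realizingsurfacesegment}. The product structure on $M\times[0,1]$ needs no discussion. For (c), any two smoothings of the corner locus of a (properly immersed) submanifold agree after an ambient isotopy supported in an arbitrarily small neighborhood of the corners; this is standard. For (a), fix a vertex $v$ and a ball $N_v$ in which $L$ is the standard linear model $\{xy=0\}$ of a transverse double point. The co-orientation of $v$ that is part of the marking $\sigma$ fixes the positive and negative normal directions, and the push-offs of the arc joining $p_1,p_3$ in a prescribed direction vary in a convex (hence connected) family; so the admissible germs $L^\pm|_{N_v}$ are connected, and any two are related by an ambient isotopy of $N_v$ fixing $\partial N_v$.

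The substantive step is (b). Having fixed $L^\pm$ near $v$, the boundary of $D_v$ is the prescribed bigon $\beta=a^+\cup a^-$ formed by the two push-offs, and $D_v$ is an embedded disk in $N_v$ with $\partial D_v=\beta$ meeting $L$ transversely in a single interior point. I claim any two admissible choices $D_v$ and $D_v'$ are related by an ambient isotopy of $N_v$, rel $\beta$, carrying $L\cap N_v$ to itself. I would prove this by a standard innermost-disk argument: put the disks in general position; after an isotopy of $D_v'$ rel $\beta$, arrange that $D_v$ and $D_v'$ coincide near their common puncture point with $L$, so the remaining circles of intersection of their interiors lie in the interior of $N_v\setminus L$ and can be removed by surgery along innermost subdisks, using that $N_v$ is irreducible and keeping all surgeries disjoint from the unknotted strand $L\cap N_v$. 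Once $D_v$ and $D_v'$ meet only along $\beta$ (and a small disk at the puncture), their union is essentially an embedded $2$--sphere meeting $L$ transversely twice; by the Schoenflies theorem it bounds a ball $B\subset N_v$, inside which $L\cap B$ is an unknotted arc because $L\cap N_v$ is a trivial tangle, and a push across $B$ gives the desired isotopy from $D_v$ to $D_v'$ rel $\beta$ and fixing $L$.

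Finally I would assemble the local data. The space of admissible local configurations $(L^\pm|_{N_v},D_v)$ near each vertex is connected; and since the bands $B$ are attached away from $\Delta_L$, the neighborhoods $N_v$ and a tubular neighborhood of $B$ may be taken pairwise disjoint and to contain all the ambiguous data. Two global systems of choices are therefore joined by an ambient isotopy $\psi_s$ of $M$ supported in this disjoint union; its product $(\psi_s\times\mathrm{id}_{[0,1]})$, composed with the corner-smoothing isotopies from (c), is an ambient isotopy of $M\times[0,1]$ carrying one realization of $\overline{\Sigma}(L,B)$ to the other and fixing $M\times\{0,1\}$ throughout. The main obstacle is item (b) — controlling the innermost-disk surgeries so they respect $L$ and confirming that the arc in the Schoenflies ball is unknotted; once this local rel-boundary uniqueness of the disks $D_v$ is established, the rest is a matter of products and standard corner-smoothing.
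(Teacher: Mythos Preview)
Your proof is correct and takes essentially the same approach as the paper: enumerate the auxiliary choices (the push-offs $L^\pm$, the bigon disks $D_v$, and the corner smoothing) and argue each is unique up to ambient isotopy. The paper's version is a three-sentence assertion of these facts, whereas you supply the details, in particular an innermost-disk and Schoenflies argument for the rel-boundary uniqueness of $D_v$; one small wrinkle to patch is that your phrase ``their common puncture point with $L$'' presupposes the two disks meet $L$ at the same point, which you should first arrange by sliding one puncture along the unknotted transverse strand.
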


\begin{proof}
There is a unique way (up to isotopy) to smooth the corners of $\Sigma$ in a neighborhood of $M \times \{1/3,2/3\}$. The disks $D_v$ in $M\times\{1/3\}$ are determined up to isotopy by the links $L^-$ and $L^+$, which are well-defined up to isotopy in $M$. No other choices were made in constructing $\overline{\Sigma}(L,B)$.
\end{proof}

Note that by rescaling the interval parameter, we can similarly define a surface segment realizing $(L,B)$ in any product of the form $M \times [t_1,t_2]$.  As above, the ambient isotopy class of $\overline{\Sigma}(L,B)$ will depend only on $(L,B)$.

\subsubsection{Morse functions and projections between level sets} Before describing how to construct a closed realizing surface in a 4--manifold from a singular banded unlink, it will be convenient to take a brief detour to set up some useful notation.   Let $X$ be a closed, oriented, $4$--manifold equipped with a self-indexing Morse function $h:X\to[0,4]$, where $h$ has exactly one index $0$ critical point.  In what follows it will be helpful to have a way of identifying subsets of distinct level sets $h^{-1}(t_1)$ and $h^{-1}(t_2)$.  

Suppose then that $t_1\leq t_2$, and let $x_1, \ldots, x_p$ denote the critical points of $h$ which satisfy $t_1\leq h(x_j) \leq t_2$.  Let $X_{t_1,t_2}$ denote the complement in $X$ of the ascending and descending manifolds of the critical points $x_1, \ldots, x_p$.  Then the gradient flow of $h$ defines a diffeomorphism $\rho_{t_1,t_2}:h^{-1}(t_1) \cap X_{t_1,t_2} \rightarrow h^{-1}(t_2) \cap X_{t_1,t_2}$.

\begin{definition}
We call $\rho_{t_1,t_2}$ the \emph{projection of $h^{-1}(t_1)$ to $h^{-1}(t_2)$}.  Similarly, we call $\rho^{-1}_{t_1,t_2}$ the \emph{projection of $h^{-1}(t_2)$ to $h^{-1}(t_1)$}, which we likewise denote by $\rho_{t_2,t_1}$.  
 \end{definition}
 
Note that despite calling $\rho_{t_1,t_2}$ the projection from $h^{-1}(t_1)$ to $h^{-1}(t_2)$, it is only defined on the complement of the ascending and descending manifolds of the critical points that sit between $t_1$ and $t_2$.  These projection maps allow us to define local product structures away from the ascending and descending manifolds of the critical points of $h$.

\subsubsection{Singular banded unlinks and closed realizing surfaces}\label{subsec:SBUDSandrealizingsurfaces}
We are now able to define a closed realizing surface associated to a given singular banded {\emph{unlink}}, which we define below. As above, let $X$ be a closed, oriented, $4$--manifold equipped with a self-indexing Morse function $h:X\to[0,4]$, with exactly one index $0$ critical point.

\begin{definition}\label{def:realizing}
Let $(L,B)$ be a marked singular banded link in the 3--manifold $M:=h^{-1}(3/2)$, such that $L, B \subset X_{1/2,5/2}$.  Suppose furthermore that $\rho_{3/2,1/2}(L^-)$ bounds a collection of disjoint embedded disks $D_-$ in $h^{-1}(1/2)$, and that $\rho_{3/2,5/2}(L^+_B)$ bounds a collection of disjoint embedded disks $D_+$ in $h^{-1}(5/2)$.  Then we say that $(L,B)$ is a \emph{singular banded unlink} in the manifold $X$.
\end{definition}

In plain English, $(L,B)$ is a singular banded unlink when both
\begin{enumerate}[1.]
    \item $L^-$ is an unlink when viewed as a link in $h^{-1}(3/2)$ (``below the $2$-handles"),
    \item $L^+_B$ is an unlink when viewed as a link in $h^{-1}(5/2)$ (``above the $2$-handles").
    \end{enumerate}

Fix $\varepsilon\in(0,1/2)$.  Given a singular banded unlink $(L,B)$ in $M=h^{-1}(3/2)$, and families of disks $D_+$ and $D_-$ as in Definition~\ref{def:realizing}, we can construct an immersed surface with corners $\Sigma \subset X$ as follows.

\begin{enumerate}
\item $\Sigma \cap h^{-1}(t)= \emptyset$ for  $t < 1/2$ or $t>5/2$, 
\item $\Sigma \cap h^{-1}(1/2)= D_-$,
\item $\Sigma \cap h^{-1}(t) = \rho_{1/2,t} (\partial D_-)$ for $t \in (1/2, 3/2 - \varepsilon)$,
\item \label{identification}  $\Sigma\cap h^{-1}((3/2-\varepsilon,3/2+\varepsilon))$ is a realizing surface segment in the product $h^{-1}((3/2-\varepsilon,3/2+\varepsilon)) \cong M\times (3/2-\varepsilon,3/2+\varepsilon)$ for the singular banded link $(L,B)$ in $M$,
\item $\Sigma \cap h^{-1}(t) = \rho_{5/2,t} (\partial D_+)$ for $t \in (3/2+\varepsilon, 5/2)$,
\item $\Sigma \cap h^{-1}(5/2)= D_+$.
\end{enumerate}

That is, $\Sigma$ consists from bottom to top of minimum disks, a realizing surface segment (which we recall has self-intersections and index 1 critical points), and maximum disks.

Note that the identification of $h^{-1}((3/2-\varepsilon,3/2+\varepsilon))$ with $M\times (3/2-\varepsilon,3/2+\varepsilon)$ in part~\ref{identification} above is made using the projection maps $\rho_{3/2,t}:h^{-1}(3/2) \rightarrow h^{-1}(t)$, which is a diffeomorphism for $t \in (3/2-\varepsilon,3/2+\varepsilon)$ and small $\varepsilon$.  Under this identification the boundary of the realizing surface segment will be precisely $\rho_{5/2,3/2+\varepsilon} (\partial D_+) \sqcup \rho_{1/2,3/2-\varepsilon} (\partial D_-)$, and hence the surface $\Sigma$ constructed above will be closed.

\begin{definition}\label{def:closedrealizingsurface}
Let $\Sigma(L,B)$ be an immersed surface in $X$ obtained from $\Sigma$ by smoothing corners. We refer to $\Sigma(L,B)$ as a 
\emph{(closed) realizing surface} for the singular banded unlink $(L,B)$ in $X$.
\end{definition}

The surface $\Sigma(L,B)$ is an immersed surface in $X$ with isolated, transverse self-intersections.  Note that $\Sigma(L,B)$ is obtained (up to isotopy) by smoothing the result of capping off the boundary components of $\overline{\Sigma}(L,B)$ by horizontal disks, which is possible exactly when $(L,B)$ is a singular banded \emph{unlink}.

\begin{proposition} \label{prop:uniquerealizingsurfaces}
Any two realizing surfaces for the singular banded unlink $(L,B)$ are smoothly isotopic.
\end{proposition}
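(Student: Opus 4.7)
The plan is to isolate the choices made in the construction of a realizing surface and show that each choice is inessential. Referring to Definition~\ref{def:closedrealizingsurface} and the preceding enumeration, the data used to build $\Sigma(L,B)$ from $(L,B)$ consists of: the parameter $\varepsilon\in(0,1/2)$, the capping disks $D_-\subset h^{-1}(1/2)$ bounding $\rho_{3/2,1/2}(L^-)$, the capping disks $D_+\subset h^{-1}(5/2)$ bounding $\rho_{3/2,5/2}(L^+_B)$, the ambient-isotopy representative of the surface segment in the middle slab, and the corner smoothing. I would produce an ambient isotopy of $X$ carrying one realizing surface onto the other by handling each of these ingredients in turn.

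First, I would observe that the choice of $\varepsilon$ is immaterial: two values $\varepsilon_1$ and $\varepsilon_2$ can be interpolated by an ambient isotopy of $X$ supported in $h^{-1}([1/2,5/2])\setminus h^{-1}(\{1/2,5/2\})$ which rescales the vertical parameter in each of the slabs $h^{-1}([1/2,3/2-\varepsilon])$, $h^{-1}([3/2-\varepsilon,3/2+\varepsilon])$, and $h^{-1}([3/2+\varepsilon,5/2])$ along the gradient flow of $h$. This reduces us to comparing two realizing surfaces built with the same $\varepsilon$.

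Next, on the middle slab, Proposition~\ref{prop:surfacesegment} immediately supplies an ambient isotopy of $h^{-1}((3/2-\varepsilon,3/2+\varepsilon))\cong M\times(3/2-\varepsilon,3/2+\varepsilon)$ between the two surface segments realizing $(L,B)$. Because both segments agree (up to vertical translation) with the product link $L^\pm\times\{\text{point}\}$ near the boundary of this slab, this isotopy may be chosen to be the identity near the boundary, so that it extends by the identity to an ambient isotopy of all of $X$. For the outer slabs, above the middle and below the middle, the surface is a product of a link with an interval, and the ambient isotopy class of the surface on each slab is determined entirely by the choice of capping disks $D_\pm$: any ambient isotopy of $h^{-1}(1/2)$ (resp.\ $h^{-1}(5/2)$) carrying $D_-^1$ to $D_-^2$ (resp.\ $D_+^1$ to $D_+^2$) rel boundary extends via the gradient flow to an ambient isotopy of the slab fixing the surface near the middle-slab boundary.

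The remaining and main issue is therefore the uniqueness of the capping disks themselves. Since $(L,B)$ is a singular banded \emph{unlink}, both $\rho_{3/2,1/2}(L^-)$ and $\rho_{3/2,5/2}(L^+_B)$ are unlinks in the connected sums of $S^1\times S^2$ that make up $h^{-1}(1/2)$ and $h^{-1}(5/2)$ respectively. I would invoke the fact that any two collections of disjoint embedded disks bounded by an unlink in $\#^k(S^1\times S^2)$ are ambient-isotopic rel boundary; this is the same input used in the embedded case in \cite{bandpaper} and goes back to Laudenbach--Po\'enaru. Applying this to both the upper and lower capping disks, then concatenating with the isotopies from the previous paragraph and noting that corner smoothings near a fixed submanifold with corners are unique up to isotopy, assembles an ambient isotopy of $X$ taking one realizing surface to the other. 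The main obstacle is precisely the disk-uniqueness statement for unlinks in $\#^k(S^1\times S^2)$: it is not tautological, but it is a well-established classical result and it is exactly what allows the construction to descend to an invariant of the singular banded unlink.
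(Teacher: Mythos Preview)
Your overall strategy mirrors the paper's proof closely: isolate the choices (the parameter $\varepsilon$, the realizing segment, the capping disks, the corner smoothing) and argue each is inessential. The paper does exactly this, invoking Proposition~\ref{prop:surfacesegment} for the middle slab.

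However, there is a genuine gap in your treatment of the \emph{upper} capping disks. You claim that any two collections of disjoint embedded disks bounded by an unlink in $\#^k(S^1\times S^2)$ are ambient-isotopic rel boundary \emph{in that $3$--manifold}. This is false. For example, in $S^1\times S^2$ take a small unknot $U$ bounding an obvious disk $D_0$; tubing $D_0$ to a disjoint essential sphere $\{\mathrm{pt}\}\times S^2$ produces another embedded disk $D_1$ with $\partial D_1=U$, but $D_0\cup_U D_1$ is a nonseparating sphere, so $D_0$ and $D_1$ cannot be isotopic rel boundary inside $S^1\times S^2$. Laudenbach--Po\'enaru is a $4$--dimensional statement (about extending diffeomorphisms over $\natural^k(S^1\times B^3)$), not a $3$--dimensional disk-uniqueness theorem.

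The paper avoids this problem by arguing the disk uniqueness \emph{in the $4$--manifold pieces} rather than in the level sets: the lower disks are isotopic rel boundary through $h^{-1}([0,1/2])\cong B^4$, and the upper disks are isotopic rel boundary through $h^{-1}([5/2,4])\cong\natural^k(S^1\times B^3)$. In the latter, $\pi_2$ vanishes, so the sphere $D_+^1\cup D_+^2$ bounds a $3$--ball and the isotopy exists. Your argument becomes correct once you allow the isotopy of the capping disks to leave the level set and pass through these $4$--dimensional collars; this is the only substantive change needed.
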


\begin{proof}
We first note that choosing a different value for $\varepsilon$ changes $\Sigma$ by an isotopy through realizing surfaces.  Second, by Proposition~\ref{prop:surfacesegment} any two choices of surface segment $\overline{\Sigma}(L,B) \subset h^{-1}([3/2-\varepsilon ,3/2+\varepsilon])$ are isotopic, and this isotopy can be extended to the rest of $\Sigma \cap h^{-1}((1/2,5/2))$ using the projection maps $\rho_{t_1,t_2}$.  Finally, any choice of embedded disks $\Sigma \cap h^{-1}(1/2)$ and $\Sigma \cap h^{-1}(5/2)$ are isotopic rel boundary in $h^{-1}([0,1/2])$ and $h^{-1}([5/2,4])$ respectively, which follows from the fact that $h^{-1}([0,1/2])\cong B^4$ and $h^{-1}([5/2,4])\cong\natural^k (S^1 \times B^3)$.
\end{proof}

As the realizing surface $\Sigma(L,B)$ is determined by the singular banded unlink $(L,B)$ up to isotopy, we will often think of $\Sigma(L,B)$ as representing an isotopy class of immersed surfaces, rather than a particular representative.

\subsubsection{Singular banded unlink diagrams and Kirby diagrams}\label{sec:kirby}

We now make sense of how to describe a realizing surface as in Section~\ref{subsec:SBUDSandrealizingsurfaces} via a Kirby diagram. If one is comfortable with these diagrams, then the contents of this subsection are clear from Definition~\ref{def:realizing}: simply draw $L$ and $B$ inside a diagram for $X$ in a natural way. We now review some basic notions about Kirby diagrams.

Let $h:X\to\R$ be a self-indexing Morse function with a unique index 0 critical point, and let $n$ be the number of index 1 critical points of $h$. Fix a gradient-like vector field $\nabla h$ for $h$.  Let $M=h^{-1}(3/2)$, and let $L_2$ be the intersection of $M$ with the descending manifolds of the index 2 critical points of $h$. Perturb $\nabla h$ slightly if necessary so that this intersection is transverse, so that $L_2$ is a link in the 3--manifold $M\cong\#_n S^1\times S^2$. To each component of $L_2$, assign the framing induced by the descending manifold of the associated index 2 critical point, so that $L_2$ is actually a framed link in $M$.

Fix an $n$--component unlink $L_1$ in $S^3$. Let $V$ denote the complement of the unique (up to isotopy rel boundary) boundary-parallel disks bounded by $L_1$ in $B^4$.  Then $V$ is diffeomorphic to $\natural_n S^1\times B^3$, and we can therefore find a diffeomorphism $\phi:V\to h^{-1}([0,3/2])$. By Laudenbach--Poenaru \cite{lp} and Laudenbach \cite{laudenbachsurles}, the choice of $\phi$ is natural up to isotopy and moves that correspond to slides of $L_1$ (as a 0--framed link) in $S^3$.   Moreover, $\partial V$ can be naturally identified with the result of performing 0--surgery on $S^3$ along $L_1$, which we denote by $S_0^3(L_1)$.  By perturbing $\nabla h$ we may assume that $\phi^{-1}(L_2) \subset \partial V \cong S_0^3(L_1)$ is disjoint from the surgery solid tori, and hence we can think of $\phi^{-1}(L_2)$ as a link in $S^3$.  By abuse of notation, we will also refer to this link as $L_2$.

\begin{definition}
Let $\K:=(L_1,L_2)$ be a pair of disjoint links in $S^3$ with $L_1$ an unlink and $L_2$ framed. Suppose there is a 4--manifold $X$, a Morse function $h:X\to\R$, and a gradient-like vector field $\nabla h$ for $h$, so that $h^{-1}(3/2)$ may be identified with $S^3_0(L_1)$, and the descending manifolds of the index 2 critical points of $h$ meet $h^{-1}(3/2)$ in the framed link $L_2$. Then we call $\K$ a \emph{Kirby diagram of $X$ corresponding to $(h,\nabla h)$}.
\end{definition}

\begin{remark}
In \cite{maggiepatrick}, the third author and Naylor showed that a smooth, closed, {\emph{non-orientable}} 4-manifold $X^4$ is also determined up to diffeomorphism by  (framed) attaching regions of 0, 1, and 2-handles. If desired, one could thus make sense of diagrams of closed (immersed) surfaces in Kirby diagrams of non-orientable 4-manifolds. We choose not to pursue this explicitly in this paper for sake of simplicity.
\end{remark}

\begin{remark}
Given $h$ and $\nabla h$, a Kirby diagram $\K$ corresponding to $(h,\nabla h)$ is well-defined up to isotopy and slides over $L_1$ as long as there is no flow line of $\nabla h$ between two index 2 critical points of $h$. That is, generically we expect $h$ and $\nabla h$ to determine a Kirby diagram.

Conversely, given $\K$, the triple $(X,h,\nabla h)$ is determined up to diffeomorphism. 
\end{remark}

Let $E(\K)$ denote the complement $S^3 \backslash \nu(\mathcal{K}) $ of a small tubular neighborhood of the links $L_1,L_2$ comprising a Kirby diagram $\K$. Then given a link $L \subset E(\K)$ we may think of $L$ as describing a link in $h^{-1}(t)$ for any $t\in(0,3)$ via the projection map $\rho_{3/2,t}$.

\begin{definition}\label{def:sbud}
A \emph{singular banded unlink diagram} in the Kirby diagram $\mathcal{K}=(L_1,L_2)$ is a triple $(\mathcal{K}, L, B)$, where $L \subset E(\K)$ is a marked singular link and $B \subset E(\K)$ is a finite family of disjoint bands for $L$, such that $L^-$ bounds a family of pairwise disjoint embedded disks in $h^{-1}(1/2)$, and $L_B^+$ bounds a family of pairwise disjoint embedded disks in $h^{-1}(5/2)$. 
\end{definition}

By comparing Definition~\ref{def:sbud} to Definition~\ref{def:realizing}, we see that a singular banded unlink diagram describes an immersed realizing surface as follows.  We first note that we can identify $E(\K)$ with a subset of $h^{-1}(3/2)$ in a natural way (i.e.\ via $\nabla h)$. 
Since the banded link $L\cup B$ is disjoint from $L_1$, it can be identified with a subset of $h^{-1}(3/2)$, which we denote by $L'\cup B'$. This subset avoids the descending manifolds of the index 2 critical points of $h$.

Since $L'^-$ is disjoint from $L_1$, we can isotope it vertically downwards via the projection map $\rho_{3/2,t}$ from $h^{-1}(3/2)$ to $h^{-1}(1/2)$, where it can be capped off by a family of disjoint embedded disks in $h^{-1}(1/2)$.  Similarly, we can extend the surgered link $L'^+_{B'}$ vertically upwards from $h^{-1}(3/2)$ to $h^{-1}(5/2)$, where it can be capped off by disks. As these families of disks are unique up to isotopy rel boundary, the surface we obtain in this way from the banded unlink diagram $(\mathcal{K}, L, B)$ is well-defined up to isotopy. (See also Proposition~\ref{prop:uniquerealizingsurfaces}.)  We denote this surface by $\Sigma(\mathcal{K},L,B)$. 
\begin{definition}
We say that $\Sigma(\K,L,B)$ is a {\emph{realizing surface for}} $(\K,L,B)$, or that $(\K,L,B)$ {\emph{describes the surface}} $\Sigma(\K,L,B)$.  
\end{definition}

\begin{definition}
If $\Sigma$ is a realizing surface of a singular banded unlink diagram $(\K,L,B)$, then we say that $(\K,L,B)$ is a {\emph{singular banded unlink diagram}} for $\Sigma$, and we write $\D(\Sigma):=(\K,L,B)$. (In practice, we might drop the word ``singular", since this will be clear when $\Sigma$ is immersed.)  Note that $\Sigma$ determines $\D(\Sigma)$ uniquely up to isotopy, assuming that $\Sigma$ is a realizing surface for some diagram.
\end{definition}

\begin{definition}
Let $\Sigma$ be a subset of $X$.  Then we say that $h|_\Sigma$ is \emph{Morse} if there is a surface $F$ and an immersion $f:F\rightarrow X$ such that $\Sigma=f(F)$, and such that $h \circ f$ is a Morse function on $F$.  An index $k$ critical point of $h|_\Sigma$ is a point of the form $f(p)$, where $p$ is an index $k$ critical point of $h \circ f$.
\end{definition}

\begin{lemma} \label{thm:realizingsurface}
Let $X$ be a closed 4-manifold, and $\K$ a Kirby diagram for $X$.  Then any immersed surface $\Sigma$ in $X$ is ambient isotopic to a realizing surface $\Sigma (\K,L,B)$ for some singular banded unlink diagram $(\K,L,B)$.

\end{lemma}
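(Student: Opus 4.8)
The plan is to isotope $\Sigma$ into the normal form of Section~\ref{subsec:SBUDSandrealizingsurfaces} for the Morse data $(h,\nabla h)$ determined by $\K$, and then to read $(L,B)$ off of it; the argument parallels the embedded case of \cite{bandpaper}, the only genuinely new feature being the self-intersections. Write $\Sigma=f(F)$ for a self-transverse immersion of a closed surface. First I would put $h|_\Sigma$ in generic position: after a small ambient isotopy of $X$, $h\circ f$ is Morse with critical values all distinct, the finitely many double points of $\Sigma$ occur at distinct levels disjoint from the critical values of $h|_\Sigma$, $\Sigma$ is disjoint from the critical points of $h$, and $\Sigma$ meets the ascending and descending manifolds of the critical points of $h$ transversally.

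Next I would run the Morse-theoretic rearrangement, realized by ambient isotopies of $X$ supported near flow segments of $\nabla h$: push every index-$0$ critical point of $h|_\Sigma$ below level $1/2$, every index-$2$ critical point of $h|_\Sigma$ above level $7/2$, every index-$1$ critical point of $h|_\Sigma$ onto the level $3/2+\varepsilon/2$, and every double point of $\Sigma$ onto the level $3/2-\varepsilon/2$, for a small fixed $\varepsilon>0$. All the elementary reorderings are available: inside $\Sigma$ nothing flows downward out of a minimum or upward out of a maximum, so minima may be slid below saddles and below double points and maxima slid above them, two saddles (or a saddle and a double point) are separated by a generic perturbation, and a double point — not being a critical point — may be slid freely along $\Sigma$. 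After this, $\Sigma\cap h^{-1}([0,1/2])$ is a disjoint union of disks in $h^{-1}([0,1/2])\cong B^4$, each carrying exactly one critical point of $h|_\Sigma$; reading the level sets $\Sigma\cap h^{-1}(c)$ upward from $c=0$, each component is born at its minimum as a tiny split unknot contained in a ball disjoint from the rest, and between births the link changes only by ambient isotopy of the level set, so $\Sigma\cap h^{-1}(1/2)$ is an unlink in $h^{-1}(1/2)\cong S^3$ and hence $L^-:=\rho_{1/2,3/2}(\Sigma\cap h^{-1}(1/2))$ is an unlink in $M=h^{-1}(3/2)$ in the sense of Definition~\ref{def:realizing}. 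The same argument applied to $4-h$, which again has a single index-$0$ critical point since $h$ has a single index-$4$ critical point, shows that $\Sigma\cap h^{-1}(5/2)$ bounds disjoint disks in $h^{-1}(5/2)$; as in the proof of Proposition~\ref{prop:uniquerealizingsurfaces} we may then replace the part of $\Sigma$ above level $5/2$ by the standard cap disks.

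Finally I would read off the diagram. Having straightened $\Sigma$ into a genuine vertical product on $h^{-1}([1/2,3/2-\varepsilon])$ and $h^{-1}([3/2+\varepsilon,5/2])$, set $L:=\rho_{3/2-\varepsilon/2,\,3/2}(\Sigma\cap h^{-1}(3/2-\varepsilon/2))$; near each double point the cross-section of $\Sigma$ with the level set is a transverse pair of arcs, so $L$ is a $4$-valent graph in $M$ — a singular link whose vertices are the double points, lying in $E(\K)$ — with marking $\sigma$ at each vertex recording which sheet of $\Sigma$ lies on the positive side of the bigon swept up to the saddle level (equivalently, that $L^-$ and $L^+$ are the negative and positive resolutions of $(L,\sigma)$). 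Let $B$ be the family of bands produced by the index-$1$ critical points at level $3/2+\varepsilon/2$. Then $(\K,L,B)$ is a singular banded unlink diagram, and $\Sigma$ agrees piece-by-piece with the realizing surface built from $(\K,L,B)$, so Propositions~\ref{prop:surfacesegment} and~\ref{prop:uniquerealizingsurfaces} supply the desired ambient isotopy $\Sigma\to\Sigma(\K,L,B)$. The step I expect to be the main obstacle is the straightening into a vertical product: converting ``$h|_\Sigma$ has no critical points on an interval'' into ``$\Sigma$ is literally a product there'' forces one to push $\Sigma$ off the cocores of the $1$- and $2$-handles of $X$, which is carried out by innermost-disk and Whitney-type arguments exactly as in \cite{bandpaper}; here the hypothesis that $\K$ is an honest Kirby diagram — in particular $h^{-1}([0,1/2])\cong B^4$ — is essential, and the double points, confined to the single level $3/2-\varepsilon/2$ below the saddles, do not interfere.
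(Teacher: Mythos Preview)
Your argument is correct and follows essentially the same route as the paper: perturb so that $h|_\Sigma$ is Morse, reorder the minima, saddles, self-intersections, and maxima into the appropriate height bands by vertical isotopies justified by dimension counts, and then read off $(L,B)$ from the cross-sections near level $3/2$.

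Two small points are worth tightening. First, there is no need to push maxima above level $7/2$ or to invoke symmetry with $4-h$; it suffices to push them above $5/2$ and observe (as in Proposition~\ref{prop:uniquerealizingsurfaces}) that $h^{-1}([5/2,4])\cong\natural^k(S^1\times B^3)$, so the resulting disks are automatically boundary-parallel. Second, your ``main obstacle'' is overstated: you do not need to make $\Sigma$ literally vertical on the product regions, only isotopic to the realizing surface piece by piece, and this follows directly from the fact that the relevant sublevel sets are $4$--dimensional $1$--handlebodies; no innermost-disk or Whitney-type arguments are required, and \cite{bandpaper} does not use them for this step either---the necessary disjointness from ascending and descending manifolds of critical points of $h$ is a genericity (dimension-count) statement, exactly as in the paper's proof.
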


\begin{proof}
After a small ambient isotopy we may assume that $h|_{\Sigma}$ is Morse.  Isotope all of the maxima of $\Sigma$ vertically upward into $h^{-1}((5/2,4))$ (generically, maxima of $\Sigma$ do not lie in the descending manifolds of index $1$ or $2$ critical points of $h$). Similarly isotope the minima of $\Sigma$ vertically downward into $h^{-1}((0,3/2))$. Isotope all of the index 1 critical points of $h|_{\Sigma}$ vertically into $h^{-1}((3/2,5/2))$ (again, index 1 critical points of $h|_{\Sigma}$ generically do not lie in the ascending manifolds of index 3 critical points or the descending manifolds of index 1 critical points). Finally, isotope the self-intersections of $\Sigma$ to lie in $h^{-1}((3/2,5/2))$ in such a way that they do not coincide with index 1 critical points of $h|_{\Sigma}$.

Now flatten $\Sigma$ as in \cite{kss}. In words, notice that $h$ and $-\nabla h$, when restricted to $\Sigma$, generically induce a CW decomposition of $\Sigma$ in which 0--cells are the index 0 critical points of $h|_{\Sigma}$, one point in the interior of each 1--cell is an index 1 critical point of $h|_{\Sigma}$, and one point in the interior of each 2--cell is an index 2 critical  point of $h|_{\Sigma}$. Perturb, if necessary, so that self-intersections of $\Sigma$ all lie outside the descending and ascending manifolds in $\Sigma$ of index 1 critical points of $h|_{\Sigma}$.

The family of gradient flow lines of $\nabla h$ in $X$ which originate on the ascending manifolds of an index 1 critical point of $h|_\Sigma$ is 2--dimensional, as is the family of gradient flow lines of -$\nabla h$ in $X$ which originate on the descending manifolds of an index 1 critical point of $h|_\Sigma$. Thus, we may generically take them all to be disjoint and also disjoint from ascending and descending manifolds of index 2 points of $h$. (We discuss this more in Section~\ref{sec:0and1std}. While this condition is generic, it is not natural -- this lack of generality precisely corresponding to the singular band moves of Theorem~\ref{maintheorem}.)

Fix $\varepsilon > 0$, and let $L^-=\Sigma \cap h^{-1}(3/2-\varepsilon)$.  Isotope $\Sigma$ near height $3/2$ so that the intersection $\Sigma\cap h^{-1}([3/2-\varepsilon,3/2+\varepsilon])$ is of the form $L^- \times [3/2-\varepsilon,3/2+\varepsilon]$. A neighborhood of each 1--cell of $\Sigma$ can be isotoped via $-\nabla h$ to a band in $h^{-1}(3/2)$ that is attached to a parallel copy of $L^-$. Let $B$ be the collection of all such bands (one for each 1-cell in $\Sigma$).

Now isotope $\Sigma$ near each self-intersection $s$ of $\Sigma$ as in the right-hand side of Figure~\ref{fig:getsurface}, i.e.\ make one of the sheets of $\Sigma$ at $s$ include a small region that is horizontal with respect to $h$, and which contains $s$. Isotope this sheet via $-\nabla h$ to push this horizontal region to $h^{-1}(3/2)$, where it can be interpreted as a marked fat vertex as in Figure~\ref{fig:getsurface} (left). Repeating for every self-intersection of $\Sigma$, we obtain a marked singular banded link $L$ in $h^{-1}(3/2)$ whose negative resolution is $L^-$.

Now $\Sigma$ intersects regions of $X$ in the following way:
\begin{align*}
h^{-1}([0,3/2-\varepsilon])&\text{ in boundary parallel disks with boundary $L^-$,}\\
h^{-1}([3/2-\varepsilon,3/2+\varepsilon])&\text{ in the realizing surface segment for $(L,B)$,}\\ 
h^{-1}([3/2+\varepsilon,5/2])&\text{ in an embedded surface on which $h$ has no critical points,}\\
h^{-1}([5/2,4])&\text{ in boundary parallel disks with boundary $L^+_B$.}
\end{align*}
We conclude that $\Sigma$ is isotopic to $\Sigma(\K,L,B)$.\end{proof}

\begin{remark}
In the proof of Proposition~\ref{thm:realizingsurface}, we made several references to genericity. That is, we made several choices of how to perturb $\Sigma$ in order to obtain $(\K,L,B)$. It may be helpful to imagine the lower-dimensional analogue of knots in $S^3$: every knot in $S^3$ is isotopic to one that projects to a knot diagram. However, not every knot in $S^3$ actually projects to a knot diagram. An arbitrary knot may, for example, have a projection that includes a cusp, self-tangency, or triple point. These conditions are not generic and can be corrected by a slight perturbation, but therein involves a choice that can yield diagrams differing by a Reidemeister move (RI, RII, RIII, respectively).  There are, of course, even ``worse" conditions, such as a knot whose projection involves a quadruple intersection. However, this condition is even ``less" generic, by which we mean:
\begin{itemize}
    \item[$\circ$] A generic knot in $S^3$ admits a projection with no triple points.
    \item[$\circ$] A generic 1--parameter family of smoothly varying knots in $S^3$ admit projections with finitely many triple points but no quadruple points.
    \item[$\circ$] A generic 2-parameter family of smoothly varying knots in $S^3$ admit projections with 1-dimensional families of triple points and finitely many quadruple points.
\end{itemize}

Thus in a 1--parameter family of knots (i.e.\ a knot isotopy), we expect to obtain diagrams that differ by an RIII move (and similarly for RI and RII), but need never consider moves involving quadruple intersections.

    Moving back to the 4-dimensional world, in order to understand to what extent a singular banded unlink diagram is well-defined up to isotopy of an immersed surface, we must understand which nongeneric behaviors of projections we expect to see a finite number of times in a 1--dimensional family of immersed surfaces. We discuss this more formally in Sections~\ref{sec:0and1std} and~\ref{sec:isotopy}.

\end{remark}

\subsubsection{Singular band moves}

The Kirby diagram $\K$ only determines the described 4--manifold $X$ up to diffeomorphism. Therefore, $(\K,L,B)$ only determines the pair $(X,\Sigma(\K,L,B))$ up to diffeomorphism; it does not make sense to say that $(\K,L,B)$ determines $\Sigma(\K,L,B)$ up to isotopy. However, if we have already identified $X$ with the manifold described by $\K$, then we can consider $\Sigma(\K,L,B)$ up to isotopy. In particular, given another singular banded unlink diagram $(\K,L',B')$ in the same Kirby diagram $\K$, there is a natural (up to isotopy) diffeomorphism between the 4--manifolds containing $\Sigma(\K,L',B')$ and $\Sigma(\K,L,B)$. Therefore, it {\emph{does}} make sense to ask whether $\Sigma(\K,L,B')$ and $\Sigma(\K,L,B)$ are ambiently isotopic, regularly homotopic, or homotopic in $X$. In this section, we define moves of singular banded unlink diagrams that describe ambient isotopies of immersed surfaces; in Sections~\ref{sec:0and1std} and~\ref{sec:isotopy} we show that indeed these moves are sufficient.

\begin{definition}\label{def:singbandmove}

Let $\D:=(\K,L,B)$ and $\D':=(\K,L',B')$ be singular banded unlink diagrams. Suppose that $\D'$ is obtained from $\D$ by a finite sequence of the moves in Figures~\ref{fig:oldisomoves} and~\ref{fig:newisomoves}. We call these moves {\emph{singular band moves}}, and say that $\D'$ {\emph{is related to $\D$ by singular band moves}}. (This relationship is clearly symmetric.)

Specifically, the singular band moves (illustrated in Figures \ref{fig:oldisomoves} and \ref{fig:newisomoves}) are:

\vspace{.1in}

\par\smallskip\noindent
\centerline{\begin{minipage}{75mm}
\begin{enumerate}
 \item \label{s41} Isotopy in $E(\K)$,
 \item Cup/cap moves,
 \item Band slides,
 \item \label{s4n}Band swims,
 \item \label{k1}Slides of bands over components of $L_2$,
 \item Swims of bands about $L_2$,
 \item \label{k1n}Slides of unlinks and bands over $L_1$,
    \item Sliding a vertex over a band,
    \item Passing a vertex past the edge of a band,
    \item Swimming a band through a vertex.
\end{enumerate}
\end{minipage}}
\par\smallskip

\vspace{.1in}

We may refer to moves~\ref{s41}--\ref{k1n} (illustrated in Figure~\ref{fig:oldisomoves}) as {\emph{band moves}} (omitting the word ``singular") since they do not involve the self-intersections of $L$.  The remainder of the moves are illustrated in Figure~\ref{fig:newisomoves}.

\end{definition}

\begin{figure}
    \centering
    \vspace{.2in}
    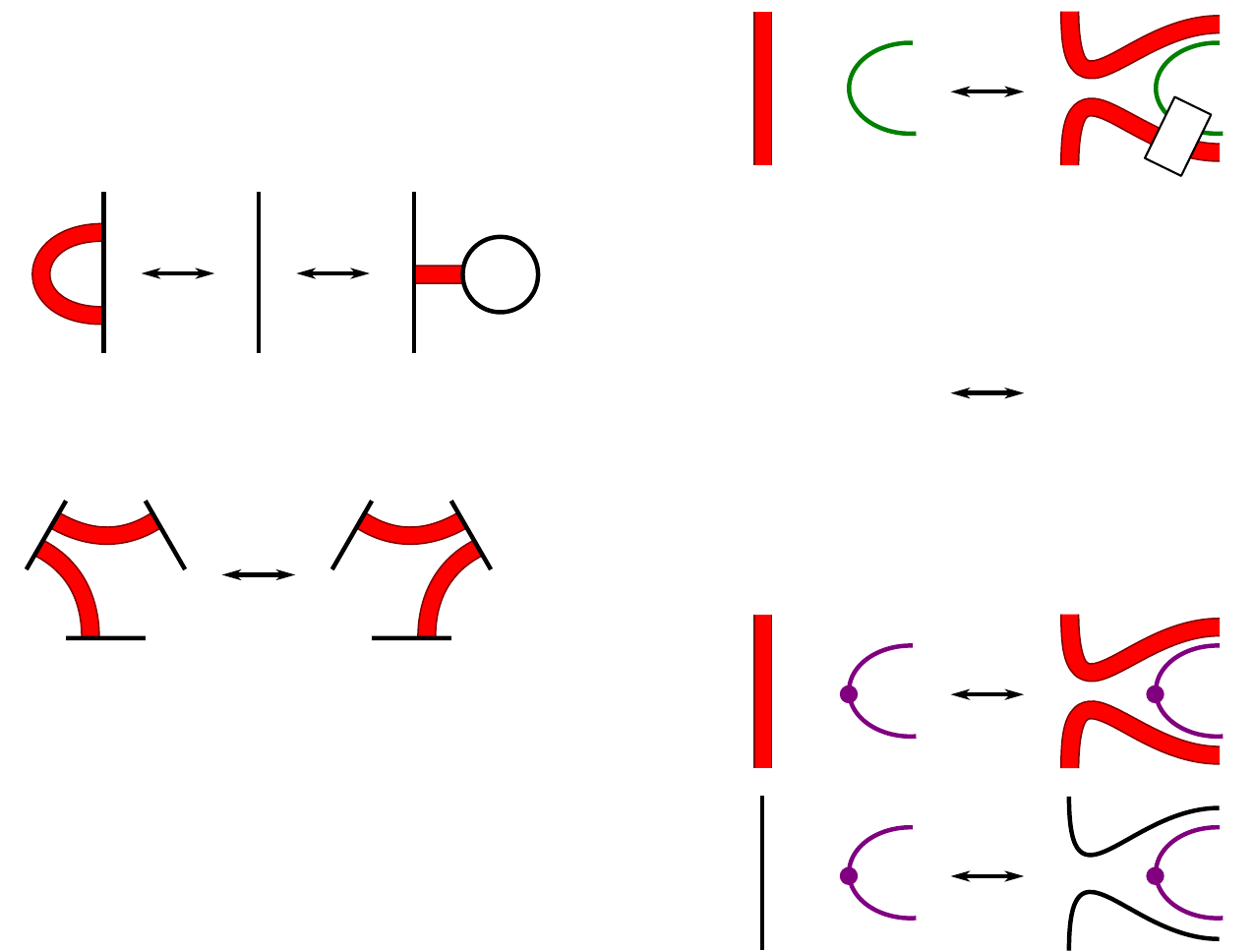
    \caption{The band moves that do not involve the self-intersections of the described surface.}
    \label{fig:oldisomoves}
    \end{figure}

\begin{figure}
    \centering
    \vspace{.05in}
\begingroup%
  \makeatletter%
  \providecommand\color[2][]{%
    \errmessage{(Inkscape) Color is used for the text in Inkscape, but the package 'color.sty' is not loaded}%
    \renewcommand\color[2][]{}%
  }%
  \providecommand\transparent[1]{%
    \errmessage{(Inkscape) Transparency is used (non-zero) for the text in Inkscape, but the package 'transparent.sty' is not loaded}%
    \renewcommand\transparent[1]{}%
  }%
  \providecommand\rotatebox[2]{#2}%
  \newcommand*\fsize{\dimexpr\f@size pt\relax}%
  \newcommand*\lineheight[1]{\fontsize{\fsize}{#1\fsize}\selectfont}%
  \ifx\svgwidth\undefined%
    \setlength{\unitlength}{237.40133691bp}%
    \ifx\svgscale\undefined%
      \relax%
    \else%
      \setlength{\unitlength}{\unitlength * \real{\svgscale}}%
    \fi%
  \else%
    \setlength{\unitlength}{\svgwidth}%
  \fi%
  \global\let\svgwidth\undefined%
  \global\let\svgscale\undefined%
  \makeatother%
  \begin{picture}(1,1.12549606)%
    \lineheight{1}%
    \setlength\tabcolsep{0pt}%
    \put(0,0){\includegraphics[width=\unitlength,page=1]{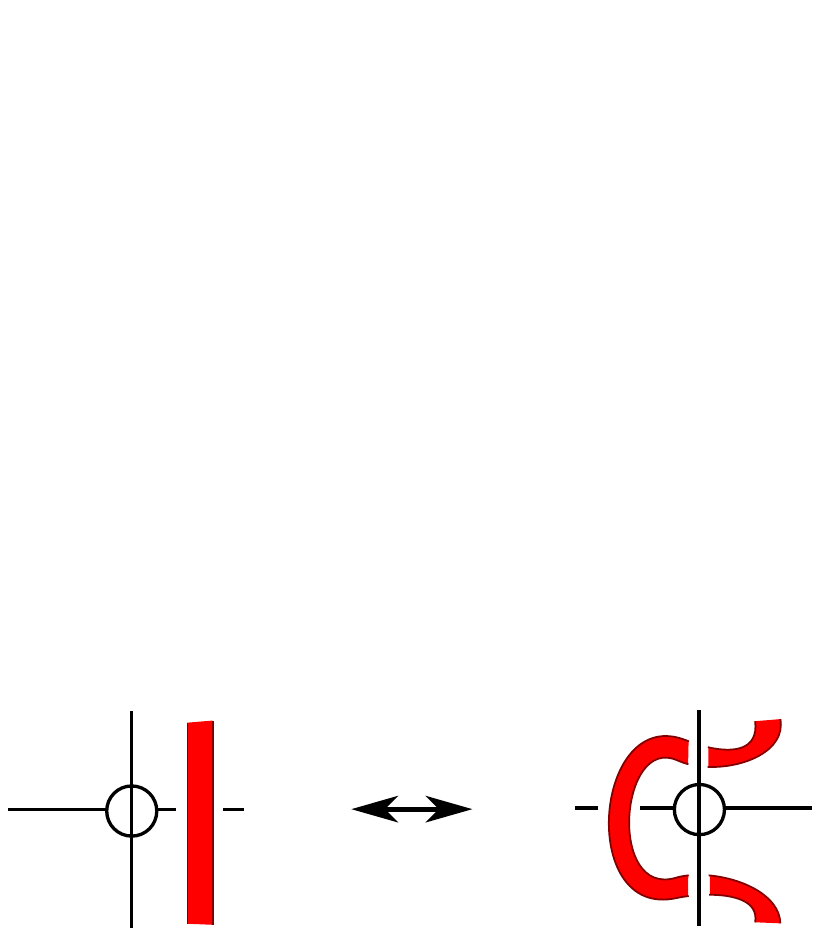}}%
    \put(0.5055278,0.26935762){\color[rgb]{0,0,0}\makebox(0,0)[t]{\lineheight{1.25}\smash{\begin{tabular}[t]{c}intersection/band\\swim\end{tabular}}}}%
    \put(0.47201341,0.05875132){\color[rgb]{0,0,0}\makebox(0,0)[lt]{\lineheight{1.25}\smash{\begin{tabular}[t]{l}(x)\end{tabular}}}}%
    \put(0,0){\includegraphics[width=\unitlength,page=2]{newisomoves.pdf}}%
    \put(0.49437438,0.65939605){\color[rgb]{0,0,0}\makebox(0,0)[t]{\lineheight{1.25}\smash{\begin{tabular}[t]{c}intersection/band\\pass\end{tabular}}}}%
    \put(0.44634761,0.45006042){\color[rgb]{0,0,0}\makebox(0,0)[lt]{\lineheight{1.25}\smash{\begin{tabular}[t]{l}(ix)\end{tabular}}}}%
    \put(0,0){\includegraphics[width=\unitlength,page=3]{newisomoves.pdf}}%
    \put(0.48322056,1.08786624){\color[rgb]{0,0,0}\makebox(0,0)[t]{\lineheight{1.25}\smash{\begin{tabular}[t]{c}intersection/band\\slide\end{tabular}}}}%
    \put(0,0){\includegraphics[width=\unitlength,page=4]{newisomoves.pdf}}%
    \put(0.42557469,0.84009854){\color[rgb]{0,0,0}\makebox(0,0)[lt]{\lineheight{1.25}\smash{\begin{tabular}[t]{l}(viii)\end{tabular}}}}%
  \end{picture}%
\endgroup%

    \caption{The singular band moves that involve self-intersections of the described surface.}
    \label{fig:newisomoves}
    \end{figure}

\begin{exercise}
If $\D$ and $\D'$ are related by singular band moves, then $\Sigma(\D)$ and $\Sigma(\D')$ are ambiently isotopic.
\end{exercise}

In the future, we will refer to moves by name rather than number to avoid confusion.

In Figures~\ref{fig:seungwonmove}--\ref{fig:intswimalt} we illustrate some other useful moves on singular banded unlink diagrams that are achievable by a combination of singular band moves. We call these moves $\star$ (Figure~\ref{fig:seungwonmove}), the upside-down intersection/band swim (Figure~\ref{fig:upsidedownswim}), the intersection pass (Figure~\ref{fig:intpass}), the intersection swim (Figures~\ref{fig:intswim} and~\ref{fig:intswimalt}), the intersection/2--handle slide (Figure~\ref{fig:int2handleslide}) and the intersection/2--handle swim (Figure~\ref{fig:int2handleswim}).

\begin{figure}
    \centering
    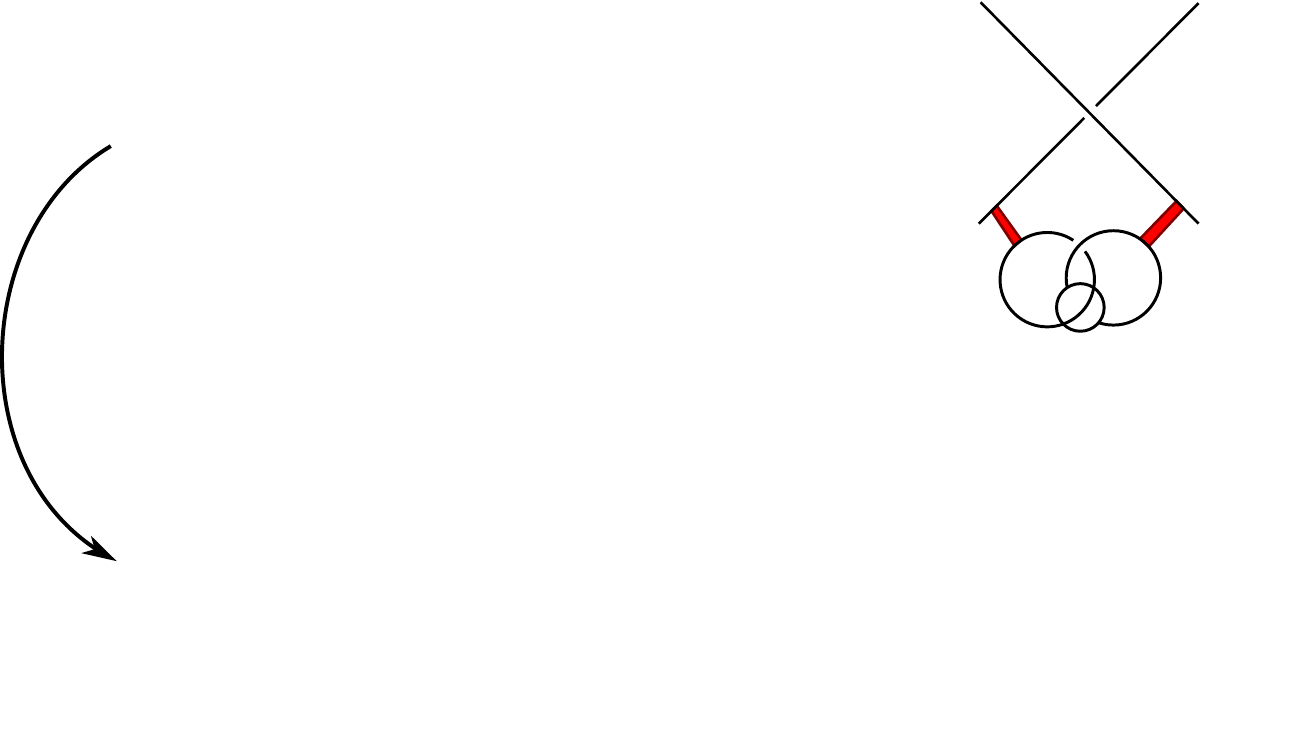
    \caption{The $\star$ move moves a vertex onto two new unlink components (or the reverse). In Figures~\ref{fig:upsidedownswim},~\ref{fig:intswim},~\ref{fig:intswimalt} we see that the $\star$-move can be used (in conjunction with singular band moves) to achieve other seemingly natural moves.}
    \label{fig:seungwonmove}
\end{figure}

\begin{figure}
    \centering
\begingroup%
  \makeatletter%
  \providecommand\color[2][]{%
    \errmessage{(Inkscape) Color is used for the text in Inkscape, but the package 'color.sty' is not loaded}%
    \renewcommand\color[2][]{}%
  }%
  \providecommand\transparent[1]{%
    \errmessage{(Inkscape) Transparency is used (non-zero) for the text in Inkscape, but the package 'transparent.sty' is not loaded}%
    \renewcommand\transparent[1]{}%
  }%
  \providecommand\rotatebox[2]{#2}%
  \newcommand*\fsize{\dimexpr\f@size pt\relax}%
  \newcommand*\lineheight[1]{\fontsize{\fsize}{#1\fsize}\selectfont}%
  \ifx\svgwidth\undefined%
    \setlength{\unitlength}{376.70414962bp}%
    \ifx\svgscale\undefined%
      \relax%
    \else%
      \setlength{\unitlength}{\unitlength * \real{\svgscale}}%
    \fi%
  \else%
    \setlength{\unitlength}{\svgwidth}%
  \fi%
  \global\let\svgwidth\undefined%
  \global\let\svgscale\undefined%
  \makeatother%
  \begin{picture}(1,0.59039263)%
    \lineheight{1}%
    \setlength\tabcolsep{0pt}%
    \put(0,0){\includegraphics[width=\unitlength,page=1]{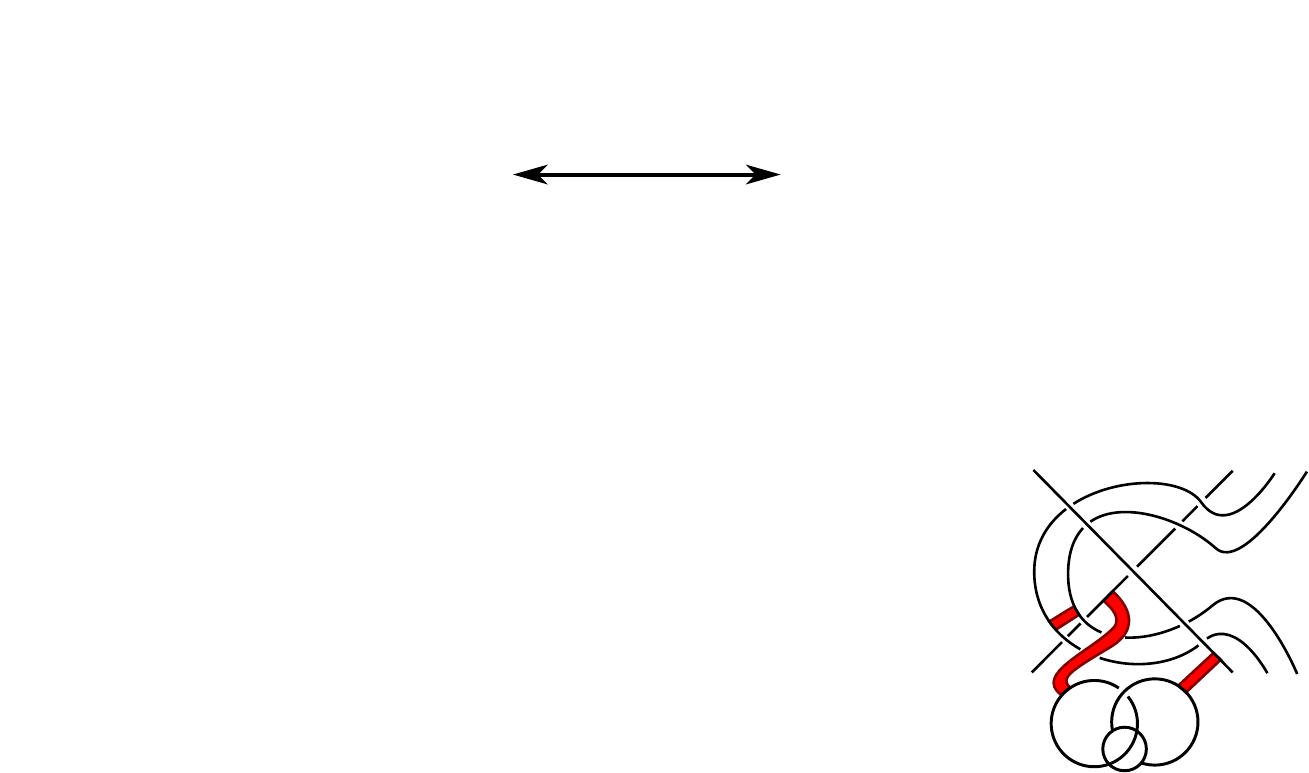}}%
    \put(0.48462013,0.5641656){\color[rgb]{0,0,0}\makebox(0,0)[t]{\lineheight{1.25}\smash{\begin{tabular}[t]{c}upside-down\\intersection/band\\swim\end{tabular}}}}%
    \put(0,0){\includegraphics[width=\unitlength,page=2]{upsidedownswim.pdf}}%
    \put(0.09016922,0.3152791){\color[rgb]{0,0,0}\makebox(0,0)[t]{\lineheight{1.25}\smash{\begin{tabular}[t]{c}$\bigstar$\end{tabular}}}}%
    \put(0,0){\includegraphics[width=\unitlength,page=3]{upsidedownswim.pdf}}%
    \put(0.25876044,0.15575662){\color[rgb]{0,0,0}\makebox(0,0)[lt]{\lineheight{1.25}\smash{\begin{tabular}[t]{l}isotopy\end{tabular}}}}%
    \put(0,0){\includegraphics[width=\unitlength,page=4]{upsidedownswim.pdf}}%
    \put(0.61524166,0.15575662){\color[rgb]{0,0,0}\makebox(0,0)[lt]{\lineheight{1.25}\smash{\begin{tabular}[t]{l}band swim\end{tabular}}}}%
    \put(0,0){\includegraphics[width=\unitlength,page=5]{upsidedownswim.pdf}}%
    \put(0.9652803,0.30522334){\color[rgb]{0,0,0}\makebox(0,0)[t]{\lineheight{1.25}\smash{\begin{tabular}[t]{c}$\bigstar$\end{tabular}}}}%
  \end{picture}%
\endgroup%

    \caption{We can achieve the upside-down intersection/band swim by performing $\star$ and singular band moves.}
    \label{fig:upsidedownswim}
\end{figure}

\begin{figure}
    \centering
\begingroup%
  \makeatletter%
  \providecommand\color[2][]{%
    \errmessage{(Inkscape) Color is used for the text in Inkscape, but the package 'color.sty' is not loaded}%
    \renewcommand\color[2][]{}%
  }%
  \providecommand\transparent[1]{%
    \errmessage{(Inkscape) Transparency is used (non-zero) for the text in Inkscape, but the package 'transparent.sty' is not loaded}%
    \renewcommand\transparent[1]{}%
  }%
  \providecommand\rotatebox[2]{#2}%
  \newcommand*\fsize{\dimexpr\f@size pt\relax}%
  \newcommand*\lineheight[1]{\fontsize{\fsize}{#1\fsize}\selectfont}%
  \ifx\svgwidth\undefined%
    \setlength{\unitlength}{372.61745183bp}%
    \ifx\svgscale\undefined%
      \relax%
    \else%
      \setlength{\unitlength}{\unitlength * \real{\svgscale}}%
    \fi%
  \else%
    \setlength{\unitlength}{\svgwidth}%
  \fi%
  \global\let\svgwidth\undefined%
  \global\let\svgscale\undefined%
  \makeatother%
  \begin{picture}(1,0.48840162)%
    \lineheight{1}%
    \setlength\tabcolsep{0pt}%
    \put(0,0){\includegraphics[width=\unitlength,page=1]{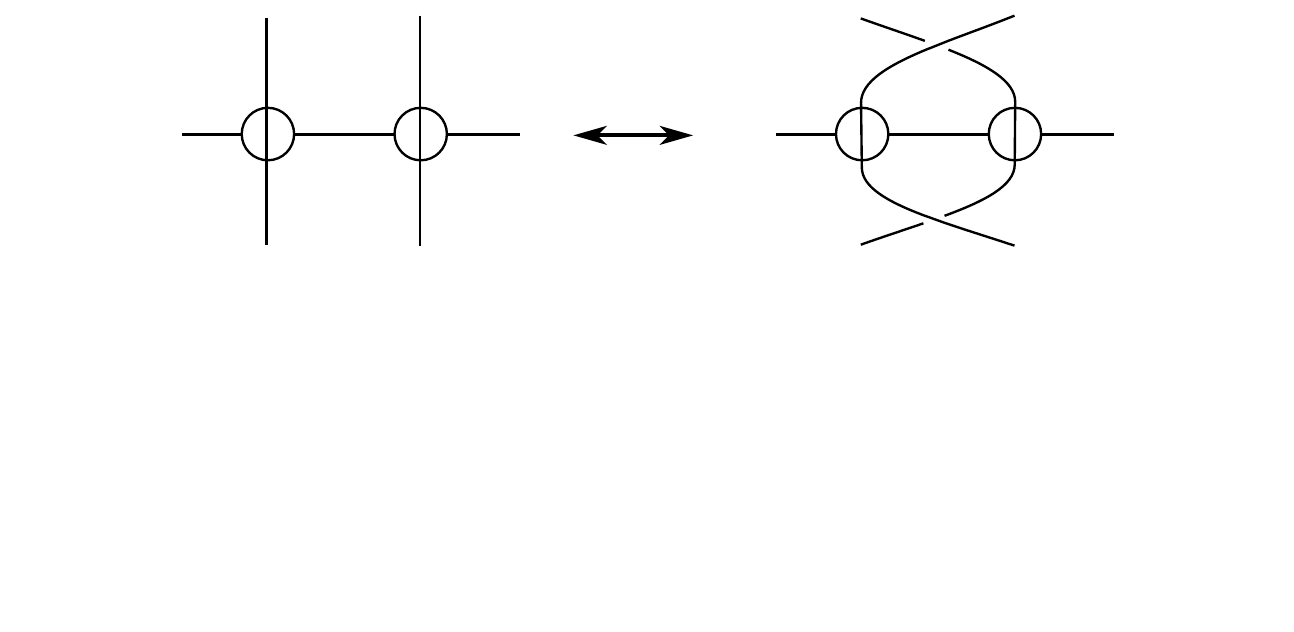}}%
    \put(0.49340024,0.46332754){\color[rgb]{0,0,0}\makebox(0,0)[t]{\lineheight{1.25}\smash{\begin{tabular}[t]{c}intersection\\pass\end{tabular}}}}%
    \put(0,0){\includegraphics[width=\unitlength,page=2]{intpass.pdf}}%
    \put(0.31721428,0.18274393){\color[rgb]{0,0,0}\makebox(0,0)[t]{\lineheight{1.25}\smash{\begin{tabular}[t]{c}int/band\\pass\end{tabular}}}}%
    \put(0,0){\includegraphics[width=\unitlength,page=3]{intpass.pdf}}%
    \put(0.68928938,0.13754842){\color[rgb]{0,0,0}\makebox(0,0)[t]{\lineheight{1.25}\smash{\begin{tabular}[t]{c}isotopy\end{tabular}}}}%
    \put(0,0){\includegraphics[width=\unitlength,page=4]{intpass.pdf}}%
    \put(0.10707907,0.25211386){\color[rgb]{0,0,0}\makebox(0,0)[t]{\lineheight{1.25}\smash{\begin{tabular}[t]{c}$\bigstar$\end{tabular}}}}%
    \put(0,0){\includegraphics[width=\unitlength,page=5]{intpass.pdf}}%
    \put(0.90621091,0.25211386){\color[rgb]{0,0,0}\makebox(0,0)[t]{\lineheight{1.25}\smash{\begin{tabular}[t]{c}$\bigstar$\end{tabular}}}}%
  \end{picture}%
\endgroup%

    \caption{We can achieve an intersection pass by performing $\star$ and singular band moves.}
    \label{fig:intpass}
\end{figure}

\begin{figure}
    \centering
\begingroup%
  \makeatletter%
  \providecommand\color[2][]{%
    \errmessage{(Inkscape) Color is used for the text in Inkscape, but the package 'color.sty' is not loaded}%
    \renewcommand\color[2][]{}%
  }%
  \providecommand\transparent[1]{%
    \errmessage{(Inkscape) Transparency is used (non-zero) for the text in Inkscape, but the package 'transparent.sty' is not loaded}%
    \renewcommand\transparent[1]{}%
  }%
  \providecommand\rotatebox[2]{#2}%
  \newcommand*\fsize{\dimexpr\f@size pt\relax}%
  \newcommand*\lineheight[1]{\fontsize{\fsize}{#1\fsize}\selectfont}%
  \ifx\svgwidth\undefined%
    \setlength{\unitlength}{372.94487901bp}%
    \ifx\svgscale\undefined%
      \relax%
    \else%
      \setlength{\unitlength}{\unitlength * \real{\svgscale}}%
    \fi%
  \else%
    \setlength{\unitlength}{\svgwidth}%
  \fi%
  \global\let\svgwidth\undefined%
  \global\let\svgscale\undefined%
  \makeatother%
  \begin{picture}(1,0.51680719)%
    \lineheight{1}%
    \setlength\tabcolsep{0pt}%
    \put(0,0){\includegraphics[width=\unitlength,page=1]{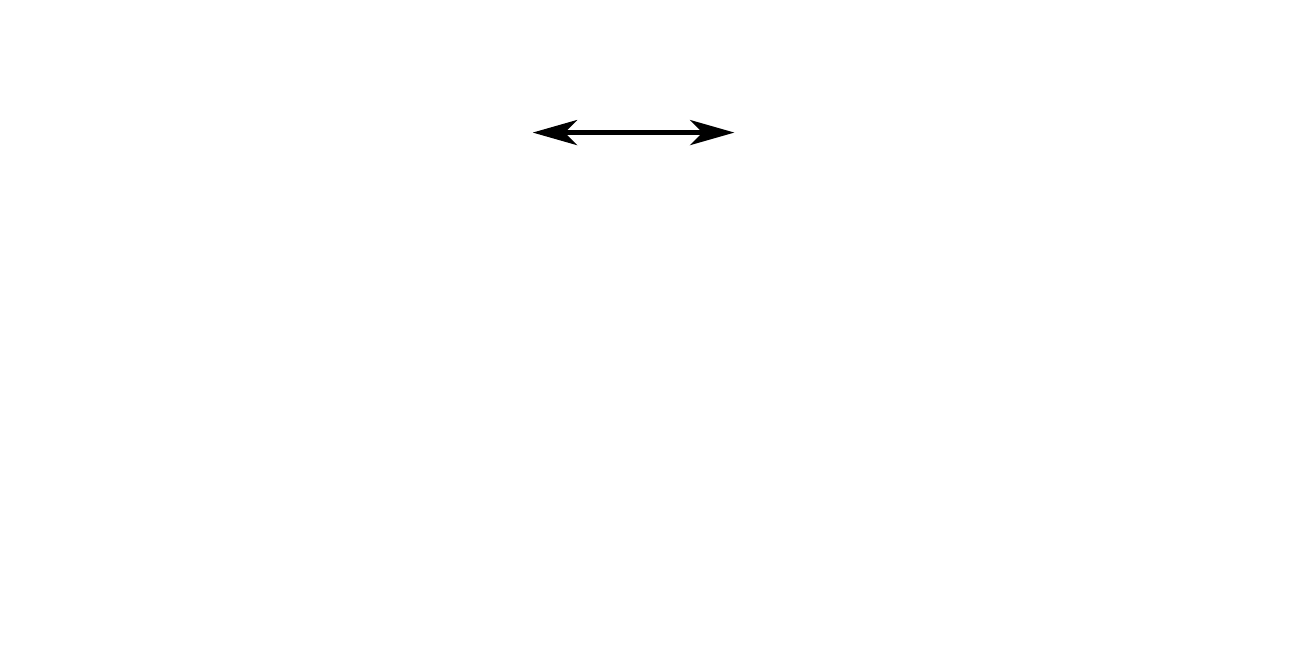}}%
    \put(0.48728899,0.49227539){\color[rgb]{0,0,0}\makebox(0,0)[t]{\lineheight{1.25}\smash{\begin{tabular}[t]{c}intersection\\swim\end{tabular}}}}%
    \put(0,0){\includegraphics[width=\unitlength,page=2]{intswim.pdf}}%
    \put(0.13838713,0.25095889){\color[rgb]{0,0,0}\makebox(0,0)[t]{\lineheight{1.25}\smash{\begin{tabular}[t]{c}$\bigstar$\end{tabular}}}}%
    \put(0,0){\includegraphics[width=\unitlength,page=3]{intswim.pdf}}%
    \put(0.31936568,0.16099454){\color[rgb]{0,0,0}\makebox(0,0)[t]{\lineheight{1.25}\smash{\begin{tabular}[t]{c}int/band\\swim\end{tabular}}}}%
    \put(0,0){\includegraphics[width=\unitlength,page=4]{intswim.pdf}}%
    \put(0.68727764,0.12027482){\color[rgb]{0,0,0}\makebox(0,0)[t]{\lineheight{1.25}\smash{\begin{tabular}[t]{c}isotopy\end{tabular}}}}%
    \put(0,0){\includegraphics[width=\unitlength,page=5]{intswim.pdf}}%
    \put(0.84498877,0.25095889){\color[rgb]{0,0,0}\makebox(0,0)[t]{\lineheight{1.25}\smash{\begin{tabular}[t]{c}$\bigstar$\end{tabular}}}}%
  \end{picture}%
\endgroup%

    \caption{We can achieve an intersection swim by performing $\star$ and singular band moves.}
    \label{fig:intswim}
\end{figure}

\begin{figure}
    \centering
\begingroup%
  \makeatletter%
  \providecommand\color[2][]{%
    \errmessage{(Inkscape) Color is used for the text in Inkscape, but the package 'color.sty' is not loaded}%
    \renewcommand\color[2][]{}%
  }%
  \providecommand\transparent[1]{%
    \errmessage{(Inkscape) Transparency is used (non-zero) for the text in Inkscape, but the package 'transparent.sty' is not loaded}%
    \renewcommand\transparent[1]{}%
  }%
  \providecommand\rotatebox[2]{#2}%
  \newcommand*\fsize{\dimexpr\f@size pt\relax}%
  \newcommand*\lineheight[1]{\fontsize{\fsize}{#1\fsize}\selectfont}%
  \ifx\svgwidth\undefined%
    \setlength{\unitlength}{348.53103037bp}%
    \ifx\svgscale\undefined%
      \relax%
    \else%
      \setlength{\unitlength}{\unitlength * \real{\svgscale}}%
    \fi%
  \else%
    \setlength{\unitlength}{\svgwidth}%
  \fi%
  \global\let\svgwidth\undefined%
  \global\let\svgscale\undefined%
  \makeatother%
  \begin{picture}(1,0.73448455)%
    \lineheight{1}%
    \setlength\tabcolsep{0pt}%
    \put(0,0){\includegraphics[width=\unitlength,page=1]{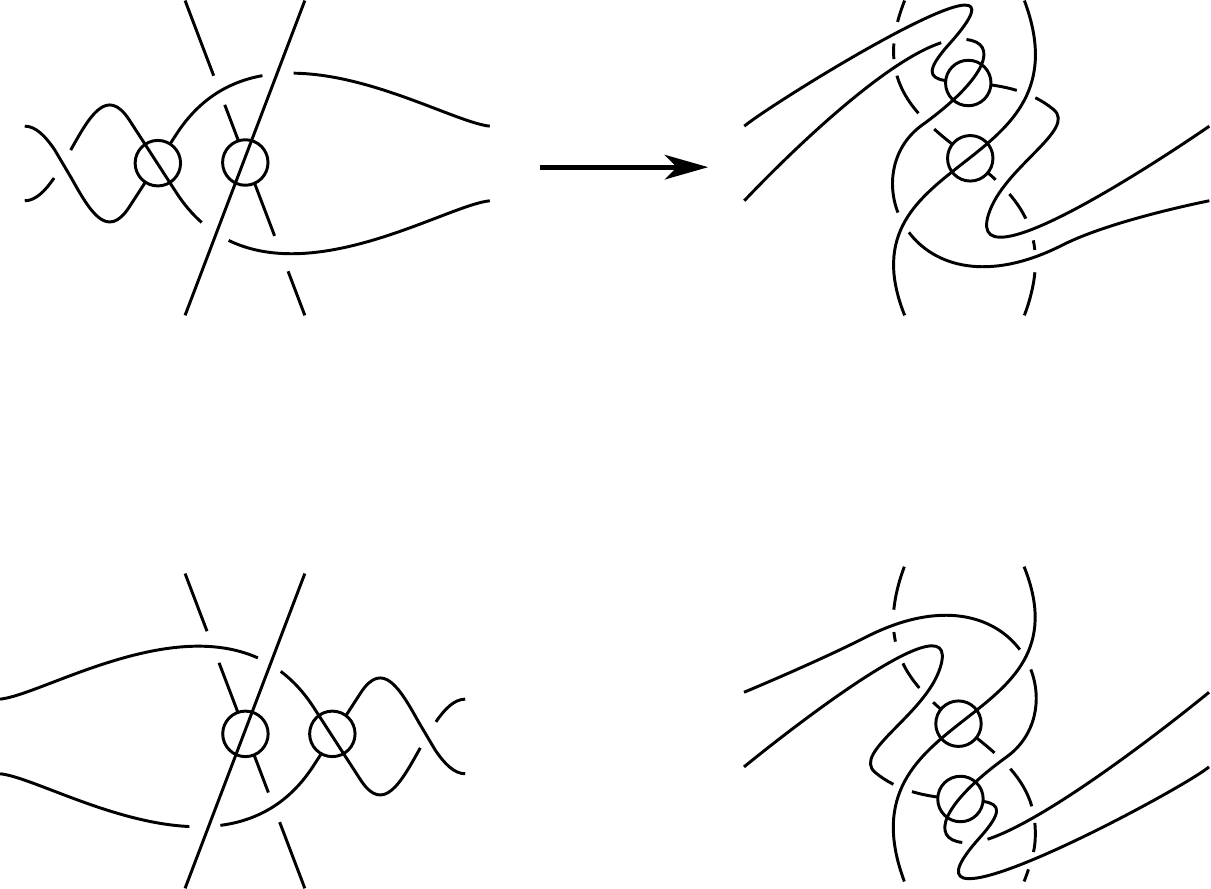}}%
    \put(0.44665803,0.62268177){\color[rgb]{0,0,0}\makebox(0,0)[lt]{\lineheight{1.25}\smash{\begin{tabular}[t]{l}isotopy\end{tabular}}}}%
    \put(0,0){\includegraphics[width=\unitlength,page=2]{intswimalt.pdf}}%
    \put(0.44665803,0.15080329){\color[rgb]{0,0,0}\makebox(0,0)[lt]{\lineheight{1.25}\smash{\begin{tabular}[t]{l}isotopy\end{tabular}}}}%
    \put(0,0){\includegraphics[width=\unitlength,page=3]{intswimalt.pdf}}%
    \put(0.88430556,0.38391431){\color[rgb]{0,0,0}\makebox(0,0)[t]{\lineheight{1.25}\smash{\begin{tabular}[t]{c}intersection\\swim\end{tabular}}}}%
  \end{picture}%
\endgroup%

    \caption{We achieve an alternate version of the intersection swim of Figure~\ref{fig:intswim}, in which one marking and one crossing are changed, via isotopy and intersection swim.}
    \label{fig:intswimalt}
\end{figure}

\begin{figure}
    \centering
    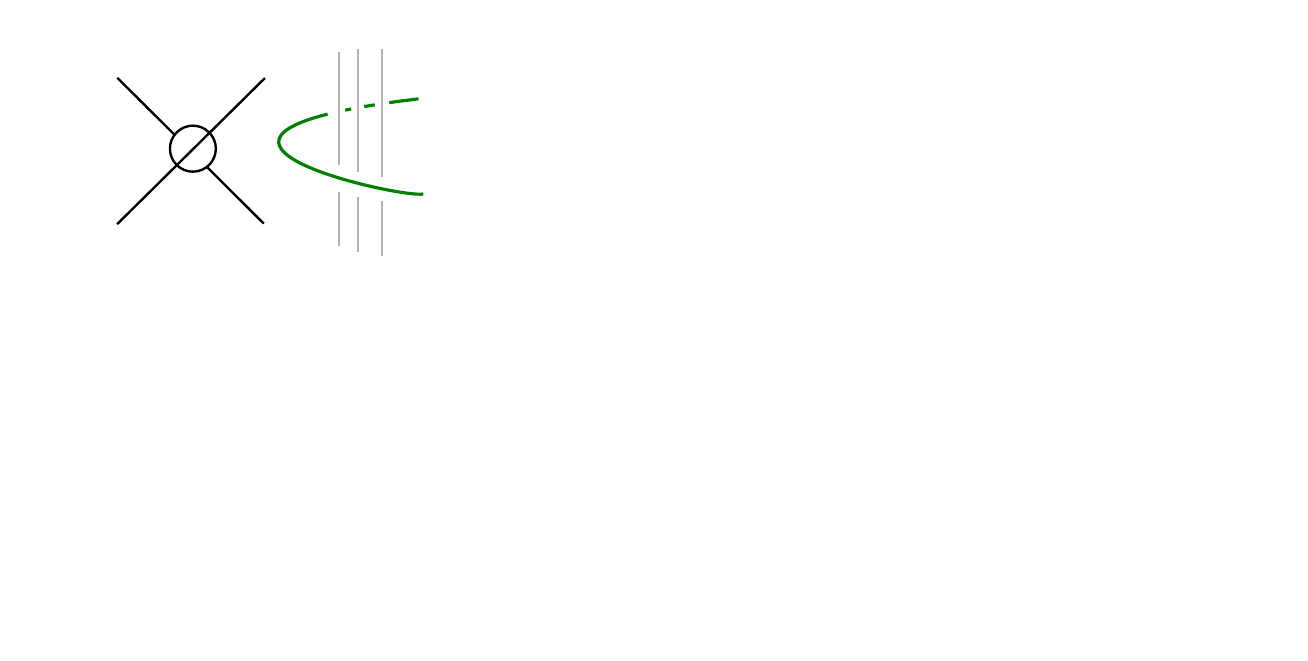
    \caption{We achieve an intersection/2--handle slide by performing $\star$ and singular band moves.}
    \label{fig:int2handleslide}
\end{figure}

\begin{figure}
    \centering
    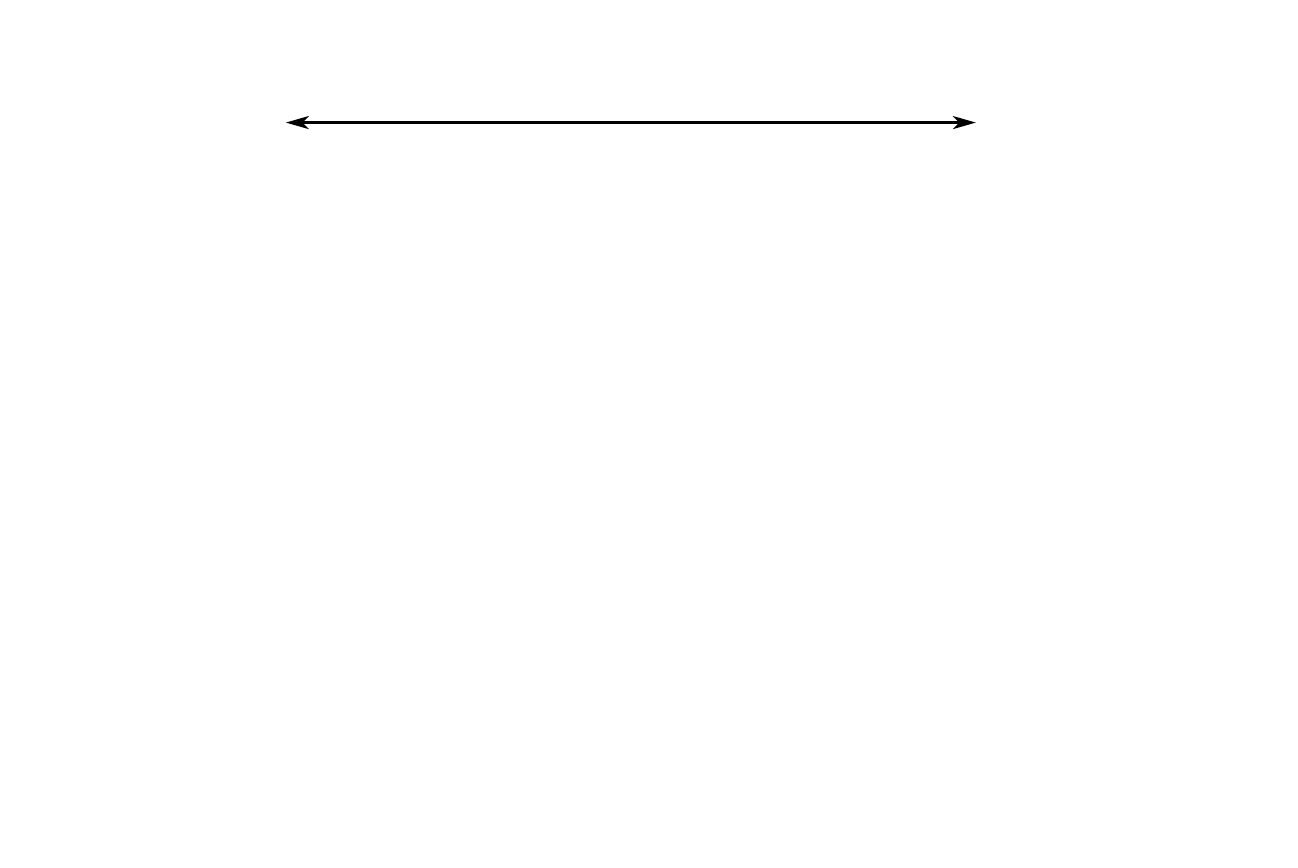
    \caption{We achieve an intersection/2--handle swim by performing singular band moves}
    \label{fig:int2handleswim}
\end{figure}

\begin{remark}
While the length of the list in Definition~\ref{def:singbandmove} may seem unwieldy, there is a general principle at play: singular band moves allow us to isotope a singular banded unlink $(L,B)$ within $\K$; or to push any vertex in $L$ or band in $B$ slightly into the past or future, do further isotopy there, and then push the vertex or band back into the present. In practice when using these diagrams, we do not explicitly break a described isotopy into a sequence of the moves of Definition~\ref{def:singbandmove}, just as how in practice one does not typically break an isotopy of a knot explicitly into a sequence of Reidemeister moves.
\end{remark}

\subsection{Ascending/descending manifolds and 0-- and 1--standard surfaces}\label{sec:0and1std}
So far, we have only used singular banded unlink diagrams to describe realizing surfaces, which are incredibly non-generic. One goal of this paper is to use singular banded unlink diagrams to describe any self-transverse immersed surface $\Sigma$. In Lemma~\ref{thm:realizingsurface}, we showed that any such $\Sigma$ is isotopic to a realizing surface. However, it is not obvious that any two realizing surfaces isotopic to $\Sigma$ have singular banded unlink diagrams that are related by singular band moves. In order to prove this, we must first restrict ourselves to understanding surfaces that intersect the ascending and descending manifolds of critical points of $h$ in prescribed ways, but yet are still more generic than realizing surfaces.

We will now consider not only the ascending/descending manifolds of critical points of $h$, but also the ascending and descending manifolds of critical points of the restricted Morse function $h|_{\Sigma}$.  From now on, fix a gradient-like vector field $\nabla h$ for the Morse function $h:X\to\R$, and let $Z$ denote $X^4\setminus\nu(\Sigma)$.

In order to obtain a gradient-like vector field on $\Sigma$ itself, we choose a splitting $TX|_\Sigma = T\Sigma \oplus N$ and let $\proj_{T{\Sigma}} : TX|_\Sigma \to T\Sigma$ be the associated bundle projection.  We can assume that the splitting is chosen so that $\proj_{T{\Sigma}}(\nabla h)|_{\Sigma}$ is a gradient-like vector field for $h|_{\Sigma}$ on $\Sigma$, which we denote by $\nabla (h|_{\Sigma})$. Note that this is actually {\emph{not}} a vector field on the immersed surface $\Sigma$ (although we could pull it back to a vector field on the abstract surface $F$), since there are two associated vectors at each point of self-intersection of $\Sigma$ (the projections of $\nabla h$ onto the tangent planes of each local sheet) -- however, we think that the language ``gradient-like vector field" is not confusing in this context.

\begin{definition}
The {\emph{ascending manifold of $p$ with respect to $\nabla h$}} consists of all points in $X$ traced out by flowing the ascending manifold of $p$ in $\Sigma$ (with respect to $\nabla (h|_\Sigma)$) along the vector field $\nabla h$. In other words, the ascending manifold of $p$ with respect to $\nabla h$ consists of all points in $X$ that lie above $p$, or that lie above a point in $\Sigma$ that lies above $p$.

Similarly, the {\emph{descending manifold of $p$ with respect to $\nabla h$}} consists of all points in $X$ traced out by flowing the descending manifold of $p$ in $\Sigma$ (with respect to $\nabla (h|_\Sigma)$) along the vector field $-\nabla h$. In other words, the descending manifold of $p$ with respect to $\nabla h$ consists of all points in $X$ that lie below $p$, or that lie below a point in $\Sigma$ that lies below $p$.
\end{definition}

Now let $p$ be a self-intersection of $\Sigma$. We will defined ascending and descending manifolds for $p$ under $\nabla h$ similar to how we defined them for critical points of $h|_\Sigma$.  To do this, note that there are two flow lines of $\nabla (h|_{\Sigma})$ in $\Sigma$ starting at $p$: one on each sheet of the self-intersection.

\begin{definition}
 The {\emph{ascending manifold of $p$ with respect to $\nabla h$}} consists of all points in $X$ traced out by flowing the two flow lines in $\Sigma$ starting at $p$ (with respect to $\nabla (h|_\Sigma)$) along the vector field $\nabla h$. In other words, the ascending manifold of $p$ with respect to $\nabla h$ again consists of all points that lie above $p$, or that lie above a point in $\Sigma$ that lies above $p$.

 Similarly, the {\emph{descending manifold of $p$ with respect to $\nabla h$}} consists of all points in $X$ traced out by flowing the two flow lines in $\Sigma$ starting at $p$ (with respect to $-\nabla (h|_\Sigma)$) along the vector field $-\nabla h$. In other words, the descending manifold of $p$ with respect to $\nabla h$ again consists of all points that lie below $p$, or that lie below a point in $\Sigma$ that lies below $p$.
\end{definition}

In what follows we will often refer to the ascending or descending manifolds of critical points of $h|_{\Sigma}$ or of self-intersections of $\Sigma$. Unless we specify otherwise, assume that this always refers to the corresponding manifolds in $X$ with respect to $\nabla h$ as defined above, rather than ascending or descending manifolds in $\Sigma$ with respect to $\nabla (h|_{\Sigma})$.

\subsubsection{1--standard surfaces}\label{subsec:1std}

Suppose that $\Sigma$ is a self-transverse immersed surface in $X$.  The following definition will be important as we consider 1--parameter families of immersed surfaces:

\begin{definition}\label{def1std}
We say that $\Sigma$ is {\emph{1--standard}} 
if the following are true:
\begin{enumerate}[(1)]
    \item\label{1std1} The surface $\Sigma$ is disjoint from the critical points of $h$.
    \item\label{1std2} The restriction $h|_{\Sigma}$ is Morse except for possibly at most one birth/death degeneracy, i.e.\ a point of $\Sigma$ about which $h|_\Sigma$ can be represented as $h|_\Sigma (x,y) = x^2 - y^3$ in some local coordinates on $\Sigma$.
    \item\label{1std3} For $k\ge n+1$, the descending manifolds of index $n$ critical points of $h$ and index $(n-1)$ critical points of $h|_{\Sigma}$ are disjoint from the ascending manifolds of index $k$ critical points of $h$ and index $(k-1)$ critical points of $h|_{\Sigma}$. 
    Moreover, self-intersections of $\Sigma$ are disjoint from the ascending manifolds of index $3$ critical points of $h$ and descending manifolds of index $1$ critical points of $h$.    In other words, we ask for $n$--dimensional descending manifolds to be disjoint from $(4-n-1)$--dimensional ascending manifolds.
    \end{enumerate}
    \end{definition}
    
\begin{remark}\label{1stdrem}

Definition~\ref{def1std} is essentially a list of all ascending/descending manifold pairs that we expect to be disjoint in a 1--parameter family of immersed surfaces by dimensional considerations, as explained in Proposition~\ref{1stdfamgeneric}. This motivates the name ``1--standard."

\end{remark}

\begin{proposition}\label{1stdfamgeneric}
Let $\Sigma_t$ be an isotopy between 1--standard surfaces $\Sigma_0$ and $\Sigma_1$. After an arbitrarily small perturbation of the isotopy $\Sigma_t$, we can assume that $\Sigma_t$ is 1--standard for all $t$.
\end{proposition}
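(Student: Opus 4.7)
The strategy is a standard transversality argument organized according to the three conditions in Definition~\ref{def1std}. I would treat the isotopy as a smooth map $\Phi \colon F \times [0,1] \to X$, where $F$ is the abstract domain surface, with $\Phi(\cdot,t)$ a self-transverse immersion whose image is $\Sigma_t$, and perturb $\Phi$ rel boundary. The key observation is that each condition in Definition~\ref{def1std} either cuts out an open subset of the space of immersions, or allows precisely the codimension-1 degeneracy that appears generically in 1-parameter families.

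For condition~\eqref{1std1}, $\mathrm{Crit}(h)$ is a finite set, so $\mathrm{Crit}(h) \times [0,1]$ is $1$-dimensional inside the $5$-manifold $X \times [0,1]$, while the image of $\Phi$ is $3$-dimensional. Since $1 + 3 < 5$, a small perturbation makes them disjoint. For condition~\eqref{1std2}, I would invoke one-parameter Cerf theory applied to the family of functions $h \circ \Phi(\cdot,t)$ on $F$: after a small perturbation, each such function is Morse except at finitely many isolated times where exactly one birth/death degeneracy occurs.

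Condition~\eqref{1std3} reduces to dimension counting. Using the definitions unwound earlier in the section, each required disjointness is between a descending stratum of dimension $n$ and an ascending stratum of dimension $4-k$ in $X$, with $k \geq n+1$, so the dimensional sum is $n + (4-k) \leq 3$. Sweeping these strata through the isotopy adds $1$ to each dimension, but the ambient also enlarges to the $5$-manifold $X \times [0,1]$, so the total dimension remains strictly below the ambient dimension, hence generic emptiness. The condition for self-intersections is similar: self-intersections of $\Sigma_t$ trace a $1$-dimensional subset of $X \times [0,1]$, while the index-$1$/index-$3$ manifolds of $h$ are $1$-dimensional, and $1 + 1 < 5$. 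A standard transversality argument applied to the evaluation maps for $\Phi$ together with the fixed gradient flow of $\nabla h$ (and the induced family of gradient-like fields $\nabla(h|_{\Sigma_t})$) delivers these disjointnesses simultaneously after an arbitrarily small perturbation.

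The main obstacle lies at the isolated birth/death times of $h|_{\Sigma_t}$: at such a time $t_0$, two critical points of $h|_{\Sigma_t}$ collide and their ascending/descending manifolds in $X$ undergo a local change, so one must check that the disjointness conditions of~\eqref{1std3} persist through the event. This amounts to observing that the stratum in the space of $1$-parameter families of immersions where ``a birth/death critical point's ascending or descending manifold meets a forbidden stratum'' is itself of codimension at least $2$ in the parameter, and can therefore be perturbed away. Analogously, all the higher-codimension coincidences we must exclude (two birth/deaths at the same time, a birth/death occurring simultaneously with a failure of~\eqref{1std1} or~\eqref{1std3}, a self-intersection passing through a critical manifold of $h$ at a birth/death time, etc.) have codimension at least $2$ in one-parameter families and so are removable. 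What remains is a perturbed isotopy along which the $1$-standard conditions hold at every $t$.
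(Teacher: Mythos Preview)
Your overall strategy matches the paper's: treat the three conditions of Definition~\ref{def1std} separately via (1) a naive dimension count, (2) Cerf theory, and (3) a flow-line dimension count. However, there is one genuine omission and one imprecision.

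\textbf{The omission.} You perturb the map $\Phi\colon F\times[0,1]\to X$ rel boundary, obtaining a new family of self-transverse immersions. But the statement concerns an \emph{ambient} isotopy of the images $\Sigma_t$ (see the paper's convention in Section~\ref{sec:markedlinks}), and you never verify that the perturbed family of immersions is again realized by an ambient isotopy. For embedded submanifolds this is the isotopy extension theorem, but here the surfaces have transverse self-intersections, so the usual theorem does not apply directly. The paper confronts this head-on: after perturbing the path $f_t$ in the space of immersions to $g_t$, it constructs product-ball neighborhoods $B_i = g_t(D_{x_i})\times g_t(D_{y_i})$ around the double points, extends the isotopy of the image to an isotopy of $\Sigma_0\cup B_1\cup\cdots\cup B_n$, and then applies ordinary isotopy extension to the \emph{embedded} complement $\Sigma_0\cap(X\setminus\mathrm{int}(\cup B_i))$. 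This step occupies most of the paper's proof and is the main content you are missing.

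\textbf{The imprecision.} In your treatment of condition~\eqref{1std3}, you add the raw dimensions $n+(4-k)$ and compare with the ambient dimension. But ascending and descending manifolds are both tangent to $\nabla h$, so they never meet transversely in the naive sense; the correct count is of unparametrized flow lines, giving moduli dimension $(n-1)+((4-k)-1)-(4-1)=n-k-1\le -2$. This is what ensures generic emptiness even after adding one parameter. Your conclusion is right, but the count as written would only give expected dimension $\le -1$ in the family, which is not enough.
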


\begin{proof}
We prove that after a small perturbation, $\Sigma_t$ satisfies each property of Definition~\ref{def1std} for all $t$.
\begin{enumerate}[(1)]
\item The critical point set of $h$ in $X\times I$ is $1$--dimensional, while the isotopy $\Sigma_t$ in $X\times I$ is $3$--dimensional. Generically, we do not expect $\Sigma_t$ to intersect a critical point of $h$ for any $t$.
\item This follows from Cerf's filtration on the space of surfaces (see, e.g.\ \cite[Chapter 1 $\S$2]{hatcher}). This is a filtration on the space $C(F)$ of all smooth maps $F\to X^4$, for $F$ a surface. The codimension--0 stratum consists of all maps $f:F\to X^4$ with $h|_{f(F)}$ Morse with critical points at distinct heights. The codimension--1 stratum  includes $f$ if either of the following is true:
\begin{itemize}
    \item[$\circ$] The restriction $h|_{f(F)}$ is Morse with exactly two critical points at the same height, but all other critical points sit at distinct heights.
    \item[$\circ$]  The restriction $h|_{f(F)}$ is Morse except for one birth or death degeneracy. This degeneracy and all critical points are at distinct heights.
\end{itemize}

Suppose $\Sigma_0$ has $n$ points of self-intersection. Fix $2n$ points $x_1$, $y_2$,$\ldots$, $x_n$, $y_n$ in $F$ and choose $f_t:F\to X$ so that $f_t(F)=\Sigma_t$ and $f_t(x_i)=f_t(y_i)$ for all $i$ and $t$. Now a small perturbation of the path $f_t$ from $f_0$ to $f_1$ in $C(F)$ yields a path $g_t$ that is completely contained in the codimension--0 and codimension--1 strata of Cerf's filtration with $g_0=f_0,g_1=f_1$. Since $g_t$ lies in these strata, $g_t(F)$ has the property~\ref{1std2} of Definition~\ref{def1std} for all $t$.  Moreover, if the perturbation is sufficiently small we may assume that $g_t(F)$ is an immersed surface with $n$ transverse double points for all $t$, all of which are contained in a fixed small tubular neighborhood of $\Sigma_t$.

While $g_t$ is a homotopy from $g_0$ to $g_1$, we may view its image as an isotopy between the singular submanifolds $\Sigma_0$ and $\Sigma_1$ in $X$. We must now check that this isotopy extends to an ambient isotopy of $X$. That is, while we have argued that we may perturb $f_t$ to achieve property~\ref{1std2}, we must explain why this perturbation may be achieved by perturbing the ambient isotopy from $\Sigma_0$ to $\Sigma_1$, since there is a distinction between the immersions $f_t$ and their images $f_t(F)=\Sigma_t$. This is relatively standard (and indeed stated without proof in e.g.\ \cite{fq}): choose small disjoint closed disks $D_{x_i}, D_{y_i}$ ($i=1,\ldots, n$) in $F$, centered at $x_i$ and $y_i$ respectively. We can fix a family of coordinates on a closed tubular neighborhood of $g_t(F)$ near the self-intersections so that centered about $g_t(x_i)=g_t(y_i)$, we have a closed ball $B_i=g_t(D_{x_i})\times g_t(D_{y_i})$ intersecting $g_t(F)$ in $$\frac{(g_t(D_{x_i})\times \{0\})\cup(\{0\}\cup g_t(D_{y_i})}{(g_t(x_i)\times 0)\sim (0\times g_t(y_i))}.$$ 

Now we may extend the isotopy $\Sigma_0\to \Sigma_1$ that is the image of $g_t$ to an isotopy $\phi_t$ of $\Sigma_0\cup B_1\cup\cdots\cup B_n$ by specifying that $\phi_t(g_0(a),g_0(b))=(g_t(a),g_t(b))$ for all $a\in D_{x_i}, b\in D_{y_i}$, since $B_i=g_0(D_{x_i})\times g_0(D_{y_i})$. Then $\phi_t(B_i)=g_t(D_{x_i})\times g_t(D_{y_i})$. Since the $B_i$'s are balls, the isotopy $\phi_t|_{\cup_i B_i}$ extends to an ambient isotopy $\psi_t$ of $X$. The composition $\psi_t^{-1}\phi_t$ then fixes $B_i$ pointwise for each $i$.

Now since $\Sigma_0\cap (X\setminus\text{int}(B_1\sqcup\cdots\sqcup B_n))$ is an embedded submanifold (whose boundary is not tangent to the boundary of $X\setminus\text{int}(B_1\cup\cdots\cup B_n)$; i.e.,\ $\Sigma_0\cap (X\setminus\text{int}(B_1\sqcup\cdots\sqcup B_n))$ is neat in the sense of \cite{hirschbook}) whose boundary is fixed by $\psi_t^{-1}\phi_t$, we may use usual isotopy extension 
to extend $\psi_t^{-1}\phi_t$ to an ambient isotopy. Then since $\psi_t^{-1}$ is an ambient isotopy (and hence a diffeotopy starting at the identity map), we conclude that $\phi_t$ extends to a diffeotopy starting at the identity map, i.e. an ambient isotopy.

We conclude that our original ambient isotopy from $\Sigma_0$ to $\Sigma_1$ may be perturbed to another ambient isotopy of $\Sigma_0$ to $\Sigma_1$ which satisfies property~\ref{1std2} of 1--standardness at all times.

\item 
 Note that both ascending and descending manifolds are parallel to $\nabla h$, so rather than counting transverse intersections, we count the dimension of the space of line intersections (parallel to $\nabla h$) of these ascending and descending manifolds. (In other words, we count the dimension of the moduli space of unparametrized flow lines of $-\nabla h$ from one critical or intersection point to another.) 
An $n$--dimensional descending manifold and a $(4-k)$--dimensional ascending manifold thus have expected dimension $$(n-1)+((4-k)-1)-(4-1)=n-k-1$$ as a space of lines. 
For $k\ge n+1$, this expected dimension is at most $-2$, so we conclude that we may perturb $\Sigma_t$ (which by the previous item we see may be obtained by perturbing a path of immersions $f_t$ in $C(F)$) to achieve property~\ref{1std3}.
\end{enumerate}
\end{proof}

\subsubsection{0--standard surfaces}\label{subsec:0std} In Remark~\ref{1stdrem}, we explained that the definition of 1--standardness comes from studying generic 1--parameter families. That is, the conditions in Definition~\ref{def1std} are generically true for 1--parameter families of surfaces.  We now define a slightly more restrictive condition on the surfaces we study, which we expect to be violated a finite number of times in a generic 1--parameter family.

\begin{definition}\label{def0std}
We say that $\Sigma$ is {\emph{0--standard}} 
if it is 1--standard and the following are true:
\begin{enumerate}[(1)]
    \item\label{0std1} The restriction $h|_{\Sigma}$ is Morse.
    \item\label{0std1} Whenever $p$ and $q$ are either index $2$ critical points of $h$, index 1 critical points of $h|_{\Sigma}$, or self-intersections of $\Sigma$ (not necessarily of the same type), and $p \neq q$, the descending manifold of $p$ is disjoint from the ascending manifold of $q$.
 In short: 2--dimensional descending manifolds are disjoint from 2--dimensional ascending manifolds.
    \end{enumerate}
    \end{definition}

    \begin{remark}
    Roughly speaking, a surface $\Sigma$ is 0--standard if its index 1 critical points (viewed as bands) and self-intersections do not lie above each other, or above or below any index 2 critical points of $h$. These conditions are what cause a projection of $\Sigma$ to a singular banded unlink diagram to not be well-defined, leading to the existence of singular band moves. The failure of 0--standardness corresponds exactly to these ``bad" projections. 
    \end{remark}

\begin{proposition}\label{0stdfamgeneric}
Let $\Sigma_t$ be an isotopy between 0--standard surfaces $\Sigma_0$ and $\Sigma_1$. After an arbitrarily small perturbation of the isotopy $\Sigma_t$, it is true that $\Sigma_t$ is 1--standard for all $t$, and 0--standard for all but finitely many $t$. 
\end{proposition}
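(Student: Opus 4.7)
The plan is to first invoke Proposition~\ref{1stdfamgeneric} to perturb the isotopy so that $\Sigma_t$ is 1--standard for all $t$, and then to show that a further arbitrarily small perturbation ensures the two extra conditions of 0--standardness fail at only finitely many times. Throughout, following the strategy of the proof of Proposition~\ref{1stdfamgeneric}, the isotopy $\Sigma_t$ is realized as (the image of) a path $f_t$ in the space $C(F)$ of smooth maps $F\to X$ which has been fixed on $2n$ marked points so as to carry the $n$ self-intersections, and perturbations of $\Sigma_t$ correspond to perturbations of $f_t$.

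For condition~\ref{0std1} of Definition~\ref{def0std}, that $h|_{\Sigma_t}$ is Morse, I appeal to Cerf's filtration on $C(F)$: the maps $f$ for which $h\circ f$ fails to be Morse because of a birth/death degeneracy constitute (part of) the codimension--1 stratum. A generic 1--parameter path meets this stratum transversely in a discrete (hence finite, by compactness of $[0,1]$) set of times, so after a small perturbation $h|_{\Sigma_t}$ is Morse for all but finitely many $t$, and at each such exceptional time the non-Morse behavior is a single birth/death degeneracy (as required by 1--standardness).

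For the disjointness condition~\ref{0std1} of Definition~\ref{def0std}, the relevant points are index 2 critical points of $h$ (which are fixed during the isotopy), index 1 critical points of $h|_{\Sigma_t}$, and self-intersections of $\Sigma_t$; in each case the ascending and descending manifolds with respect to $\nabla h$ are 2--dimensional. As in the proof of Proposition~\ref{1stdfamgeneric}, I count the dimension of the moduli space of unparametrized flow lines of $-\nabla h$ from such a point $p$ to a distinct such point $q$: this is $(2-1)+(2-1)-(4-1)=-1$ as an expected dimension in $X$. In the 1--parameter family parametrized by $t\in[0,1]$, the corresponding parametric moduli space therefore has expected dimension $0$, so after a generic perturbation it consists of finitely many points. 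The case in which both $p$ and $q$ are index~2 critical points of $h$ is independent of $t$; it holds at $t=0$ by hypothesis (since $\Sigma_0$ is 0--standard) and is automatically preserved.

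The main obstacle is verifying that all of these transversality conditions can be achieved by a single perturbation that simultaneously preserves the conclusions of Proposition~\ref{1stdfamgeneric} (in particular, keeps the endpoints $\Sigma_0$ and $\Sigma_1$ fixed, keeps $\Sigma_t$ self-transverse, and maintains the dimension estimates of 1--standardness). This is handled by performing the perturbations within the parameterized space of maps described in the proof of Proposition~\ref{1stdfamgeneric}: one applies a relative version of Sard's theorem to the parameterized evaluation maps whose zero sets detect the forbidden coincidences, making the perturbations arbitrarily small and supported away from $t=0,1$ (where 0--standardness is already satisfied). The isotopy-extension argument at the end of the proof of Proposition~\ref{1stdfamgeneric} then promotes the homotopy of maps to an ambient isotopy of $X$, completing the proof.
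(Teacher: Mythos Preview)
Your proposal is correct and follows essentially the same approach as the paper: invoke Proposition~\ref{1stdfamgeneric} for 1--standardness, appeal to Cerf for the Morse condition, and use the expected-dimension count $(2-1)+(2-1)-(4-1)=-1$ for the complementary ascending/descending manifold condition. Your treatment is in fact more explicit than the paper's, which simply cites Cerf and states the dimension count without the additional remarks about the fixed index~2 critical points of $h$ or the care needed to combine the perturbations.
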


\begin{proof}
It follows from Proposition~\ref{1stdfamgeneric} that 1--parameter families $\Sigma_t$ of surfaces are generically 1--standard for all $t$. We now consider the conditions of Definition~\ref{def0std} separately.

\begin{enumerate}[(1)]
\item This is well-known by Cerf (see, e.g.\ \cite[Chapter 1 $\S$2]{hatcher}).
\item A pair of complementary-dimension descending and ascending manifolds meet with expected dimension $-1$ (as a space of lines parallel to $\nabla h$).  Therefore, Property~\ref{0std1} is generically true at all but finitely many times during a 1--parameter family of surfaces. 
\end{enumerate}
\end{proof}

\begin{proposition}\label{prop:canonical}
Suppose $\Sigma$ is 0--standard. Then there is a singular banded unlink diagram $\D$ determined by $\Sigma$ up to isotopy and slides over the 1--handle circles $L_1$, so that $\Sigma$ is ambiently isotopic to $\Sigma(\D)$.
\end{proposition}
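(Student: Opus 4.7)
The plan is to read off the diagram $\D$ directly from $\Sigma$ using the gradient-like flow $\nabla h$, and then verify that the result is canonical because the 0--standard condition eliminates all potential ambiguities. First, the Kirby diagram $\K = (L_1, L_2)$ is determined by $(h, \nabla h)$ up to isotopy and slides over $L_1$ (by Laudenbach--Poenaru), so I can focus on extracting $(L,B)$ from $\Sigma$ and take $\K$ as given.

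Since $\Sigma$ is 0--standard, $h|_\Sigma$ is Morse, and the 2--dimensional descending and ascending manifolds in $X$ associated to index 2 critical points of $h$, index 1 critical points of $h|_\Sigma$, and self-intersections of $\Sigma$ are pairwise disjoint (except at the base points themselves). Using these disjoint flow manifolds as canonical ``rails,'' I would isotope $\Sigma$ along $\pm\nabla h$ so that its index 0 critical points of $h|_\Sigma$ sit in $h^{-1}((0,1/2])$, its index 2 critical points of $h|_\Sigma$ sit in $h^{-1}([5/2,4))$, and its index 1 critical points together with its self-intersections all sit in $h^{-1}(3/2)$. Between these heights $\Sigma$ becomes a union of flow lines. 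The 0--standard hypothesis is precisely what guarantees that these flows do not collide with each other or with the descending/ascending manifolds of the index 1 and 2 critical points of $h$. Now I would define $L\subset h^{-1}(3/2)$ to be the resulting singular link, where each self-intersection of $\Sigma$ becomes a 4--valent vertex marked by the local picture of the two sheets, and define $B$ to consist of the bands obtained by taking a small neighborhood of each index 1 critical point of $h|_\Sigma$ inside $\Sigma$.

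The triple $(\K, L, B)$ is then a singular banded unlink diagram: $L^-$ is (after projection via $\rho_{3/2,1/2}$) the boundary in $h^{-1}(1/2)$ of the minimum disks of $\Sigma$, hence bounds an unlink of disks below the 2--handles, and symmetrically $L^+_B$ bounds disks above the 2--handles. By construction $\Sigma$ agrees on the nose with the realizing surface $\Sigma(\K,L,B)$ described in Section~\ref{subsec:SBUDSandrealizingsurfaces}, up to smoothing of corners and choice of the parameter $\varepsilon$, so Proposition~\ref{prop:uniquerealizingsurfaces} finishes the identification.

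For well-definedness, observe that the flow is determined by $\nabla h$ and the 0--standard condition supplies all the disjointness needed to perform the above flowing without any choices being forced. Any ambiguity in placing multiple self-intersections or index 1 critical points at the common slice $h^{-1}(3/2)$ is absorbed into isotopy of $L\cup B$ inside $E(\K)$, which is one of the permitted moves. The main obstacle is the careful bookkeeping in showing that the ``rails'' in 0--standardness remain disjoint throughout the isotopy that brings $\Sigma$ into the form just described; this reduces to the transversality arguments for 2--dimensional flow manifolds in the 4--manifold $X$ and is straightforward once the 0--standard hypothesis is unpacked, but it is the step where all of Definition~\ref{def0std} is actually used.
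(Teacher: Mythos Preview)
Your proposal is correct and follows essentially the same approach as the paper: both use the gradient-like flow to isotope the critical points of $h|_\Sigma$ and the self-intersections of $\Sigma$ to the appropriate levels, read off $(L,B)$ from the resulting configuration in $h^{-1}(3/2)$, and invoke the 0--standard hypothesis to ensure that the flow lines involved are pairwise disjoint and miss the descending manifolds of the index~2 critical points of $h$. The paper phrases this in terms of an explicit 1--skeleton $S\subset\Sigma$ (with 1--cells along the descending manifolds of index~1 critical points of $h|_\Sigma$ and along pairs of flow lines through self-intersections), whereas you speak of ``rails,'' but these are the same objects; your well-definedness argument and the paper's both conclude by observing that the only residual ambiguity is isotopy in $E(\K)$ together with slides over $L_1$.
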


\begin{remark}
In Proposition~\ref{prop:canonical}, note that slides of $L$ and $B$ over $L_1$ correspond to horizontal isotopy near $h^{-1}(3/2)$. That is, if we instead defined a Kirby diagram to constitute a framed link $L_2$ of 2--handle attaching circles in $\#_k S^2\times S^1$ (rather than drawing 1-handles in $S^3$ as dotted circles), Proposition~\ref{prop:canonical} would say that if $\Sigma$ is 0--standard, then the singular banded unlink $\D$ is canonical up to isotopy.
\end{remark}

\begin{proof}[Proof of Proposition~\ref{prop:canonical}]

Since $\Sigma$ is 0--standard (and hence 1--standard), we may vertically isotope $\Sigma$ so that the minima of $h|_{\Sigma}$ lie below $h^{-1}(3/2)$, the maxima of $h|_{\Sigma}$ lie above $h^{-1}(5/2)$, and the self-intersections/bands of $\Sigma$ lie in $h^{-1}((3/2,5/2))$.

Since $\Sigma$ is 0--standard, the descending manifolds (using $\nabla (h|_{\Sigma})$) of index 1 critical points of $h|_{\Sigma}$ end at index 0 points of $h|_{\Sigma}$ without meeting any index 1 points or self-intersections of $\Sigma$. Similarly, flow lines of $-\nabla (h|_{\Sigma})$ originating at self-intersections of $\Sigma$ also end at index 0 points of $h|_{\Sigma}$ without meeting any other index 1 critical points or self-intersections of $\Sigma$.

Now let $S$ be the 1--skeleton of $\Sigma$ determined by $\nabla h$, i.e.\ the 1--complex with:
\begin{enumerate}[1)]
    \item 0--cells at index 0 points of $h|_{\Sigma}$,
    \item 1--cells along the descending manifolds of index 1 critical point of $h|_{\Sigma}$, 
    \item Additional 1--cells consisting of pairs of flow lines of $-\nabla h|_\Sigma$ glued together at self-intersections of $\Sigma$.
\end{enumerate}

Isotope $\Sigma$ vertically so that the index 1 critical points of $h|_{\Sigma}$ and self-intersections of $\Sigma$ lie disjointly in $h^{-1}(3/2)$. (Here we are implicitly using the fact that since $\Sigma$ is 0--standard, these points do not lie directly above one another nor above index 2 critical points of $h$.)  Flatten $\Sigma$ near $h^{-1}(3/2)$ to turn index 1 points of $h|_{\Sigma}$ into bands whose cores are contained in 1--cells of $S$.

Since $\Sigma$ is 0--standard, the bands and self-intersections of $\Sigma\cap h^{-1}(3/2)$ are disjoint from the descending manifolds of index 2 critical points of $h$, i.e.\ they are disjoint from the attaching circles $L_2$ of the $2$--handles in $\K$.

Then $\Sigma\cap h^{-1}(3/2)$ is a singular banded link $(L,B)$, where $L^-$ is isotopic to $\Sigma\cap h^{-1}(3/2-\varepsilon)$, and $L^+_B$ is isotopic to $\Sigma\cap h^{-1}(3/2+\varepsilon)$. We conclude that $(L,B)$ is well-defined up to isotopy in $h^{-1}(3/2)\setminus($descending manifolds of index 2 critical points of $h$). Therefore, $(\K,L,B)$ is well-defined up to slides of $L$ and $B$ over the dotted circles $L_1$ of $\K$. This completes the proof of Proposition~\ref{prop:canonical}.
\end{proof}

\begin{corollary}\label{0stdisotopy}
Let $\Sigma_0$ and $\Sigma_1$ be 0--standard surfaces. 
Suppose there is an isotopy $\Sigma_t$ from $\Sigma_0$ to $\Sigma_1$ that is 0--standard for all $t$. Then $\D_0$ and $\D_1$ are related by isotopy in $E(\K)$ and slides over $L_1$.
\end{corollary}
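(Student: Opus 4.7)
The plan is to apply Proposition~\ref{prop:canonical} in a parameter-dependent fashion to the family $\{\Sigma_t\}_{t\in[0,1]}$. Since each $\Sigma_t$ is 0--standard, Proposition~\ref{prop:canonical} produces a singular banded unlink diagram $\D_t=(\K,L_t,B_t)$, well-defined up to isotopy in $E(\K)$ and slides over $L_1$. The goal is to show that the assignment $t\mapsto \D_t$ can be carried out continuously, producing a 1--parameter family of diagrams connecting $\D_0$ to $\D_1$.

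First I would revisit the construction of $\D$ from $\Sigma$ in the proof of Proposition~\ref{prop:canonical}: one vertically isotopes $\Sigma$ so that its index 0 critical points of $h|_\Sigma$ lie below $h^{-1}(3/2)$, its index 2 critical points lie above $h^{-1}(5/2)$, and its index 1 critical points and self-intersections lie disjointly in $h^{-1}(3/2)$; then one flattens $\Sigma$ near $h^{-1}(3/2)$ to convert the index 1 critical points of $h|_\Sigma$ into bands and the self-intersections into decorated vertices. Each of these steps depends only on data that varies continuously with $t$, and 0--standardness is preserved by hypothesis, so all the vertical isotopies and flattenings can be chosen to depend smoothly on $t$. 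This produces a smooth 1--parameter family of singular banded links $(L_t,B_t)\subset h^{-1}(3/2)$.

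Next I would verify that the combinatorial type of $\D_t$ is constant in $t$. A change in combinatorial type would require either a birth/death of a critical point of $h|_{\Sigma_t}$ (excluded by condition~\ref{0std1} of Definition~\ref{def0std}), two index 1 critical points or self-intersections coinciding in height along a flow line of $\nabla h$ (excluded by condition~\ref{0std1} since these are exactly the 2--dimensional ascending/descending manifold pairs), or a band/vertex passing through a descending or ascending manifold of an index 2 critical point of $h$ (also excluded for the same reason, as these include the attaching circles $L_2$). So $(L_t,B_t)$ is an ambient isotopy of singular banded links in $h^{-1}(3/2)\setminus L_2$.

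The main issue is the behavior at the 1--handles. Under the identification $h^{-1}(3/2)\cong S^3_0(L_1)$, the isotopy $(L_t,B_t)$ in $h^{-1}(3/2)\setminus L_2$ corresponds to an isotopy in $S^3_0(L_1)\setminus L_2$, and to descend this to an isotopy in $E(\K)=S^3\setminus\nu(L_1\cup L_2)$, one must account for strands and bands that cross the cores of the surgery solid tori bounded by $L_1$. After a small generic perturbation, these crossings are transverse and occur at finitely many times $t_1<\cdots<t_m$; each such crossing corresponds exactly to a slide of a strand of $L_{t}$ or a band of $B_{t}$ over the corresponding dotted component of $L_1$. Restricting $(L_t,B_t)$ to each subinterval $[t_i,t_{i+1}]$ therefore yields an ambient isotopy of singular banded links in $E(\K)$, and concatenating these with the $L_1$--slides at the $t_i$'s gives the desired sequence of moves from $\D_0$ to $\D_1$. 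The remaining ambiguity (the choice of diagram representative at the endpoints, coming from Proposition~\ref{prop:canonical}) is itself absorbed into isotopy in $E(\K)$ and $L_1$--slides.
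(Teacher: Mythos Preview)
Your argument is correct and is essentially what the paper intends: the corollary is stated in the paper without proof, as an immediate consequence of Proposition~\ref{prop:canonical}, and you have simply spelled out why the construction there can be carried out continuously in the parameter $t$ when every $\Sigma_t$ is 0--standard. Your handling of the $L_1$--slides (as the only additional moves arising when passing from isotopy in $h^{-1}(3/2)\setminus L_2$ to isotopy in $E(\K)$) matches the remark following Proposition~\ref{prop:canonical}.
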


\subsection{Conclusion: uniqueness of singular banded unlink diagrams}\label{sec:isotopy}

\subsubsection{Singular band moves and isotopy}  We are now in a position to prove our main results.

\begin{theorem}\label{thm:getmoves}
Let $\Sigma_0,\Sigma_1$ be 0--standard self-transverse immersed surfaces. Suppose there exists an isotopy $\Sigma_t$ so that $\Sigma_t$ is 1--standard for all $t$, and 0--standard for all $t\neq 1/2$.

Set $\D_t:=\D(\Sigma_t)$. Then $\D_0$ and $\D_1$ are related by singular band moves.

\end{theorem}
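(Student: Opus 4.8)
The plan is to peel off the stretches of the isotopy on which $0$--standardness already holds, reduce to a single \emph{elementary} degeneracy at the middle time, and then match that degeneracy against an explicit singular band move (or a short composition of such). First I would fix a small $\delta>0$ and observe that the restrictions of $\Sigma_t$ to $[0,1/2-\delta]$ and to $[1/2+\delta,1]$ are isotopies through $0$--standard surfaces, so by Corollary~\ref{0stdisotopy} the diagrams $\D_0$ and $\D_{1/2-\delta}$ (and likewise $\D_{1/2+\delta}$ and $\D_1$) are related by isotopy in $E(\K)$ and slides over $L_1$, both of which are singular band moves. Hence it suffices to relate $\D_{1/2-\delta}$ and $\D_{1/2+\delta}$. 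Next, using the stratification and transversality arguments behind Propositions~\ref{1stdfamgeneric} and~\ref{0stdfamgeneric} (Cerf's filtration for the Morse condition on $h|_{\Sigma_t}$, and transversality of the flow of $-\nabla h$ for the ascending/descending manifold conditions), I would perturb $\Sigma_t$ rel a neighborhood of $t=1/2\pm\delta$, keeping it $1$--standard throughout and $0$--standard near the endpoints, so that on $(1/2-\delta,1/2+\delta)$ it fails to be $0$--standard at only finitely many times, each of which is elementary. Since $\D_0$ and $\D_1$ are unchanged by this perturbation, it is then enough to treat a single elementary degeneracy.

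By Definitions~\ref{def1std} and~\ref{def0std}, an elementary degeneracy at a time $t_0\in(1/2-\delta,1/2+\delta)$ is one of: (A) a single birth/death degeneracy of $h|_{\Sigma_{t_0}}$ (permitted by $1$--standardness); or (B) a moment at which a $2$--dimensional descending manifold and a $2$--dimensional ascending manifold first share a flow line of $-\nabla h$, where each of the two relevant critical/singular points is an index $2$ critical point of $h$, an index $1$ critical point of $h|_{\Sigma_{t_0}}$ (a band), or a self-intersection of $\Sigma_{t_0}$, and the two are distinct. A flow line between two index $2$ critical points of $h$ cannot occur in case (B), since $h$ and $\nabla h$ (hence all flow lines among critical points of $h$) are fixed throughout the isotopy and $\K$ is a genuine Kirby diagram; and all remaining conditions constituting $1$--standardness hold for every $t$ by hypothesis, so no further degeneracy can appear.

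The hard part is the case analysis of (B). I would first localize: in both (A) and (B) the degenerate locus --- the birth/death point, or the newly shared flow line --- together with the nearby pieces of $\Sigma_{t_0}$ and the flow of $-\nabla h$ carrying them down to $h^{-1}(3/2)$, is a contractible tree-like set and hence has a ball neighborhood; a localization argument of the type used in the proof of Proposition~\ref{1stdfamgeneric} (absorb the behavior outside the ball into an ambient isotopy of $X$, recorded as isotopy in $E(\K)$) reduces us to a local model. Case (A) takes place away from self-intersections, so it is identical to the embedded case treated in \cite{bandpaper}, where it is realized by cup/cap moves together with isotopy. In case (B), the band/band and band/$2$--handle flow lines (in either vertical order) are again exactly the embedded analysis of \cite{bandpaper}: they are realized by band slides, band swims, and slides or swims of bands over $L_2$. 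The genuinely new cases all involve a self-intersection: a self-intersection/band flow line is realized by one of ``sliding a vertex over a band'', ``passing a vertex past the edge of a band'', or ``swimming a band through a vertex'' of Definition~\ref{def:singbandmove} (Figure~\ref{fig:newisomoves}); a self-intersection/self-intersection flow line is realized by the intersection pass or the intersection swim (Figures~\ref{fig:intpass}--\ref{fig:intswimalt}); and a self-intersection/$2$--handle flow line by the intersection/$2$--handle slide or swim (Figures~\ref{fig:int2handleslide} and~\ref{fig:int2handleswim}) --- the last three of these being built from the $\star$--move of Figure~\ref{fig:seungwonmove} together with the basic singular band moves. In each case one verifies that the two local models, just before and just after $t_0$, differ by precisely the stated move, up to isotopy in $E(\K)$.

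The main obstacle, then, is the completeness and correctness of this analysis whenever one of the two strata is a self-intersection: one must check that the moves of Definition~\ref{def:singbandmove} and their derived moves genuinely account for every way a $2$--dimensional descending manifold can be swept across a $2$--dimensional ascending manifold attached to a vertex, while keeping careful track of the vertex marking $\sigma$ and of which strand becomes an over- or under-strand after the move --- this bookkeeping being exactly why Figure~\ref{fig:intswimalt}, with a changed marking and crossing, is needed alongside Figure~\ref{fig:intswim}. A secondary, more routine point is justifying the ``supported in a ball'' localization rigorously in the presence of self-intersections, which is handled in the same way as the self-intersection-preserving perturbations in the proof of Proposition~\ref{1stdfamgeneric}.
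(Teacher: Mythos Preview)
Your approach is correct and matches the paper's: reduce via Corollary~\ref{0stdisotopy} to a neighborhood of the failure time, then case-analyze the elementary degeneracy against the list of singular band moves (the paper does this as Propositions~\ref{getmovesprop1}--\ref{getmovesprop5}, assuming from the outset a single elementary failure rather than perturbing as you do). One refinement the paper makes explicit and you leave implicit is the dichotomy between the offending flow line lying \emph{in} $\Sigma$ (yielding slide- and pass-type moves, Proposition~\ref{getmovesprop2}) versus lying in $X$ and meeting $\Sigma$ only at its endpoints (yielding swim-type moves, including the upside-down intersection/band swim, Proposition~\ref{getmovesprop3}); this is exactly what pins down which of the vertex moves (viii)--(x), or which derived move, occurs in each self-intersection subcase.
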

We break Theorem~\ref{thm:getmoves} into Propositions~\ref{getmovesprop1}--\ref{getmovesprop5}, in which we separately consider different ways in which $\Sigma_{1/2}$ may fail to be 0--standard. 

\begin{prop}\label{getmovesprop1}
Suppose that $\Sigma_{1/2}$ would be 0--standard if not for a single birth or death degeneracy. Then $\D_0$ and $\D_1$ are related by isotopy in $E(K)$ and possibly a cup or cap move.
\end{prop}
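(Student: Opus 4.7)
The plan is to localize the failure of 0--standardness to a small time interval around $t=1/2$ and a small region of $X$, and then explicitly identify the local diagrammatic change with a cup or cap move.

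First, fix $\varepsilon>0$ small enough that $\Sigma_t$ is 0--standard for $t\in[0,1/2-\varepsilon]\cup[1/2+\varepsilon,1]$, which is possible by hypothesis. Applying Corollary~\ref{0stdisotopy} to each half, $\D_0$ is related to $\D(\Sigma_{1/2-\varepsilon})$ by isotopy in $E(\K)$ (and slides over $L_1$, which we absorb into the isotopy moves allowed in Definition~\ref{def:singbandmove}~\ref{s41}), and similarly $\D(\Sigma_{1/2+\varepsilon})$ is related to $\D_1$. It therefore suffices to show that $\D(\Sigma_{1/2-\varepsilon})$ and $\D(\Sigma_{1/2+\varepsilon})$ are related by isotopy in $E(\K)$ plus at most one cup or cap move.

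Next I would localize the degeneracy. The birth/death point is a single $p\in\Sigma_{1/2}$ at which $h|_\Sigma$ has local form $x^2-y^3$. Choose a small coordinate ball $B\subset X$ centered at $p$ so that for $t$ close to $1/2$, $\Sigma_t$ agrees with $\Sigma_{1/2}$ outside $B$ and differs from it only by a standard local model inside $B$. Since by hypothesis $\Sigma_{1/2}$ would be 0--standard if not for this degeneracy, we may shrink $B$ (and $\varepsilon$) so that $B$ avoids $L_1\cup L_2$, the critical points of $h$, the other critical points of $h|_{\Sigma_t}$, and the self-intersections of $\Sigma_t$ for all $t\in[1/2-\varepsilon,1/2+\varepsilon]$. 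Thus the diagrams $\D(\Sigma_{1/2\pm\varepsilon})$ agree outside a small neighborhood of $B\cap h^{-1}(3/2)$.

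Then I would analyze the local model. As $t$ crosses $1/2$, the degeneracy creates or destroys a pair of critical points of $h|_{\Sigma_t}$ of consecutive indices: either $(0,1)$ or $(1,2)$. In the birth $(0,1)$ case, a new minimum of $h|_{\Sigma}$ appears together with a canceling index~1 critical point. Running the flattening procedure from the proof of Proposition~\ref{prop:canonical} inside $B$, the new minimum contributes a small unknot to $L^-$ (bounding the new minimum disk in $h^{-1}(1/2)$), and the new index~1 critical point contributes a small band in $B$ whose core joins this new unknot to a nearby strand of $L$ so that after surgery the banded link is isotopic to its previous form. This is exactly the cup move of Figure~\ref{fig:oldisomoves}. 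The $(1,2)$ birth case is symmetric and yields a cap move, and deaths are the inverses of births and give the inverse cup/cap moves.

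The main obstacle is the careful verification in the last step that the local model of the birth really does correspond to the cup/cap pictures in Figure~\ref{fig:oldisomoves}, rather than to some apparently more general diagrammatic modification. This requires choosing the vertical coordinates so that the new critical points sit on the correct side of $h^{-1}(3/2)$, and arranging the new band and unknotted circle to match the standard picture; the minor indeterminacy in these choices (e.g., which strand of $L$ the new band is attached to) can be absorbed into isotopy in $E(\K)$. Composing the isotopies from the first paragraph with the single cup or cap move produced here then gives the relation claimed by the proposition.
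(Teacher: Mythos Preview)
Your proposal is correct and is essentially a detailed unpacking of the standard Cerf-theory argument that the paper invokes by citation (the paper's proof reads in its entirety: ``This is a standard fact. See, e.g., \cite{arnold}.''). Your time-localization via Corollary~\ref{0stdisotopy}, spatial localization to a ball around the degeneracy, and identification of the $(0,1)$/$(1,2)$ birth--death pairs with cup/cap moves is exactly the content behind that citation.
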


\begin{proof}
This is a standard fact. See, e.g.,\ \cite{arnold}.
\end{proof}

\begin{prop}\label{getmovesprop2}
Suppose that $\Sigma_{1/2}$ would be 0--standard if not for the descending manifold (in $\Sigma$) of $p$ meeting the ascending manifold (in $\Sigma$) of $q$, where $p$ and $q$ are each index 1 critical points of $h|_{\Sigma}$ or self-intersections of $\Sigma$.  
 Then $\D_0$ and $\D_1$ are related by isotopy in $E(K)$ and possibly a band slide, intersection/band slide, intersection/band pass, or intersection pass. 
\end{prop}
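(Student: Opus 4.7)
My plan is to reduce the proposition to a local analysis at the catastrophe time $t = 1/2$, and then to identify the resulting diagrammatic change with one of the four listed moves by a case analysis based on the types of $p$ and $q$.

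First, for small $\epsilon > 0$, the restricted families $\{\Sigma_t\}_{t \in [0, 1/2-\epsilon]}$ and $\{\Sigma_t\}_{t \in [1/2+\epsilon, 1]}$ are 0--standard throughout. Proposition~\ref{prop:canonical} and Corollary~\ref{0stdisotopy} then imply that $\D_{1/2-\epsilon}$ is related to $\D_0$, and $\D_{1/2+\epsilon}$ to $\D_1$, by isotopy in $E(\K)$ and slides over $L_1$. So it suffices to compare $\D_{1/2-\epsilon}$ and $\D_{1/2+\epsilon}$ and realize their difference by a single move from the list. To set this up, I would use 1--standardness to arrange, uniformly for $t$ near $1/2$, that every index~1 critical point of $h|_{\Sigma_t}$ and every self-intersection of $\Sigma_t$ lies at the level $h^{-1}(3/2)$, exactly as in the proof of Proposition~\ref{prop:canonical}. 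Then $\D_t$ is read off directly from $\Sigma_t \cap h^{-1}(3/2)$, and for $t$ near but not equal to $1/2$ the diagrams $\D_{1/2 \pm \epsilon}$ agree outside a small disk $D \subset h^{-1}(3/2)$ containing the projection of the catastrophe flow line from $p$ to $q$.

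Inside this disk I would run a case analysis on the types of $p$ and $q$. If both are index~1 critical points of $h|_\Sigma$, the picture is identical to the embedded setting of \cite{bandpaper}, and the catastrophe realizes a band slide. If both are self-intersections, the local picture shows two vertices exchanging their relative heights along $\nabla h$; after flattening to $h^{-1}(3/2)$ this is precisely an intersection pass (realized through singular band moves via Figure~\ref{fig:intpass}). If exactly one of $p, q$ is a self-intersection and the other is an index~1 critical point, the catastrophe is a vertex meeting a band, and depending on whether the projected catastrophe lies in the interior of the attaching region of the band or only at the side edge of the band, the resulting change is an intersection/band slide or an intersection/band pass, respectively.

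The main obstacle is the local-model verification in the mixed cases, where one must confirm that the ``slide versus pass'' dichotomy is exhaustive and correctly matches the moves of Figure~\ref{fig:newisomoves}. The plan is to introduce standard coordinates in a tubular neighborhood of the catastrophe flow line at $t = 1/2$, invoke 1--standardness to guarantee that no other ascending/descending manifold collisions occur in this neighborhood, and then model $\Sigma_t$ there as a standard 1--parameter deformation whose flattening picture can be computed explicitly. Since everything outside this disk (in particular the links $L_1, L_2$ of $\K$, the cups and caps, and all other vertices and bands) is untouched by the local deformation, matching the local picture with Figure~\ref{fig:newisomoves} or Figure~\ref{fig:intpass} will globalize to the desired comparison $\D_{1/2-\epsilon} \leftrightarrow \D_{1/2+\epsilon}$ by isotopy in $E(\K)$ together with a single move of the required type.
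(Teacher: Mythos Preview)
Your strategy matches the paper's: reduce via Corollary~\ref{0stdisotopy} to comparing $\D_{1/2-\epsilon}$ with $\D_{1/2+\epsilon}$, then run a local case analysis on the types of $p$ and $q$, identifying the two perturbations of the degenerate $1$--skeleton of Proposition~\ref{prop:canonical} with the two sides of a single move.

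The one place you diverge is the criterion in the mixed case. You propose to distinguish the intersection/band slide from the intersection/band pass by where the catastrophe projects on the band (attaching region versus side edge). The paper instead uses the asymmetry already built into the hypothesis: $p$ contributes its \emph{descending} manifold and $q$ its \emph{ascending} manifold, so the mixed situation splits into exactly two sub-cases, $(p,q)=(\text{vertex},\text{saddle})$ and $(p,q)=(\text{saddle},\text{vertex})$, yielding the intersection/band slide and the intersection/band pass respectively (see Figure~\ref{fig:getmoves1}). Your geometric reformulation may well be equivalent after unwinding the flattening construction, but you have not established this, and organizing the split directly by the roles of $p$ and $q$ makes the verification immediate and removes what you flag as the ``main obstacle.''
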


\begin{proof}
The proof of Proposition~\ref{prop:canonical} fails for $\Sigma_{1/2}$ precisely because $p$ lying above $q$ in $\Sigma$ causes indeterminacy in the 1-skeleton $S$. There are then two choices (up to small isotopy through 0--standard surfaces) in how to perturb $\Sigma$ near $p$ to obtain a 0--standard surface. See Figure~\ref{fig:getmoves1}. The resulting two singular banded unlink diagrams differ by a:

\vspace{.1in}
\renewcommand{\arraystretch}{1.5}

\noindent\hfil\begin{tabular}{l|c|r}
\hline
\begin{tabular}{l}
band slide\\\hline
intersection/band slide\\\hline
intersection/band pass\\\hline
intersection pass\\
\end{tabular}
&if&
\begin{tabular}{r}
$p$ and $q$ are index 1 points\\\hline
$p$ is a self-intersection; $q$ is an index 1 point\\\hline
$p$ is an index 1 point; $q$ is a self-intersection\\\hline
$p$ and $q$ are self-intersections.\\
\end{tabular}\\
\hline
\end{tabular}\par

\vspace{.1in}

We conclude that $\D_{1/2-\varepsilon}$ and $\D_{1/2+\varepsilon}$ are either isotopic or isotopic after one of the above moves. The same is then true of $\D_0$ and $\D_1$ by Corollary~\ref{0stdisotopy}.

\end{proof}

\begin{figure}
    \centering
    \includegraphics[width=120mm]{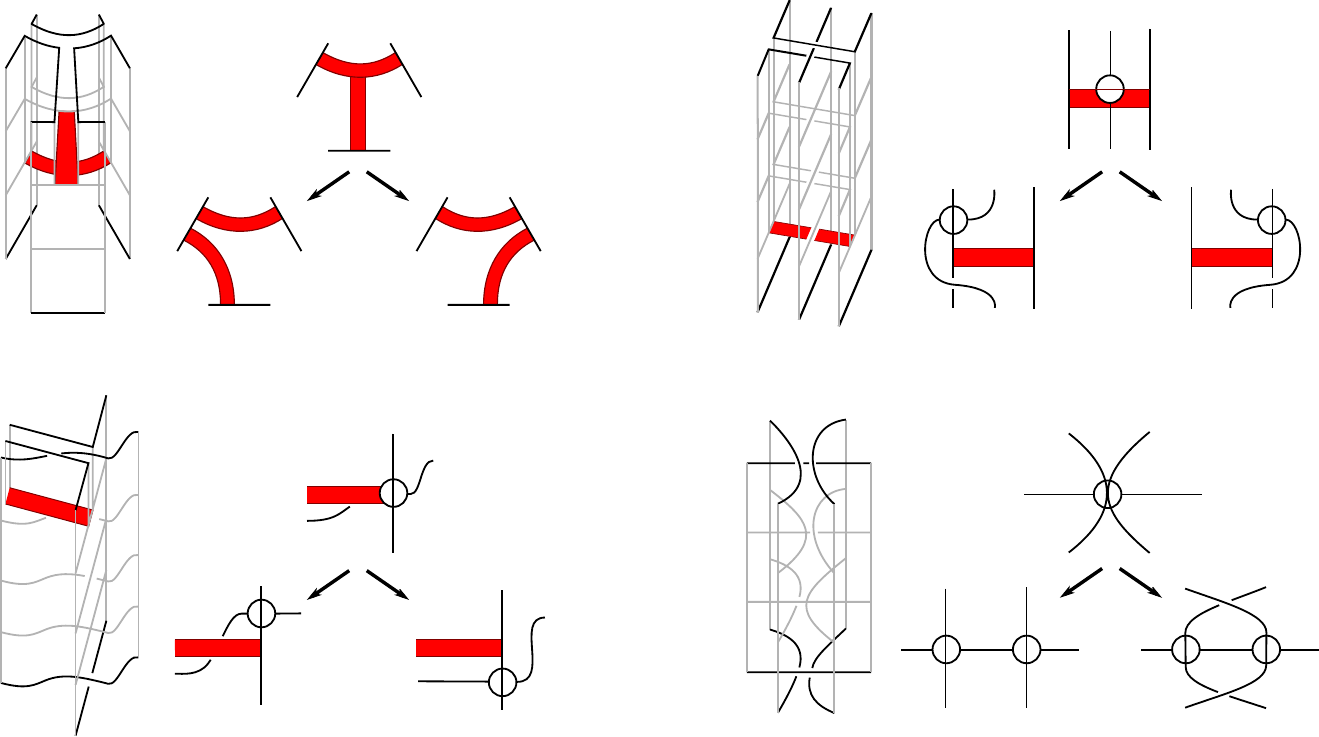}
    \caption{The cases of Proposition~\ref{getmovesprop2}. At the left of each quadrant we draw a local model about the flow line that causes $\Sigma_{1/2}$ to not be 0--standard. At the top right of each quadrant, we draw a schematic of the projection of $\Sigma_{1/2}$ to $E(\K)$, where two bands, two self-intersections, or one of each coincide. We draw arrows to indicate the two diagrams that arise if we perturb $\Sigma_{1/2}$ to be 0--standard.}
    \label{fig:getmoves1}
\end{figure}

\begin{prop}\label{getmovesprop3}
Suppose that $\Sigma_{1/2}$ would be 0--standard if not for the descending manifold of $p$ with respect to $\nabla h$ meeting the ascending manifold of $q$ with respect to $\nabla h$, where $p$ and $q$ are each index 1 critical points of $h|_{\Sigma}$ or self-intersections of $\Sigma$, and their ascending/descending manifolds intersect in their interiors (rather than in just $\Sigma$, as in Proposition~\ref{getmovesprop2}). Then $\D_0$ and $\D_1$ are related by isotopy in $E(K)$ and possibly a band swim, intersection/band swim, upside-down intersection/band swim, or intersection swim. 
\end{prop}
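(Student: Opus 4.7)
The plan is to mirror the argument of Proposition~\ref{getmovesprop2}. At $t=1/2$, the surface $\Sigma_{1/2}$ fails to be 0--standard because there is a flow line $\gamma$ of $\nabla h$ in $X$ connecting the descending manifold of $p$ to the ascending manifold of $q$, whose interior escapes $\Sigma$ (rather than being contained in $\Sigma$ as in Proposition~\ref{getmovesprop2}). This extra room off of $\Sigma$ is exactly the 4--dimensional setting in which swim moves are designed to operate, and it is what distinguishes the present case from the previous one.

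First, I would apply the flattening procedure from the proof of Proposition~\ref{prop:canonical} to extract singular banded unlink diagrams $\D_t$ for $t$ near $1/2$. For $t \neq 1/2$ the flattening yields well-defined diagrams (up to isotopy and slides over $L_1$), since $\Sigma_t$ is 0--standard. At $t = 1/2$, however, the flow line $\gamma$ forces the images in $h^{-1}(3/2)$ of the pieces of $\Sigma$ associated to $p$ and to $q$ to collide, so the flattened diagram is ambiguous. The two resolutions obtained by perturbing through $t = 1/2 \pm \varepsilon$ produce $\D_{1/2-\varepsilon}$ and $\D_{1/2+\varepsilon}$, and they differ locally by pushing the item at $p$ past the item at $q$ through the 4--dimensional neighborhood of $\gamma$, i.e.\ by a swim move.

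Next, I would carry out a case analysis on the types of $p$ and $q$ to identify the specific move that arises. When both $p$ and $q$ are index 1 critical points of $h|_{\Sigma}$, the move is a band swim (Figure~\ref{fig:oldisomoves}). When one of $p, q$ is an index 1 critical point and the other is a self-intersection, the move is either an intersection/band swim (Figure~\ref{fig:newisomoves}) or its upside-down counterpart (Figure~\ref{fig:upsidedownswim}), depending on which of $p, q$ is the self-intersection. When both $p$ and $q$ are self-intersections, the move is an intersection swim (Figures~\ref{fig:intswim} and~\ref{fig:intswimalt}). Combining this local claim with Corollary~\ref{0stdisotopy} applied on $[0, 1/2 - \varepsilon]$ and on $[1/2 + \varepsilon, 1]$ then shows that $\D_0$ and $\D_1$ are related by singular band moves.

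The main obstacle I foresee is the local verification in the case analysis. The band swim and intersection/band swim are primitive moves in Definition~\ref{def:singbandmove}, so those cases reduce to matching the local model at $\gamma$ to the pictures in Figures~\ref{fig:oldisomoves} and~\ref{fig:newisomoves}. The upside-down intersection/band swim and intersection swim are derived moves expressed via $\bigstar$ and the primitive moves in Figures~\ref{fig:upsidedownswim}, \ref{fig:intswim}, and~\ref{fig:intswimalt}, so for those cases I need to confirm that the local model at $\gamma$ is compatible with one of these derived forms. In particular, the alternate form in Figure~\ref{fig:intswimalt} will be needed to handle the marking and crossing data that can vary when both $p$ and $q$ are self-intersections; ensuring that these combinatorial choices match the geometric configuration determined by $\gamma$ is the delicate part.
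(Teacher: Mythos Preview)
Your proposal is correct and follows essentially the same approach as the paper: identify why the projection in Proposition~\ref{prop:canonical} fails (the projected edges corresponding to $p$ and $q$ collide in $h^{-1}(3/2)$), observe that the two nearby 0--standard perturbations yield diagrams differing by a swim-type move determined by the types of $p$ and $q$, and then invoke Corollary~\ref{0stdisotopy} on each side of $t=1/2$. Your extra caution about verifying the derived moves (upside-down intersection/band swim and intersection swim) against the local model is reasonable, though the paper simply defers this to the local pictures in Figure~\ref{fig:getmoves2}.
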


\begin{proof}
The proof of Proposition~\ref{prop:canonical} fails for $\Sigma_{1/2}$ because when we attempt to project the 1-skeleton of $\Sigma$ to $h^{-1}(3/2)$, the edges corresponding to $p$ and $q$ will intersect.
There are then two choices (up to small isotopy through 0--standard surfaces) in how to perturb $\Sigma$ near $p$ to obtain a 0--standard surface. See Figure~\ref{fig:getmoves2}. The resulting two singular banded unlink diagrams differ by a:

\vspace{.1in}
\renewcommand{\arraystretch}{1.5}
\noindent\hfil\scalebox{.85}{
\begin{tabular}{l|c|r}
\hline
\begin{tabular}{l}
band swim\\\hline
intersection/band swim\\\hline
upside-down intersection/band swim\\\hline
intersection swim\\
\end{tabular}
&if&
\begin{tabular}{r}
$p$ and $q$ are index 1 points\\\hline
$p$ is a self-intersection; $q$ is an index 1 point\\\hline
$p$ is an index 1 point; $q$ is a self-intersection\\\hline
$p$ and $q$ are self-intersections\\
\end{tabular}\\
\hline
\end{tabular}}
\par

\vspace{.1in}

We conclude that $\D_{1/2-\varepsilon}$ and $\D_{1/2+\varepsilon}$ are either isotopic or isotopic after one of the above moves. The same is then true of $\D_0$ and $\D_1$ by Corollary~\ref{0stdisotopy}.
\end{proof}

\begin{figure}
    \centering
    \includegraphics[width=120mm]{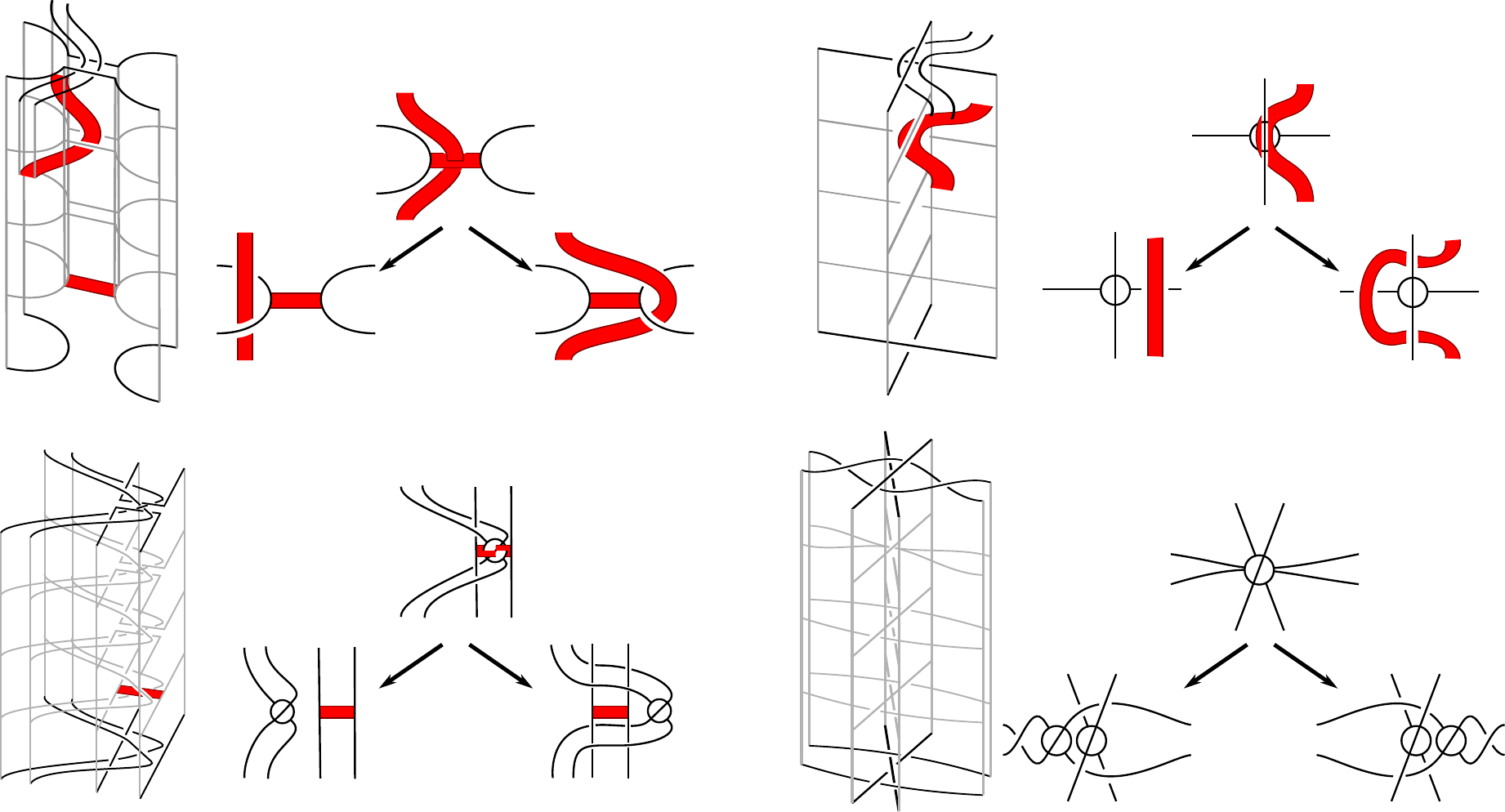}
    \caption{The cases of Proposition~\ref{getmovesprop3}. At the left of each quadrant we draw a local model about the flow line that causes $\Sigma_{1/2}$ to not be 0--standard. At the top right of each quadrant, we draw a schematic of the projection of $\Sigma_{1/2}$ to $E(\K)$, where two bands, two self-intersections, or one of each coincide. We draw arrows to indicate the two diagrams that arise if we perturb $\Sigma_{1/2}$ to be 0--standard.}
    \label{fig:getmoves2}
\end{figure}

\begin{prop}\label{getmovesprop4}
Suppose that $\Sigma_{1/2}$ would be 0--standard if not for the descending manifold of $p$ intersecting the ascending manifold of $q$, where $p$ is an index 1 critical point of $h|_{\Sigma}$ or  a self-intersection of $\Sigma$ and $q$ is an index 2 critical point of $h$. 
Then $\D_0$ and $\D_1$ are related by isotopy in $E(K)$ and possibly a band/2--handle slide or intersection/2--handle slide.

\end{prop}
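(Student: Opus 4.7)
The plan is to follow the same template as Propositions~\ref{getmovesprop2} and~\ref{getmovesprop3}, with the new ingredient being that the obstruction to 0--standardness is now a transverse intersection between a 2--dimensional descending manifold of a band or self-intersection of $\Sigma$ and the 2--dimensional ascending manifold of an index 2 critical point $q$ of $h$. The key observation is that the ascending manifold of $q$ meets $h^{-1}(3/2)$ in a component $L_2^q$ of the attaching link $L_2$ in the Kirby diagram $\K$. Failure of 0--standardness at time $t=1/2$ therefore means exactly that when we attempt to project $p$ down to $h^{-1}(3/2)$ via $-\nabla h$ in order to apply the construction of Proposition~\ref{prop:canonical}, the trajectory passes through $L_2^q$.

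First I would note that since $\Sigma_{1/2}$ is 1--standard, only one such intersection occurs and it is a single transverse point in $X$. For all $t$ in a small punctured neighborhood of $1/2$, the surface $\Sigma_t$ is 0--standard and the projected band or vertex in $h^{-1}(3/2)$ sits on one side of $L_2^q$; as $t$ crosses $1/2$, this band or vertex moves to the opposite side of $L_2^q$. Thus $\D_{1/2-\varepsilon}$ and $\D_{1/2+\varepsilon}$ (each well-defined up to isotopy in $E(\K)$ and slides over $L_1$ by Proposition~\ref{prop:canonical}) differ by pushing a single band or vertex across a component of $L_2$, while the rest of the diagram is unchanged.

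Next I would identify the diagrammatic change in each case. If $p$ is an index 1 critical point of $h|_\Sigma$, then $p$ projects to a band in $h^{-1}(3/2)$, and pushing this band across a component of $L_2$ is by definition the band/2--handle slide (move \ref{k1} of Definition~\ref{def:singbandmove}, left column of Figure~\ref{fig:oldisomoves}). If instead $p$ is a self-intersection of $\Sigma$, then $p$ projects to a vertex of $L$, and pushing this vertex across a component of $L_2$ is exactly the intersection/2--handle slide of Figure~\ref{fig:int2handleslide}, which we have already verified is expressible as a composition of $\star$--moves and ordinary singular band moves. To make these identifications rigorous I would draw a local model (paralleling Figures~\ref{fig:getmoves1} and~\ref{fig:getmoves2}) around the unique flow line of $\nabla h$ from $q$ to the offending intersection point, in which one of the projected 1--cells of the 1--skeleton $S$ of $\Sigma_{1/2}$ is replaced by a strand of $L_2^q$ held fixed in $h^{-1}(3/2)$.

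Finally I would conclude by applying Corollary~\ref{0stdisotopy} to the 0--standard sub-isotopies $\Sigma_t$ for $t\in[0,1/2-\varepsilon]$ and $t\in[1/2+\varepsilon,1]$, yielding that $\D_0 \leftrightarrow \D_{1/2-\varepsilon}$ and $\D_{1/2+\varepsilon} \leftrightarrow \D_1$ are each related by isotopy in $E(\K)$ and slides over $L_1$. Combined with the single slide move identified above, this shows $\D_0$ and $\D_1$ are related by isotopy in $E(\K)$ and possibly one band/2--handle slide or intersection/2--handle slide, as claimed. I do not anticipate a serious obstacle: structurally the argument mirrors Propositions~\ref{getmovesprop2} and~\ref{getmovesprop3}, and the only genuinely new verification is the local picture in $h^{-1}(3/2)$ showing that crossing $L_2^q$ matches the move defined in Figure~\ref{fig:int2handleslide} (for the vertex case).
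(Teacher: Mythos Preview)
Your argument follows the paper's approach and reaches the right conclusion, including the appeal to Corollary~\ref{0stdisotopy} on the two sub-isotopies. One correction is needed: it is the \emph{descending} manifold of $q$ that meets $h^{-1}(3/2)$ in the attaching circle $L_2^q$, not the ascending manifold, since $h(q)=2>3/2$. Accordingly, the obstruction is not that the trajectory from $p$ ``passes through $L_2^q$'' but that a point on the $1$--cell of $p$ flows under $-\nabla h$ into the critical point $q$ itself, so its projection to $h^{-1}(3/2)$ is undefined at $t=1/2$ (this is exactly how the paper phrases it: ``we cannot project the edge of the $1$--skeleton of $\Sigma$ corresponding to $p$''). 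Perturbing to either side, the flow narrowly misses $q$ along its ascending disk and exits near the descending disk, so the projected band or vertex winds once around $L_2^q$; this is why the resulting move is a \emph{slide}, whereas in Proposition~\ref{getmovesprop5} the projection succeeds and the projected edge transversely crosses a strand of $L_2$, giving a swim. Your phrase ``pushing across a component of $L_2$'' blurs this distinction and should be sharpened.
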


\begin{proof}
The proof of Proposition~\ref{prop:canonical} fails because we cannot project the edge of the 1-skeleton of $\Sigma$ corresponding to $p$ to the level $h^{-1}(3/2)$. There are then two choices (up to small isotopy through 0--standard surfaces) in how to perturb $\Sigma$ near $p$ to obtain a 0--standard surface, with resulting singular banded unlink diagrams differing by a slide over a 2-handle. That is, the resulting two singular banded unlink diagrams differ by a:

\vspace{.1in}
\renewcommand{\arraystretch}{1.5}

\noindent\hfil
\begin{tabular}{l|c|r}
\hline
\begin{tabular}{l}
band/2-handle slide\\\hline
intersection/2-handle slide\\
\end{tabular}
&if&
\begin{tabular}{r}
$p$ is an index 1 point\\\hline
$p$ is a self-intersection
\end{tabular}\\
\hline
\end{tabular}
\par

\vspace{.1in}

We conclude that $\D_{1/2-\varepsilon}$ and $\D_{1/2+\varepsilon}$ are either isotopic or isotopic after one of the above moves. The same is then true of $\D_0$ and $\D_1$ by Corollary~\ref{0stdisotopy}.

\end{proof}

\begin{prop}\label{getmovesprop5}
Suppose that $\Sigma_{1/2}$ would be 0--standard if not for the descending manifold of $p$ intersecting the ascending manifold of $q$, where $p$ is an index 2 critical point of $h$ and $q$ is either an index 1 critical point of $h|_{\Sigma}$ or a self-intersection of $\Sigma$. 
Then $\D_0$ and $\D_1$ are related by isotopy in $E(K)$ and possibly a band/2-handle swim or intersection/2-handle swim.

\end{prop}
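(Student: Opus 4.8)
The plan is to run the same argument as in Propositions~\ref{getmovesprop2}--\ref{getmovesprop4}: pin down the single way in which the construction of Proposition~\ref{prop:canonical} breaks down for $\Sigma_{1/2}$, observe that the resulting degeneracy can be resolved by a small isotopy through 0--standard surfaces in exactly two ways, recognize the two resulting diagrams as differing by one of the named moves, and then transport the conclusion from $\D_{1/2\pm\varepsilon}$ to $\D_0$ and $\D_1$ via Corollary~\ref{0stdisotopy}.

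First I would set up the local picture. The descending manifold of the index 2 critical point $p$ meets $h^{-1}(3/2)$ in a single component $\ell$ of the framed link $L_2$, and the hypothesis that this descending manifold meets the ascending manifold of $q$ translates, in the language of the proof of Proposition~\ref{prop:canonical}, into the assertion that the edge $e_q$ of the 1--skeleton of $\Sigma$ associated to $q$ cannot be pushed into the level $h^{-1}(3/2)$ while staying disjoint from $\ell$. For a 0--standard surface this obstruction is absent --- this is precisely Definition~\ref{def0std}(2), with $p$ taken to be an index 2 critical point of $h$ --- so $e_q$ may be pushed just to one side of $\ell$ or just to the other. These two choices are interchanged by sliding the band $e_q$ (resp.\ the pair of strands of $L$ meeting at the self-intersection $e_q$) around $\ell$: that is, by a band/2--handle swim when $q$ is an index 1 critical point of $h|_\Sigma$, and by an intersection/2--handle swim when $q$ is a self-intersection of $\Sigma$. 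The former is on the list of Definition~\ref{def:singbandmove}; the latter is not, but Figure~\ref{fig:int2handleswim} already realizes it as a composite of singular band moves, which is all we need. As in Propositions~\ref{getmovesprop2}--\ref{getmovesprop4} I would record this dichotomy in a short case table and include a figure in the style of Figures~\ref{fig:getmoves1} and~\ref{fig:getmoves2}, but now with only two cases, showing on the left the offending flow line and on the right the two perturbations of $\Sigma_{1/2}$ to a 0--standard surface.

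Having shown in this way that $\D_{1/2-\varepsilon}$ and $\D_{1/2+\varepsilon}$ are related by isotopy in $E(\K)$, slides over $L_1$, and at most one of the two swim moves, I would apply Corollary~\ref{0stdisotopy} on the subintervals $[0,1/2-\varepsilon]$ and $[1/2+\varepsilon,1]$, on which $\Sigma_t$ is 0--standard throughout, to obtain the same relation between $\D_0$ and $\D_1$, completing the proof.

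The main obstacle I expect is the figure-level bookkeeping in the middle step: verifying that the two one-sided perturbations of $e_q$ really produce diagrams differing by \emph{precisely} the band/2--handle swim (resp.\ intersection/2--handle swim) and nothing more, while correctly tracking the framing of the 2--handle at $p$ and the relevant orientation data. In the self-intersection case this comes down to confirming that the local model obtained here is exactly the one appearing in Figure~\ref{fig:int2handleswim}; once that is checked, the move is genuinely generated by the singular band moves of Definition~\ref{def:singbandmove}, and the argument is complete.
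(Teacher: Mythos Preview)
Your proposal is correct and follows essentially the same approach as the paper's proof: identify that the projection of the edge corresponding to $q$ to $h^{-1}(3/2)$ meets the component of $L_2$ corresponding to $p$, observe that the two perturbations to 0--standard surfaces yield diagrams differing by a band/2--handle swim or intersection/2--handle swim according to whether $q$ is an index~1 critical point or a self-intersection, and then invoke Corollary~\ref{0stdisotopy}. The paper's argument is terser but structurally identical, and your remark that the intersection/2--handle swim is realized as a composite of singular band moves (Figure~\ref{fig:int2handleswim}) is exactly the point.
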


\begin{proof}
The proof of Proposition~\ref{prop:canonical} fails for $\Sigma_{1/2}$ because after we project the 1-skeleton of $\Sigma$ to $h^{-1}(3/2)$, the edge corresponding to $q$ will intersect the component of $L_2\subset\mathcal{K}$ corresponding to $p$. 
There are then two choices (up to small isotopy through 0--standard surfaces) in how to perturb $\Sigma$ near $p$ to obtain a 0--standard surface, with resulting singular banded unlink diagrams differing by a swim through a 2-handle attaching circle. That is, the resulting two singular banded unlink diagrams differ by a:

\vspace{.1in}
\renewcommand{\arraystretch}{1.5}

\noindent\hfil
\begin{tabular}{l|c|r}
\hline
\begin{tabular}{l}
band/2-handle swim\\\hline
intersection/2-handle swim\\
\end{tabular}
&if&
\begin{tabular}{r}
$p$ is an index 1 point\\\hline
$p$ is a self-intersection
\end{tabular}\\
\hline
\end{tabular}
\par

\vspace{.1in}

We conclude that $\D_{1/2-\varepsilon}$ and $\D_{1/2+\varepsilon}$ are either isotopic or isotopic after one of the above moves. The same is then true of $\D_0$ and $\D_1$ by Corollary~\ref{0stdisotopy}.
\end{proof}

This completes the proof of Theorem~\ref{thm:getmoves}, since Propositions~\ref{getmovesprop1}--\ref{getmovesprop5} cover all of the cases in which $\Sigma_{1/2}$ is 1--standard and not 0--standard (of course, if $\Sigma_{1/2}$ is 0--standard then Theorem~\ref{thm:getmoves} follows from Corollary~\ref{0stdisotopy}) except for the case that there are flow lines of $-\nabla h$ between index 2 critical points.  However, $h$ and $\nabla h$ are fixed during the isotopy so this does not happen.\qedsymbol

\begin{corollary}\label{cor:getmoves}
Let $\Sigma_0,\Sigma_1$ be 0--standard self-transverse immersed surfaces. Suppose there exists an isotopy $\Sigma_t$ and values $t_1<t_2<\cdots<t_n\in(0,1)$ so that $\Sigma_t$ is 0--standard for all $t\not\in\{t_1,t_2,\ldots, t_n\}$, and $\Sigma_{t_i}$ is 1--standard for each $i=1,2,\ldots, n$.

Let $\D_t:=\D(\Sigma_t)$. Then $\D_0$ and $\D_1$ are related by a sequence of singular band moves.
\end{corollary}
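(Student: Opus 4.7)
The plan is to reduce Corollary~\ref{cor:getmoves} to $n$ applications of Theorem~\ref{thm:getmoves} by chopping the isotopy $\Sigma_t$ into sub-isotopies, each of which violates 0--standardness at exactly one interior time.

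Concretely, I would first choose a sequence $0 = s_0 < s_1 < \cdots < s_n = 1$ with $s_i \in (t_i, t_{i+1})$ for $i = 1, \ldots, n-1$ (so that the bad time $t_i$ lies strictly between $s_{i-1}$ and $s_i$). By hypothesis each $\Sigma_{s_i}$ is 0--standard, since $s_i \notin \{t_1, \ldots, t_n\}$. For each $i = 1, \ldots, n$, reparametrize the sub-interval $[s_{i-1}, s_i]$ linearly to $[0,1]$, sending $t_i$ to $1/2$; call the resulting sub-isotopy $\widetilde\Sigma^{(i)}_t$. Because 0--standardness implies 1--standardness, $\widetilde\Sigma^{(i)}_t$ is 1--standard for every $t \in [0,1]$ and 0--standard for every $t \neq 1/2$, so it satisfies the hypotheses of Theorem~\ref{thm:getmoves} verbatim.

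Applying Theorem~\ref{thm:getmoves} to each sub-isotopy yields a finite sequence of singular band moves taking $\D(\Sigma_{s_{i-1}})$ to $\D(\Sigma_{s_i})$. Concatenating these $n$ sequences produces a finite sequence of singular band moves from $\D(\Sigma_0) = \D(\Sigma_{s_0})$ to $\D(\Sigma_1) = \D(\Sigma_{s_n})$, which is precisely the conclusion.

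The only subtlety worth flagging is that $\D(\Sigma_{s_i})$ is a priori only well-defined up to isotopy in $E(\mathcal{K})$ and slides over $L_1$ (by Proposition~\ref{prop:canonical}), and different applications of Theorem~\ref{thm:getmoves} at the endpoint $s_i$ might produce representatives differing by such moves; however, both isotopy in $E(\mathcal{K})$ and slides over $L_1$ are themselves listed among the singular band moves, so the concatenation goes through without additional work. There is no genuine obstacle here — the corollary is essentially a bookkeeping statement combining the single-crossing result with the fact that the hypothesis of Corollary~\ref{cor:getmoves} already guarantees a finite decomposition into such single-crossing intervals.
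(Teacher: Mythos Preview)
Your proposal is correct and follows essentially the same approach as the paper: choose intermediate points $s_i$ between consecutive bad times $t_i$, apply Theorem~\ref{thm:getmoves} to each sub-isotopy, and concatenate the resulting sequences of singular band moves. Your version is in fact more carefully written than the paper's, since you explicitly reparametrize each sub-interval to place the bad time at $1/2$ (as the hypothesis of Theorem~\ref{thm:getmoves} literally requires) and you address the minor ambiguity in $\D(\Sigma_{s_i})$ at the gluing points.
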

\begin{proof}
For each $i=1,\ldots, {n-1}$, let $s_1$ be a value in $(t_i,t_{i+1})$. By Corollary~\ref{0stdisotopy}, \begin{itemize}
    \item[$\circ$]  $\D_0$ is related to $\D_{s_1}$ by  singular band moves,
    \item[$\circ$] $\D_{s_i}$ is related to $\D_{s_{i+1}}$ by  singular band moves for $i=1,\ldots, n-1$,
    \item[$\circ$] $\D_{n-1}$ is related to $\D_1$ by  singular band moves.
    \end{itemize}
    We conclude that $\D_0$ and $\D_1$ are related by singular band moves.
\end{proof}

\subsubsection{Proof of uniqueness theorems}\label{sec:uniqueness}
We finally prove that singular banded unlink diagrams of isotopic (resp. regularly homotopic, homotopic) surfaces exist for arbitrary immersed self-transverse surfaces and are well-defined up to singular band moves. At this point, not much is left to do -- the material in Section~\ref{sec:isotopy} is essentially the whole proof that diagrams exist and are unique up to singular band moves.

\begin{theorem}\label{maintheorem}

Let $\Sigma$ be a self-transverse smoothly immersed surface in $X$. Then there is a singular banded unlink diagram $\D(\Sigma)$, well-defined up to singular band moves, so that $\Sigma$ is isotopic to the closed realizing surface for $\D(\Sigma)$. Moreover, if $\Sigma$ is isotopic to $\Sigma'$, then $\D(\Sigma)$ and $\D(\Sigma')$ are related by singular band moves.

We say that $\D(\Sigma)$ is {\emph{a singular banded unlink diagram for $\Sigma$}}, or simply that $\D(\Sigma)$ is a diagram for $\Sigma$.
\end{theorem}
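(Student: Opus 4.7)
The plan is to assemble the theorem essentially as a corollary of the machinery already built up in Sections~\ref{sec:0and1std} and~\ref{sec:isotopy}. The statement has three parts to verify: (a) every self-transverse $\Sigma$ has an associated diagram $\D(\Sigma)$; (b) $\D(\Sigma)$ is well-defined up to singular band moves; and (c) isotopic surfaces yield singular-band-equivalent diagrams.

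For (a), I would first show that any self-transverse immersed surface $\Sigma$ admits an arbitrarily $C^\infty$-small ambient isotopy to a 0--standard surface $\widetilde{\Sigma}$. This is a genericity argument entirely analogous to Proposition~\ref{0stdfamgeneric}: Morseness of $h|_\Sigma$ is generic by Cerf theory, avoidance of index 0/4 critical points of $h$ is codimension 4, and each ``bad'' coincidence of complementary-dimensional ascending and descending manifolds in Definition~\ref{def0std} has expected dimension $-1$ as a space of $\nabla h$-flow lines, so is generically empty after perturbation. Once $\widetilde{\Sigma}$ is 0--standard, Proposition~\ref{prop:canonical} produces a singular banded unlink diagram $\D$ with $\Sigma(\D)$ ambiently isotopic to $\widetilde{\Sigma}$, and hence to $\Sigma$. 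Define $\D(\Sigma):=\D$.

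For (b), suppose $\widetilde{\Sigma}_0$ and $\widetilde{\Sigma}_1$ are two 0--standard perturbations of $\Sigma$, with associated diagrams $\D_0$ and $\D_1$. Since each differs from $\Sigma$ by an arbitrarily small isotopy, we can concatenate to obtain an ambient isotopy $\Sigma_t$ from $\widetilde{\Sigma}_0$ to $\widetilde{\Sigma}_1$. By Proposition~\ref{0stdfamgeneric}, a small perturbation of $\Sigma_t$ makes it 1--standard for all $t$ and 0--standard outside a finite set; Corollary~\ref{cor:getmoves} then concludes that $\D_0$ and $\D_1$ are related by a sequence of singular band moves. Thus $\D(\Sigma)$ is well-defined up to singular band moves. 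For (c), the same argument applies verbatim: given an ambient isotopy $\Sigma_t$ between 0--standard $\Sigma=\Sigma_0$ and $\Sigma'=\Sigma_1$, perturb to be 1--standard throughout and 0--standard generically, then invoke Corollary~\ref{cor:getmoves}.

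The only real obstacle I foresee is in step (a), where one must justify that a perturbation achieving 0--standardness can be carried out as an \emph{ambient} isotopy of $\Sigma$ (not merely a perturbation in the space of immersions). The hardest piece is the condition on self-intersections: one must move self-intersections off ascending manifolds of index 3 critical points, descending manifolds of index 1 critical points, and put them at distinct heights from one another and from bands. This is handled exactly as in the proof of Proposition~\ref{1stdfamgeneric}, where a perturbation $g_t$ of the underlying immersion is promoted to an ambient isotopy by choosing disjoint coordinate bi-disks around each self-intersection, extending via isotopy extension on the embedded complement, and verifying the composed diffeotopy starts at the identity. Once this ambient perturbation is in hand, the remaining generic conditions in Definition~\ref{def0std} are purely dimensional and follow from standard transversality applied to the finite-dimensional space of critical points and self-intersections versus the $\nabla h$-flow.
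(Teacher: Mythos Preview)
Your proposal is correct and follows essentially the same approach as the paper's own proof: perturb to a 0--standard surface, invoke Proposition~\ref{prop:canonical} to produce a diagram, and for well-definedness and isotopy-invariance apply Proposition~\ref{0stdfamgeneric} followed by Corollary~\ref{cor:getmoves}. The paper's proof is in fact shorter---it glosses over the ambient-isotopy justification in step (a) with the phrase ``via a small perturbation''---so your added discussion of promoting the perturbation to an ambient isotopy (via the coordinate bi-disk argument from the proof of Proposition~\ref{1stdfamgeneric}) is a reasonable elaboration rather than a departure.
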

\begin{proof}
Via a small perturbation, $\Sigma$ is isotopic to a 0--standard surface $\Sigma_0$. Set $\D(\Sigma):=\D(\Sigma_0)$. 
To show that $\D(\Sigma)$ is well-defined, suppose that $\Sigma_1$ is another 0--standard surface that is isotopic to $\Sigma$, and hence isotopic to $\Sigma_0$. By Proposition~\ref{0stdfamgeneric}, there is an isotopy $\Sigma_t$ from $\Sigma_0$ to $\Sigma_1$ so that $\Sigma_t$ is 1--standard for all $t$ and 0--standard for all but finitely many $t$. By Corollary~\ref{cor:getmoves}, $\D(\Sigma_0)$ and $\D(\Sigma_1)$ are related by singular band moves.

Since this argument applies to any 0--standard surface $\Sigma_1$ isotopic to $\Sigma$, we conclude that if $\Sigma$ and $\Sigma'$ are isotopic, then $\D(\Sigma)$ and $\D(\Sigma')$ are related by singular band moves.
\end{proof}

\begin{corollary}\label{diffeotheorem}
 Let $\D$ and $\D'$ be singular banded unlink diagrams of surfaces $\Sigma$ and $\Sigma'$ immersed in diffeomorphic 4--manifolds $X$ and $X'$. There is a diffeomorphism taking  $(X,\Sigma)$ to $(X',\Sigma')$ if and only if there is a sequence of singular band moves and Kirby moves taking $\D$ to $\D'$.
\end{corollary}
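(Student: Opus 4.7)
The plan is to deduce Corollary~\ref{diffeotheorem} from Theorem~\ref{maintheorem} together with the classical Kirby theorem: two Kirby diagrams describe diffeomorphic closed smooth $4$--manifolds if and only if they are related by a finite sequence of Kirby moves. Part~\ref{main1} of Theorem~\ref{maintheorem} already settles the case when $\Sigma$ and $\Sigma'$ lie in the same ambient $4$--manifold $X$; the corollary is a bookkeeping extension that uses a diffeomorphism $\phi : X \to X'$ to transport one diagram into the Kirby setting of the other.

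For the direction $(\Leftarrow)$, I would proceed move-by-move. Each singular band move replaces a singular banded unlink diagram with another diagram describing an ambient isotopic realizing surface in the same $4$--manifold (the content of the exercise following Definition~\ref{def:singbandmove}). Each Kirby move on the ambient diagram $\K$, after a small isotopy of the surface data $L \cup B$ away from the support of the move, corresponds to a diffeomorphism of the ambient $4$--manifold that carries the realizing surface to an isotopic realizing surface described by the new Kirby diagram. Concatenating these local diffeomorphisms and isotopies produces a diffeomorphism $(X,\Sigma) \cong (X',\Sigma')$.

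For the direction $(\Rightarrow)$, suppose $\phi : X \to X'$ is a diffeomorphism with $\phi(\Sigma) = \Sigma'$. Let $(h', \nabla h')$ be the Morse datum inducing $\K'$. Then $(h' \circ \phi, \phi^\ast \nabla h')$ is a Morse datum on $X$ whose associated Kirby diagram $\phi^{-1}(\K')$ comes equipped with the pulled-back singular banded unlink diagram $\phi^{-1}(\D') = (\phi^{-1}(\K'), \phi^{-1}(L'), \phi^{-1}(B'))$ of $\Sigma \subset X$. Since $\K$ and $\phi^{-1}(\K')$ are two Kirby diagrams of the same $4$--manifold $X$, Kirby's theorem provides a finite sequence of Kirby moves from $\K$ to $\phi^{-1}(\K')$; after a small perturbation these moves can be assumed to take place in the complement of the singular banded link, so they lift to a sequence taking $\D$ to some diagram $\widetilde{\D}$ inside $\phi^{-1}(\K')$. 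Finally, $\widetilde{\D}$ and $\phi^{-1}(\D')$ are singular banded unlink diagrams of the same immersed surface $\Sigma \subset X$ in the same Kirby diagram, so Theorem~\ref{maintheorem} supplies a sequence of singular band moves relating them, and concatenation gives the required sequence from $\D$ to $\D'$.

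The main technical point to verify is that each Kirby move on $\K$ can be supported in the complement of $L \cup B$, so that it is well-defined as a move on singular banded unlink diagrams. Since $L \cup B$ is a $1$--complex in the $3$--manifold $h^{-1}(3/2)$ and Kirby moves are local operations on $1$-- and $2$--handles, a standard transversality argument arranges that each move can be isotoped off $L \cup B$; any residual intersections introduced by a handle slide can be cleared using the band slide, band swim, and slides over $L_1$ and $L_2$ moves already present in Definition~\ref{def:singbandmove}. Once this step is handled, the remainder of the argument is formal.
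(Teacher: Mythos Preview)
Your proposal is correct and follows the same approach the paper intends: the corollary is deduced from Theorem~\ref{maintheorem} together with Kirby's theorem, and in fact the paper gives no proof beyond the sentence in the introduction noting that it follows from these two ingredients. Your write-up is more detailed than what the paper provides, and the outline---pull back via the diffeomorphism, relate the Kirby diagrams by Kirby moves while carrying the singular banded link along, then invoke Theorem~\ref{maintheorem}\ref{main2} once both diagrams live over the same Kirby data---is exactly the intended argument.

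One small point worth tightening: when you perform the sequence of Kirby moves from $\K$ to $\phi^{-1}(\K')$, you should note that this sequence can be chosen to realize (up to isotopy) the specific diffeomorphism $\phi$, not merely some diffeomorphism $X \to X'$. This is a consequence of the Cerf-theoretic proof of Kirby's theorem (the moves arise from a path between the Morse functions $h$ and $h'\circ\phi$ on $X$), and it is what guarantees that the resulting diagram $\widetilde{\D}$ in $\phi^{-1}(\K')$ actually describes a surface isotopic to $\Sigma$, so that Theorem~\ref{maintheorem}\ref{main2} applies to $\widetilde{\D}$ and $\phi^{-1}(\D')$. Without this, the diffeomorphism implicitly defined by your Kirby-move sequence might differ from $\phi$, and you would be comparing diagrams of surfaces that are only equivalent up to a self-diffeomorphism of $X'$ rather than isotopic.
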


In addition, we can use these moves to describe homotopies of surfaces in terms of singular banded unlink diagrams.

\begin{corollary}\label{htpytheorem}
Let $\D$ and $\D'$ be singular banded unlink diagrams for surfaces $\Sigma$ and $\Sigma'$ immersed in $X$. If $\Sigma$ and $\Sigma'$ are homotopic, then $\D$ and $\D'$ are related by a finite sequence of singular band moves and the following moves (illustrated in Figure~\ref{fig:newhtpymoves}):
\begin{itemize}
    \item[$\circ$] Introducing or cancelling two oppositely marked vertices (a ``finger move" or ``Whitney move") as illustrated,
    \item[$\circ$] replacing a nugatory crossing with a vertex, or vice versa, (a ``cusp move") as illustrated.
\end{itemize}

In addition, if $\Sigma$ and $\Sigma'$ are {\emph{regularly}} homotopic, then $\D$ and $\D'$ are related by a finite sequence of singular band moves, finger moves, and Whitney moves (i.e.\ a sequence of the given moves that does not include any cusp moves.)
\end{corollary}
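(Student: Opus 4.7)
The plan is to reduce Corollary~\ref{htpytheorem} to Theorem~\ref{maintheorem} by showing that a generic (regular) homotopy between 0--standard surfaces decomposes into a sequence of isotopies interrupted by finitely many finger/Whitney events (and, in the non-regular case, cusp events), and that each such event corresponds at the diagrammatic level to exactly the local moves indicated in Figure~\ref{fig:newhtpymoves}. Since Theorem~\ref{maintheorem} already tells us how to relate diagrams of isotopic surfaces, the only new work is in analyzing the codimension--1 degenerations of the space of immersions.

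First I would enlarge the Cerf-type filtration of Proposition~\ref{1stdfamgeneric}/Proposition~\ref{0stdfamgeneric} from the space of self-transverse immersions to the space of all immersions (for regular homotopy) and then to the space of all smooth maps of $F$ to $X$ (for arbitrary homotopy). The codimension--1 strata in the space of immersions that are not already accounted for in Definition~\ref{def1std} are exactly the non-transverse double points; perturbing these yields either two new transverse double points (a finger move) or the cancellation of two existing transverse double points (a Whitney move), and by the standard Whitney--Thom analysis the two double points involved always carry opposite local signs. In the larger space of smooth maps, the only additional codimension--1 stratum consists of maps with a single cusp singularity; perturbing past a cusp either introduces or cancels a single transverse double point near a nugatory loop. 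After a small perturbation of the given (regular) homotopy $\Sigma_t$, I can therefore arrange that $\Sigma_t$ is 0--standard for all but finitely many times, that the failures of 0--standardness at the times handled by Propositions~\ref{getmovesprop1}--\ref{getmovesprop5} continue to occur generically, and that the new times at which $\Sigma_t$ is not even a self-transverse immersion are isolated and each corresponds to either a single finger/Whitney event or a single cusp event (in the non-regular case).

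Between any two consecutive critical times, the family $\Sigma_t$ is a 1--standard isotopy of self-transverse immersed surfaces to which Corollary~\ref{cor:getmoves} applies, so the corresponding diagrams are related by singular band moves. It remains to match each critical time with the correct diagrammatic move. Before analyzing the local model I will first push the finger/Whitney/cusp event vertically into a generic horizontal level so that the two newly introduced double points (or the nugatory loop) sit in $h^{-1}(3/2)$, away from the critical points of $h$, the existing singularities, and the existing bands. In the finger/Whitney case the local picture in $h^{-1}(3/2)$ is exactly the standard Whitney disc model, and projecting via Proposition~\ref{prop:canonical} produces the birth/death of a pair of oppositely marked vertices pictured in Figure~\ref{fig:newhtpymoves}; in the cusp case the local model is the standard swallowtail and projects to the creation/annihilation of a nugatory crossing together with a vertex, which is the cusp move.

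The main obstacle I anticipate is verifying that the local diagrammatic move is really well-defined independent of the precise way one pushes the event into $h^{-1}(3/2)$; different choices of vertical perturbation could a priori produce a diagram which differs from Figure~\ref{fig:newhtpymoves} by additional band slides, swims, or slides over $L_1$ or $L_2$. These ambiguities are exactly the singular band moves, so once one verifies by a direct local model calculation that at least one choice of perturbation realizes the figure exactly, the rest follow. With this in hand, concatenating the singular band moves coming from the isotopy pieces with the finger/Whitney/cusp moves coming from the critical times produces the desired sequence relating $\D$ and $\D'$, which finishes both statements of Corollary~\ref{htpytheorem}.
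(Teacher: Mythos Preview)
Your approach is valid but genuinely different from the paper's, and it is worth contrasting the two.

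The paper does not follow the given homotopy and analyze its codimension--1 crossings. Instead it reduces everything to a single application of Theorem~\ref{maintheorem} by using two classical facts. First, by Hirsch--Smale, two homotopic self-transverse immersed surfaces are regularly homotopic if and only if their signed self-intersection numbers agree; so the paper begins by performing the appropriate number of cusp moves on $\D$ to equalize these numbers, thereby reducing the general case to the regular case. Second, for the regular case the paper invokes the standard structure theorem for regular homotopies (as in Freedman--Quinn): any regular homotopy can be factored as a sequence of finger moves, followed by an ambient isotopy, followed by a sequence of Whitney moves. The paper then isotopes each finger arc into $h^{-1}(3/2)$ (using Theorem~\ref{maintheorem} to absorb that isotopy into singular band moves), performs the diagrammatic finger move there, and similarly for the other surface. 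The two resulting diagrams describe ambiently isotopic surfaces, so Theorem~\ref{maintheorem} finishes the job.

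Your approach instead extends the Cerf-type stratification of Propositions~\ref{1stdfamgeneric}--\ref{0stdfamgeneric} to the space of all immersions and then to all smooth maps, and follows the homotopy through its isolated degenerations. This is conceptually more uniform but demands more work: you must establish the genericity statements for these larger mapping spaces (in particular that the only new codimension--1 phenomenon beyond finger/Whitney is a single rank-drop event producing the cusp move), and you must check that 0--standardness can be arranged on either side of each event so that Corollary~\ref{cor:getmoves} applies on the intervening intervals. The paper's route sidesteps all of this by front-loading the cusp moves and citing the finger/Whitney factorization as a black box, at the cost of not literally tracking the given homotopy. Either argument is acceptable; the paper's is shorter to write because the singularity theory is already packaged in the cited references.
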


\begin{figure}
    \centering
\begingroup%
  \makeatletter%
  \providecommand\color[2][]{%
    \errmessage{(Inkscape) Color is used for the text in Inkscape, but the package 'color.sty' is not loaded}%
    \renewcommand\color[2][]{}%
  }%
  \providecommand\transparent[1]{%
    \errmessage{(Inkscape) Transparency is used (non-zero) for the text in Inkscape, but the package 'transparent.sty' is not loaded}%
    \renewcommand\transparent[1]{}%
  }%
  \providecommand\rotatebox[2]{#2}%
  \newcommand*\fsize{\dimexpr\f@size pt\relax}%
  \newcommand*\lineheight[1]{\fontsize{\fsize}{#1\fsize}\selectfont}%
  \ifx\svgwidth\undefined%
    \setlength{\unitlength}{190.55989423bp}%
    \ifx\svgscale\undefined%
      \relax%
    \else%
      \setlength{\unitlength}{\unitlength * \real{\svgscale}}%
    \fi%
  \else%
    \setlength{\unitlength}{\svgwidth}%
  \fi%
  \global\let\svgwidth\undefined%
  \global\let\svgscale\undefined%
  \makeatother%
  \begin{picture}(1,1.13276995)%
    \lineheight{1}%
    \setlength\tabcolsep{0pt}%
    \put(0,0){\includegraphics[width=\unitlength,page=1]{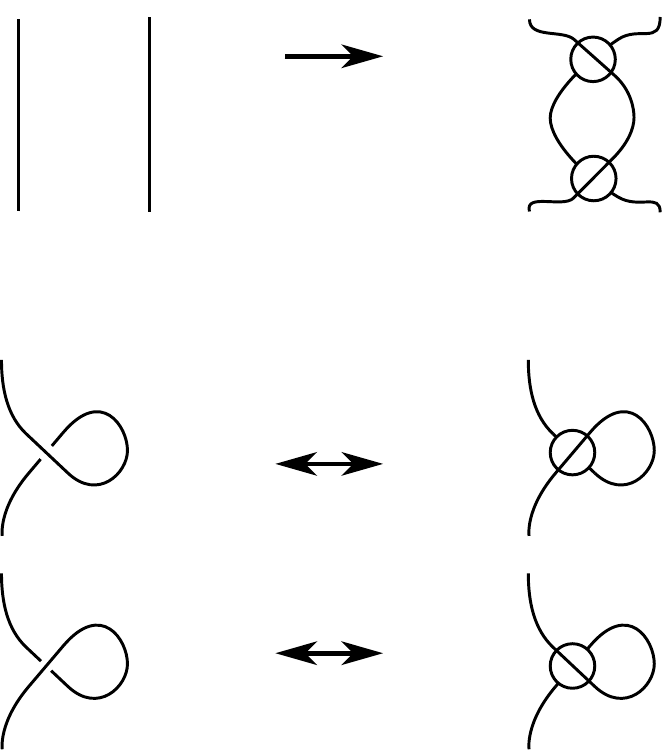}}%
    \put(0.50411577,1.09114257){\color[rgb]{0,0,0}\makebox(0,0)[t]{\lineheight{1.25}\smash{\begin{tabular}[t]{c}finger move\end{tabular}}}}%
    \put(0.50411577,0.88917145){\color[rgb]{0,0,0}\makebox(0,0)[t]{\lineheight{1.25}\smash{\begin{tabular}[t]{c}Whitney move\end{tabular}}}}%
    \put(0,0){\includegraphics[width=\unitlength,page=2]{newhtpymoves.pdf}}%
    \put(0.50411577,0.47513043){\color[rgb]{0,0,0}\makebox(0,0)[t]{\lineheight{1.25}\smash{\begin{tabular}[t]{c}cusp move\end{tabular}}}}%
    \put(0.50411577,0.19237071){\color[rgb]{0,0,0}\makebox(0,0)[t]{\lineheight{1.25}\smash{\begin{tabular}[t]{c}cusp move\end{tabular}}}}%
  \end{picture}%
\endgroup%

    \caption{The new moves describing homotopy of a surface in a 4--manifold. There are two versions of the cusp move. One involves a positive self-intersection and one involves a negative self-intersection of the described immersed surface. To describe regular homotopy we only need finger and Whitney moves. }
    \label{fig:newhtpymoves}
\end{figure}

\begin{proof}
Say $\Sigma$ and $\Sigma'$ are homotopic and have self-intersection numbers $s$ and $s'$, respectively. By work of Hirsch \cite{hirsch} and Smale \cite{smale}, $\Sigma$ and $\Sigma'$ are regularly homotopic if and only if $s=s'$.

After performing a cusp move on $\D$, a realizing surface for the resulting diagram has self-intersection $s\pm 1$, with sign depending on the choice of cusp move. Perform $|s'-s|$ cusp moves of the appropriate sign to $\D$ to obtain a diagram $\D_2$ whose realizing surface $\Sigma_2$ has self-intersection number $s'$. Now $\Sigma_2$ and $\Sigma'$ are regularly homotopic.

We recommend the reference \cite{fq} for exposition on regular homotopy of surfaces. In brief, there exists a sequence of finger moves on $\Sigma_2$ along framed arcs $\eta_1,\ldots, \eta_n$ yielding a surface $\Sigma_3$, and a sequence of finger moves on $\Sigma'$ along framed arcs $\eta'_1,\ldots,\eta'_m$ yielding a surface $\Sigma''$, so that $\Sigma_3$ and $\Sigma''$ are ambiently isotopic. 

We isotope $\eta_1$ to lie completely in $h^{-1}(3/2)$ (which may involve isotopy of $\Sigma_2$ inducing singular band moves on its singular banded unlink diagram according to Theorem~\ref{maintheorem}) and then shrink $\eta_1$ to be short and contained in a neighborhood identical to the top left of Figure~\ref{fig:newhtpymoves}. Twist the diagram as necessary so that the framing of $\eta_1$ is untwisted. Then we perform a finger move to $\D_2$ in that neighborhood. Repeat for each $i=2,\ldots, n$, and call the resulting diagram $\D_3$. A realizing surface for $\D_3$ is isotopic to $\Sigma_3$.

Now repeat for $\Sigma'$ by performing singular band moves and finger moves to its diagram $\D'$ until obtaining a diagram $\D''$ whose realizing surface is isotopic to $\Sigma''$. Since $\Sigma''$ and $\Sigma_3$ are isotopic, by Theorem~\ref{maintheorem} it follows that $\D_3$ and $\D''$ are related by singular band moves.

We thus conclude that $\D$ can be transformed into $\D'$ by a sequence of singular band moves, cusp moves, finger moves, and Whitney moves (which are the inverses to finger moves).
\end{proof}

\begin{remark}
When performing a finger move to a singular banded unlink diagram, there are seemingly two choices (related by a local symmetry) of how to mark the new vertices. However, the choices yield diagrams related by singular band moves, as shown in Figure~\ref{fig:changefingermove}.
\end{remark}

\begin{figure}
    \centering
\begingroup%
  \makeatletter%
  \providecommand\color[2][]{%
    \errmessage{(Inkscape) Color is used for the text in Inkscape, but the package 'color.sty' is not loaded}%
    \renewcommand\color[2][]{}%
  }%
  \providecommand\transparent[1]{%
    \errmessage{(Inkscape) Transparency is used (non-zero) for the text in Inkscape, but the package 'transparent.sty' is not loaded}%
    \renewcommand\transparent[1]{}%
  }%
  \providecommand\rotatebox[2]{#2}%
  \newcommand*\fsize{\dimexpr\f@size pt\relax}%
  \newcommand*\lineheight[1]{\fontsize{\fsize}{#1\fsize}\selectfont}%
  \ifx\svgwidth\undefined%
    \setlength{\unitlength}{362.95471672bp}%
    \ifx\svgscale\undefined%
      \relax%
    \else%
      \setlength{\unitlength}{\unitlength * \real{\svgscale}}%
    \fi%
  \else%
    \setlength{\unitlength}{\svgwidth}%
  \fi%
  \global\let\svgwidth\undefined%
  \global\let\svgscale\undefined%
  \makeatother%
  \begin{picture}(1,0.83580206)%
    \lineheight{1}%
    \setlength\tabcolsep{0pt}%
    \put(0,0){\includegraphics[width=\unitlength,page=1]{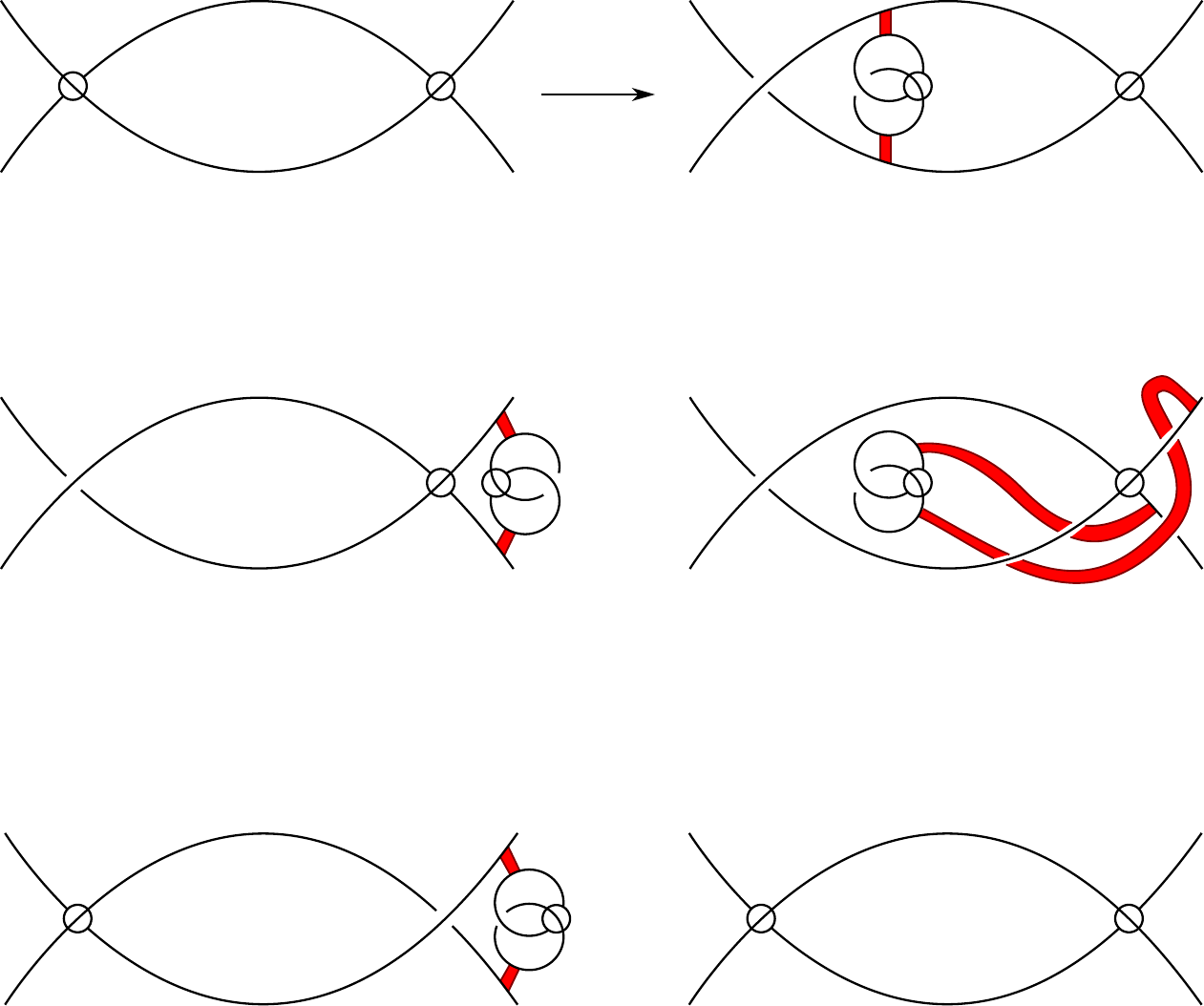}}%
    \put(0.47478702,0.77012237){\color[rgb]{0,0,0}\makebox(0,0)[lt]{\lineheight{1.25}\smash{\begin{tabular}[t]{l}$\bigstar$\end{tabular}}}}%
    \put(0,0){\includegraphics[width=\unitlength,page=2]{changefingermove1.pdf}}%
    \put(0.77509305,0.60555186){\color[rgb]{0,0,0}\makebox(0,0)[lt]{\lineheight{1.25}\smash{\begin{tabular}[t]{l}int/band passes\end{tabular}}}}%
    \put(0,0){\includegraphics[width=\unitlength,page=3]{changefingermove1.pdf}}%
    \put(0.20118407,0.28318596){\color[rgb]{0,0,0}\rotatebox{-90}{\makebox(0,0)[lt]{\lineheight{1.25}\smash{\begin{tabular}[t]{l}isotopy\end{tabular}}}}}%
    \put(0,0){\includegraphics[width=\unitlength,page=4]{changefingermove1.pdf}}%
    \put(0.51225804,0.07487688){\color[rgb]{0,0,0}\makebox(0,0)[lt]{\lineheight{1.25}\smash{\begin{tabular}[t]{l}$\bigstar$\end{tabular}}}}%
    \put(0,0){\includegraphics[width=\unitlength,page=5]{changefingermove1.pdf}}%
    \put(0.48744977,0.4320533){\color[rgb]{0,0,0}\makebox(0,0)[lt]{\lineheight{1.25}\smash{\begin{tabular}[t]{l}isotopy\end{tabular}}}}%
  \end{picture}%
\endgroup%

    \caption{There are two seemingly different finger moves, but they yield singular banded unlink diagrams that differ by singular band moves.}
   \label{fig:changefingermove}
\end{figure}

\section{Bridge trisections}\label{sec:bridgetrisections}
\subsection{Bridge trisections of embedded surfaces}\label{sec:trisectionembed}

In Section~\ref{sec:trisectionimmerse}, we prove that self-transverse immersed surfaces in 4--manifolds can be put into {\emph{bridge position}}, a notion introduced for embedded surfaces by Meier and Zupan \cite{meier2017bridge,meier2018bridge}. Meier and Zupan showed that a bridge trisection of a surface in $S^4$ (with respect to a standard trisection of $S^4$) is unique up to perturbation \cite{meier2017bridge}, using the work of Swenton \cite{swenton} and Kearton--Kurlin \cite{KK} on banded unlink diagrams in $S^4$. The authors of this paper then used a general version of this theorem in arbitrary 4--manifolds to show that bridge trisections of surfaces in any trisected manifold are unique up to perturbation. In what follows, we will apply Theorem~\ref{maintheorem} to prove an analogous uniqueness result for bridge trisections of immersed surfaces. In this section, we will review the situation where the surface is embedded.

First, we recall the definition of a trisection of a closed $4$-manifold. Similar exposition can be found in \cite{bandpaper}. We do not require much knowledge of trisections; for more detailed exposition, the interested reader may refer to~\cite{gay2016trisecting}.

\begin{definition}[\cite{gay2016trisecting}]
Let $X^4$ be a connected, closed, oriented $4$-manifold. A {\emph{$(g,k)$--trisection}} of $X^4$ is a triple $(X_1,X_2,X_3)$ where
\begin{enumerate}
\item $X_1\cup X_2\cup X_3=X^4$,
\item $X_i\cong\natural_{k_i} S^1\times B^3$,
\item $X_i\cap X_j=\boundary X_i\cap\boundary X_j\cong \natural_g S^1\times B^2$,
\item $X_1\cap X_2\cap X_3\cong \Sigma_g$,
\end{enumerate}
where $\Sigma_g$ is a closed orientable surface of genus $g$. Here, $g$ is an integer while $k=(k_1,k_2,k_3)$ is a triple of integers. If $k_1=k_2=k_3$, then the trisection is said to be {\emph{balanced}}.
\end{definition}

Briefly, a trisection is a decomposition of a $4$--manifold into three elementary pieces, analogous to a Heegaard splitting of a $3$--manifold into two elementary pieces. 

\begin{theorem}[\cite{gay2016trisecting}]
Any smooth, connected, closed, oriented 4--manifold $X^4$ admits a trisection. Moreover, any two trisections of $X^4$ are related by a stabilization operation.
\end{theorem}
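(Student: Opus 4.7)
The plan is to prove existence and uniqueness separately, following the general strategy of Gay--Kirby via Morse 2--functions $f : X^4 \to \R^2$.

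For existence, I would begin with a self-indexing Morse function $h : X^4 \to [0,4]$ having a unique index 0 and unique index 4 critical point, with $n_i$ critical points of index $i$. Set the naive candidate pieces $X_1^0 := h^{-1}([0,3/2])$, $X_2^0 := h^{-1}([3/2,5/2])$, and $X_3^0 := h^{-1}([5/2,4])$. The Laudenbach--Poenaru theorem (together with its uniqueness statement for attaching spheres) gives $X_1^0 \cong \natural_{n_1} S^1 \times B^3$, and by inverting $h$ we likewise obtain $X_3^0 \cong \natural_{n_3} S^1 \times B^3$. However, the middle piece $X_2^0$ is a 2--handle cobordism between connect sums of $S^1 \times S^2$'s and is not generally a 1--handlebody. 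The key step is to stabilize the handle decomposition by introducing cancelling 1/2-- and 2/3--handle pairs, and then use handle slides in $h^{-1}(3/2)$ to arrange that each 2--handle attaching circle is isotopic to a meridian of some dual 1--handle in $h^{-1}(5/2)$. Once this is achieved, $X_2^0$ becomes $\natural_{n_2'} S^1 \times B^3$ for some $n_2'$, the common surface $X_1^0 \cap X_2^0 \cap X_3^0$ can be taken as a common Heegaard surface of the two boundary 3--manifolds, and the three pieces assemble into a trisection.

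For uniqueness, given two trisections $T$ and $T'$ of $X^4$, I would encode each by a generic smooth map $f : X^4 \to D^2$ whose preimages of the three sectors of $D^2$ (cut out by three radial rays meeting at the origin) recover the trisection pieces, with the central fiber recovering the central surface. I would then connect $T$ and $T'$ by a generic 1--parameter family of such maps. Codimension--1 singularities encountered along the path correspond to elementary modifications of the trisection; a case analysis based on Cerf theory for maps of 4--manifolds to $\R^2$ should show that each such modification is realizable as a trisection--preserving isotopy, a handle slide interior to one of the three pieces, or a trisection stabilization (or its inverse).

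The main obstacle is the uniqueness half: controlling the codimension--1 singularities of a generic path of such Morse 2--functions, and verifying that each can be absorbed by the trisection stabilization move, requires the full machinery of 2--parameter Cerf theory together with a careful analysis of how singularities of the Cerf graphic cross the radial rays. This is where the bulk of the technical work lies, whereas the existence half reduces to a handle--slide computation once the Laudenbach--Poenaru theorem is in hand.
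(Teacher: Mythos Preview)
The paper does not give its own proof of this statement: the theorem is quoted as background from Gay--Kirby \cite{gay2016trisecting}, with no argument supplied. So there is nothing in the paper to compare your proposal against.

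That said, your sketch is broadly in line with the actual Gay--Kirby argument. A few remarks on accuracy. For existence, your description of massaging the middle piece $X_2^0$ into a 1--handlebody is the right idea, but the phrase ``arrange that each 2--handle attaching circle is isotopic to a meridian of some dual 1--handle in $h^{-1}(5/2)$'' is not quite the correct formulation: what one actually arranges (after adding enough cancelling pairs) is that the 2--handle attaching link in $\partial X_1^0$ lies on a Heegaard surface $\Sigma$ for $\partial X_1^0$ in a way that makes $X_2^0$ a regular neighborhood of $\Sigma$ pushed into the cobordism, hence a handlebody. For uniqueness, you have correctly identified that the heart of the matter is a Cerf--theoretic analysis of a generic path of Morse 2--functions; your acknowledgment that this is where the real work lies is apt, and your sketch at that point becomes a statement of intent rather than an argument. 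None of this is a gap so much as the expected level of detail for an outline of a result you are citing rather than proving.
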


Note that from the definition, $\Sigma_g$ is a Heegaard surface for $\partial X_i$, inducing the Heegaard splitting $(X_i\cap X_j, X_i\cap X_k)$. By Laudenbach and Po\'{e}naru~\cite{lp}, $X^4$ is specified by its {\emph{spine}}, $\Sigma_g\cup_{i\neq j}(X_i\cap X_j)$. Therefore, we usually describe a trisection $(X_1,X_2,X_3)$ by a {\emph{trisection diagram}} $(\Sigma_g;\alpha,\beta,\gamma)$.  Here each of $\alpha,\beta,$ and $\gamma$ consist of $g$ independent curves on $\Sigma_g$ (abusing notation to take $\Sigma_g$ as both an abstract surface and the surface $X_1\cap X_2\cap X_3$ in $X$), which bound disks in the handlebodies $X_1\cap X_2, X_2\cap X_3, X_1\cap X_3$ respectively. Given $(X_1,X_2,X_3)$, such a diagram is well-defined up to slides of $\alpha,\beta,\gamma$ and automorphisms of $\Sigma_g$.

\begin{definition}
Let $X^4$ be a $4$--manifold with trisection $\tri=(X_1,X_2,X_3)$. 
We say that an isotopy $f_t$ of $X^4$ is {\emph{$\tri$--regular}} if $f_t(X_i)=X_i$ for each $i=1,2,3$ and for all $t$.
\end{definition}

\begin{definition}
The {\emph{standard trisection of $S^4$}} is the unique $(0,0)$--trisection $(X^0_1,$ $X^0_2,$ $X^0_3)$. View $S^4=\R^4\cup\infty$, with coordinates $(x,y,r,\theta)$ on $\R^4$, where $(x,y)$ are Cartesian planar coordinates and $(r,\theta)$ are polar planar coordinates. Up to isotopy, $X^0_i=\{\theta\in[2\pi/3\cdot i, 2\pi/3\cdot(i+1)]\}\cup\infty$. Then $X^0_i\cong B^4$, $X^0_i\cap X^0_{i+1}=\{\theta=2\pi/3\cdot(i+1)\}\cup\infty\cong B^3$, and $X^0_1\cap X^0_j\cap X^0_k=\{r=0\}\cup\infty\cong S^2$.
\end{definition}

From a trisection $(X_1,X_2,X_3)$ of $X^4$, we can obtain a handle decomposition of $X^4$ in which $X_1$ contains the 0-- and 1--handles, $X_2$ is built from $(X_1\cap X_2)\times I$ by attaching the 2--handles, and $X_3$ contains the 3-- and 4--handles. The following definition encapsulates this construction. 

\begin{definition}
Let $\tri=(X_1,X_2,X_3)$ be a trisection of a 4--manifold $X^4$. Let $h:X^4\to[0,4]$ be a self-indexing Morse function. We say that $h$ is {\emph{$\tri$-compatible}} if both of the following are true.
\begin{enumerate}
    \item $X_1=h^{-1}([0,3/2])$, 
    \item $X_2\subset h^{-1}([3/2,5/2))$ contains all of the index 2 critical points of $h$,
    \item $X_1\cup X_2$ contains the descending manifolds of all index 2 critical points of $h$.
    \end{enumerate}

    Given any trisection $\tri$, there always exists a Morse function compatible with $\tri$ (see \cite{gay2016trisecting} or \cite{propr}).
\end{definition}

Meier and Zupan used trisections to give a new way of describing a surface smoothly embedded in a 4--manifold.

\begin{definition}[\cite{meier2017bridge,meier2018bridge}]\label{def:embeddedbridge}
Let $\tri=(X_1,X_2,X_3)$ be a trisection of a closed $4$--manifold $X^4$. Let $S$ be a surface embedded in $X^4$. We say that $S$ is in $(b,c)$--bridge position with respect to $\tri$ if for every $i\neq j\in\{1,2,3\}$,
\begin{enumerate}
\item $S\cap X_i$ is a disjoint union of $c_i$ boundary parallel disks,
\item $S\cap X_i\cap X_j$ is a trivial tangle of $b$ arcs.
\end{enumerate}
Here $b$ is an integer and $c=(c_1,c_2,c_3)$ is a triple of integers. Note that  $\chi(S)=\sum c_i-b$. 
\end{definition}

\begin{theorem}[\cite{meier2017bridge,meier2018bridge}]\label{thm:triunique}
Let $S$ be a surface embedded in a 4-manifold $X^4$ with trisection $\tri=(X_1,X_2,X_3)$. Then for some $c$ and $b$, $S$ can be isotoped into $(b,c)$--bridge position with respect to $\tri$. We may take $c_1=c_2=c_3$.
\end{theorem}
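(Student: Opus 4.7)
The plan is to apply the banded unlink machinery of Section~\ref{sec:singularbandedunlinkdiagrams} to the embedded setting, using a Morse function adapted to $\tri$. First, I would fix a self-indexing Morse function $h: X^4 \to [0,4]$ that is $\tri$-compatible, so $X_1 = h^{-1}([0, 3/2])$ contains the index $0$ and $1$ critical points of $h$, $X_2$ contains the index $2$ critical points with their descending manifolds contained in $X_1 \cup X_2$, and $X_3$ contains the index $3$ and $4$ critical points. Applying Lemma~\ref{thm:realizingsurface} (whose proof simplifies considerably when $S$ is embedded, since there are no vertices to track), after a small isotopy $S$ is a realizing surface for a banded unlink diagram $(\K, L, B)$: the surface consists of boundary-parallel minimum disks with boundary $L^-$ in $h^{-1}([0,1/2])$, a realizing surface segment at height $3/2$, a vertical cylinder in $h^{-1}([3/2 + \varepsilon, 5/2])$, and boundary-parallel maximum disks bounded by $L^+_B$ in $h^{-1}([5/2, 4])$.

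The central step is to partition the bands $B = B_2 \sqcup B_3$ so that the intermediate link $L^+_{B_2}$, when viewed in $h^{-1}(5/2-\varepsilon) \subset \partial X_2$, bounds a collection of boundary-parallel disks in $X_2 \cong \natural_{k_2} S^1 \times B^3$. If such a partition exists, I can rearrange $S$ so that the bands of $B_2$ are resolved as we cross $X_1 \cap X_2$ while the bands of $B_3$ are resolved as we cross $X_2 \cap X_3$. Then $S \cap X_1$ is a trivial family of $c_1 = |D_-|$ disks (automatic from the banded unlink condition), $S \cap X_3$ is a trivial family of $c_3 = |D_+|$ disks (likewise), and $S \cap X_2$ is the cobordism from $L^-$ to $L^+_{B_2}$ capped by the disks in $X_2$, which after isotopy is itself a trivial family of $c_2$ disks. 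The bridge number $b$ can then be read off from the endpoints of the bands and the boundaries of the disks meeting $\Sigma_g = X_1 \cap X_2 \cap X_3$.

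The main obstacle is producing the partition. I would do this by following the Meier--Zupan strategy: start with an arbitrary splitting of $B$, then modify it using band slides, band swims, and (crucially) band stabilizations, i.e.\ introducing cancelling band pairs to enlarge $B_2$ or $B_3$. Since $\partial X_2 = (X_1 \cap X_2) \cup_{\Sigma_g} (X_2 \cap X_3)$ is a genus-$g$ Heegaard splitting and $X_2$ is a $4$-dimensional $1$-handlebody, any unlink in $\partial X_2$ bounds trivial disks in $X_2$ once it is reduced to a standard form; the stabilization moves give enough flexibility to push $L^+_{B_2}$ into such a standard form. Throughout, isotopies of $S$ in $X$ translate into band moves on $(L, B)$ by Theorem~\ref{maintheorem}, so at the end $S$ is ambient isotopic to a surface in $(b, c)$-bridge position for the resulting values of $b$ and $c = (c_1, c_2, c_3)$.

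Finally, to arrange $c_1 = c_2 = c_3$, I would apply perturbation moves in the regions where $c_i$ is deficient. A single perturbation in the $i$-th region increases $c_i$ by $1$ and $b$ by $1$ while leaving $c_j$ unchanged for $j \ne i$; performing $\max\{c_j\} - c_i$ perturbations in each region then yields a balanced $(b', (c, c, c))$-bridge trisection with $c = \max\{c_j\}$.
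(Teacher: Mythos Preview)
The paper does not re-prove this cited Meier--Zupan result directly, but its proof of the immersed generalization, Theorem~\ref{thm:triexist}, specializes to the embedded case and is the right comparison.  That argument is structurally different from yours.  One does \emph{not} partition the bands between pieces of the trisection.  Instead, one works entirely inside $M=h^{-1}(3/2)=\partial X_1$, where $\Sigma=X_1\cap X_2\cap X_3$ is a Heegaard surface for the splitting $M=H_1\cup_\Sigma H_2$ with $H_1=X_3\cap X_1$ and $H_2=X_1\cap X_2$.  The banded link $(L,B)$ is isotoped so that $L$ is in bridge position with respect to $\Sigma$ and every band of $B$ lies in $H_2$ as a short flat band parallel to $\Sigma$ (Figure~\ref{fig:getbridgeposition}, Lemma~\ref{lemma:bridgeposition}).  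For the realizing surface $S'$ one then reads off directly that $S'\cap X_1$ is the minimum disks, $S'\cap X_3$ is the maximum disks, and $S'\cap X_2$ is built from the bridges of $L\cap H_2$ by attaching the (now trivial) bands in $B$, hence is itself a trivial disk system.  All three tangles $S'\cap(X_i\cap X_j)$ are automatically trivial.

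Your partition scheme rests on a geometric misconception.  You write that the intermediate link $L^+_{B_2}$ is to be viewed ``in $h^{-1}(5/2-\varepsilon)\subset\partial X_2$'', but $h^{-1}(5/2-\varepsilon)$ is not contained in $\partial X_2$; indeed $\partial X_2=(X_1\cap X_2)\cup_\Sigma(X_2\cap X_3)$ is a closed $3$--manifold sitting mostly at height $3/2$, not a level set.  The piece $X_2$ is not a product cobordism between two level sets with bands distributed across it.  Consequently, the sentence ``the bands of $B_2$ are resolved as we cross $X_1\cap X_2$ while the bands of $B_3$ are resolved as we cross $X_2\cap X_3$'' has no clear meaning, and even granting some interpretation of it, you have given no reason why the resulting $S\cap X_2$ (a surface with saddles coming from $B_2$, capped by hypothetical disks) would be a \emph{trivial} disk system rather than a more complicated cobordism.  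The missing idea is precisely the bridge position of $(L,B)$ relative to the central surface $\Sigma$, which forces each disk of $S\cap X_2$ to be the obvious one built from a single band and two adjacent bridge arcs.  Your final paragraph on balancing $c_1=c_2=c_3$ via perturbations is correct.
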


Because a collection of boundary parallel disks in $\natural (S^1\times B^3)$ is uniquely determined by its boundary (up to isotopy rel boundary), a surface $S$ in bridge position is determined up to isotopy by $S\cap(\cup_{i\neq j}X_i\cap X_j)$.

There is a natural perturbation of a surface in bridge position, analogous to perturbation of a knot in bridge position within a $3$-manifold. We define the simplest version of Meier--Zupan's  original perturbation operation \cite{meier2017bridge,meier2018bridge}.

\begin{definition}\label{def:embedperturb}
Let $S\subset X^4$ be a surface in $(b,c)$--bridge position with respect to $\tri=(X_1,X_2,X_3)$. Let $S'$ be the surface obtained from $S$ as in Figure~\ref{fig:embedperturb}. In words, we take a small disk $D$ contained in $S\cap X_1$ whose boundary consists of an arc $\delta_1$ in the interior of $X_1$, an arc $\delta_2$ in $X_1\cap X_2$, and an arc $\delta_3$ in $X_3\cap X_1$. We take a parallel copy $\Delta$ of $D$ pushed off $S$ away from $\delta_1$, so $\Delta$ meets $S$ in the arc $\delta_1\subset\partial\Delta$ and the remaining boundary of $\Delta$ is an arc $\delta'$ in $\partial X_1$ that meets $X_1\cap X_2\cap X_3$ transversely in one point. Using the direction from which we obtained $\Delta$ from $D$, we frame $\Delta$ and isotope $S$ along $\Delta$ to introduce two more intersection points between $S$ and $X_1\cap X_2\cap X_3$.  We call the resulting surface $S'$ and say that $S'$ is obtained from $S$ by {\emph{elementary perturbation}}.  We likewise say that $S$ is obtained from $S'$ by {\emph{elementary deperturbation}}.

We may exchange the roles of $X_1,X_2$, and $X_3$ cyclically when performing this operation, i.e.\ alternatively obtain $S'$ from this compression operation in either $X_2$ or $X_3$. We still say $S'$ is obtained from $S$ by elementary perturbation and that $S$ is obtained from $S'$ by elementary deperturbation.

\begin{figure}
    \centering
    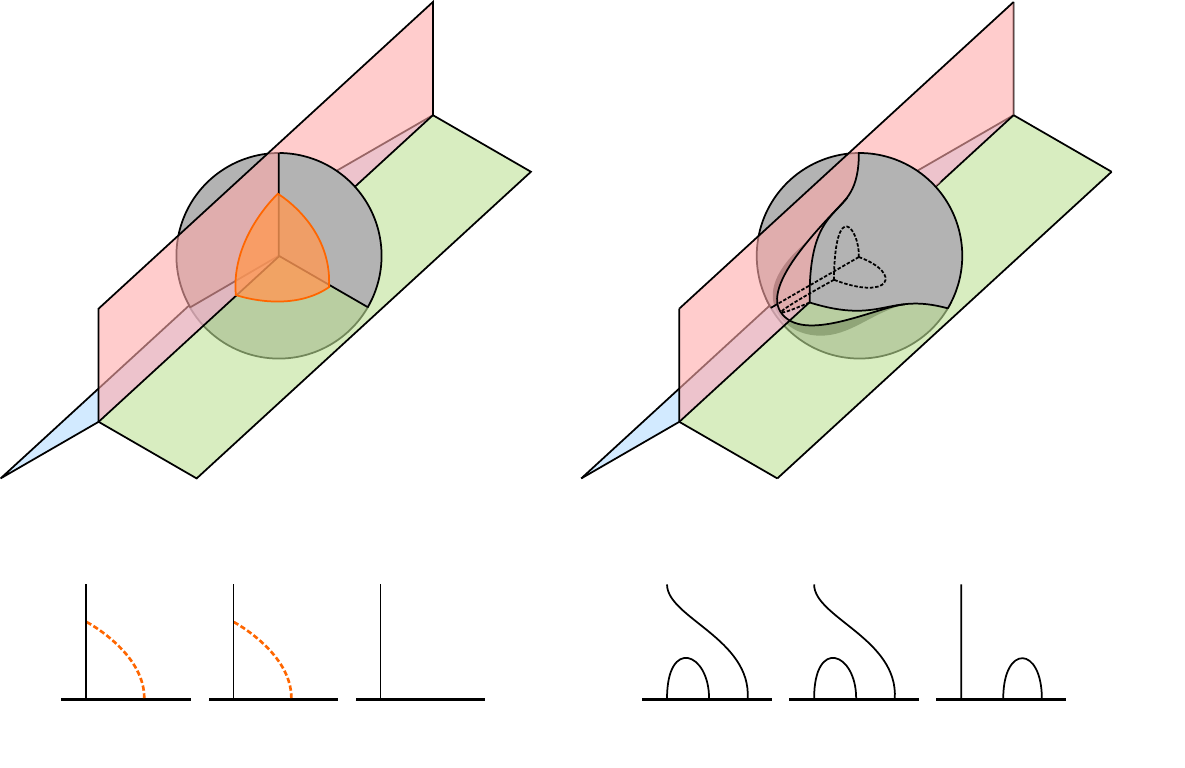
    \caption{{\bf{Left:}} A surface $S$ in $(b,c)$--bridge position with respect to a trisection $\tri$. We draw a neighborhood of an intersection of $S$ with the central surface of $\tri$. {\bf{Right:}} We perturb $S$ to obtain a surface $S'$ in $(c',b+1)$--bridge position.}
    \label{fig:embedperturb}
\end{figure}

\end{definition}

\begin{proposition}\cite[Lemma 5.2]{meier2018bridge}
Let $S$ be a surface in $(b,c)$--bridge position with respect to a trisection $\tri=(X_1,X_2,X_3)$, with $c=(c_1,c_2,c_3)$. Let $S'$ be obtained from $S$ by elementary perturbation, using a disk in $X_i$. Then $S'$ is in $(c',b+1)$--bridge position with respect to $\tri$, with $c'_j=c_j$ for $j\neq i$ and $c'_i=c_i+1$.
\end{proposition}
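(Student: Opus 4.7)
The plan is to verify that $S'$ satisfies each defining condition of $(c',b+1)$-bridge position in Definition~\ref{def:embeddedbridge}, exploiting the fact that the elementary perturbation is a local isotopy. By construction, the disk $\Delta$ and the support of the perturbing isotopy lie in an arbitrarily small $4$-ball $N \subset X_i$ whose closure meets $\partial X_i$ along a small neighborhood of $\delta'$. Outside $N$ we have $S' = S$, and all bridge-position conditions hold there automatically; only the local picture inside $N$ requires analysis.

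I would first handle the counts $c'_j$. Because the perturbing isotopy is supported in $X_i$, we obtain $S' \cap X_j = S \cap X_j$ for $j \ne i$, so these remain $c_j$ disjoint boundary-parallel disks. Inside $X_i$, the original boundary-parallel disk $D_0 \subset S \cap X_i$ containing $D$ becomes two disks in $S' \cap X_i$: a small disk $\widetilde{D}$ isotopic to $D$ but with $\delta_1 \subset S$ replaced by a slight push-in of $\delta' \subset \partial X_i$, and a larger disk obtained from $D_0$ by replacing $D$ with a thin strip near $\delta'$. Both remain boundary-parallel: the small disk $\widetilde{D}$ cobounds an obvious 3-ball with a disk in $\partial X_i$ (as its boundary lies within a small neighborhood of $\partial X_i$), while the larger disk is ambient isotopic to $D_0$, which is boundary-parallel by hypothesis. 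This gives $c'_i = c_i + 1$ and $c'_j = c_j$ for $j \ne i$.

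For the pairwise tangles, the perturbation (by design) introduces exactly two new points of intersection between $S$ and $X_1 \cap X_2 \cap X_3$, lying in a small neighborhood of the point where $\delta'$ crosses $X_1 \cap X_2 \cap X_3$. Since each transverse intersection point of $S'$ with $X_1 \cap X_2 \cap X_3$ is a common endpoint of one arc in each of the three pairwise tangles $S' \cap X_i \cap X_j$, the total number of endpoints in each tangle increases by $2$, bringing it to $2(b+1)$; hence each tangle contains $b+1$ arcs. To verify that each tangle is still trivial, a local inspection of the perturbation (cf.\ Figure~\ref{fig:embedperturb}) shows that the new arc in each pairwise tangle is supported in $N$ and cobounds a small bigon with an arc in $X_1 \cap X_2 \cap X_3$. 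These bigons can be chosen disjoint from the pre-existing triviality disks of the original tangles, so the extended tangles remain trivial. The main technical obstacle is this last step, where one must carefully extend the trivializing disk system of the original tangles to incorporate the new bigons without introducing extra intersections; this is handled by a standard general position argument leveraging the arbitrary smallness of $N$.
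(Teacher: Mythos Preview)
The paper does not supply its own proof of this proposition; it is cited from Meier--Zupan and stated without argument. So there is no paper proof to compare against, and your attempt must be judged on its own.

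Your proof is internally inconsistent. You assert that the perturbing isotopy is supported in a $4$-ball $N\subset X_i$, and conclude that $S'\cap X_j=S\cap X_j$ for $j\neq i$. But the interior of the handlebody $X_{i+1}\cap X_{i+2}$ is disjoint from $X_i$ (their intersection is exactly $\Sigma=X_1\cap X_2\cap X_3$), so an isotopy supported in $X_i$ is the identity on $(X_{i+1}\cap X_{i+2})\setminus\Sigma$. Hence $S'\cap(X_{i+1}\cap X_{i+2})$ would still be a $b$-strand tangle, contradicting your later (correct) count that each pairwise tangle has $b+1$ arcs. Equivalently, $S'\cap X_j=S\cap X_j$ for $j\ne i$ would force $S'\cap\Sigma=S\cap\Sigma$, contradicting the two new points you need on $\Sigma$.

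What actually happens in Definition~\ref{def:embedperturb} is that isotoping along $\Delta$ pushes a band of $S$ across $\partial X_i$: part enters $X_{i+1}$, part enters $X_{i-1}$, and a short new arc appears in $X_{i+1}\cap X_{i-1}$. To repair your argument, take $N$ to be a small $4$-ball neighborhood of a point of $\Sigma$ meeting all three sectors $X_1,X_2,X_3$, rather than one contained in $X_i$. With that correction the remaining local analysis can be carried out, though you must now also verify that the disk systems $S'\cap X_j$ for $j\neq i$ (each of which has acquired a small finger) remain boundary-parallel.
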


In previous work the authors of this paper  showed that any two bridge trisections of a surface are related by elementary perturbations.
\begin{theorem}[\cite{bandpaper}]\label{oldbridgethm}
Let $S$ and $S'$ be surfaces in bridge position with respect to a trisection $\tri$ of a 4--manifold $X^4$. Suppose $S$ is isotopic to $S'$. Then $S$ can be taken to $S'$ by a sequence of elementary perturbations and deperturbations, followed by a $\tri$--regular isotopy.
\end{theorem}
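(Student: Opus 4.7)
The plan is to reduce this uniqueness result for bridge trisections to the analogous uniqueness result for banded unlink diagrams of embedded surfaces (the embedded precursor of Theorem~\ref{maintheorem}, which is the main result of \cite{bandpaper}). The key idea is that a bridge trisection is essentially a Morse-theoretic decomposition in disguise, and the band moves on a diagram precisely record the moves needed to relate bridge trisections.

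First, I would fix a $\tri$-compatible self-indexing Morse function $h:X^4\to[0,4]$ with gradient-like vector field $\nabla h$, yielding a Kirby diagram $\K$ of $X^4$ in which $X_1 = h^{-1}([0,3/2])$ and $X_2$ contains the $2$-handle cores. Given $S$ in $(b,c)$-bridge position with respect to $\tri$, I would show that after a $\tri$-regular isotopy, $S$ is a realizing surface for a canonical banded unlink diagram $\D(S)=(\K, L, B)$: the $c_1$ boundary-parallel disks $S\cap X_1$ cap the unlink $L^-\subset h^{-1}(3/2)$ obtained by flattening the trivial tangle $S\cap(X_1\cap X_2)$; the cobordism $S\cap X_2$ between $L^-$ and $L^+_B$ records the band attachments $B$; and the $c_3$ disks $S\cap X_3$ cap the unlink $L^+_B\subset h^{-1}(5/2)$ obtained by flattening $S\cap(X_2\cap X_3)$.

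Next I would apply the embedded banded unlink diagram uniqueness theorem of \cite{bandpaper}: since $S$ and $S'$ are isotopic surfaces in $X^4$ whose diagrams are associated to the same Morse data, $\D(S)$ and $\D(S')$ are related by a finite sequence of embedded band moves as in Figure~\ref{fig:oldisomoves}. The main work is then to realize each such band move on $\D(S)$ by a sequence of elementary perturbations, elementary deperturbations, and $\tri$-regular isotopies. Isotopy in $E(\K)$, band slides, band swims, and slides/swims over $L_2$ can each be performed by ambient isotopies supported in a neighborhood of $X_1\cap X_2$ or inside $X_2$, and can be arranged to respect the trisection decomposition. Slides of bands and unlink components over $L_1$ are realized by $\tri$-regular isotopies supported in $X_1$. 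The essential new ingredient is the cup/cap move: introducing a small unknotted circle together with a short band increases $c_1$ and $b$ each by one, which is precisely the combinatorial effect of an elementary perturbation in $X_1$ as described in Definition~\ref{def:embedperturb}. The cyclic symmetry among $X_1, X_2, X_3$, together with the ability to push minimum disks in $X_1$ up into ``maximum'' disks in $X_3$ via $\tri$-regular isotopy, allows us to realize every cup/cap move as a perturbation centered in any of the three sectors.

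The main obstacle I anticipate is the bookkeeping required to keep the intermediate surfaces in bridge position. A band move is a local combinatorial operation on the diagram, but its realization as an ambient isotopy of $S$ typically passes through surfaces that are not themselves in bridge position with respect to $\tri$. After each such realization, one must carry out an additional $\tri$-regular isotopy to return to bridge position; showing that this is always possible reduces to the Laudenbach--Po\'enaru-type fact that collections of boundary-parallel disks in each $X_i$ are unique up to isotopy rel boundary, together with the fact that trivial tangles in the handlebodies $X_i\cap X_j$ are unique up to isotopy rel endpoints. Once these reductions are in place, each band move corresponds either to a $\tri$-regular isotopy or to an elementary perturbation, completing the proof.
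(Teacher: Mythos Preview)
This theorem is only cited from \cite{bandpaper} here and not re-proved; however, the present paper does prove the immersed analogue (Theorem~\ref{bridgethm}), and your outline matches that strategy closely: pass to a banded unlink diagram via a $\tri$-compatible Morse function, invoke the band-move uniqueness theorem, and then realize each band move by perturbations and $\tri$-regular isotopy.

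That said, there is a genuine gap in your execution. You assert that isotopy in $E(\K)$, band slides, and band swims can all be realized by $\tri$-regular isotopies alone. This is false for band slides: in the paper's case analysis (Case~7, Figure~\ref{fig:bandslidebridge}) a band slide requires an elementary perturbation followed by a deperturbation, not merely a $\tri$-regular isotopy. Similarly, a cap move (Case~5) costs a perturbation \emph{and} a deperturbation, not just a single perturbation. More seriously, you underestimate the ``isotopy'' step itself. Before realizing each band move you must first isotope the banded unlink $(L,B)$ into a convenient local model while keeping it in bridge position with respect to $\Sigma$; two isotopic banded unlinks in bridge position do \emph{not} in general yield $\tri$-regularly isotopic bridge surfaces. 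The paper handles this via Theorem~\ref{theorem:bandlinkisotopy} and Corollary~\ref{cor:isotopicdiagrams}, which rest on the 3-dimensional common-stabilization theorem of Hayashi--Shimokawa and its banded refinement by Meier--Zupan: isotopic banded links in bridge position admit a common perturbation. Without this 3-dimensional input, your ``additional $\tri$-regular isotopy to return to bridge position'' does not exist in general, and the Laudenbach--Po\'enaru-type uniqueness of disks you invoke is not sufficient to supply it.
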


When $\tri$ is the standard trisection of $S^4$, Theorem~\ref{oldbridgethm} is a result of Meier and Zupan \cite{meier2017bridge}.

\subsection{Basic definitions for singular links and immersed surfaces}\label{sec:trisectionimmerse}

In Definition~\ref{def:embeddedbridge} of a bridge trisection of an embedded surface, we cut a 4--manifold into simple pieces so that an embedded surface is cut into systems of boundary-parallel disks. To describe immersed surfaces, we need to describe this notion with slightly different language.

\begin{definition}\label{def:trivimmersedtangle}
Let $C_1,...,C_k$ be arcs properly immersed in a $3$-manifold $M^3$. Assume that all intersections (including self-intersections) of $C_1,\ldots, C_k$ are isolated points that are not tangencies. Let $V=(\boundary M^3)\times I$ be a collar neighborhood of $\boundary M^3$ and let $h:V\to I$ be projection onto the second factor.

We say that $(C_1,\ldots, C_k)$ is a {\emph{trivial immersed tangle}} if the following are satisfied.
\begin{enumerate}
    \item Each $C_i$ is contained in $V$.
    \item All self-intersections of $C_i$ and intersections of $C_i$ with $C_j$ are contained in the interior of $M$.
    \item There is an immersed tangle $(C'_1,\ldots, C'_k)$ that is isotopic rel boundary to $(C_1,\ldots, C_k)$ so that $h|C'_i$ is Morse with a single critical point for all $i$.
    \end{enumerate}
    
\end{definition}

\begin{definition}
Let $D_1,\ldots,D_k$ be $2$--dimensional disks properly immersed in a $4$-manifold $X^4$. Assume that all intersections (including self-intersections) of $D_1$,$\ldots$, $D_k$ are isolated, transverse intersections contained in $\partial X^4$ (so $\partial (\cup D_i)$ is a singular link in $\partial X$). Let $V=\boundary X\times I$ be a neighborhood of $\boundary X$ and let $h:V\to I$ be projection onto the second factor.

We say that $(D_1,\ldots, D_k)$ is a {\emph{trivial immersed disk system}} if the following are satisfied (up to isotopy rel boundary).
\begin{enumerate}
    \item Each $D_i$ is contained in $V$.
    \item The restriction $h|D_i$ is Morse with a single critical point for all $i$.
    \end{enumerate}
\end{definition}

Trivial immersed tangles and disk systems are the immersed analogue to systems of boundary parallel embedded tangles and disks. With immersed tangles we can easily define an analogue of bridge position for singular links.

\begin{definition}
Let $L$ be a singular link in a 3--manifold $M$ with a Heegaard splitting $(H_1,H_2)$. Let $F:=H_1\cap H_2$.  

We say that $L$ is in {\emph{bridge position}} with respect to $F$ if $L\cap H_i$ is a trivial immersed tangle for $i=1,2$. See Figure~\ref{fig:immersedbridgeposition}. If $(L,\sigma)$ is a marked singular link, then we say that $(L,\sigma)$ is in {\emph{bridge position}} if $L$ is in bridge position.

\begin{figure}
    \centering
    \includegraphics[width=2in]{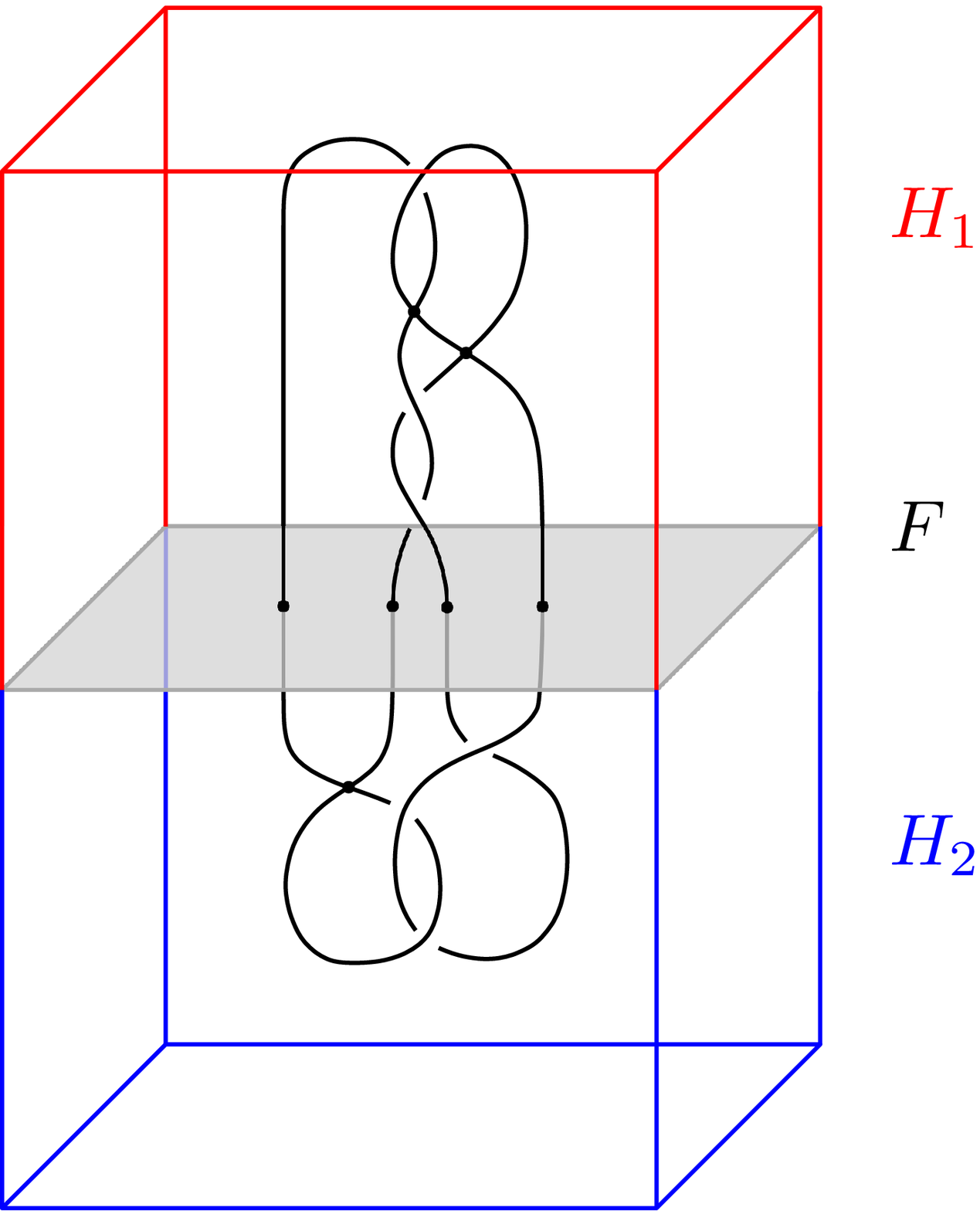}
    \caption{A singular link in bridge position.}
    \label{fig:immersedbridgeposition}
\end{figure}
\end{definition}

 We can perturb immersed tangles just as we perturb embedded tangles, but we must also account for vertices.

\begin{definition}
Let $L$ be a marked singular link in a 3--manifold $M$ with Heegaard splitting $(H_1,H_2)$. Suppose $L$ is in bridge position with respect to $\Sigma:=H_1\cup H_2$.

Let $L'$ be a marked singular link obtained from $L$ by perturbation near $\Sigma$, as in Figure~\ref{fig:singularperturb}. Note that we allow up to one vertex of $L$ to be between the original intersection of $L$ with $\Sigma$ and the newly created pair of intersections. Then we say $L'$ is obtained from $L$ by {\emph{elementary perturbation}}, and $L$ is obtained from $L'$ by {\emph{elementary deperturbation}}. 

\begin{figure}
    \centering
    \includegraphics[width=4in]{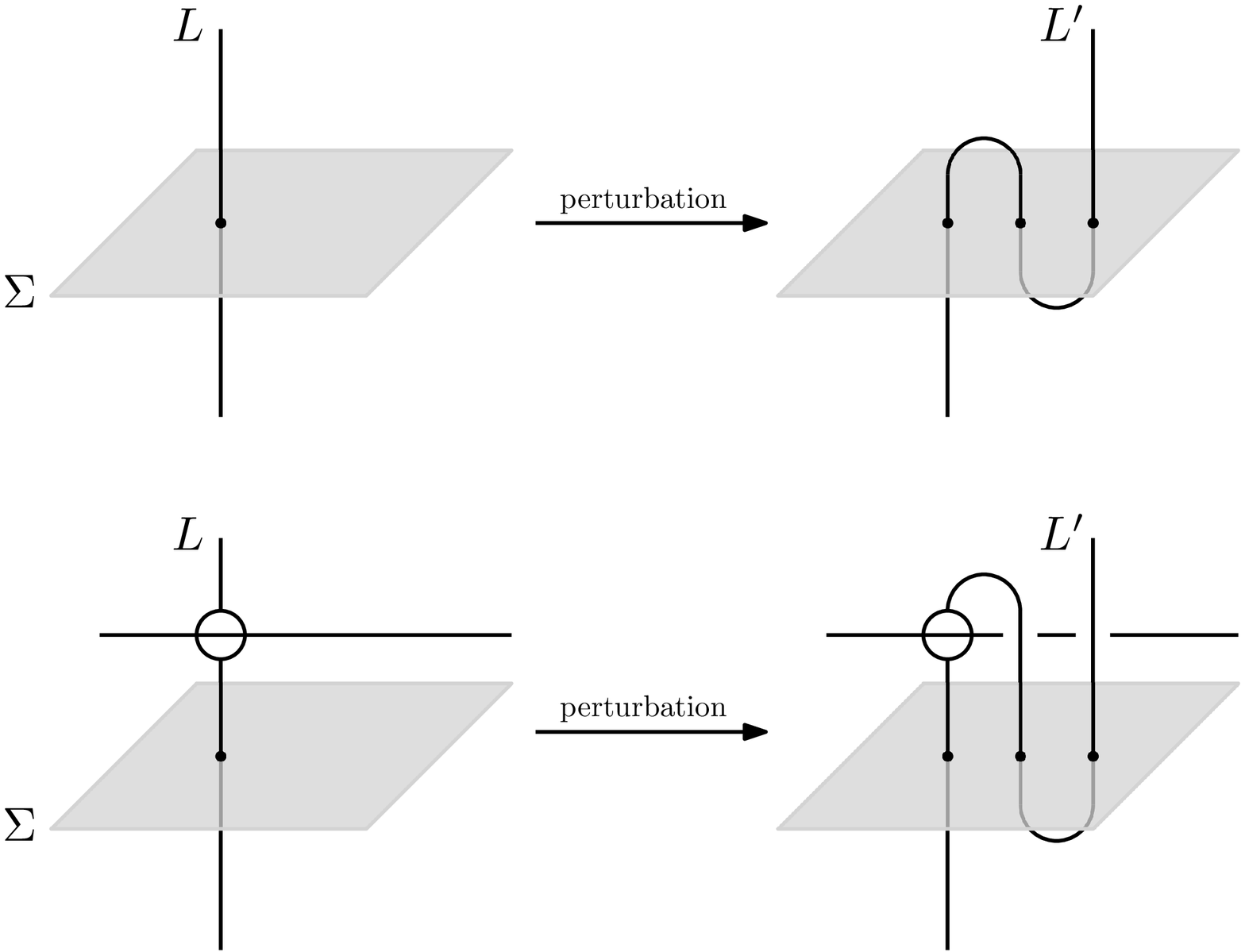}
    \caption{An elementary perturbation of a marked singular link in bridge position.}
    \label{fig:singularperturb}
\end{figure}

Let $L''$ be a marked singular link obtained from $L$ by moving a vertex in $L$ through $\Sigma$ as in the local model shown in Figure~\ref{fig:vertexperturb}. Then we say $L''$ is obtained from $L$ (and vice versa) by {\emph{vertex perturbation}}.

\begin{figure}
    \centering
    \includegraphics[width=4in]{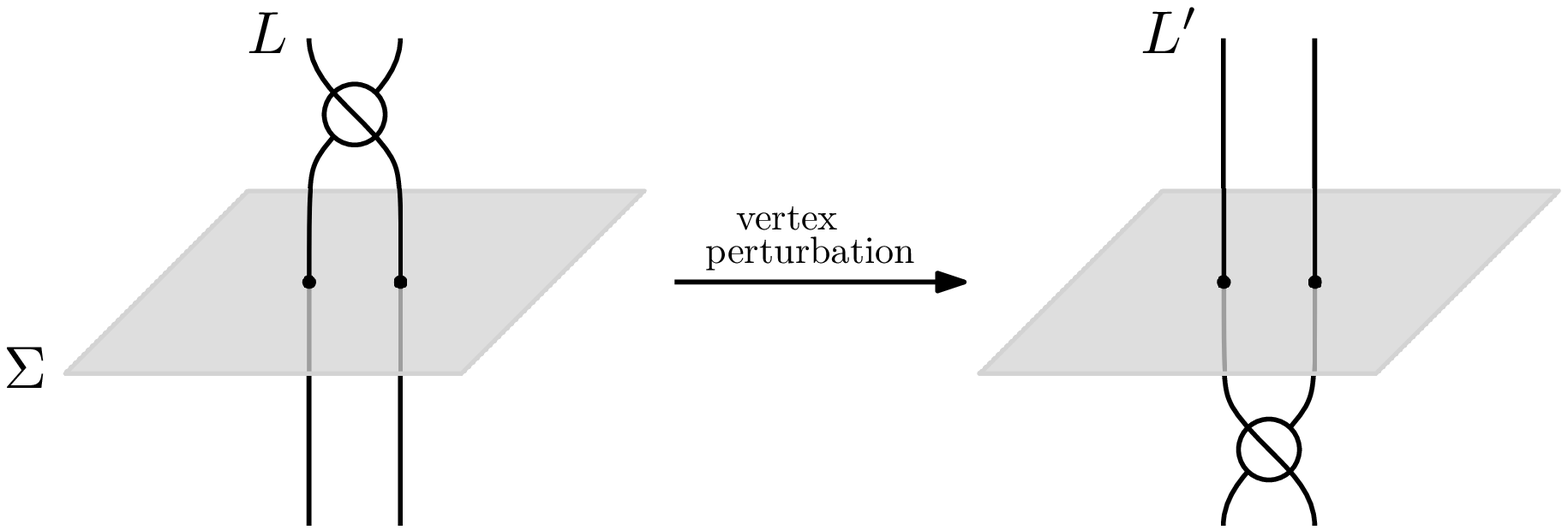}
    \caption{A vertex perturbation of a marked singular link in bridge position.}
    \label{fig:vertexperturb}
\end{figure}
\end{definition}

\begin{theorem}\label{theorem:bridgelinkisotopy}
Let $L$ and $L'$ be isotopic marked singular links in a 3--manifold $M$ with Heegaard splitting $(H_1,H_2)$. Assume $L$ and $L'$ are in bridge position with respect to $\Sigma:=H_1\cap H_2$. Then there exists a marked singular link $L''$ that can be obtained from $L$ and from $L'$ by  sequences of elementary perturbations, vertex perturbations, and isotopies fixing $\Sigma$ setwise.
\end{theorem}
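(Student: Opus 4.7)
My plan is to adapt the standard one--parameter family argument for uniqueness of embedded bridge position, carefully tracking vertices and their markings. Given an ambient isotopy $\{L_t\}_{t\in[0,1]}$ from $L=L_0$ to $L'=L_1$, I would first perturb so that the isotopy is generic with respect to $\Sigma$: the vertices of $L_t$ trace smooth arcs with constant markings; for all but finitely many critical times $t_1<\cdots<t_n$ in $(0,1)$ the link $L_t$ meets $\Sigma$ transversely away from its vertices; and at each $t_i$ exactly one codimension--one event occurs, namely either (a) $L_t$ has a single quadratic tangency with $\Sigma$ in the interior of an edge, or (b) a single vertex of $L_t$ lies on $\Sigma$ and crosses it transversely. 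Such genericity is a standard transversality statement for the $1$--parameter family of maps $L_t\subset M$ against the codimension--one submanifold $\Sigma$, together with the condition that vertex positions are generic with respect to $\Sigma$.

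On each open interval between consecutive critical times, the combinatorial bridge structure of $L_t$ is constant: the cardinality of $L_t\cap\Sigma$, the distribution of arcs between $H_1$ and $H_2$, and the placement of vertices in $H_1\sqcup H_2$ all stay fixed. The family $\{L_t\cap H_i\}$ is then a one--parameter family of trivial immersed tangles rel boundary in $H_i$, so by a parametric version of Definition~\ref{def:trivimmersedtangle} combined with isotopy extension, the endpoints of this family are connected by an ambient isotopy of $H_i$ fixing $\Sigma$. Gluing along $\Sigma$ produces a $\Sigma$--preserving ambient isotopy of $M$ carrying $L_{t_i+\varepsilon}$ to $L_{t_{i+1}-\varepsilon}$. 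At a tangency time of type (a), the local model near the tangency is the birth or death of a pair of points in $L\cap\Sigma$ together with a ``half--moon'' arc on one side, which (allowing for a vertex that may happen to lie inside the local ball) is exactly the local model of Figure~\ref{fig:singularperturb}; and at a vertex--crossing time of type (b), the local model is exactly Figure~\ref{fig:vertexperturb}. Concatenating these steps, the isotopy from $L$ to $L'$ decomposes into a finite sequence of elementary perturbations, elementary deperturbations, vertex perturbations, and $\Sigma$--preserving isotopies.

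To produce the common link $L''$ of the statement, reached from both $L$ and $L'$ by only perturbations (no deperturbations) and $\Sigma$--preserving isotopies, I would invoke the standard reordering trick: each elementary or vertex perturbation is supported in a small ball that is disjoint from the remainder of $L$, so after $\Sigma$--preserving isotopies we may assume the supports of the various perturbation and deperturbation moves in the sequence are pairwise disjoint. Adjacent (deperturbation, perturbation) pairs with disjoint supports commute up to $\Sigma$--preserving isotopy, so we may push every deperturbation past every perturbation to the end of the sequence; the intermediate link after all perturbations and before any deperturbations is the desired $L''$. The main obstacle I anticipate is the claim invoked in the middle paragraph, that an isotopy of trivial immersed tangles rel boundary extends to an ambient isotopy of $H_i$ fixing $\Sigma$: for embedded boundary--parallel tangles this is classical, but in the presence of self-intersections and marked vertices one must verify that the space of trivial immersed tangles with fixed combinatorial data is path--connected and that paths lift to ambient isotopies, which I would do by parametrizing the collar--straightening isotopy of Definition~\ref{def:trivimmersedtangle} and extending trivially across the rest of $H_i$.
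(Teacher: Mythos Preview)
Your argument has a genuine gap in the middle paragraph. You assert that on each open interval between critical times, $L_t\cap H_i$ is a one--parameter family of \emph{trivial} immersed tangles. But nothing in your setup guarantees this: the hypotheses only say that $L_0$ and $L_1$ are in bridge position, not that $L_t$ is in bridge position for intermediate $t$. A generic ambient isotopy of a link in bridge position will not keep the link in bridge position throughout --- the arcs $L_t\cap H_i$ can become knotted or linked in $H_i$ while remaining transverse to $\Sigma$ the entire time. This is precisely the difficulty that the stable--equivalence theorems for (embedded) bridge position address, and it is why the Hayashi--Shimokawa proof uses a sweep--out/graphic argument rather than a naive Cerf--style analysis of $L_t\cap\Sigma$. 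Your identification of type~(a) tangencies with elementary perturbations suffers from the same problem: an elementary perturbation by definition takes a bridge position to a bridge position, whereas a tangency event in a generic isotopy merely changes $|L_t\cap\Sigma|$ by two with no control on triviality of the resulting tangles.

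The paper sidesteps this by reducing to the already--established nonsingular results. It replaces the marked singular link $L$ by its positive resolution $L^+$ together with short framed arcs $a_1,\ldots,a_n$ recording the vertices, and similarly for $L'$. Then it quotes Meier--Zupan's banded--link version of Hayashi--Shimokawa to produce a common perturbation $J$ of $L^+$ and ${L'}^+$ in which the arcs $a_i$, $a'_i$ have been carried to parallel arcs $b_i$, $b'_i$ lying close to $\Sigma$ (possibly on opposite sides). Contracting $J$ along these arcs recovers singular links $\hat J$, $\hat J'$ in bridge position that differ only by vertex perturbations, and one takes $L''=\hat J$. The point is that all of the hard work --- controlling what happens when the tangles fail to be trivial --- is outsourced to the embedded theory; the singular structure rides along as auxiliary framed--arc data and only re--enters at the very end as vertex perturbations.
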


\begin{proof}
When $L$ and $L'$ are nonsingular, this is a theorem of Hayashi and Shimokawa \cite{hayashi}. We will apply a version of this theorem for nonsingular banded links due to Meier and Zupan \cite{meier2017bridge,meier2018bridge} by using the following observation.  First, recall from Section~\ref{sec:markedlinks} that if $L$ is a marked singular link, then $L^+$ denotes the nonsingular link obtained by positively resolving the vertices of $L$.

\begin{observation}
There exist disjoint framed arcs $a_1,\ldots, a_n$ with endpoints on $L^+$ so that contracting $L^+$ along $a_1,\ldots, a_n$ yields $L$.
\end{observation}

Similarly let $a'_1,\ldots, a'_n$ be framed arcs with endpoints on ${L'}^+$ so that contracting ${L'}^+$ along $a'_1,\ldots, a'_n$ yields $L'$.

Now by Meier and Zupan \cite{meier2017bridge,meier2018bridge}, there exists a link $J$ that can be obtained from $L^+$ and from ${L'}^+$ by elementary perturbations and isotopies fixing $\Sigma$ setwise.  Moreover, these isotopies and perturbations may be chosen to carry $a_i$ and $a'_i$ to framed arcs $b_i$, $b'_i$ (respectively) with endpoints on $J$, so that $b_i,b'_i$  are parallel to $\Sigma$ with surface framing, and are parallel to each other (though possibly on opposite sides of $\Sigma$). 
In Meier and Zupan's construction, during this sequence of perturbations and isotopies of $L^+$ (resp. ${L'}^+$), $a_i$ (resp. $a'_i$) never intersect $\Sigma$, so these perturbations and isotopies may be achieved by perturbations and isotopies of $L$ (resp. $L'$). Let $\hat{J}$ and $\hat{J'}$ be the marked singular links obtained by contracting $J$ along $\cup b_i$ and $\cup b'_i$, respectively, and with markings induced by those of $L$ and $L'$. Then $\hat{J'}$ can be transformed into $\hat{J}$ by isotopy fixing $\Sigma$ and a vertex perturbation for each pair $a_i,a'_i$ separated in different components of $M\setminus\Sigma$. Therefore, the claim holds with $L''=\hat{J}$.
\end{proof}

\subsection{Bridge trisections of immersed surfaces}\label{sec:trisectionbridgeimmerse}

We now use the definitions from Section~\ref{sec:trisectionimmerse} to define bridge trisections of self-transverse immersed surfaces.

\begin{definition}
Let $\tri=(X_1,X_2,X_3)$ be a trisection of a closed $4$-manifold $X^4$. Let $S$ be a self-transverse immersed surface in $X^4$. We say that $S$ is in $(b,c)$--bridge position with respect to $\tri$ if for each $i\neq j\in\{1,2,3\}$,
\begin{enumerate}
    \item $S\cap X_i$ is a trivial immersed disk system of $c_i$ disks,
    \item $S\cap X_i\cap X_j$ is a trivial immersed tangle of $b$ strands. 
\end{enumerate}
Here, $b$ is a positive integer and $c=(c_1,c_2,c_3)$ is a triple of positive integers. 
\end{definition}

In Figure~\ref{fig:immersedbridgetrisection}, we give some small examples of bridge trisections of 2--spheres immersed in $S^4$.

\begin{figure}
    \includegraphics[width=5.8cm,angle=90]{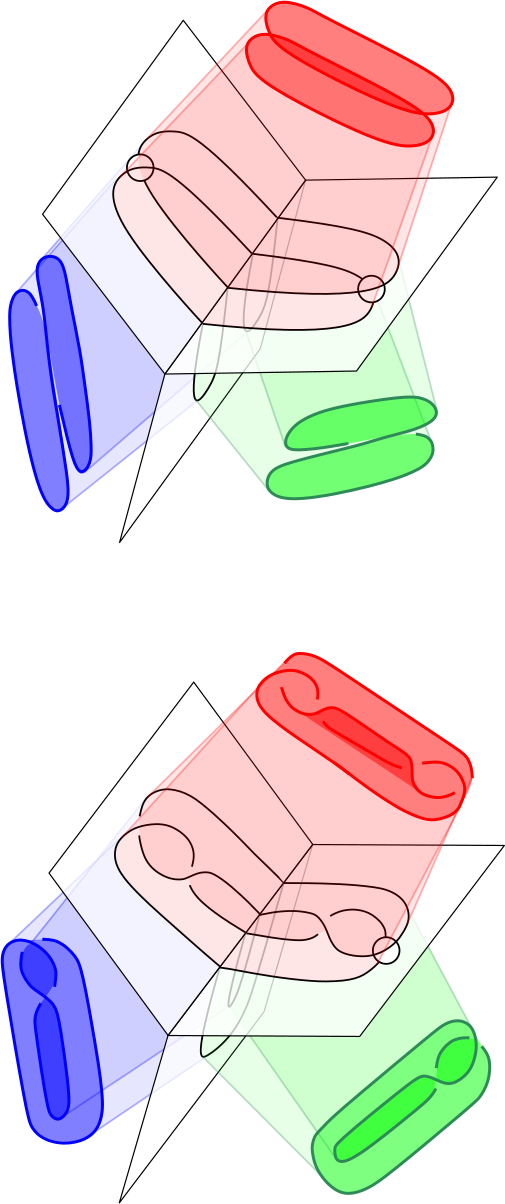}
    \caption{Two $((2,1,1),2)$--bridge trisections of immersed 2--spheres in $S^4$. {\bf{Left:}} This 2--sphere has a pair of self-intersections of opposite sign. {\bf{Right:}} This 2--sphere has a single self-intersection.}
    \label{fig:immersedbridgetrisection}
\end{figure}

There is again a natural notion of perturbing an immersed surface in $(b,c)$--bridge position. More precisely, the notion of perturbing an embedded surface in bridge position works perfectly well for an immersed surface in bridge position. We write the definition below, believing that the value of transparency outweighs the cost of redundancy.

\begin{definition}\label{def:perturbbridge}
Let $S$ be a self-transverse immersed surface in bridge position with respect to a trisection $\tri=(X_1,X_2,X_3)$. In Figure~\ref{fig:embedperturb}, we depict a small neighborhood of a point in $S\cap \Sigma$, for $\Sigma:=X_1\cap X_2\cap X_3$. Let $S'$ be the surface obtained from $S$ as in Figure~\ref{fig:embedperturb}. We say that $S'$ is obtained from $S$ by {\emph{elementary perturbation}}, and that $S$ is obtained from $S'$ by {\emph{elementary deperturbation}}.
\end{definition}

If $S$ is in bridge position with respect to a trisection $\tri=(X_1,X_2,X_3)$, then elementary perturbation and $\tri$--regular isotopy cannot move a self-intersection of $S$ from $X_i$ to $X_j$ for $i\neq j$. Thus, we introduce one new kind of perturbation for immersed surfaces in bridge position, based on the most elementary way one might move a self-intersection of $S$ from $X_i$ to $X_j$.

\begin{definition}\label{def:vertexperturb}
Let $v$ be a vertex of the singular link $S\cap X_i\cap X_{i+1}$ for some $i$ (where the indices are understood to be taken $\text{mod }3$), so that $v$ is a self-intersection of $S$. Suppose $v$ has a neighborhood as in Figure~\ref{fig:surfacevertexperturb}, so that $v$ is near $\Sigma:=X_1\cap X_2\cap X_3$. We may isotope $S$ to move $v$ into $\Sigma$ and then into either $X_{i+1}\cap X_{i+2}$ or $X_{i-1}\cap X_i$, producing a new surface $S'$ in $(b,c)$--bridge position. See Figures~\ref{fig:surfacevertexperturb} and~\ref{fig:surfacevertexperturb2}. We say that $S'$ is obtained from $S$ (and vice versa) by {\emph{vertex perturbation}}.
\end{definition}

\begin{figure}
    \centering
    \includegraphics[width=90mm]{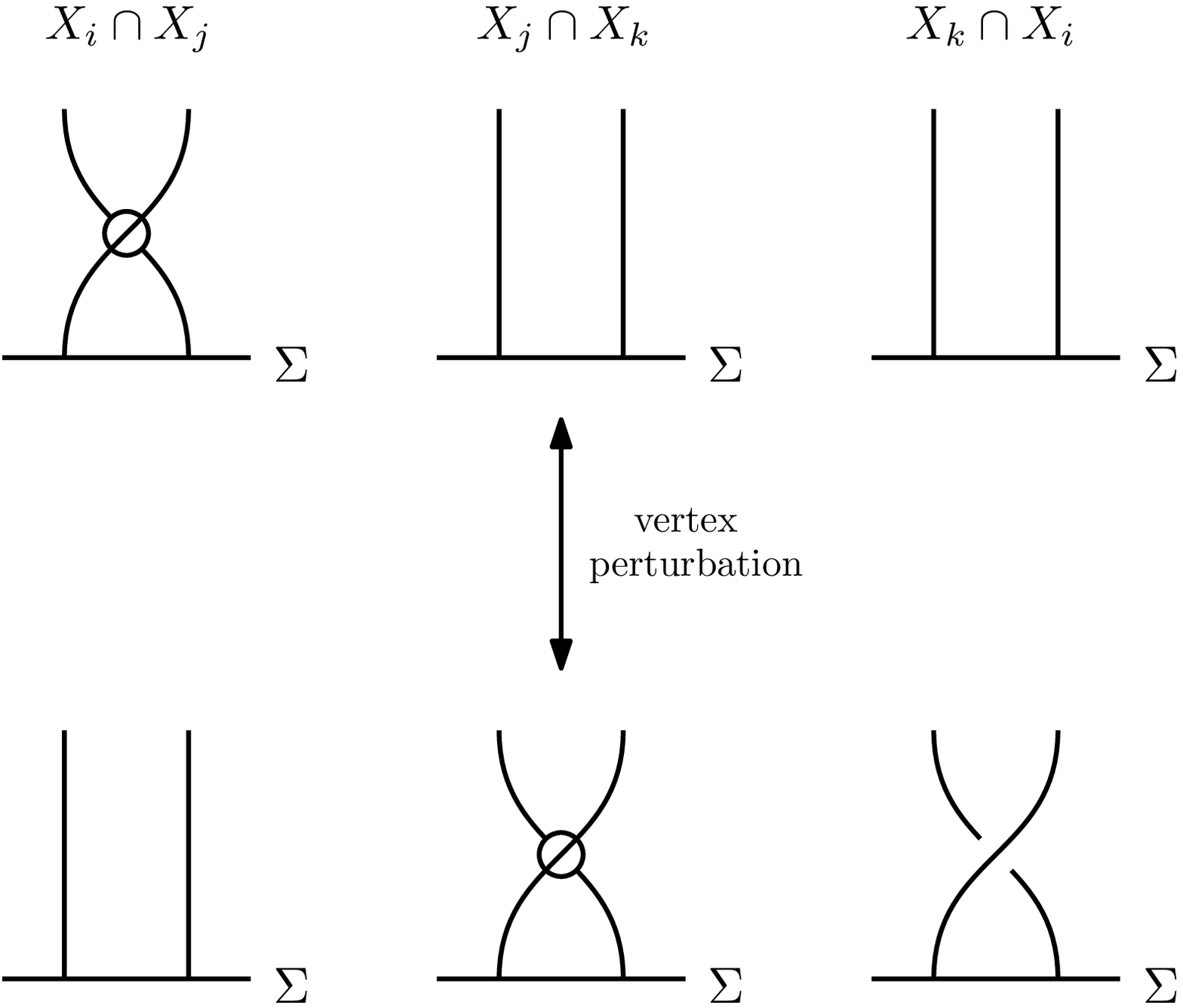}
    \caption{A vertex perturbation of a triplane diagram.}
    \label{fig:surfacevertexperturb}
\end{figure}
\begin{figure}
    \centering
    \includegraphics[width=\textwidth]{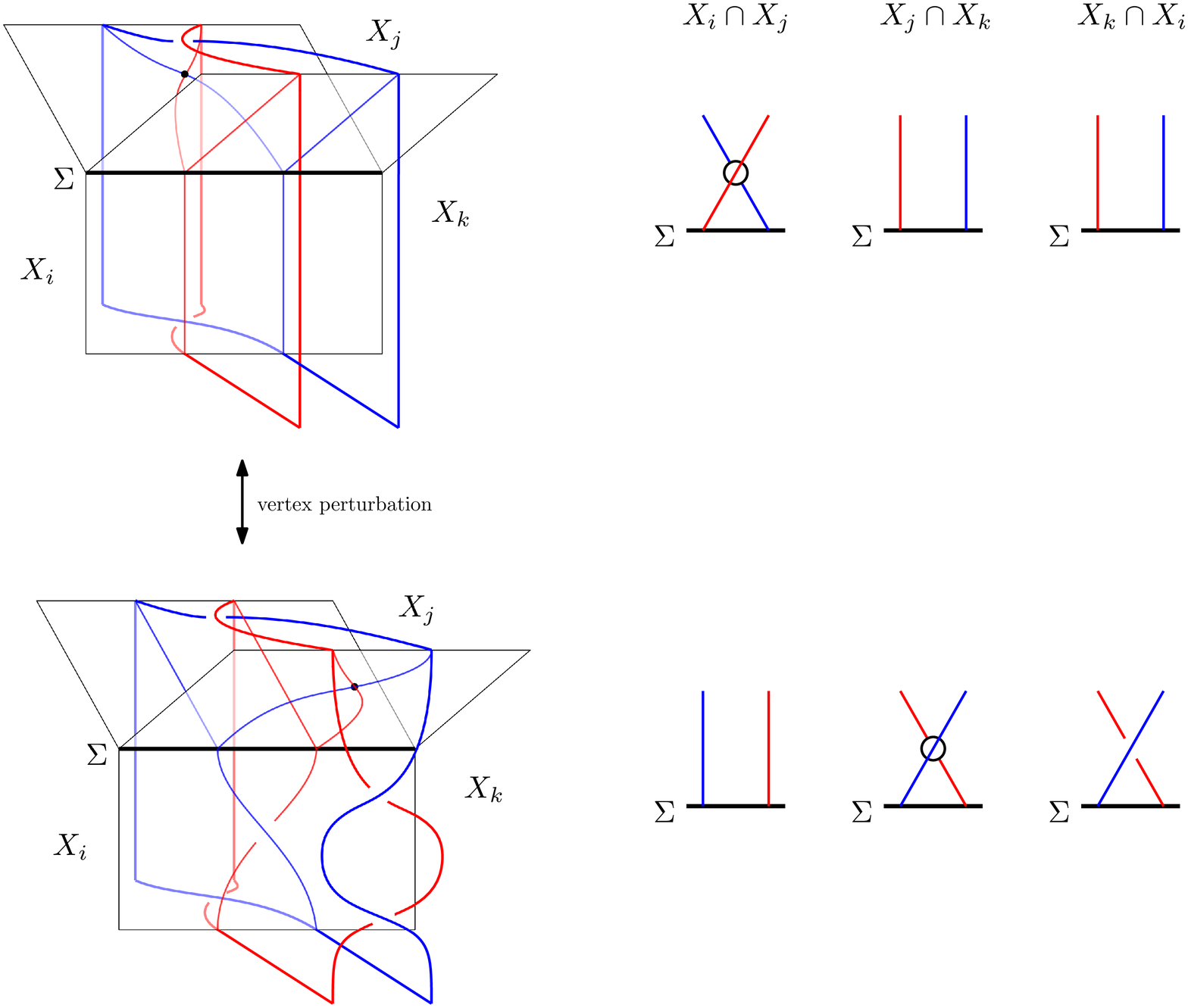}
    \caption{Pushing a self-intersection point from $X_i \cap X_j$ to $X_j \cap X_k$ during a vertex perturbation.}
    \label{fig:surfacevertexperturb2}
\end{figure}

\begin{remark}
Let $S$ be an immersed surface in $(b;c_1,c_2,c_3)$--bridge position with respect to $\tri=(X_1,X_2,X_3)$.
\begin{enumerate}[(1)]
\item If $S'$ is obtained from $S$ by elementary perturbation along a disk in $X_i$, then $S'$ is in $(b+1;c'_1,c'_2,c'_3)$--bridge position with $c'_i=c_i+1$ and $c'_j=c_j$ for $j\neq i$.
\item If $S'$ is obtained from $S$ by vertex perturbation, then $S'$ is in $(b;c_1,c_2,c_3)$--bridge position.
\end{enumerate}
\end{remark}

\begin{definition}
If a surface $S'$ in bridge position with respect to a trisection $\tri$ is obtained from a surface $S$ in bridge position with respect to $\tri$ by a sequence of elementary and vertex perturbations, then we simply say that $S'$ is obtained from $S$ by perturbation (with $\tri$ implicit). If $S'$ is obtained from $S$ by a sequence of elementary perturbations and deperturbations and vertex perturbations, then we say that $S'$ is obtained from $S$ (or ``related to $S$") by perturbation and deperturbation.
\end{definition}

\begin{theorem}\label{thm:triexist}
Let $S$ be a self-transverse immersed surface in a 4--manifold $X^4$ with trisection $\tri=(X_1,X_2,X_3)$. Then for some $c$ and $b$, $S$ can be isotoped into $(b,c)$--bridge position with respect to $\tri$. 
\end{theorem}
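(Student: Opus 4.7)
The plan is to adapt the Meier--Zupan argument underlying Theorem~\ref{thm:triunique} to the immersed setting, using the perturbation moves of Definitions~\ref{def:perturbbridge} and~\ref{def:vertexperturb} to handle self-intersections. First I will fix a $\tri$-compatible self-indexing Morse function $h \colon X^4 \to [0,4]$ together with a gradient-like vector field $\nabla h$. By an arbitrarily small ambient isotopy of $S$ we may assume that $h|_S$ is Morse, that all critical points of $h|_S$ occur at distinct heights, that no self-intersection of $S$ coincides with a critical point of $h|_S$, and that no self-intersection lies on the spine $\Sigma \cup \bigcup_{i\neq j} (X_i \cap X_j)$ of $\tri$.

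Next, mimicking the proof of Lemma~\ref{thm:realizingsurface}, I will use the gradient flow of $\nabla h$ to slide the critical points of $h|_S$ so that the index $0$ critical points lie in the interior of $X_1$, the index $2$ critical points lie in the interior of $X_3$, and the index $1$ critical points, together with all self-intersections, sit in a thin slab about $h^{-1}(3/2)$. Since the spine of $\tri$ is $3$-dimensional while self-intersections are $0$-dimensional, a further small isotopy pushes each self-intersection onto one of the handlebodies $X_i \cap X_j$. After this arrangement, $S \cap X_1$ and $S \cap X_3$ are disjoint unions of embedded disks each containing a single minimum or maximum of $h|_S$, while $S \cap X_2$ contains the saddles together with the self-intersections now positioned on the handlebodies in $\partial X_2$.

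To finish I will apply the splitting step of Meier--Zupan: for each saddle of $h|_S$ contained in some $S \cap X_i$, perform an elementary perturbation (Definition~\ref{def:perturbbridge}) along a small disk to remove the saddle from the interior and introduce an additional pair of bridge points on the trisection surface $\Sigma$. The same kind of perturbation can be used to isotope any disk in $S \cap X_i$ that fails to sit in a collar of $\partial X_i$ into standard form, so that each component of $S \cap X_i$ is a disk contained in a collar of $\partial X_i$ with a single critical point of $h$, yielding a trivial immersed disk system. Self-intersections appear as vertices of the singular tangles $S \cap (X_i \cap X_j)$; additional elementary perturbations and, if necessary, vertex perturbations (Definition~\ref{def:vertexperturb}) ensure that each strand of these tangles contains at most one critical point of $h$, which is the triviality condition from Definition~\ref{def:trivimmersedtangle}.

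The hard part will be verifying that the triviality conditions for the immersed disk systems $S \cap X_i$ and the singular tangles $S \cap (X_i \cap X_j)$ can be achieved simultaneously in the presence of self-intersections. The vertices coming from self-intersections on the handlebodies $X_i \cap X_j$ obstruct a direct application of the embedded Meier--Zupan argument, and one must arrange the splitting perturbations to interact cleanly with those vertices. Because self-intersections are isolated and each perturbation is supported in a small neighborhood of a chosen boundary arc, this interaction can be controlled by choosing perturbation neighborhoods disjoint from the self-intersection locus; vertex perturbations then give the flexibility to redistribute self-intersections among the handlebodies $X_i \cap X_j$ as needed.
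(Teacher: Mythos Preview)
Your approach differs substantially from the paper's and, as written, has a genuine gap. The paper does not manipulate $S$ directly with perturbation moves. Instead it first invokes the machinery of Section~\ref{sec:singularbandedunlinkdiagrams} to obtain a singular banded unlink diagram $(\K,L,B)$ for $S$ in the level set $M=h^{-1}(3/2)=\partial X_1$. It then observes that $M$ carries the Heegaard splitting $(H_1,H_2)=(X_3\cap X_1,\,X_1\cap X_2)$ with Heegaard surface $\Sigma=X_1\cap X_2\cap X_3$, and isotopes the \emph{singular banded link} $(L,B)$ inside $M$ into bridge position with respect to $\Sigma$: $L$ is put in ordinary bridge position, vertices are pushed off $\Sigma$, and the bands are made short and parallel to $\Sigma$ in $H_2$ (with an extra perturbation of $L$ near the band ends). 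One then checks directly that the realizing surface of this bridge-positioned diagram is in bridge position with respect to $\tri$: minima land in $X_1$, the bands yield the trivial disk system in $X_2$, maxima land in $X_3$, and the three tangles are $L\cap H_2$, $(L\cap H_2)^+_B$, and $L\cap \overline{H_1}$. The 4-dimensional problem is thus reduced to a routine 3-dimensional one about putting a banded singular link in bridge position.

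Your proposal has a circularity problem: elementary perturbations (Definition~\ref{def:perturbbridge}) and vertex perturbations (Definition~\ref{def:vertexperturb}) are defined only for surfaces \emph{already} in bridge position, so they cannot be invoked to produce bridge position. More seriously, after you arrange the critical points of $h|_S$ by index, the piece $S\cap X_2$ still contains the saddles and is therefore not a union of disks at all; no elementary perturbation (which merely adjusts $b$ and one $c_i$ by $1$ while preserving bridge position) converts a saddle into a disk. The missing content is precisely the passage through a banded-link presentation: each saddle must be encoded as a band attached to a link in $\partial X_1$, and those bands must then be arranged relative to the Heegaard surface $\Sigma$, which is exactly what the paper does.
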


\begin{proof}
Let $h$ be a self-indexing Morse function of $X^4$ that is $\tri$--compatible. Let $(L,B)$ be a singular banded unlink diagram for $S$, so $L$ is a singular link in $M:=h^{-1}(3/2)$, and $B$ is a set of bands for $L$ in $M$. Let $H_1:=X_3\cap X_1$, and $H_2:=X_1\cap X_2$, so that $\Sigma:=H_1\cap H_2$ is a Heegaard surface for $M$.

By dimensionality, we may isotope $L,B$ to be contained in $\Sigma\times[-1,1]\subset M$ (i.e.\ we isotope $L$ and $B$ to avoid a 1--skeleton of $H_1$ and $H_2$), with $\Sigma\times[-1,0]\subset H_1,\Sigma\times[0,1]\subset H_2$. Isotope $L$ so that the vertices of $L$ are disjoint from $\Sigma$, and so that $B$ consists of short straight bands parallel to $\Sigma$ in $H_2$ that are far from each other, as in Figure~\ref{fig:getbridgeposition} (ii). Let $\pi:\Sigma\times[0,1]\to[0,1]$ be the projection, and perform a small isotopy of $L$ so that $\pi|_L$ is Morse.  Isotope the index 0 critical points of $\pi|_L$ vertically with respect to $\pi$ to be contained in $H_1$, and the index 1 critical points of $\pi|_L$ vertically with respect to $\pi$ to be contained in $H_2$, isotoping horizontally first if necessary to avoid introducing self-intersections of $L$ or intersections of $L$ with $B$. Now $L$ is in bridge position with respect to $\Sigma$. Perturb $L$ near again near $\boundary B$ as in Figure~\ref{fig:getbridgeposition} (iv), and isotope all bands in $B$ to lie in $H_2$.

By Theorem~\ref{maintheorem}, $S$ is isotopic to $S':=\Sigma(L,B)$. We investigate the intersections of $S'$ with the pieces of $\tri$.
\begin{enumerate}
\item $S'\cap X_1=S'\cap h^{-1}(3/2)$ consists of the minimum disks of $S'$. All self-intersections of $S'$ are contained in $\boundary X_1$.
\item $S'\cap X_2$ contains the index 1 critical points of $h|_{S'}$. This surface is built from the singular tangle $L\cap H_2$ by extending vertically and then attaching bands according to $B$. By construction, these bandings are trivial and the components of $S'\cap X_2$ are boundary-parallel away from the intersections.
\item $S'\cap X_3$ contains the maximum disks of $h|_{S'}$. In particular, $(X_3,S'\cap X_3)$ can be strongly deformation retracted to $(h^{-1}([5/2,4]),S'\cap h^{-1}([5/2,4]))$.
\item $S'\cap X_1\cap X_2=L\cap H_2$.
\item $S'\cap X_2\cap X_3$ is equivalent to the tangle obtained from $L^+\cap H_2$ by surgery on $B$.
\item $S'\cap X_3\cap X_1=\overline{L\cap H_1}$. Note the reversed orientation; this is because $H_1$ is oriented as being in the boundary of $X_1$, but $X_3\cap X_1$ is oriented as the boundary of $X_3$.
\end{enumerate}

We conclude that $S'$ is in $(b,c)$--bridge position with respect to $\tri$ for some $b,c$.
\end{proof}

\begin{figure}
    \centering
    \includegraphics[width=.95\textwidth]{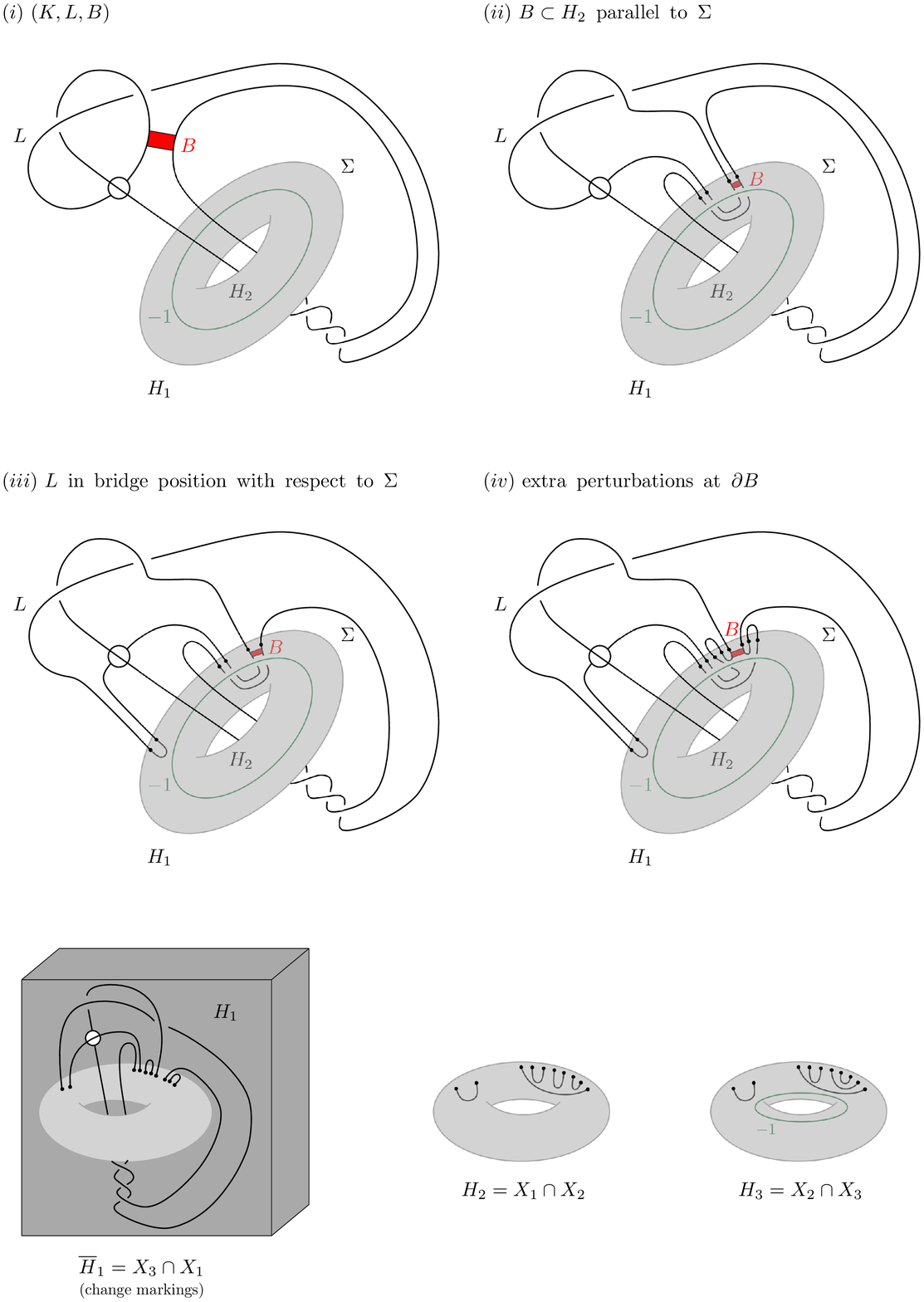}
    \caption{We illustrate how a surface that realizes a banded unlink diagram $(\mathcal{K},L,B)$ may be isotoped to lie in bridge position. See the proof of Theorem~\ref{thm:triexist}.}
    \label{fig:getbridgeposition}
\end{figure}

\subsection{Bridge splittings of singular banded links}\label{sec:bridgesingularlink}

The proof of Theorem~\ref{thm:triexist} motivates the following definition.

\begin{definition}
Let $L$ be a singular link in a 3--manifold $M$, and let $B=b_1,\ldots, b_n$ be a set of bands for $L$. Let $F$ be a Heegaard surface for $M$. We say that the singular banded link $(L,B)$ is in {\emph{bridge position}} with respect to $F$ if $L$ is in bridge position with respect to $F$, and each band $b_i$ is contained in a 3-ball $U_i$ as in Figure~\ref{fig:trivband}, with $U_i\cap U_j=\emptyset$ for $i\neq j$. 
\end{definition}

\begin{figure}
    \centering
\begingroup%
  \makeatletter%
  \providecommand\color[2][]{%
    \errmessage{(Inkscape) Color is used for the text in Inkscape, but the package 'color.sty' is not loaded}%
    \renewcommand\color[2][]{}%
  }%
  \providecommand\transparent[1]{%
    \errmessage{(Inkscape) Transparency is used (non-zero) for the text in Inkscape, but the package 'transparent.sty' is not loaded}%
    \renewcommand\transparent[1]{}%
  }%
  \providecommand\rotatebox[2]{#2}%
  \newcommand*\fsize{\dimexpr\f@size pt\relax}%
  \newcommand*\lineheight[1]{\fontsize{\fsize}{#1\fsize}\selectfont}%
  \ifx\svgwidth\undefined%
    \setlength{\unitlength}{179.55461037bp}%
    \ifx\svgscale\undefined%
      \relax%
    \else%
      \setlength{\unitlength}{\unitlength * \real{\svgscale}}%
    \fi%
  \else%
    \setlength{\unitlength}{\svgwidth}%
  \fi%
  \global\let\svgwidth\undefined%
  \global\let\svgscale\undefined%
  \makeatother%
  \begin{picture}(1,0.32344071)%
    \lineheight{1}%
    \setlength\tabcolsep{0pt}%
    \put(0,0){\includegraphics[width=\unitlength,page=1]{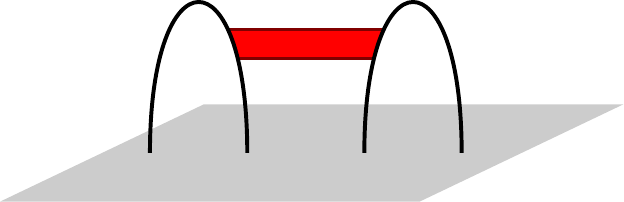}}%
    \put(-0.03850079,0.05408336){\color[rgb]{0,0,0}\makebox(0,0)[lt]{\lineheight{1.25}\smash{\begin{tabular}[t]{l}$F$\end{tabular}}}}%
  \end{picture}%
\endgroup%

    \caption{If a singular banded link $(L,B)$ is in bridge position with respect to a Heegaard surface $F$, then every band in $B$ has a neighborhood as pictured here. That is, every band in $B$ has a neighborhood $U$ containing two components $C_1,C_2$ of $L\setminus F$ (on which $B$ has ends), meeting $F$ in a disk, and not meeting any other bands in $B$ or other components of $L\setminus F$. Moreover, $\overline{C_1}\cup\overline{C_2}\cup B$ may be isotoped rel $\partial(\overline{C_1}\cup\overline{C_2})$ in $U$ to lie in $F$.}
    \label{fig:trivband}
\end{figure}

The proof of Theorem~\ref{thm:triexist} can be broken down into the following two lemmas, which are useful to state directly.

\begin{lemma}\label{lemma:bridgeposition}
Let $L$ be a singular link in a 3--manifold $M$, and let $B$ be a set of bands for $L$. Fix a Heegaard surface $F$ for $M$. Then $(L,B)$ can be isotoped to lie in bridge position with respect to $F$.
\end{lemma}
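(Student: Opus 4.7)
The plan is to follow the strategy already employed in the proof of Theorem~\ref{thm:triexist}, but stated more cleanly since we are not yet worrying about reconstructing a surface. Let $H_1, H_2$ be the handlebodies of the Heegaard splitting, so $F = H_1 \cap H_2$, and fix an identification of a bicollar neighborhood of $F$ with $F \times [-1,1]$, where $F \times [-1,0] \subset H_1$ and $F \times [0,1] \subset H_2$. Let $\pi : F \times [-1,1] \to [-1,1]$ be the projection onto the second factor.

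First I would use a standard general-position argument to isotope $L \cup B$ off of 1--skeleta of $H_1$ and $H_2$; since $L$ is $1$--dimensional and $B$ is $2$--dimensional while the 1--skeleta are 1--dimensional in a 3--manifold, the complement deformation-retracts so that $L \cup B$ may be pushed into the bicollar $F \times [-1,1]$. Next, isotope the vertices of $L$ off of $F$ (this is again generic: vertices are isolated points and $F$ is $2$--dimensional in a $3$--manifold). Then isotope the bands in $B$ so that each $b_i \in B$ sits inside a small 3--ball $U_i \subset F \times (0,1] \subset H_2$, disjoint from all other $U_j$ and from the vertices of $L$, with $b_i$ straight and parallel to $F$ inside $U_i$ in the standard model of Figure~\ref{fig:trivband}. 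The key point here is that this can be achieved because each band attaches to $L$ along two short subarcs which we may shorten arbitrarily, and the complement of $L$ in $F \times [-1,1]$ is connected enough to isotope each band core to be a short straight arc parallel to $F$.

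After a further small perturbation we may assume $\pi|_L$ is Morse, with the critical points of $\pi|_L$ disjoint from the vertices of $L$ and from the subarcs where $B$ is attached. Using the gradient flow of $\pi$ in $F \times [-1,1]$, isotope each index $0$ critical point of $\pi|_L$ vertically downward into $H_1$ and each index $1$ critical point vertically upward into $H_2$; if two such critical points lie on the same vertical line, first do a small horizontal perturbation to separate them so that the vertical isotopy does not create new self-intersections of $L$ or intersections of $L$ with the bands or vertices. After this step $L \cap H_1$ and $L \cap H_2$ consist of properly embedded arcs, each of which is vertically isotopic into $F$ via $\pi$ with exactly one critical point — in other words, each of $L \cap H_i$ is a trivial immersed tangle in the sense of Definition~\ref{def:trivimmersedtangle} (the only vertices lie in $H_2$, and they are isolated transverse intersections by hypothesis).

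The main obstacle is a bookkeeping one: when we push the critical points of $\pi|_L$ into the handlebodies, we have to verify that we do not destroy the ``trivial band'' neighborhoods $U_i$ of the previous step, and vice versa. I would handle this by doing the two operations in the order described above — first arrange the bands in disjoint product neighborhoods $U_i$, and then perform the Morse adjustments on $\pi|_L$ outside $\bigsqcup U_i$, which is possible since the ends of each band are already at bridge-position height and the interior of each $U_i$ contains no critical behavior that needs to be moved. If a critical point of $\pi|_L$ happens to lie in one of the $U_i$ after the first step, shrink $U_i$ slightly so that the critical point lies outside, or perform a tiny horizontal isotopy of $L$ to slide it away. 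This gives $(L,B)$ in bridge position with respect to $F$, completing the proof. \qed
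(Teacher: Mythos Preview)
Your proposal is correct and follows essentially the same approach as the paper, which does not give this lemma a separate proof but extracts it from the proof of Theorem~\ref{thm:triexist}; your write-up is a cleaned-up version of exactly that argument (push into a bicollar, straighten the bands, make $\pi|_L$ Morse, push minima down and maxima up). Two small points you should correct: the neighborhood $U_i$ in Figure~\ref{fig:trivband} must meet $F$ in a disk, so it does not sit strictly inside $F\times(0,1]$; and your parenthetical ``the only vertices lie in $H_2$'' is not true and not needed---vertices may lie in either handlebody, since the definition of a trivial immersed tangle allows intersections in the interior of either $H_i$.
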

\begin{lemma}\label{lemma:bridgerealization}
Let $\tri=(X_1,X_2,X_3)$ be a trisection of a 4--manifold $X^4$. Let $h$ be a $\tri$--compatible Morse function on $X^4$, and $\K$ a Kirby diagram induced by $h$ and a gradient-like vector field $\nabla h$. Then $H_1=X_3\cap X_1$ and $H_2=X_1\cap X_2$ give a Heegaard splitting $(H_1,H_2)$ for $h^{-1}(3/2)$, in which $\Sigma:=H_1\cap H_2\subset E(\K)$ is a Heegaard surface.

Suppose a banded unlink $(\K,L,B)$ is in bridge position with respect to $\Sigma$. Then a realizing surface $\Sigma(\K,L,B)$ is in bridge position with respect to $\tri$.
\end{lemma}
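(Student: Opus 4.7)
The plan is to explicitly construct a realizing surface $S := \Sigma(\K, L, B)$ which manifestly inherits bridge position from that of $(L, B)$, and then verify the six intersections of $S$ with the pieces of $\tri$.

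First, that $(H_1, H_2)$ is a Heegaard splitting of $M = h^{-1}(3/2)$ is immediate from the trisection axioms and $\tri$-compatibility: $H_1$ and $H_2$ are handlebodies, $H_1 \cup H_2 = \partial X_1 = M$, and $H_1 \cap H_2 = X_1 \cap X_2 \cap X_3 = \Sigma$. The surface $\Sigma$ lies in $E(\K)$ because $\tri$-compatibility places the 1-handle feet $L_1$ in the interior of $H_1$ and the 2-handle attaching circles $L_2$ in the interior of $H_2$, so $\Sigma = H_1 \cap H_2$ is disjoint from $L_1 \cup L_2$.

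Next, using the freedom in Proposition~\ref{prop:surfacesegment}, I arrange heights so that: the minima of $S$ sit at height $1/2$ in $X_1$; the transition disks $D_v$ realizing the self-intersections sit at height exactly $3/2 \in \partial X_1$, with each $D_v \subset H_1$ when $v \in H_1$ and each $D_v \subset H_2$ when $v \in H_2$ (possible since $(L,B)$ is in bridge position with respect to $\Sigma$); the bands of $B$ are surgered at height $3/2 + \delta$ for small $\delta > 0$, sitting inside the trivial $3$-balls $U_i \subset H_2$ of Figure~\ref{fig:trivband} pushed slightly upward into $X_2$; and the maxima sit at height $5/2$ in $X_3$.

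Then I verify the six pieces. The three tangles are immediate: $S \cap X_1 \cap X_2 = S \cap H_2$ coincides with the singular tangle $L \cap H_2$ (with vertices at those $v \in H_2$), which is a trivial immersed tangle by the bridge position hypothesis; symmetrically $S \cap X_3 \cap X_1 = L \cap H_1$ is trivial; and $S \cap X_2 \cap X_3$ is obtained by surgering $L^+ \cap H_2$ along $B$, again trivial. For the disk systems, $S \cap X_1$ consists of minimum disks plus vertical $L^-$-tubes capped by the disks $D_v$ lying on $\partial X_1$; a straight-line collapse in the vertical direction exhibits each component as a boundary-parallel disk with all self-intersections on $\partial X_1$. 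Symmetrically $S \cap X_3$ is a trivial immersed disk system. Finally, $S \cap X_2$ consists of saddle components, each built from two arcs of $L^+ \cap H_2$ extended vertically and joined by a band $b_i \in B$; the trivial 3-ball $U_i$ housing $b_i$ lets each such component isotope into $H_2 = X_1 \cap X_2$, exhibiting it as a boundary-parallel disk.

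The main technical obstacle is the verification for $X_2$: unlike $X_1$ and $X_3$, whose disk components are manifestly trivial via the minima and maxima, components of $S \cap X_2$ contain saddle critical points of $h|_S$, and boundary-parallelism relies crucially on the trivial-band-neighborhood model of Figure~\ref{fig:trivband}.
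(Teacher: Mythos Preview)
Your proposal is correct and takes essentially the same approach as the paper, which does not give a separate proof of this lemma but rather extracts it from the proof of Theorem~\ref{thm:triexist}: both arguments construct the realizing surface with appropriately arranged heights and then verify the six intersections $S\cap X_i$ and $S\cap X_i\cap X_j$ one at a time. Your write-up is somewhat more explicit than the paper's in places (e.g.\ you verify the Heegaard splitting claim and the containment $\Sigma\subset E(\K)$, and you spell out the role of the trivial $3$--balls $U_i$ for the $X_2$ disk system), while the paper is content with phrases like ``by construction, these bandings are trivial''; the only place your phrasing is slightly looser than the paper's is in calling $S\cap X_2\cap X_3$ the surgered tangle rather than ``equivalent to'' it (since $X_2\cap X_3$ is a different handlebody from $H_2$, identified via the product structure), and in invoking ``symmetrically'' for $S\cap X_3$, where the paper instead notes that $(X_3,S\cap X_3)$ deformation retracts onto $(h^{-1}([5/2,4]),S\cap h^{-1}([5/2,4]))$.
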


\begin{definition}\label{def:bridgediagram}
Let $S$ be a self-transverse immersed surface in a 4--manifold $X^4$ with trisection $\tri=(X_1,X_2,X_3)$. Assume $S$ is in $(b,c)$--bridge position. We call the triple of singular marked tangles $(T_1,T_2,T_3)=(S\cap X_1\cap X_2,S\cap X_2\cap X_3, S\cap X_3\cap X_1)$ a {\emph{bridge trisection diagram}} of $S$. The markings of each tangle should be chosen so that:
\begin{itemize}
    \item[$\circ$] In $X_i$, cross-sections of $S$ are the negative resolution of $S\cap X_i\cap X_{i+1}$.
    \item[$\circ$] In $X_i$, cross-sections of $S$ are the positive resolution of $S\cap X_{i-1}\cap X_{i}$.
\end{itemize}

Note that we choose the marking convention to be symmetric with respect to the trisection, even though in the construction of Theorem~\ref{thm:triexist}, we used a Morse function $h$ in which the pieces $X_1, X_2, X_3$ were not symmetric. If $(L,B)$ is a singular banded unlink diagram for $S$ and we follow the construction of Theorem~\ref{thm:triexist}, then we obtain a bridge trisection diagram $(T_1,T_2,T_3)$ of $S$ with:
\begin{enumerate}
\item $T_1=L\cap H_2$ with markings agreeing with those of $L$,
\item $T_2=(L\cap H_2)^+_B$,
\item $T_3=L\cap \overline{H_1}$ with markings {\emph{opposite}} those of $L$.
\end{enumerate}

We include a local example in Figure~\ref{fig:bridgeorientations}.

\begin{figure}
    \centering
    \scalebox{1.05}{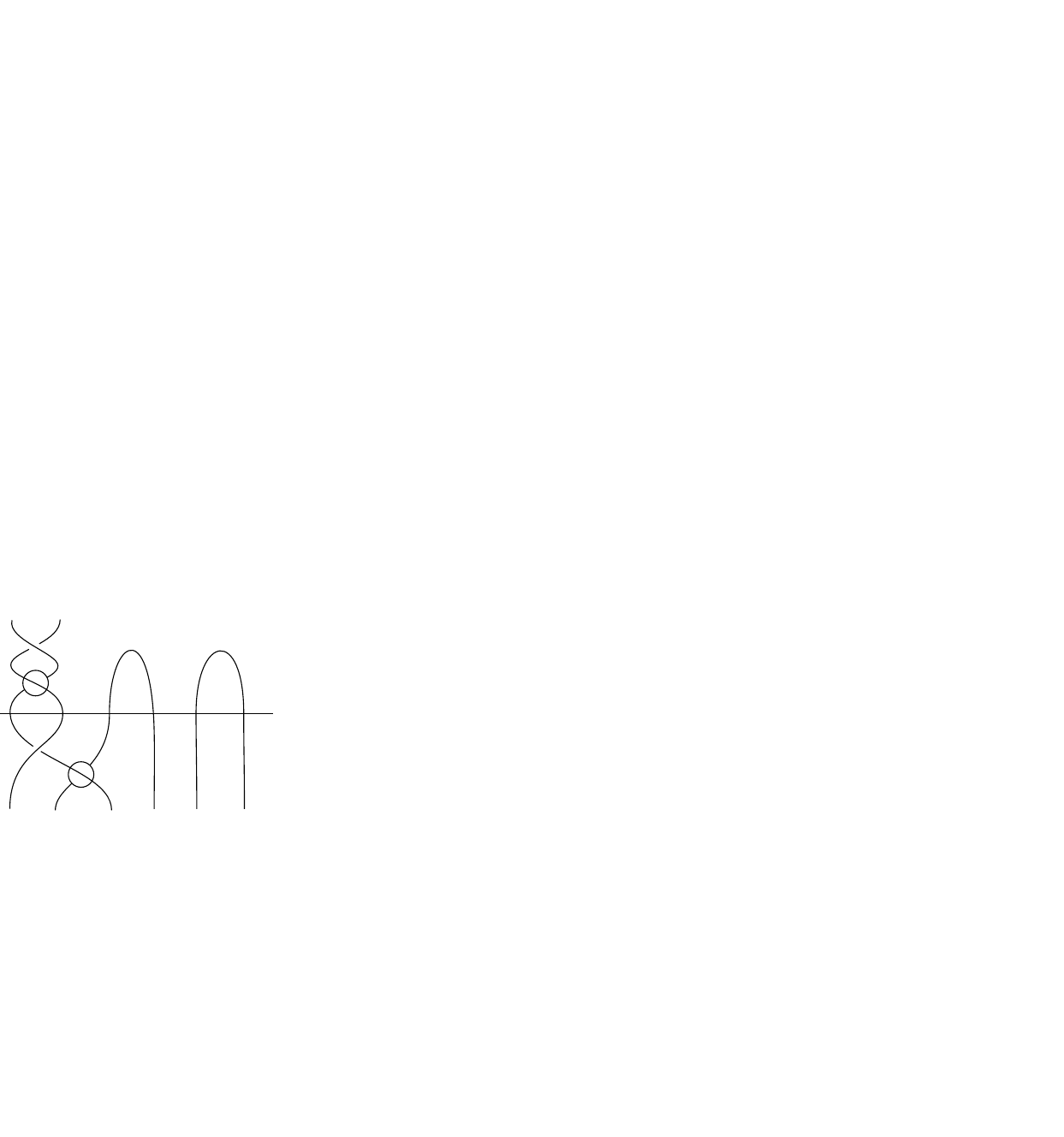}
    \caption{{\bf{Top row:}} Part of a singular banded unlink in bridge position. {\bf{Second row:}} We obtain the singular tangles $T_1, T_2, T_3$ as in Definition~\ref{def:bridgediagram}. {\bf{Third row:}} The singular links that are the intersection of the associated bridge trisected surface with $\partial X_1, \partial X_2, \partial X_3$. {\bf{Bottom row:}} We draw the resolutions of these tangles in the interiors of $X_1, X_2, X_3$. Note that vertices in $X_i\cap X_{i+1}$ are resolved negatively into $X_i$, while vertices in $X_{i-1}\cap X_i$ are resolved positively into $X_i$.}
    \label{fig:bridgeorientations}
\end{figure}

\end{definition}
From a bridge trisection diagram of $S$, we can reconstruct a surface that is ambiently isotopic to $S$ as usual. For convenience (to mirror the construction in Theorem~\ref{thm:triexist}), it is more convenient to assume all self-intersections lie in $H_1$ and $H_2$ (i.e.\ in $\partial X_1$ and not in $X_2\cap X_3$).

\begin{lemma}\label{lemma:bridgegetbandedunlink}
Let $S$ be a self-transverse immersed surface in a 4--manifold $X^4$ that is in bridge position with respect to a trisection $\tri=(X_1,X_2,X_3)$. Assume that $S$ has no self-intersections in $X_2\cap X_3$.

Let $h$ be a $\tri$--compatible Morse function on $X^4$, and fix a gradient-like vector field $\nabla h$ inducing a Kirby diagram $\K$. Then there is a singular banded unlink diagram $(\K,L,B)$ so that $(L,B)$ is in bridge position with respect to the Heegaard surface $\Sigma=X_1\cap X_2\cap X_3\subset E(\K)$, and $S$ is $\tri$--regularly isotopic to the surface $\Sigma(\K,L,B)$.

\end{lemma}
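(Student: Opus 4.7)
The plan is to reverse the construction used in the proof of Theorem~\ref{thm:triexist} (equivalently, to run Lemma~\ref{lemma:bridgerealization} backwards), extracting a bridge--position singular banded unlink diagram directly from the bridge trisection data for $S$. First I would use the identification $h^{-1}(3/2) = \partial X_1$ with its induced Heegaard splitting $\partial X_1 = H_1 \cup_\Sigma H_2$, where $H_1 = X_3 \cap X_1$, $H_2 = X_1 \cap X_2$, and $\Sigma = X_1 \cap X_2 \cap X_3$. By $\tri$--compatibility of $h$, the attaching link $L_2$ of $\K$ sits inside $H_2$ disjoint from $\Sigma$, and the dotted circles $L_1$ are encoded by the handlebody structure of $\partial X_1$.

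Using that $S \cap X_1$ is a trivial immersed disk system, I would perform a $\tri$--regular isotopy supported in $X_1$ to arrange $S \cap X_1$ as standard minimum disks in $h^{-1}([0, 3/2-\varepsilon])$ with vertical collars up to $\partial X_1$; by hypothesis the self--intersections of $S$ contained in $X_1$ all lie on $\partial X_1$ and constitute the vertices of the singular tangles $T_1 := S \cap H_2$ and $T_3 := S \cap H_1$. An analogous isotopy, supported in $X_3$, arranges $S \cap X_3$ as standard maximum disks in $h^{-1}([5/2+\varepsilon, 4])$. I then define $L := T_1 \cup T_3$ as a marked singular link in $\partial X_1$, with markings given by the conventions of Definition~\ref{def:bridgediagram} (reversed on $T_3$ to account for the orientation of $\overline{H_1}$ as part of $\partial X_3$ rather than $\partial X_1$). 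Since each $T_i$ is a trivial immersed tangle in one of $H_1, H_2$, the link $L$ is in bridge position with respect to $\Sigma$.

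For the bands: the hypothesis that $T_2 := S \cap X_2 \cap X_3$ is non--singular means every vertex of $\partial(S \cap X_2) = T_1 \cup T_2$ lies in $T_1$. Each component of $S \cap X_2$ whose boundary has arcs in both $H_2$ and $X_2 \cap X_3$ carries a single saddle critical point of $h|_S$; the triviality of the immersed disk system lets me isotope it so the saddle flattens onto $\Sigma$ as a short band with core in $\Sigma$ (as in Figure~\ref{fig:trivband}), disjoint from the vertices of $L$ and from the other such bands. Collecting these gives the band collection $B$, placing $(L,B)$ in bridge position with respect to $\Sigma$. Finally, $L^-$ (resp.\ $L^+_B$) bounds embedded disks in $h^{-1}(1/2)$ (resp.\ $h^{-1}(5/2)$) coming from the pushed minima (resp.\ maxima), so $(\K, L, B)$ is a genuine singular banded unlink diagram. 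By Lemma~\ref{lemma:bridgerealization} its realizing surface is in bridge position with respect to $\tri$, and by construction it is $\tri$--regularly isotopic to $S$. The main subtlety is verifying that the saddles in $S \cap X_2$ can be simultaneously flattened into the disjoint $3$--ball neighborhoods required by Figure~\ref{fig:trivband} without colliding with the vertices of $T_1$; this is where the trivial immersed disk system hypothesis is essential, since it provides a standard collar model near $\partial X_2$ in which the saddles sit in pairwise disjoint product neighborhoods away from all self--intersections.
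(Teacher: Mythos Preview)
Your overall strategy (define $L$ as the singular link $S\cap\partial X_1$ in bridge position, then extract bands from the disks of $S\cap X_2$) matches the paper's, but the band extraction has a genuine gap. You assert that each disk component of $S\cap X_2$ ``carries a single saddle critical point of $h|_S$,'' and this is generally false. A boundary-parallel disk $D_i\subset X_2$ whose boundary circle meets $H_2$ in $k_i$ arcs (and $X_2\cap X_3$ in $k_i$ arcs) will, once $h|_{D_i}$ is made Morse with no local extrema, carry $k_i-1$ saddles, not one: think of $D_i$ as a $2k_i$-gon with alternating edges at low and high values. Nothing in the definition of bridge position forces $k_i=2$, so for disks with $k_i=1$ there is no saddle to flatten, and for $k_i\ge 3$ there are several. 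Your band set $B$ is therefore not well-specified, and even if you picked one saddle per disk, the surgered link $L^+_B$ would have no reason to agree with $S\cap(X_2\cap X_3)$, so the realizing surface would not be $\tri$-regularly isotopic to $S$.

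The paper sidesteps this by not using saddles at all. It deletes small neighborhoods of the self-intersections in $X_1\cap X_2$ to obtain an \emph{embedded} trivial disk system $D\subset\widetilde{X}_2$, then for each component $D_i$ invokes the Meier--Zupan construction: pick one arc $a_i$ of $\partial D_i\cap\widetilde{H}_2$, take a framed arc $y_i$ parallel to the complementary arc $\partial D_i\setminus a_i$, isotope $y_i$ into the Heegaard surface with the correct framing, push into $H_2$, and thicken to a band $b_i$. Because $y_i$ encodes the entire disk boundary, surgering $L^+$ along these bands reproduces (up to isotopy and handle slides in $H_2$) the link $S\cap h^{-1}(5/2)$, and the realizing surface then agrees with $S$ on each $X_i\cap X_j$. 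Your approach could be salvaged by allowing $k_i-1$ bands per disk and arguing they can be simultaneously put into the Figure~\ref{fig:trivband} configuration, but carrying that out carefully is essentially equivalent to the paper's argument.
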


\begin{proof}
Isotope $S$ to be 0--standard (with respect to $h, \nabla h$). Since $S$ is in bridge position, we may take this isotopy to be $\tri$--regular.

Let $L:=S\cap h^{-1}(3/2)$. (Recall $h^{-1}(3/2)=\partial X_1=H_1\cup H_2$, where $H_1=X_3\cap X_1$ and $H_2=X_1\cap X_2$). Then $L$ is a singular link whose vertices are either in $H_1$ or $H_2$. Mark $L$ so that the negative resolutions of the vertices in $H_1$ and the positive resolutions of the vertices in $H_2$ correspond to the resolutions of the immersed disk system $S\cap X_1$. Then $L$ is a marked singular link and $L^-$ is an unlink.

Now $S\cap X_2$ is a trivial immersed disk system with all intersections in $X_1\cap X_2$. Let $\widetilde{X}_2$ be obtained from $X_2$ by deleting a small neighborhood of each intersection, so that $\widetilde{X}_2$ is still a 4--dimensional 1--handlebody, but $S\cap\widetilde{X}_2$ is a trivial embedded disk system $D$. Let $\widetilde{H}_2$ denote the closure of $(\partial\widetilde{X}_2)\setminus(X_2\cap X_3)$.

Now $D$ is a collection of boundary parallel disks in $\widetilde{X}_2$, and $\partial\widetilde{X}_2$ has a Heegaard splittings ($\widetilde{H}_2,X_2\cap X_3)$, which in respect to  $\partial D$ is in bridge position. We proceed as in \cite[Lemma 3.3]{meier2017bridge}: for each component $D_i$ of $D$, let $a_i$ be one component of $\overline{\partial D\setminus(X_2\cap X_3)}$. Then let $y_i$ be an arc in $\partial\widetilde{X}_2$ parallel to $\partial D_i\setminus a_i$ with endpoints on $\partial D$, with framing induced by $D_i$. Isotope $y_i$ in $\partial{X}_2$ into the Heegaard surface for $\partial\widetilde{X}_2$, twisting $y_i$ around $\partial D$ as necessary so that the framing of $y_i$ agrees with the framing induced by the Heegaard surface. Finally, project $y_i$ to $\partial X_2$, push slightly into $H_2$, and thicken (according to the framing of $y_i$) to obtain a band attached to $S\cap H_2$  (i.e.\ a band $b_i$ in $h^{-1}(3/2)$ attached to $L$, with $b_i$ in $H_2$ parallel to $H_1\cap H_2$). 

Repeat this for every component of $D$ to obtain a collection $B$ of bands for $L$. By construction, $L^+_B$ is an unlink when projected to $h^{-1}(5/2)$.  More specifically, in $\K$ the link $L^+_B$ (projected to $h^{-1}(5/2)$) can be made to agree with the link $S\cap h^{-1}(5/2)$  via an isotopy rel boundary in $H_2$ and slides in $H_2$ over curves in $\K$.

We conclude immediately that $(\K,L,B)$ is a singular banded unlink for some surface $S':=\Sigma(\K,L,B)$ in $X$. Moreover, $S'$ is in bridge position with respect to $\tri$, and by the above paragraph can be $\tri$--regularly isotoped so that it agrees with $S$ in $X_i\cap X_j$ for all $i\neq j$. Therefore, $S$ and $S'$ are $\tri$--regularly isotopic.
\end{proof}

\begin{remark}
Fix a trisection $\tri=(X_1,X_2,X_3)$ of $X$, a $\tri$--compatible Morse function $h$, and a gradient-like vector field $\nabla h$, so that $(h,\nabla h)$ induce a Kirby diagram $\K$ of $X$ in which $\Sigma:=X_1\cap X_2\cap X_3$ is a Heegaard surface. Definition~\ref{def:bridgediagram} and Lemma~\ref{lemma:bridgegetbandedunlink} can be combined to form the following equivalence.

\vspace{.1in}

\centering{
\begin{tabular}{c}
    $\displaystyle\frac{\{\text{bridge trisections w.r.t $\tri$ with no self-intersections in $X_2\cap X_3$}\}}{\text{$\tri$--regular isotopy}}$\\\\
   $\displaystyle\updownarrow$\\\\
   $\displaystyle\frac{\{\text{SBUDs in $\K$ in bridge position w.r.t $\Sigma$}\}}{\text{singular band moves preserving $\Sigma$ setwise}}$
    \end{tabular}}

\vspace{.1in}

\end{remark}

The restriction of bridge position to not include self-intersections in $X_2\cap X_3$ is merely a diagrammatic convenience from the viewpoint  of singular banded unlinks diagrams (SBUDs).

\begin{lemma}\label{nointsinx2x3}
Let $S$ be in bridge position with respect to $\tri=(X_1,X_2,X_3)$. There exists a sequence of perturbations of $S$ yielding a surface $S'$ in bridge position so that $S'$ has no self-intersections in $X_2\cap X_3$.
\end{lemma}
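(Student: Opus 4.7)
The plan is to iteratively push each self-intersection of $S$ contained in $X_2\cap X_3$ across the trisection surface $\Sigma:=X_1\cap X_2\cap X_3$ into one of the adjacent patches $X_1\cap X_2$ or $X_3\cap X_1$, using vertex perturbations as introduced in Definition~\ref{def:vertexperturb}. Since a vertex perturbation preserves bridge position and does not change the total number of self-intersections of $S$, repeatedly applying this operation will reduce the number of self-intersections contained in $X_2\cap X_3$ one at a time until none remain.

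Concretely, fix a self-intersection $v$ of $S$ lying in $X_2\cap X_3$. I would first $\tri$-regularly isotope $S$ so that a neighborhood of $v$ takes on the standard local model depicted at the top of Figure~\ref{fig:surfacevertexperturb}; that is, I want the two arcs of $S\cap(X_2\cap X_3)$ meeting at $v$ to travel monotonically towards $\Sigma$ on both sides of $v$, each meeting $\Sigma$ transversely in a small bridge crossing. This is where the trivial-tangle hypothesis enters crucially: by Definition~\ref{def:trivimmersedtangle}, after an ambient isotopy rel boundary in the handlebody $X_2\cap X_3$ each arc of the tangle is governed by a single Morse critical point of the collar height function, so there is enough flexibility to place the two strands through $v$ in the required configuration without disturbing the other tangle components. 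Having done so, I would apply the vertex perturbation to push $v$ across $\Sigma$ into $X_1\cap X_2$ (either choice works); the result is again in bridge position, with the same bridge parameter $(b,c)$, but with one fewer self-intersection in $X_2\cap X_3$.

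The main obstacle will be establishing that the preliminary isotopy in the second step can be carried out without pushing $v$ through other tangle arcs, merging it with another self-intersection, or otherwise breaking the bridge structure in $X_1\cap X_2$ or $X_3\cap X_1$. I would handle this by a general-position argument: choose a properly embedded arc in $X_2\cap X_3$ from $v$ to $\Sigma$ disjoint from the other components of the tangle and from the other self-intersections, and perform the isotopy within a small tubular neighborhood of this arc, extending by the identity to a $\tri$-regular ambient isotopy of $X$. Once this single-vertex argument is in place, iterating it over the finitely many self-intersections originally contained in $X_2\cap X_3$ produces the desired surface $S'$.
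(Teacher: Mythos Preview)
Your proposal is correct and follows essentially the same approach as the paper: inductively reduce the number of self-intersections in $X_2\cap X_3$ by $\tri$-regularly isotoping a vertex into the standard local model near $\Sigma$ and then applying a vertex perturbation. The paper streamlines your general-position step by first placing the entire tangle $S\cap(X_2\cap X_3)$ in a collar $\Sigma\times I$ with the height function Morse, ordering the self-intersections by height, and handling the one closest to $\Sigma$---this automatically avoids interference from other vertices and makes the required isotopy a matter of ``mutual braid transposition'' near $\Sigma$.
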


To inductively prove Lemma~\ref{nointsinx2x3}, it is clearly sufficient to prove the following proposition.
\begin{proposition}
Suppose there are $n>0$ self-intersections of $S$ in $X_2\cap X_3$. Then after $\tri$--regular isotopy of $S$, there is a surface $S'$ obtained from vertex perturbation on $S$ so that $S'$ has $n-1$ self-intersections in $X_2\cap X_3$.
\end{proposition}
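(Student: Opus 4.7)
Pick a self-intersection $p$ of $S$ in the interior of the handlebody $X_2\cap X_3$. My plan is to perform a $\tri$--regular ambient isotopy of $X^4$ that transports $p$ to a small collar neighborhood of the central surface $\Sigma := X_1\cap X_2\cap X_3$ inside $X_2\cap X_3$, and then apply a single vertex perturbation as in Definition~\ref{def:vertexperturb} to push the transported self-intersection across $\Sigma$ into (say) $X_1\cap X_2$.

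First, I would select an embedded arc $\gamma\subset X_2\cap X_3$ from $p$ to a chosen point $q$ in a thin collar of $\Sigma$, such that $\gamma\setminus\{p\}$ is disjoint from $S\cap(X_2\cap X_3)$. This is possible by genericity: $\gamma$ is $1$--dimensional in the $3$--manifold $X_2\cap X_3$, and $S\cap(X_2\cap X_3)$ is $1$--dimensional as well. I would then construct a $\tri$--regular ambient isotopy of $X^4$ supported in a small $4$--dimensional tubular neighborhood $U$ of $\gamma$ which translates $p$ to $q$. Because $U$ meets $S$ only in the two local sheets through $p$, this isotopy drags the self-intersection from $p$ to $q$ while fixing $S$ elsewhere. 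The resulting surface is still in bridge position: the tangle $S\cap(X_2\cap X_3)$ is modified by an ambient isotopy supported in $U\cap(X_2\cap X_3)$, which preserves its trivial immersed tangle structure; and the immersed disk systems $S\cap X_2$ and $S\cap X_3$ are modified by ambient isotopies, which preserve boundary--parallelism and hence triviality.

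Once $p$ has been moved to $q$ near $\Sigma$, a further $\tri$--regular isotopy supported in a small $3$--ball $N\subset X_2\cap X_3$ containing $q$ and meeting $\Sigma$ in a disk is used to put the local picture near $q$ into the standard form of Figure~\ref{fig:surfacevertexperturb} (so that the two arcs of $S\cap(X_2\cap X_3)$ through $q$ look like standard short transverse arcs perpendicular to $\Sigma$). I then apply one vertex perturbation inside $N$, pushing the self-intersection across $\Sigma$ into $X_1\cap X_2$ (the other choice, $X_3\cap X_1$, works equally well). This produces a surface $S'$ which is in bridge position with respect to $\tri$, agrees with $S$ outside of $U\cup N$, and has exactly $n-1$ self-intersections in $X_2\cap X_3$.

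The main technical point is verifying that the $\tri$--regular ambient isotopy in the first step actually preserves bridge position globally. This reduces to the observation that the isotopy is supported in a small $4$--ball meeting $S$ only in the two sheets through $p$, so its effect on each $S\cap X_i$ is ambient isotopy of a trivial immersed disk system, which remains trivial. Once this is checked, the remaining two steps are elementary local normal--form and move arguments.
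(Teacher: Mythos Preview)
Your strategy is the same as the paper's at a high level — get a self-intersection near $\Sigma$ and then push it across with a single vertex perturbation — but the ``local normal form'' step has a real gap. After you drag the vertex to $q$ along $\gamma$, the four half-arcs of $S\cap(X_2\cap X_3)$ emanating from $q$ all run \emph{back along $\gamma$} into the interior of $X_2\cap X_3$; the $\Sigma$--endpoints of these two arcs are unchanged by your compactly supported isotopy and lie nowhere near your ball $N$. But the local model for a vertex perturbation (consistent with the fact that the move preserves the bridge index $b$, hence $|S\cap\Sigma|$) requires that two of those half-arcs descend directly to bridge points lying in $N\cap\Sigma$: when the vertex crosses $\Sigma$ it exchanges position with those two bridge points rather than creating four new ones. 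An isotopy supported in $N$ cannot manufacture bridge points in $N\cap\Sigma$ that were not already there, so you cannot reach the required model by the local isotopy you describe. In short, pushing the self-intersection close to $\Sigma$ is not the same as arranging the \emph{strands through it} to be close to $\Sigma$.

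The paper sidesteps this by using the trivial-immersed-tangle hypothesis globally rather than dragging a single point: it first $\tri$--regularly isotopes all of $S\cap(X_2\cap X_3)$ into a collar $\Sigma\times I$ so that projection to $I$ is Morse with one maximum on each arc, arranges the self-intersections at distinct $I$--heights, and selects the one closest to $\Sigma$. In that position every half-arc already heads toward $\Sigma$, and no other self-intersection lies between the chosen vertex and $\Sigma$; the only possible obstructions are nonsingular strands, which are cleared by ``mutual braid transposition'' isotopies near $\Sigma$. This global collar-straightening is exactly the ingredient your argument is missing.
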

\begin{proof}
Following from Definition~\ref{def:trivimmersedtangle} of a trivial immersed tangle, some $\tri$--regular isotopy of $S$ can arrange for the tangle $T=S\cap X_2\cap X_3$ to lie inside a collar neighborhood $\Sigma\times I\subset X_2\cap X_3$ of $\partial (X_2\cap X_3)=\Sigma$, so that projection to the $I$ factor is Morse on $T$ with one maximum on each arc component. Further isotope so that the self-intersections of $S$ in $\Sigma\times I$ lie at different values of the $I$ factor. In particular, one self-intersection $c$ lies strictly closest to $\Sigma$. Then by $\tri$--regular isotopy of $S$ near $\Sigma$ (sometimes called ``mutual braid transposition" when performed diagramatically), we can arrange for $c$ to have a neighborhood as in Figure~\ref{fig:surfacevertexperturb}, and thus apply a vertex perturbation to $S$ to obtain a surface $S'$ with one fewer self-intersection in $X_2\cap X_3$.

\end{proof}

\subsection{Uniqueness of bridge trisections of immersed surfaces}\label{sec:trisectionunique}
Perturbation of bridge trisections is conveniently very similar to perturbation of a banded link in bridge position. When perturbing a banded link $(L,B)$ with respect to a Heegaard surface $\Sigma$, we allow at most one band and one vertex to be between the intersection of $L$ and $\Sigma$ at which the perturbation is based and the two newly introduced intersections. See Figure~\ref{fig:perturbsingularband}.

\begin{lemma}\label{lemma:perturb}
Let $\tri=(X_1,X_2,X_3)$ be a trisection of a 4--manifold $X^4$. Let $h$ be a $\tri$--compatible Morse function on $X^4$, and $\K$ a Kirby diagram induced by $h$. Let $H_1:=X_3\cap X_1$ and $H_2:=X_1\cap X_2$ give the usual Heegaard splitting $(H_1,H_2)$ for $\K$, in which $\Sigma:=H_1\cap H_2$ is the Heegaard surface.

Suppose a singular banded unlink diagram $(\K,L,B)$ is in bridge position with respect to $\Sigma$. Let $(\K,L',B')$ be obtained from $(\K,L,B)$ by perturbation near $L\cap\Sigma$. Then $\Sigma(\K,L',B')$ can be obtained from $\Sigma(\K,L,B)$ by perturbation followed by $\tri$--regular isotopy.
\end{lemma}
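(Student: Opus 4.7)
The plan is to do a case analysis on the local model of the perturbation near the point $p\in L\cap\Sigma$ being perturbed. By the definition of perturbation of singular banded links in bridge position (Figures~\ref{fig:singularperturb} and~\ref{fig:vertexperturb}), any such local move either (a) inserts a pure zig--zag of the link strand at $p$ across $\Sigma$ (an elementary perturbation of the link), (b) moves a single vertex of $L$ already adjacent to $p$ through $\Sigma$ (a vertex perturbation), or (c) a combination in which at most one band endpoint and/or at most one vertex of $L$ lies between the original intersection $p$ and the two newly-created intersections. Since each case is local, it suffices to handle them one at a time.

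First I would treat the pure elementary perturbation case. Here $L$ is locally a single embedded strand crossing $\Sigma$ once, and $B$ has no bands nearby. By Lemma~\ref{lemma:bridgerealization}, a small collar neighborhood of $p$ in $\Sigma(\K,L,B)$ is a thickened column $\Sigma\times[-\varepsilon,\varepsilon]$ meeting the trisection in the obvious way, and the zig--zag introduced by the link perturbation is realized on the surface by precisely the local elementary perturbation move of Definition~\ref{def:perturbbridge} (Figure~\ref{fig:embedperturb}). Next I would treat the vertex perturbation case: if a single vertex $v$ of $L$ sits between $p$ and the new intersections in the perturbed diagram, then the marked vertex $v$ is pushed through $\Sigma$, and by construction (comparing to Definition~\ref{def:vertexperturb} and Figures~\ref{fig:surfacevertexperturb}--\ref{fig:surfacevertexperturb2}) this realizes exactly a vertex perturbation of $\Sigma(\K,L,B)$, moving the corresponding self-intersection from $X_i\cap X_{i+1}$ to $X_{i+1}\cap X_{i+2}$.

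The main obstacle is the case where one or both of a band endpoint and a vertex lie between the original intersection and the new intersections. Here I would first perform the elementary perturbation on $\Sigma(\K,L,B)$ at the strand through $p$, using a disk supported in a small neighborhood disjoint from the band $b\in B$ and vertex in question; by the previous paragraph this produces a surface in bridge position whose singular banded unlink diagram is $(\K,L,B)$ with a clean zig--zag inserted at $p$. Now I would use $\tri$--regular isotopy to slide the band $b$ (respectively, to swim the vertex) through the newly-introduced bridge arc: this is possible because the neighborhoods of $b$ and of the vertex in the perturbed picture are standard (in the sense of Figure~\ref{fig:trivband}), so the band can be pushed across the zig--zag by a $\tri$--regular isotopy supported in a ball intersecting $\Sigma$ in a disk, and likewise the vertex can be dragged along the zig--zag without crossing any other band endpoint or vertex. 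After these moves the singular banded link agrees with $(\K,L',B')$, and the resulting surface is $\Sigma(\K,L',B')$ by the uniqueness of realizing surfaces (Proposition~\ref{prop:uniquerealizingsurfaces}).

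Finally, if both a band and a vertex lie between the old and new intersections, I would combine the previous steps: perform the elementary perturbation first, then do the band slide, then do the vertex swim through the enlarged local model. Since in all cases we have exhibited $\Sigma(\K,L',B')$ as the result of (one elementary perturbation and/or one vertex perturbation) of $\Sigma(\K,L,B)$ followed by $\tri$--regular isotopy, this completes the lemma.
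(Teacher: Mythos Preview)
Your approach is essentially the paper's approach: the paper's proof is the single line ``See Figure~\ref{fig:linkperturbvssurfaceperturb} (top)'', which is exactly a local case analysis in picture form showing that each local model of an elementary perturbation of $(L,B)$ near $L\cap\Sigma$ is realized on the realizing surface by an elementary perturbation plus $\tri$--regular isotopy. Your written case-by-case unpacking of this picture is correct in substance.

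One organizational point: your case~(b), in which a vertex is pushed \emph{through} $\Sigma$, is not part of this lemma. In the paragraph preceding Lemma~\ref{lemma:perturb} and in Figure~\ref{fig:perturbsingularband}, ``perturbation'' of the banded link means the elementary zig--zag move, where a band and/or a vertex may sit on the newly created short arc but the vertex does \emph{not} cross $\Sigma$. The move that pushes a vertex through $\Sigma$ is the vertex perturbation of the banded link, treated separately as Lemma~\ref{lemma:perturb2} (and illustrated in Figure~\ref{fig:linkperturbvssurfaceperturb}, bottom). So your cases~(a) and~(c) already suffice here, and your case~(b) is extraneous (though correct as an argument for the next lemma). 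Apart from this mislabeling, your argument matches the paper's.
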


\begin{figure}
    \centering
    \includegraphics[width=45mm,angle=90]{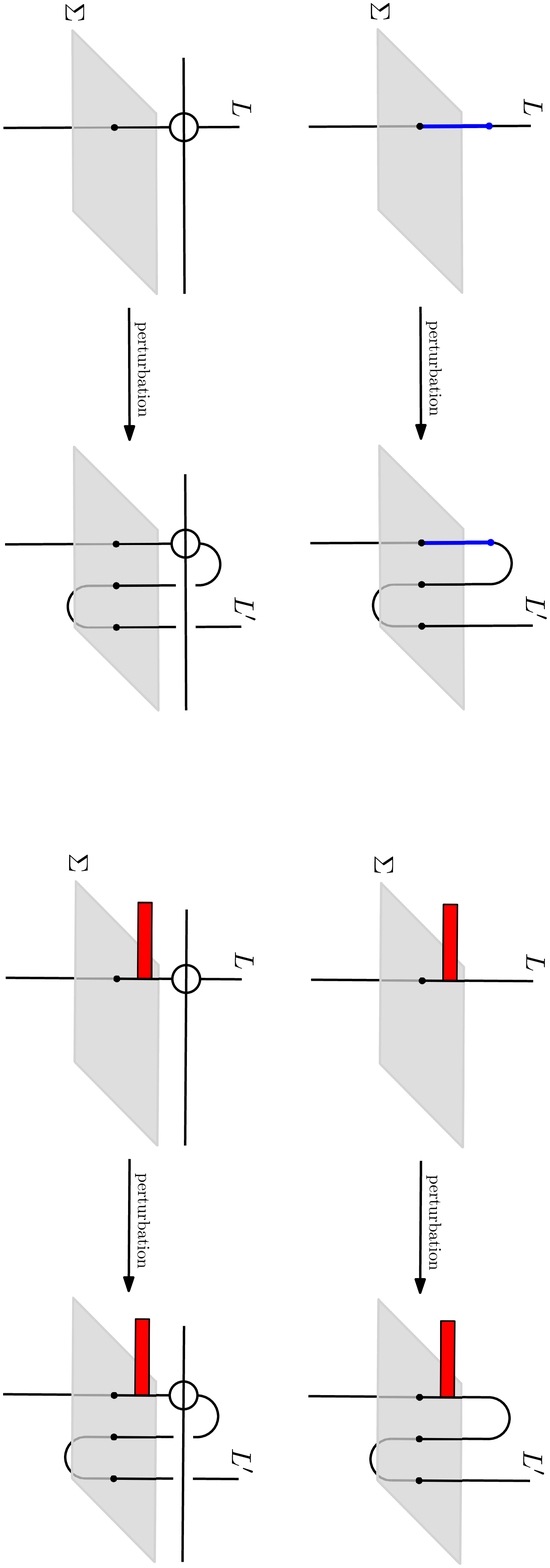}
    \caption{When performing a perturbation on the diagram in the top left we allow the blue arc to intersect at most one band and one vertex, as shown in the other three diagrams.}
    \label{fig:perturbsingularband}
\end{figure}
\begin{proof}
See Figure~\ref{fig:linkperturbvssurfaceperturb} (top).
\end{proof}

\begin{lemma}\label{lemma:perturb2}
Let $\tri=(X_1,X_2,X_3)$ be a trisection of a 4--manifold $X^4$. Let $h$ be a $\tri$--compatible Morse function on $X^4$, and $\K$ a Kirby diagram induced by $h$. Let $H_1:=X_3\cap X_1$ and $H_2:=X_1\cap X_2$ give the usual Heegaard splitting $(H_1,H_2)$ for $\K$, in which $\Sigma:=H_1\cap H_2$ is the Heegaard surface.

Suppose a singular banded unlink diagram $(\K,L,B)$ is in bridge position with respect to $\Sigma$ and that $v$ is a vertex of $L$ that is close to $\Sigma$ as in Figure~\ref{fig:vertexperturb}. Let $(\K,L',B')$ be obtained from $(\K,L,B)$ by isotoping $v$ through $\Sigma$. (We call this a {\emph{vertex perturbation of the banded link $(L,B)$}}). Then $\Sigma(\K,L',B')$ can be obtained from $\Sigma(\K,L,B)$ by one vertex perturbation followed by $\tri$--regular isotopy.
\end{lemma}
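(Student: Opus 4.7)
The plan is to parallel the proof of Lemma~\ref{lemma:perturb}, but to track the self-intersection of the realizing surface that corresponds to $v$, rather than a band. Since $(\K,L,B)$ is in bridge position and $v$ is close to $\Sigma$ as in Figure~\ref{fig:vertexperturb}, I would first choose a small $3$--ball $N\subset h^{-1}(3/2)$ containing $v$ so that $N\cap L$ is just the standard marked vertex, $N\cap B=\emptyset$, and $N\cap \Sigma$ is a disk meeting each of the four strands of $L\cap N$ transversely in a single point. Let $\widetilde{N}\subset X$ denote the $4$--dimensional region obtained by flowing $N$ up and down via $\nabla h$ for a short interval around height $3/2$; this region meets each piece $X_i$ of $\tri$ in a $4$--ball.

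Inside $\widetilde{N}$, the construction of Section~\ref{subsec:SBUDSandrealizingsurfaces} produces a pair of embedded $2$--disks meeting transversely in a single point, which is the self-intersection of $\Sigma(\K,L,B)$ corresponding to $v$. Because $v$ lies entirely in one of the two handlebodies $H_1$ or $H_2$ of $h^{-1}(3/2)$, so does the disk $D_v$ used in Section~\ref{subsec:SBUDSandrealizingsurfaces}, and so the self-intersection lies in the corresponding double intersection $X_3\cap X_1$ or $X_1\cap X_2$. The vertex perturbation that transports $v$ through $\Sigma$ from one handlebody to the other is supported in $N$, and on the realizing surface it is supported in $\widetilde{N}$; after applying it, the same self-intersection now lies on the opposite side of $\Sigma$. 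Comparing with Definition~\ref{def:vertexperturb} and Figures~\ref{fig:surfacevertexperturb}--\ref{fig:surfacevertexperturb2}, what has happened in $\widetilde{N}$ is exactly one vertex perturbation of the surface $\Sigma(\K,L,B)$.

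Outside $\widetilde{N}$, both realizing surfaces are assembled from identical data: the same singular banded unlink structure in $h^{-1}(3/2)\setminus N$, the same capping disks $D_-$ in $h^{-1}(1/2)$, and the same capping disks $D_+$ in $h^{-1}(5/2)$ (since perturbation in $N$ does not change $L^-$ or $L^+_B$ outside $N$). By Proposition~\ref{prop:uniquerealizingsurfaces} any two such assemblies are ambiently isotopic, and because the ambiguity is localized to choices compatible with the product structure of $\tri$ away from $\widetilde{N}$, this isotopy may be taken to be $\tri$--regular. Combining this with the local vertex perturbation inside $\widetilde{N}$ produces $\Sigma(\K,L',B')$ from $\Sigma(\K,L,B)$ by exactly one vertex perturbation of the surface, followed by a $\tri$--regular isotopy.

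The main subtlety will be checking that the local picture on both sides of $\Sigma$ matches the model of Figure~\ref{fig:surfacevertexperturb2}, which requires verifying that the marking of $v$ resolves correctly with respect to the convention in Definition~\ref{def:bridgediagram} (negative resolutions entering $X_i$ from $X_i\cap X_{i+1}$, positive resolutions entering $X_i$ from $X_{i-1}\cap X_i$). This is a direct, if slightly finicky, local verification: the two strands that are swapped by pushing $v$ across $\Sigma$ must produce the same pair of sheets inside $\widetilde{N}$ before and after, up to the allowed surface-level vertex perturbation, and this follows because the two markings of a vertex differ precisely by which strand is pushed to the positive versus negative side of $D_v$.
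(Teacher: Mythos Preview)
Your proposal is correct and follows essentially the same approach as the paper: the paper's proof consists entirely of a reference to Figure~\ref{fig:linkperturbvssurfaceperturb} (bottom), and your written argument unpacks exactly what that figure shows --- localizing to a neighborhood of $v$, observing that pushing $v$ across $\Sigma$ moves the corresponding self-intersection of the realizing surface from one double-intersection handlebody to the adjacent one as in Definition~\ref{def:vertexperturb}, and noting that outside this neighborhood the surfaces agree up to $\tri$--regular isotopy.
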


\begin{proof}
See Figure~\ref{fig:linkperturbvssurfaceperturb} (bottom).
\end{proof}

\begin{figure}
    \centering
    \includegraphics[width=0.8\textwidth]{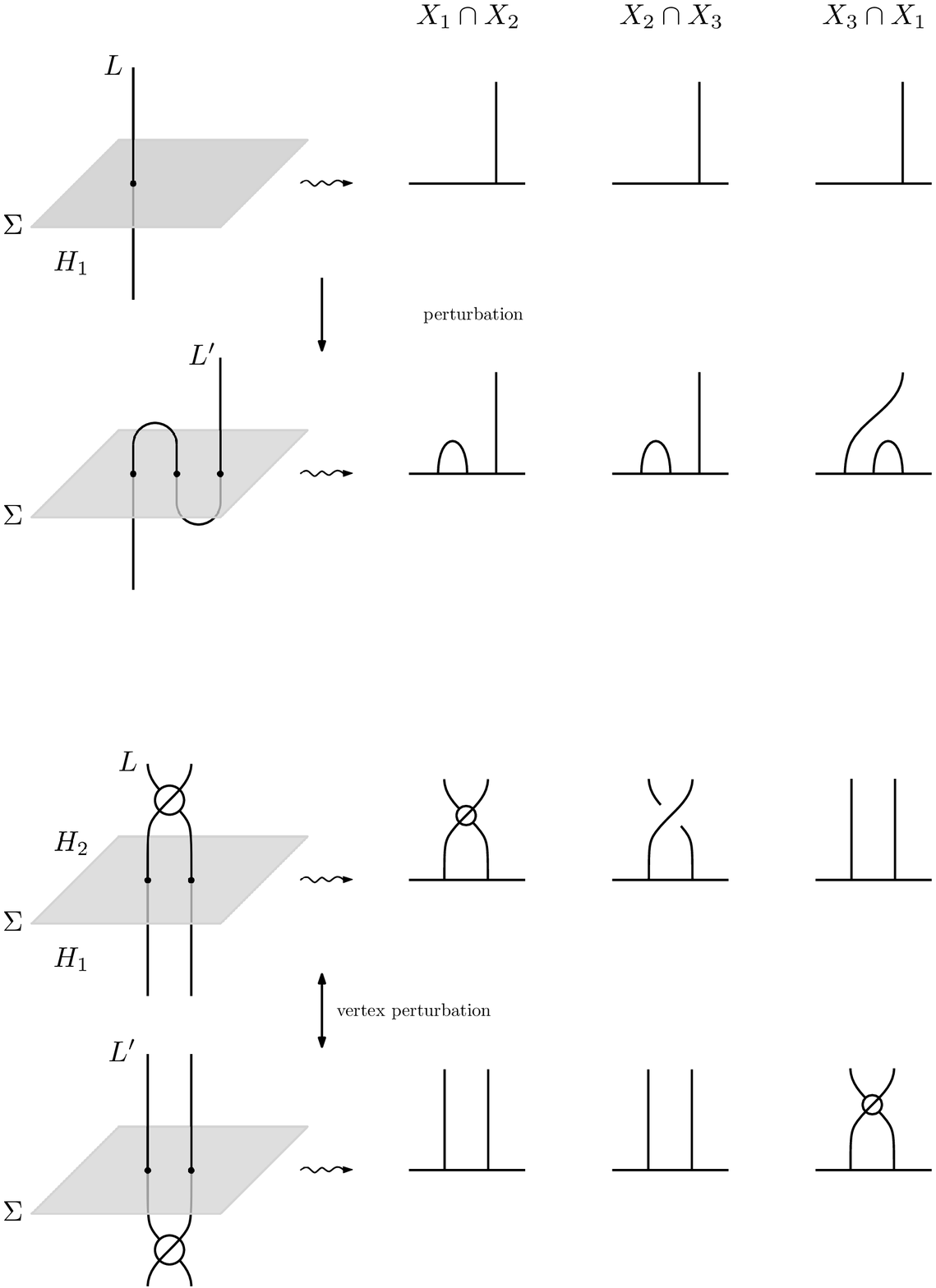}
    \caption{Perturbation of a singular banded unlink $(L,B)$ in bridge position induces perturbation of $\Sigma(L,B)$. {\bf{Top:}} Elementary perturbation. {\bf{Bottom:}} Vertex perturbation.}
    \label{fig:linkperturbvssurfaceperturb}
\end{figure}

The following uniqueness of bridge splittings of banded links motivates the uniqueness of bridge trisections.

\begin{theorem}\label{theorem:bandlinkisotopy}
Let $(L,B)$ and $(L',B')$ be isotopic banded singular marked links in a 3--manifold $M$ that has a Heegaard splitting $(H_1,H_2)$. Assume that both $(L,B)$ and $(L',B')$ are in bridge position with respect to $\Sigma:=H_1\cap H_2$, and that $B$ and $B'$ are both contained in $H_2$. Then there exists a banded singular marked link $(L'',B'')$ in bridge position with respect to $\Sigma$ that can be obtained from both $(L,B)$ and $(L',B')$ by  sequences of elementary perturbations, vertex perturbations, and isotopies that fix $\Sigma$ setwise.
\end{theorem}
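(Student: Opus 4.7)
The plan is to mimic the proof of Theorem~\ref{theorem:bridgelinkisotopy}, now carrying the bands $B$ and $B'$ along in addition to the marked singular links. The key reduction is to the uniqueness statement for bridge splittings of \emph{nonsingular} banded links (which drives the proof of Theorem~\ref{oldbridgethm} and is already built on the works of Meier--Zupan~\cite{meier2017bridge,meier2018bridge}); once that case is in hand, everything singular will be recovered via a contraction trick identical to the one used in Theorem~\ref{theorem:bridgelinkisotopy}.

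First, I would pass to the positive resolutions $L^+$ and $(L')^+$ of $L$ and $L'$, recording at each vertex of $L$ (respectively $L'$) a short framed arc $a_i$ (respectively $a'_j$) with endpoints on $L^+$ (respectively $(L')^+$) whose contraction restores the corresponding vertex. Since the vertices of $L$ and $L'$ lie in $M\setminus\Sigma$ (they sit inside the trivial immersed tangles in $H_1$ and $H_2$), these framed arcs can be chosen to be disjoint from $\Sigma$; and since $B$ and $B'$ already lie in $H_2$ and avoid the vertices, the pairs $(L^+,B)$ and $((L')^+,B')$ are nonsingular banded links in bridge position with respect to $\Sigma$ whose bands still lie in $H_2$. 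An ambient isotopy taking $(L,B)$ to $(L',B')$ lifts to an ambient isotopy taking $(L^+,B)$ to $((L')^+,B')$ which also carries $\{a_i\}$ to $\{a'_j\}$.

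Next, I would apply the nonsingular banded analogue of the present theorem to $(L^+,B)$ and $((L')^+,B')$, producing a common banded link $(J,B'')$ in bridge position reachable from both via elementary perturbations and $\Sigma$--preserving ambient isotopies. Throughout this sequence I would transport the framed arcs $a_i,a'_j$: because they are short and localized near the original vertices, and because the perturbations and isotopies can be performed in the complement of small preassigned neighborhoods of those arcs, we may assume they are carried to framed arcs $b_i,b'_i$ on $J$ that remain disjoint from $\Sigma$ and from $B''$. A further $\Sigma$--preserving isotopy, exactly as in the normalization step of the proof of Theorem~\ref{theorem:bridgelinkisotopy}, arranges each matching pair $b_i,b'_i$ to be parallel to $\Sigma$ with surface framing and parallel to one another, possibly on opposite sides of $\Sigma$.

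Finally, I would contract $J$ along $\bigcup b_i$ (respectively $\bigcup b'_i$) with markings induced by those of $L$ (respectively $L'$) to obtain marked singular banded links $(\hat J,B'')$ and $(\hat J',B'')$; these differ by an isotopy fixing $\Sigma$ together with one vertex perturbation per index $i$ at which $b_i$ and $b'_i$ lie on opposite sides of $\Sigma$. Setting $(L'',B''):=(\hat J,B'')$ then yields a common target reachable from both $(L,B)$ and $(L',B')$ by the allowed moves. The hard part of the argument will be the bookkeeping that keeps the contraction arcs $a_i$ and the bands $B$ compatible with the Meier--Zupan sequence: one must ensure both that each elementary perturbation can be performed away from the small neighborhoods of the $a_i$, and that the bands $B''$ can be kept in the appropriate handlebody throughout. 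Both are essentially free by dimensional genericity, but writing this down carefully is the technical substance of the proof.
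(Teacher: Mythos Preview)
Your proposal is correct and follows essentially the same approach as the paper: resolve the vertices to nonsingular banded links $(L^+,B)$ and $((L')^+,B')$, apply Meier--Zupan's uniqueness machinery while tracking the short contraction arcs, then contract back and use vertex perturbations to reconcile the two sides. The only packaging difference is that the paper treats the band cores and the vertex contraction arcs uniformly as framed arcs fed into Meier--Zupan's construction (so the bands $B_J,B'_J$ and the arcs $b_i,b'_i$ are simultaneously normalized to be parallel to $\Sigma$), whereas you invoke the nonsingular \emph{banded} theorem as a black box and then track the vertex arcs separately; also, the fact that the $a_i$ never cross $\Sigma$ during the sequence is a feature of Meier--Zupan's explicit construction rather than the general-position argument you sketch, but you already point to Theorem~\ref{theorem:bridgelinkisotopy} for this, so the substance is the same.
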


Theorem \ref{theorem:bandlinkisotopy} is similar to a theorem for nonsingular banded links due to Meier and Zupan \cite{meier2017bridge,meier2018bridge}.

\begin{remark}
Meier and Zupan study banded links by viewing each band as a framed arc with endpoints on a link. They give moves to perturb a link in order to make these framed arcs parallel to a bridge surface with correct framing. In the setting of singular banded links, we are able to use their proof by viewing both self-intersections and bands as framed arcs, applying the theorem, and then contracting the self-intersection arcs to yield a singular link in bridge position.
\end{remark}

\begin{proof}
As in Theorem~\ref{theorem:bridgelinkisotopy}, there exist disjoint framed arcs $a_1,\ldots, a_n$ with endpoints on $L^+$ so that contracting $L^+$ along $a_1,\ldots, a_n$ yields $L$. Similarly, there exist framed arcs $a'_1,\ldots, a'_n$ with endpoints on ${L'}^+$ so that contracting ${L'}^+$ along $a'_1,\ldots, a'_n$ yields $L'$.

Now by Meier and Zupan \cite{meier2017bridge,meier2018bridge}, there exists a link $J$ that can be obtained from $L^+$ and from ${L'}^+$ by elementary perturbations and isotopies fixing $\Sigma$ setwise. Moreover, these isotopies and perturbations carry $a_i$ and $a'_i$ to framed arcs $b_i$ and $b'_i$ (respectively) with endpoints on $J$, so that $b_i,b'_i$ are each parallel to $\Sigma$ with surface framing, and either agree or could be isotoped to agree if the endpoints of $b'_i$ were allowed to pass through $\Sigma$ (i.e.\ $b_i$ and $b'_i$ are parallel and both lie close to $\Sigma$, but potentially on opposite sides). Moreover, during the perturbations and isotopies of $L^+$ (resp. ${L'}^+$), $a_i$ (resp. $a'_i$) never intersect $\Sigma$, so these perturbations and isotopies may be achieved by perturbations and isotopies of $L$ (resp. $L'$).

Meier--Zupan's proof allows us to not only control the framed arcs $a_i$, $a'_i$, but also the framed arcs that are the cores of the bands $B$ and $B'$. That is, by perhaps perturbing $J$ even further, we may also assume that $B$ and $B'$ are taken to bands $B_J$, and $B'_J$ whose $i$-th bands either agree or are parallel and close to $\Sigma$ but on opposite sides, and that $(J,B_J)$, $(J,B'_J)$ are both in bridge position. Let $\hat{J}$ and $\hat{J'}$ be the marked singular links obtained by contracting $J$ along $b_i$ and $b'_i$, respectively, and with markings induced by $L$ and $L'$. Then $\hat{J'}$ can be transformed into $\hat{J}$ by isotopy fixing $\Sigma$ and a vertex perturbation for each pair $a_i,a'_i$ in different components of $M\setminus\Sigma$. Therefore, the claim holds with $L''=\hat{J}$, and  $B''=B_J$.
\end{proof}

\begin{corollary}\label{cor:isotopicdiagrams}
If $\D=(L,B)$ and $\D'=(L',B')$ are isotopic banded unlink diagrams that are each in bridge position with respect to $\Sigma$, then $S:=\Sigma(\D)$ and $S':=\Sigma(\D')$ are related by elementary perturbation and deperturbation, vertex perturbation, and $\tri$--regular isotopy. 
\end{corollary}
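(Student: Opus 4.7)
The proof strategy is to combine Theorem~\ref{theorem:bandlinkisotopy} with Lemmas~\ref{lemma:perturb} and~\ref{lemma:perturb2}. As a preliminary reduction, I would arrange for every band in $B$ and $B'$ to lie in $H_2$; this is possible because the 3-ball neighborhoods from the definition of bridge position for singular banded links allow each band to be pushed through $\Sigma$ via an isotopy supported in a small thickened disk near $\Sigma$, and the corresponding surface-level modification is a $\tri$--regular isotopy of the realizing surface (one is simply sliding an index 1 critical point of $h|_S$ around in a collar of $\Sigma$ inside $X_2$). With this normalization in place, the hypothesis that $\D$ and $\D'$ are isotopic banded unlink diagrams allows us to apply Theorem~\ref{theorem:bandlinkisotopy}.

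Applying Theorem~\ref{theorem:bandlinkisotopy} produces a common banded singular marked link $(L'', B'')$ in bridge position, together with two sequences of moves transforming $(L, B)$ and $(L', B')$ respectively into $(L'', B'')$. Each move in these sequences is either an elementary perturbation, a vertex perturbation, or an isotopy fixing $\Sigma$ setwise. Lemmas~\ref{lemma:perturb} and~\ref{lemma:perturb2} translate the first two kinds of moves directly into elementary perturbations and vertex perturbations of the realizing surface, each followed by a $\tri$--regular isotopy. An isotopy of the banded link fixing $\Sigma$ setwise preserves the partition $\partial X_1 = H_1 \cup H_2$, and because the surface in each sector $X_i$ is a trivial immersed disk system uniquely determined up to isotopy rel boundary by its intersection with $\partial X_i$, the isotopy extends to a $\tri$--regular ambient isotopy of the realizing surface in $X$. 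Concatenating the two chains (and reversing one) yields the desired sequence of elementary perturbations and deperturbations, vertex perturbations, and $\tri$--regular isotopies relating $S$ to $S'$.

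The main technical obstacle I anticipate is verifying that $\Sigma$--setwise-fixing isotopies of the banded link really do extend to $\tri$--regular isotopies of the realizing surfaces. This requires controlling how an ambient isotopy of $\partial X_1$ extends inward through each of $X_1, X_2, X_3$ while preserving the trisection structure, which in turn relies on the rigidity of trivial immersed disk systems inside handlebodies of the form $\natural(S^1\times B^3)$. This is in the same spirit as the argument used to prove Proposition~\ref{prop:uniquerealizingsurfaces}, but must be carried out carefully in each sector so that the extensions agree along the walls $X_i\cap X_{i+1}$ and keep the spine $\Sigma$ fixed setwise throughout.
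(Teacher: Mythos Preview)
Your overall strategy matches the paper's exactly: invoke Theorem~\ref{theorem:bandlinkisotopy}, reduce to single moves, and handle perturbations via Lemmas~\ref{lemma:perturb} and~\ref{lemma:perturb2}. Your preliminary normalization (bands into $H_2$) is also appropriate, since Theorem~\ref{theorem:bandlinkisotopy} is stated under that hypothesis.

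The one place where your argument differs from the paper's is the treatment of a $\Sigma$--setwise--fixing isotopy $f_t$ of $M$. You propose to argue abstractly, using uniqueness of trivial immersed disk systems rel boundary in each $X_i$, and you correctly flag as an obstacle the compatibility of the three separate extensions along the walls $X_i\cap X_{i+1}$. The paper avoids this obstacle entirely by exploiting the $\tri$--compatible Morse function: one observes that $X_2\cap X_3=\bigcup_{t\in[3/2,4]}\Sigma_t$ where $\Sigma_t$ is the image of $\Sigma$ under the gradient flow, and then simply extends $f_t$ \emph{horizontally} (i.e.\ level-by-level via $\rho_{3/2,t}$) to all of $X$ so that each $\Sigma_t$ is preserved setwise. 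This single global extension automatically fixes $X_1=h^{-1}([0,3/2])$, $X_2\cup X_3=h^{-1}([3/2,4])$, and $X_2\cap X_3$, hence each $X_i$ individually; no piecewise gluing is required. Your rigidity-of-disk-systems route can be made to work, but the paper's level-set extension is both shorter and sidesteps precisely the compatibility issue you identified.
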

\begin{proof}
By Theorem~\ref{theorem:bandlinkisotopy}, $\D$ and $\D'$ are related by a sequence of elementary perturbations and deperturbations, vertex perturbations, and isotopies fixing $\Sigma$ setwise. It is therefore sufficient to show that the claim is true if $\D'$ is obtained from $\D$ by a single one of these moves. We have already shown the claim to be true when $\D'$ is obtained from $\D$ by either a perturbation/deperturbation (Lemma~\ref{lemma:perturb}), or a vertex perturbation (Lemma~\ref{lemma:perturb2}). So suppose that $\D'$ is obtained from $\D$ by an isotopy $f_t$ of $M$ that fixes $\Sigma$ setwise.

The surface $\Sigma_{3/2}:=\Sigma$ is a separating surface in $M=h^{-1}(3/2)$. For every $t\in[0,4]$, there is a separating surface $\Sigma_t$ in $h^{-1}(t)$ that is vertically above or below $\Sigma$.
Then $f_t$ can be extended to a horizontal isotopy of the whole 4--manifold $X^4$ that fixes every $\Sigma_t$ horizontally. Since all index 2 critical points of $h$ are contained in one component of $X^4\setminus\cup_t\Sigma_t$, this isotopy can be chosen to take $S$ to $S'$. Since this isotopy is horizontal, it fixes $X_1=h^{-1}([0,3/2])$ and $X_2\cup X_3=h^{-1}([3/2,4])$ setwise. Since this isotopy fixes $X_2\cap X_3=\cup_{[3/2,4]}\Sigma_t$ setwise, it also fixes $X_2$ and $X_3$ setwise. Therefore, this is a $\tri$--regular isotopy.
\end{proof}

 The main theorem of this section is that bridge position and hence bridge trisection diagrams are essentially unique. The proof uses Theorem~\ref{maintheorem}.

\begin{theorem}\label{bridgethm}
Let $S$ and $S'$ be self-transverse immersed surfaces in bridge position with respect to a trisection $\tri=(X_1,X_2,X_3)$ of a closed $4$--manifold $X^4$. Suppose $S$ is ambiently isotopic to $S'$. Then $S$ can be taken to $S'$ by a sequence of elementary perturbations and deperturbations, vertex perturbations, and $\tri$--regular isotopy.
\end{theorem}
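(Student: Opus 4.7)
The plan is to reduce Theorem~\ref{bridgethm} to the diagrammatic uniqueness statement provided by Theorem~\ref{maintheorem}, following the template that proved the embedded case in \cite{bandpaper} and exploiting the dictionary between bridge trisections and bridge-position singular banded unlinks developed in Section~\ref{sec:bridgesingularlink}.

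First I would normalize $S$ and $S'$. By Lemma~\ref{nointsinx2x3}, applied separately to each surface, we may assume (after sequences of vertex perturbations and $\tri$--regular isotopies) that neither $S$ nor $S'$ has any self-intersections in $X_2\cap X_3$. Fix a $\tri$--compatible Morse function $h$, a gradient-like vector field $\nabla h$, and the induced Kirby diagram $\K$. Applying Lemma~\ref{lemma:bridgegetbandedunlink} to both surfaces yields singular banded unlink diagrams $\D=(\K,L,B)$ and $\D'=(\K,L',B')$ in bridge position with respect to $\Sigma:=X_1\cap X_2\cap X_3$, whose realizing surfaces are $\tri$--regularly isotopic to $S$ and $S'$. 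Since $S$ and $S'$ are ambiently isotopic in $X$, so are $\Sigma(\D)$ and $\Sigma(\D')$, and Theorem~\ref{maintheorem} supplies a finite sequence of singular band moves $\D=\D_0\to\D_1\to\cdots\to\D_n=\D'$.

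The heart of the argument is to promote this diagrammatic sequence to a sequence of surface-level operations. Each singular band move $\D_{i-1}\to\D_i$ from Figures~\ref{fig:oldisomoves} and~\ref{fig:newisomoves} has bounded local support in $M=h^{-1}(3/2)$; after a preparatory $\Sigma$--preserving isotopy of the banded link (which by Corollary~\ref{cor:isotopicdiagrams} corresponds to a $\tri$--regular isotopy of the realizing surface), this local support can be placed in a configuration of one of three types. Type (i): the support lies in a ball disjoint from $\Sigma$, so the move preserves bridge position and is a local isotopy of the banded link in $H_1$ or $H_2$, corresponding at the surface level to a $\tri$--regular isotopy via Corollary~\ref{cor:isotopicdiagrams}. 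Type (ii): the support crosses $\Sigma$ because a band must pass through $\Sigma$, which by Lemma~\ref{lemma:perturb} corresponds at the surface level to an elementary perturbation or deperturbation together with $\tri$--regular isotopy. Type (iii): the support crosses $\Sigma$ because a vertex must pass through $\Sigma$, which by Lemma~\ref{lemma:perturb2} corresponds at the surface level to a vertex perturbation together with $\tri$--regular isotopy.

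The main obstacle is the case analysis for the new moves of Figure~\ref{fig:newisomoves} that involve self-intersections and have no analogue in the embedded setting of \cite{bandpaper}; these are precisely the moves that require the vertex perturbation operation at the surface level, because they are the moves that can force a self-intersection of $\Sigma(\D_i)$ to cross $\Sigma$. Once each singular band move in the sequence has been converted into a (possibly empty) sequence of elementary perturbations, deperturbations, vertex perturbations, and $\tri$--regular isotopies, concatenating these and composing with the initial $\tri$--regular isotopies and vertex perturbations from the normalization step produces the desired equivalence from $S$ to $S'$.
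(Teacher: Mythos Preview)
Your overall architecture matches the paper's proof exactly: normalize via Lemma~\ref{nointsinx2x3}, extract bridge--position diagrams via Lemma~\ref{lemma:bridgegetbandedunlink}, apply Theorem~\ref{maintheorem} to get a chain of singular band moves, and then translate each move into allowed surface operations using Corollary~\ref{cor:isotopicdiagrams} to absorb the preparatory isotopies. That is precisely what the paper does.

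The gap is in your trichotomy for step~4. Not every singular band move fits into your Types (i)--(iii). A cup, cap, or band slide is \emph{not} an isotopy of the banded link $(L,B)$ in $M$ (cup/cap change the number of components and bands; a band slide pushes one band through another), so Type~(i) as you stated it does not apply; and these moves do not involve a band or vertex crossing $\Sigma$ in the sense of Lemmas~\ref{lemma:perturb} or~\ref{lemma:perturb2}, so Types~(ii)/(iii) do not apply either. Yet at the surface level these moves genuinely require elementary perturbations and deperturbations, not just $\tri$--regular isotopy: this is what the paper verifies by hand for each move (Cases~4, 5, 7), producing explicit local pictures in bridge position and reading off the (de)perturbations from the resulting triplane tangles. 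Similarly, the intersection/band slide (Case~2) needs vertex perturbations even though no vertex literally crosses $\Sigma$ during the move. So the paper's proof replaces your abstract trichotomy with an honest case analysis over all eleven move types, and that case work is where the content lies. A minor point: Corollary~\ref{cor:isotopicdiagrams} does not give $\tri$--regular isotopy alone, but (de)perturbations, vertex perturbations, and $\tri$--regular isotopy; this does not hurt your conclusion, but it means even your ``preparatory isotopy'' step already consumes perturbations.
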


\begin{proof}

Let $h:X\to[0,4]$ be a $\tri$-compatible Morse function on $X^4$. Let $\K$ be a Kirby diagram for $X$ induced by $h$ and a fixed choice of $\nabla h$. As usual, we view $\Sigma:=X_1\cap X_2\cap X_3$ as a Heegaard surface for the ambient space of $\K$, with the dotted circles of $\K$ contained in one handlebody $H_1$ of this splitting and the 2--handle circles of $\K$ contained in the other handlebody $H_2$.

By Lemma~\ref{nointsinx2x3}, we may $\tri$--regularly isotope and perturb $S$ and $S'$ so that they do not include self-intersections in $X_2\cap X_3$. Then by Lemma~\ref{lemma:bridgegetbandedunlink}, there are banded unlink diagrams $\D:=(\K,L,B)$ and $\D':=(\K,L',B')$ so that $(L,B)$ and $(L',B')$ are in bridge position with respect to $\Sigma$ and so that $S$ and $S'$ are $\tri$--regular isotopic to $\Sigma(\D)$ and $\Sigma(\D')$, respectively.

By Theorem~\ref{maintheorem}, $\D$ and $\D'$ are related by a sequence of singular band moves. By Corollary~\ref{cor:isotopicdiagrams}, if $\D$ and $\D'$ are isotopic, then the theorem holds.

Assume that $\D'$ is obtained from $\D$ by one singular band move (other than isotopy). We will show that $S'$ and $S$ become $\tri$--regular isotopic after some sequence of perturbations and deperturbations. The theorem will then hold via induction on the length of a sequence of band moves relating $\D$ and $\D'$.

Meier and Zupan \cite{meier2017bridge} previously showed that the claim holds when the move turning $\D$ into $\D'$ is a cup, cap, band swim, or band slide. The authors of this paper \cite{bandpaper} showed the claim is true when the move is a 2-handle/band slide, 2-handle/band swim, or dotted circle slides. These arguments were technically only made for nonsingular banded unlinks, so we repeat them in the singular setting for clarity, often repeating Meier and Zupan's arguments. In the following paragraphs, we now consider every singular band move that might transform $\D$ into $\D'$.

{\bf{1. Intersection/band pass.}}

\begin{figure}
    \centering
    \includegraphics[width=.9\textwidth]{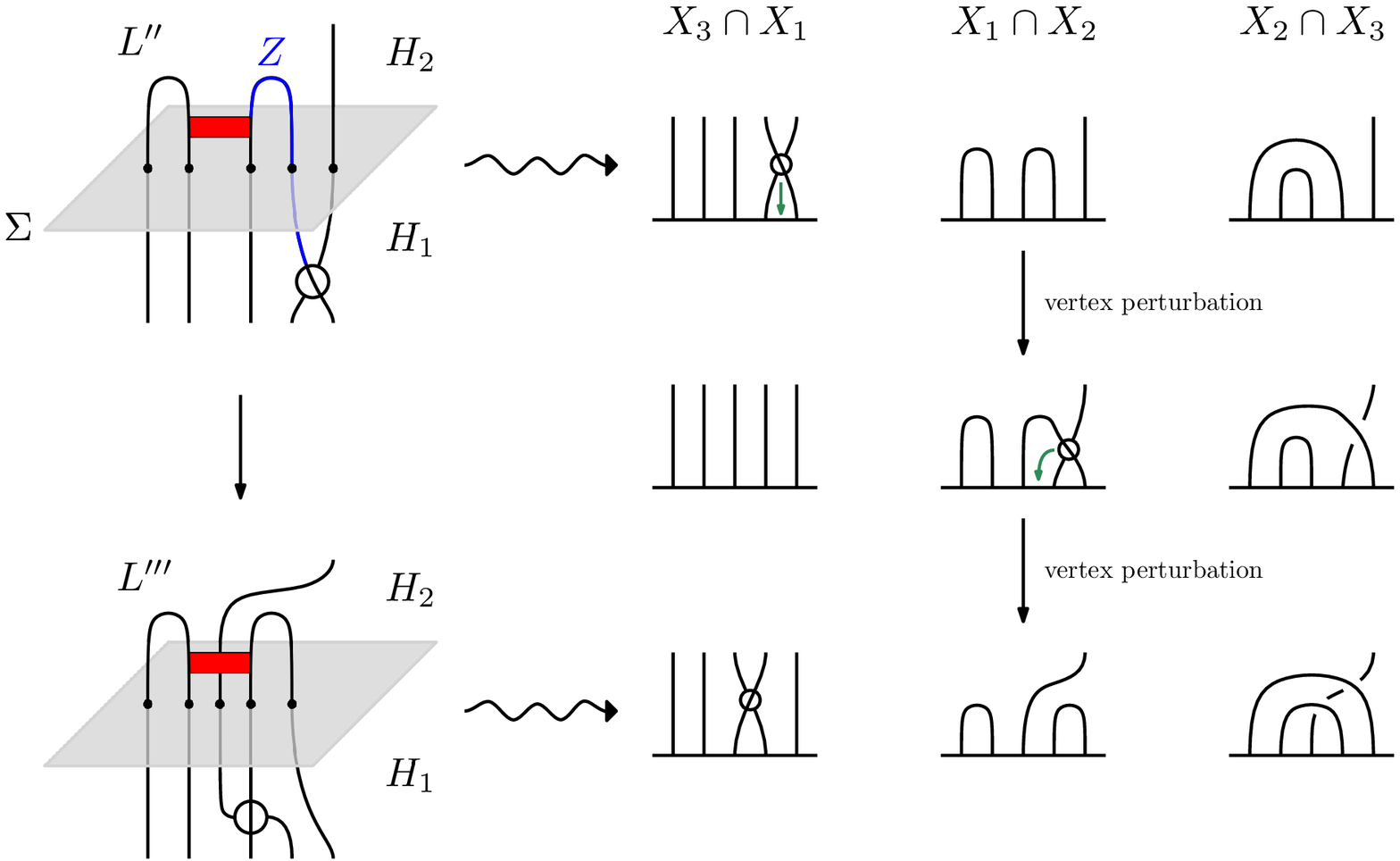}
    \caption{{\bf{Left:}} The singular banded unlink $(L''',B''')$ is obtained from $(L'',B'')$ by an intersection/band pass. {\bf{Right:}} We show that $\Sigma(L''',B''')$ (bottom) may be obtained from $\Sigma(L'',B'')$ (top) by two vertex perturbations and $\tri$--regular isotopy.}
    \label{fig:intbandpassbridge}
\end{figure}

Suppose $\D'$ is obtained from $\D$ by an intersection/band pass along a framed arc $z$ in $L$ between a vertex of $L$ and a band in $B$. Isotope $(L,B)$ so that $z$ is as in the top left of Figure~\ref{fig:intbandpassbridge}. Then isotope the rest of $L$ and $B$ outside a neighborhood of $z$ to obtain a banded link $(L'',B'')$ in bridge position. This banded singular link is isotopic to $(L,B)$, so by Corollary~\ref{cor:isotopicdiagrams} $S'':=\Sigma(L'',B'')$ is obtainable from $S$ by (de)perturbations and $\tri$--regular isotopy. Let $(L''',B''')$ be obtained from $(L'',B'')$ by performing the intersection/band pass along $z$, and let $S''':=\Sigma(L''',B''')$. Now the intersection of $S'''$ with each $X_i\cap X_j$ is isotopic rel boundary to the intersection of $S''$ with $X_i\cap X_j$, so $S'''$ is $\tri$--regular isotopic to $S''$. Finally, by Corollary~\ref{cor:isotopicdiagrams} we find that $S'''$ can be transformed into $S'$ by (de)perturbations and $\tri$--regular isotopy.

{\bf{2. Intersection/band slide.}}

Suppose $\D'$ is obtained from $\D$ by an intersection/band slide along a framed arc $z$ in $L$ between a vertex of $L$ and a band in $B$. Isotope $(L,B)$ so that $z$ is short and contained in $H_2$ in a neighborhood as in Figure~\ref{fig:intbandslidebridge}. Then isotope the rest of $L$ and $B$ outside this neighborhood to obtain a banded link $(L'',B'')$ in bridge position. This banded singular link is isotopic to $(L,B)$, so by Corollary~\ref{cor:isotopicdiagrams} $S'':=\Sigma(L'',B'')$ is obtainable from $S$ by (de)perturbations and $\tri$--regular isotopy. Let $(L''',B''')$ be obtained from $(L'',B'')$ by performing the intersection/band slide along $z$, and let $S''':=\Sigma(L''',B''')$. In Figure~\ref{fig:intbandslidebridge}, we show that $S'''$ can be obtained from $S''$ by perturbation and $\tri$--regular isotopy. Finally, by Corollary~\ref{cor:isotopicdiagrams} $S'''$ can be transformed into $S'$ by (de)perturbations and $\tri$--regular isotopy.

\begin{figure}
    \centering
    \includegraphics[width=.9\textwidth]{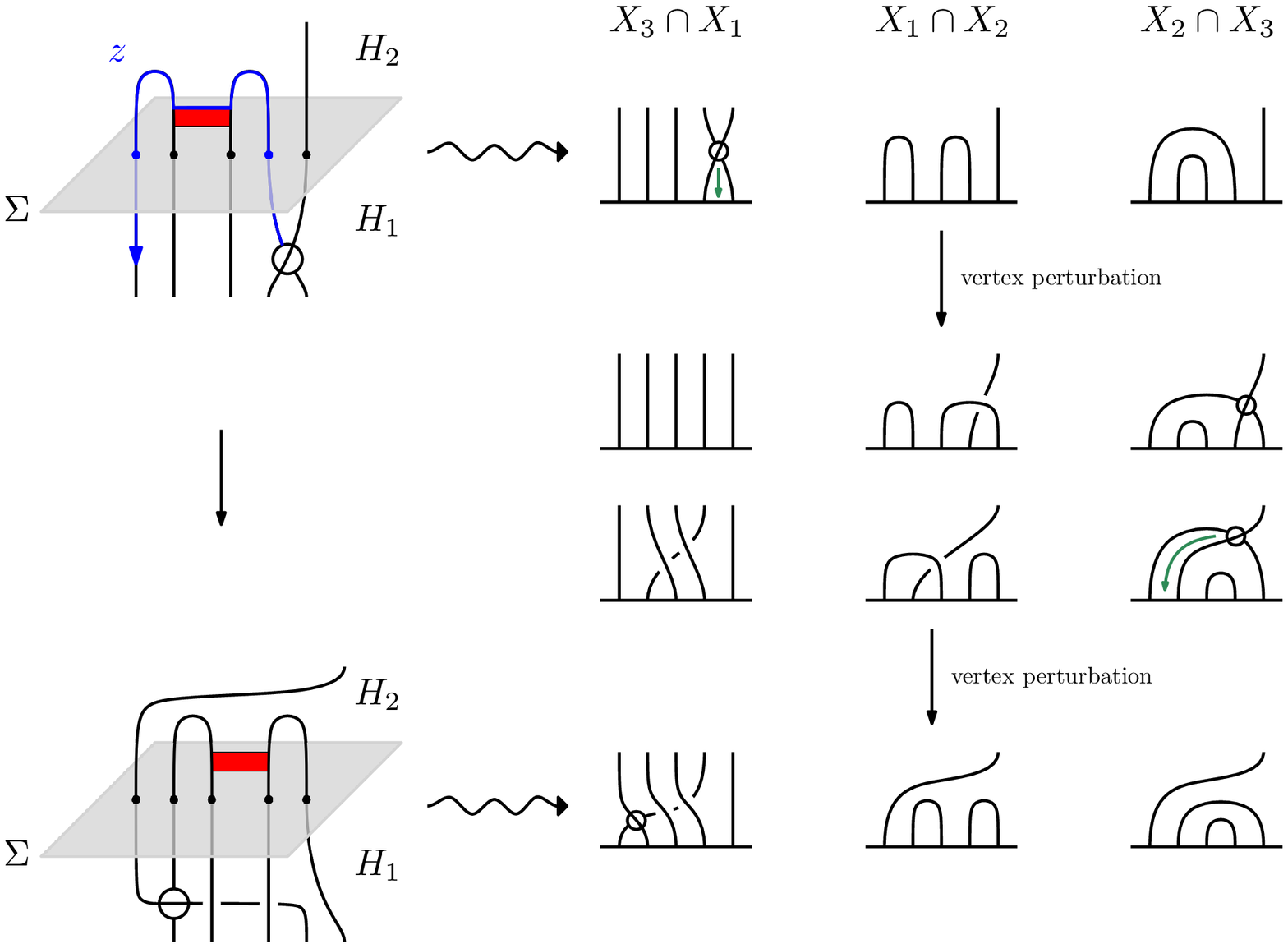}
    \caption{{\bf{Left:}} The singular banded unlink $(L''',B''')$ is obtained from $(L'',B'')$ by an intersection/band slide. {\bf{Right:}} We show that $\Sigma(L''',B''')$ (bottom) may be obtained from $\Sigma(L'',B'')$ (top) by two vertex perturbations and $\tri$--regular isotopy.}
    \label{fig:intbandslidebridge}
\end{figure}

{\bf{3. Intersection/band swim.}}

Suppose $\D'$ is obtained from $\D$ by an intersection/band swim along a framed arc $z$ between a vertex of $L$ and a band in $B$. Isotope $(L,B)$ so that $z$ is short and contained in $H_2$, as in Figure~\ref{fig:intbandswimbridge}. Then isotope the rest of $L$ and $B$ outside a neighborhood of $z$ to obtain a banded link $(L'',B'')$ in bridge position. This banded singular link is isotopic to $(L,B)$, so by Corollary~\ref{cor:isotopicdiagrams} $S'':=\Sigma(L'',B'')$ is obtainable from $S$ by (de)perturbations and $\tri$--regular isotopy. Let $(L''',B''')$ be obtained from $(L'',B'')$ by performing the intersection/band swim along $z$, and let $S''':=\Sigma(L''',B''')$. Now the intersection of $S'''$ with each $X_i\cap X_j$ is isotopic rel boundary to the intersection of $S''$ with $X_i\cap X_j$, so $S'''$ is $\tri$--regular isotopic to $S''$. Finally, by Corollary~\ref{cor:isotopicdiagrams} $S'''$ can be transformed into $S'$ by (de)perturbations and $\tri$--regular isotopy.

\begin{figure}
    \centering
    \includegraphics[width=.9\textwidth]{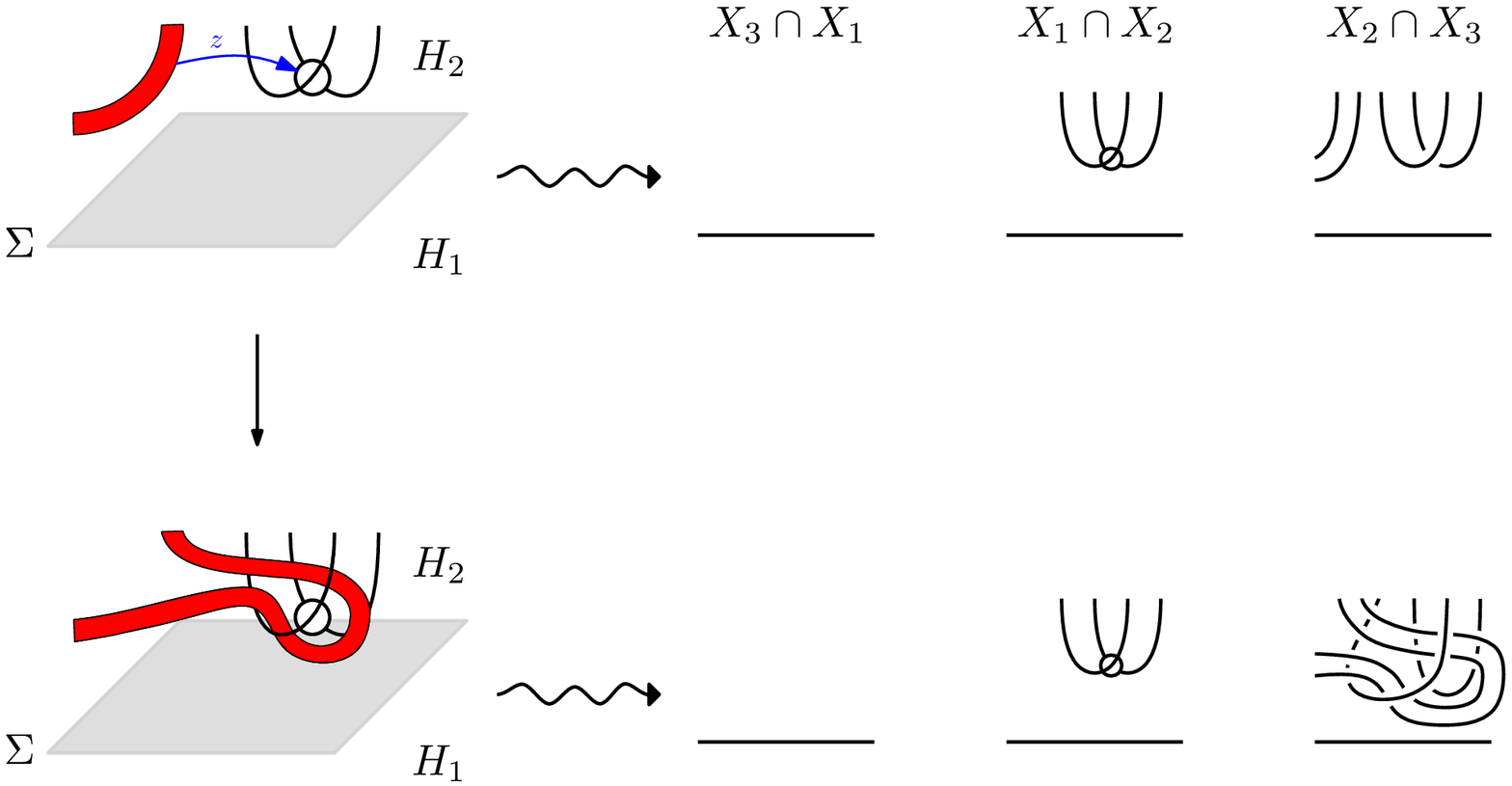}
    \caption{{\bf{Left:}} The singular banded unlink $(L''',B''')$ is obtained from $(L'',B'')$ by an intersection/band swim. {\bf{Right:}} We show that $\Sigma(L''',B''')$ (bottom) may be obtained from $\Sigma(L'',B'')$ (top) by $\tri$--regular isotopy.}
    \label{fig:intbandswimbridge}
\end{figure}

{\bf{4. Cup.}}

Suppose $\D'$ is obtained from $\D$ by a cup move. It does not matter in which direction we take the move, so assume that $L'$ is obtained from $L$ by adding a new unlink component $O$ contained in a ball not meeting $L$ or $B$, and $B'$ is obtained from $B$ by adding a trivial band $b_O$ from $L$ to $O$. By isotopy and intersection/band passes, we may take $O$ to be in 1--bridge position with respect to $\Sigma$, and $b_O$ to be in $H_2$, contained in a neighborhood as in Figure~\ref{fig:cupbridge}.  Performing the cup move yields a diagram $\D''$ that is related to $\D'$ by isotopy and intersection/band passes; by Corollary~\ref{cor:isotopicdiagrams} and Case 2, $\Sigma(\D'')$ can be transformed into $S'$ by perturbation and $\tri$--regular isotopy. Finally, we observe that $\Sigma(\D'')$ is obtained from the (perturbed) surface $S$ by perturbation (see Figure~\ref{fig:cupbridge}).

\begin{figure}
    \centering
    \includegraphics[width=.9\textwidth]{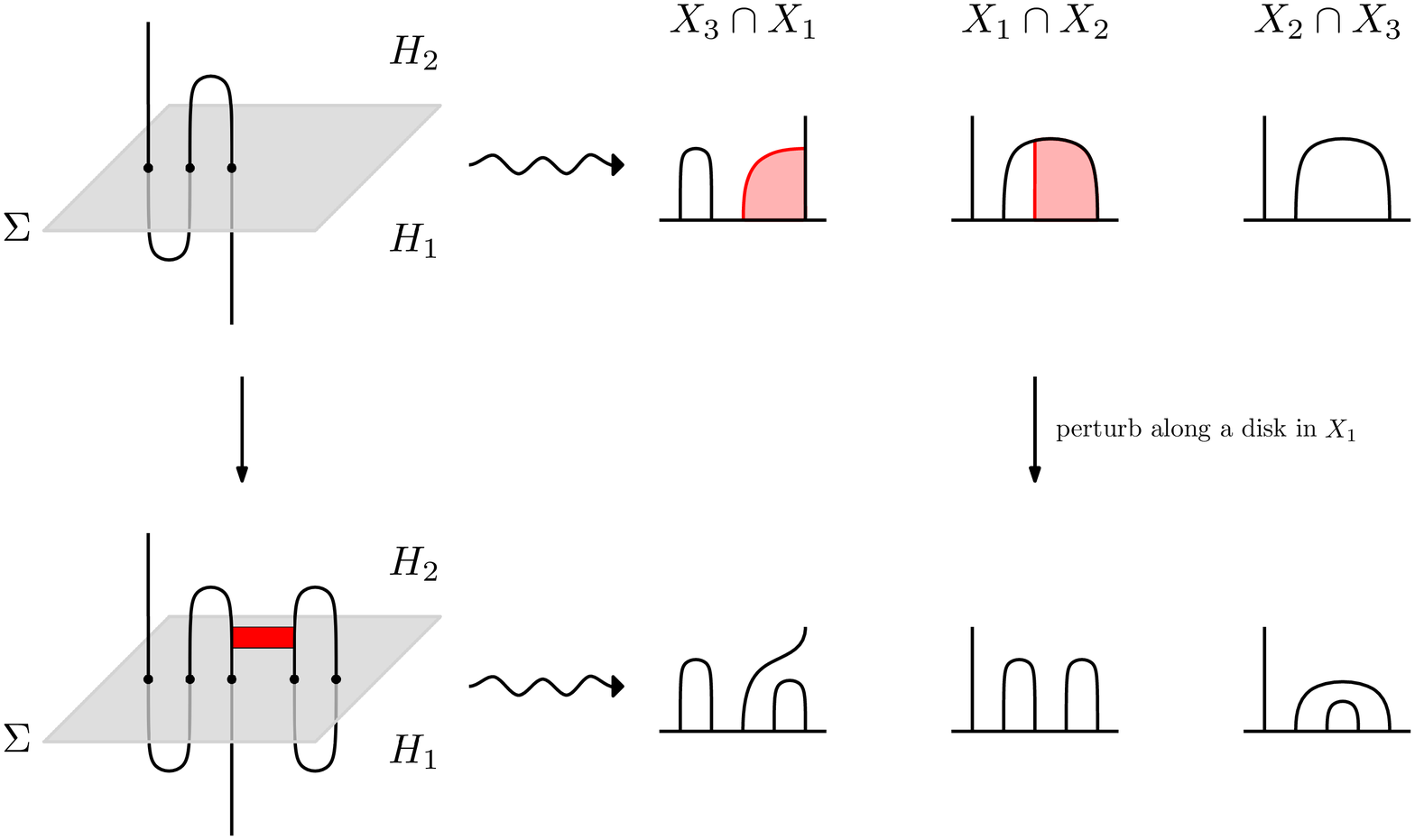}
    \caption{{\bf{Left:}} The singular banded unlink $(L''',B''')$ is obtained from $(L'',B'')$ by a cup move. {\bf{Right:}} We show that $\Sigma(L''',B''')$ (bottom) may be obtained from $\Sigma(L'',B'')$ (top) by an elementary perturbation and $\tri$--regular isotopy.}
    \label{fig:cupbridge}
\end{figure}

{\bf{5. Cap.}}

Suppose $\D'$ is obtained from $\D$ by a cap move. Again, it does not matter in which direction we take the move, so assume that $L'=L$ and $B'$ is obtained from $B$ by adding a trivial band $b$. By isotopy and intersection/band passes, we may take $b$ to have a neighborhood as in Figure~\ref{fig:capbridge}. Performing the cap move yields a diagram $\D''$ that is related to $\D'$ by isotopy and intersection/band passes; by Corollary~\ref{cor:isotopicdiagrams} and Case 2, $\Sigma(\D'')$ can be transformed into $S'$ by perturbation and $\tri$--regular isotopy. Finally, we observe that $\Sigma(\D'')$ is obtained from the (perturbed) surface $S$ by perturbation and deperturbation (see Figure~\ref{fig:capbridge}).

\begin{figure}
    \centering
    \includegraphics[width=.9\textwidth]{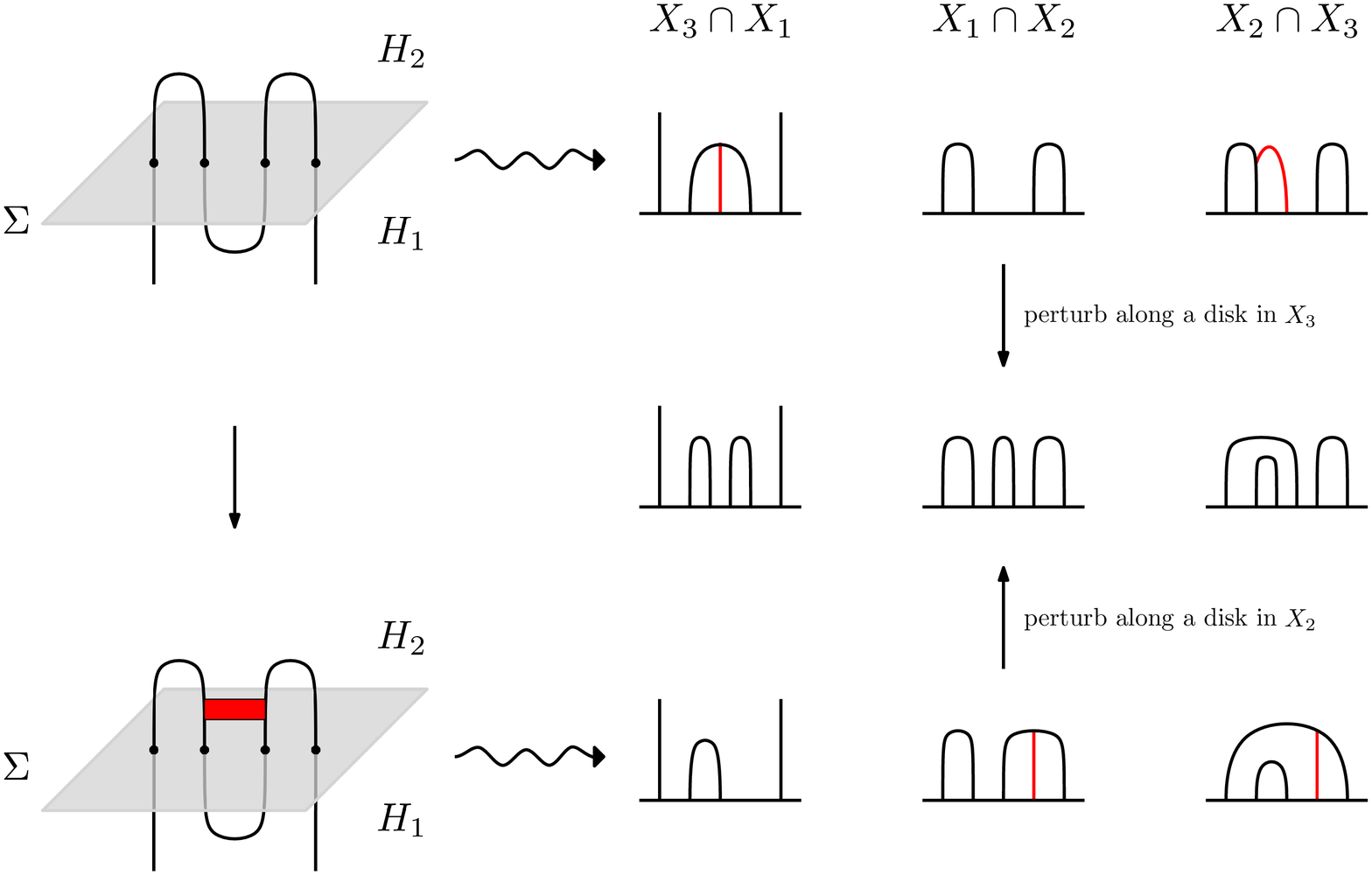}
    \caption{{\bf{Left:}} The singular banded unlink $\D''$ is obtained from $\D$ by a cap move. {\bf{Right:}} We show that $\Sigma(\D'')$ (bottom) may be obtained from $\Sigma(\D)$ (top) by an elementary perturbation and deperturbation and $\tri$--regular isotopy.}
    \label{fig:capbridge}
\end{figure}

{\bf{6. Band swim.}}

Suppose $\D'$ is obtained from $\D$ by a band swim. Isotope $\D$ to obtain a diagram in which the band swim looks as in Figure~\ref{fig:bandswimbridge}. Perform the band swim to obtain a diagram $\D''$ that is related to $\D'$ by isotopy; by Corollary~\ref{cor:isotopicdiagrams} and Case 2, $\Sigma(\D'')$ can be transformed into $S'$ by perturbation and $\tri$--regular isotopy. Finally, we observe that $\Sigma(\D'')$ is obtained from the (perturbed) surface $S$ by $\tri$--regular isotopy (see Figure~\ref{fig:bandswimbridge}).

\begin{figure}
    \centering
    \includegraphics[width=.9\textwidth]{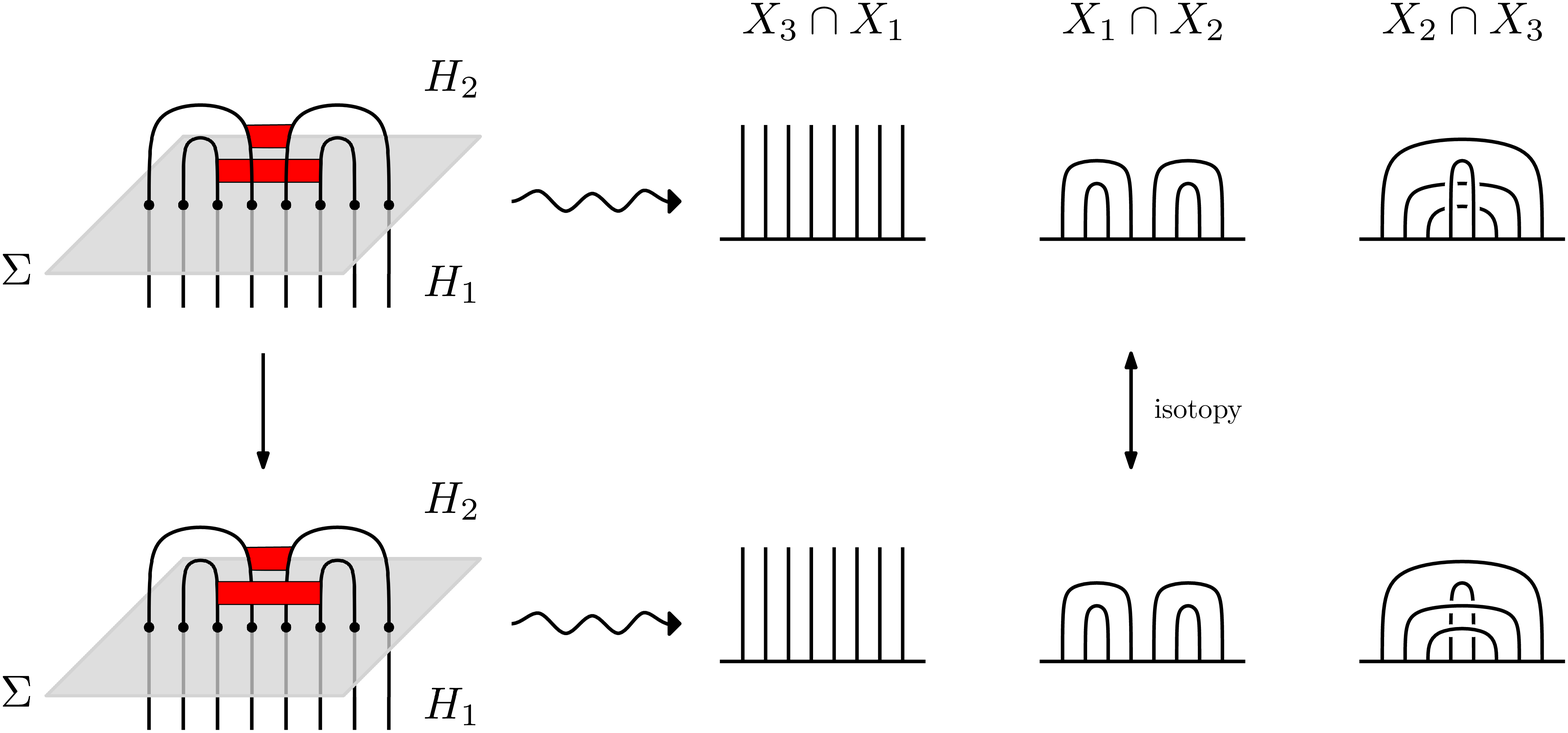}
    \caption{{\bf{Left:}} The singular banded unlink $\D''$ is obtained from $\D$ by a band swim. {\bf{Right:}} We show that $\Sigma(\D'')$ (bottom) may be obtained from $\Sigma(\D)$ (top) by $\tri$--regular isotopy.}
    \label{fig:bandswimbridge}
\end{figure}

{\bf{7. Band slide.}}

Suppose $\D'$ is obtained from $\D$ by a band slide. Isotope $\D$ to obtain a diagram in bridge position in which the desired band slide looks like Figure~\ref{fig:bandslidebridge}. By Corollary~\ref{cor:isotopicdiagrams}, the effect on $S$ can be achieved by (de)perturbation and $\tri$--regular isotopy. Call the result of the band slide $\D''$; by Corollary~\ref{cor:isotopicdiagrams} the surface $\Sigma(\D'')$ can be transformed into $S'$ by (de)perturbation and $\tri$--regular isotopy. In Figure~\ref{fig:bandslidebridge}, we observe that $\Sigma(\D'')$ is obtained from $S$ by perturbation and deperturbation.

\begin{figure}
    \centering
    \includegraphics[width=.9\textwidth]{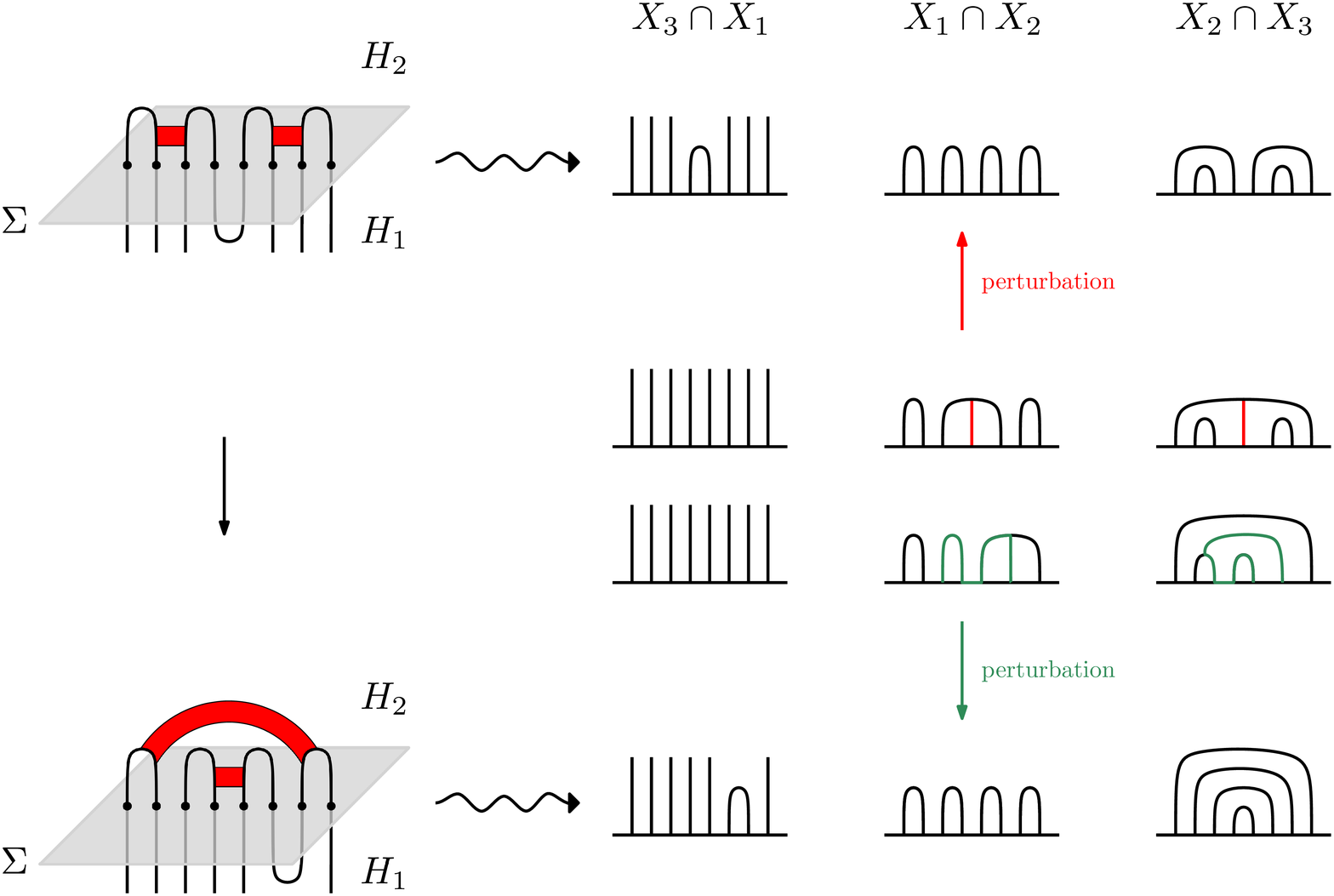}
    \caption{{\bf{Left:}} The singular banded unlink $\D''$ is obtained from $\D$ by a band slide. {\bf{Right:}} We show that $\Sigma(\D'')$ (bottom) may be obtained from $\Sigma(\D)$ (top) by an elementary perturbation and deperturbation and $\tri$--regular isotopy.}
    \label{fig:bandslidebridge}
\end{figure}

{\bf{8. 2--handle/band slide.}}

Suppose $\D'$ is obtained from $\D$ by sliding a band over a 2--handle via a framed arc $z$ between a band in $B$ and a 2--handle attaching circle in $\K$. As in Case 7, we may perturb $\D$ so that $z$ is contained in $H_2$ (See Figure~\ref{fig:2handlebandslidebridge}). Now performing the slide along $z$ yields a diagram $\D''$ that is related to $\D'$ by isotopy; by Corollary~\ref{cor:isotopicdiagrams} the surface $\Sigma(\D'')$ can be transformed into $S'$ by perturbation and $\tri$--regular isotopy. Finally, we observe that $\Sigma(\D'')$ is obtained from the (perturbed) surface $S$ by $\tri$--regular isotopy supported in $X_2$ and $X_3$.

\begin{figure}
    \centering
    \includegraphics[width=.9\textwidth]{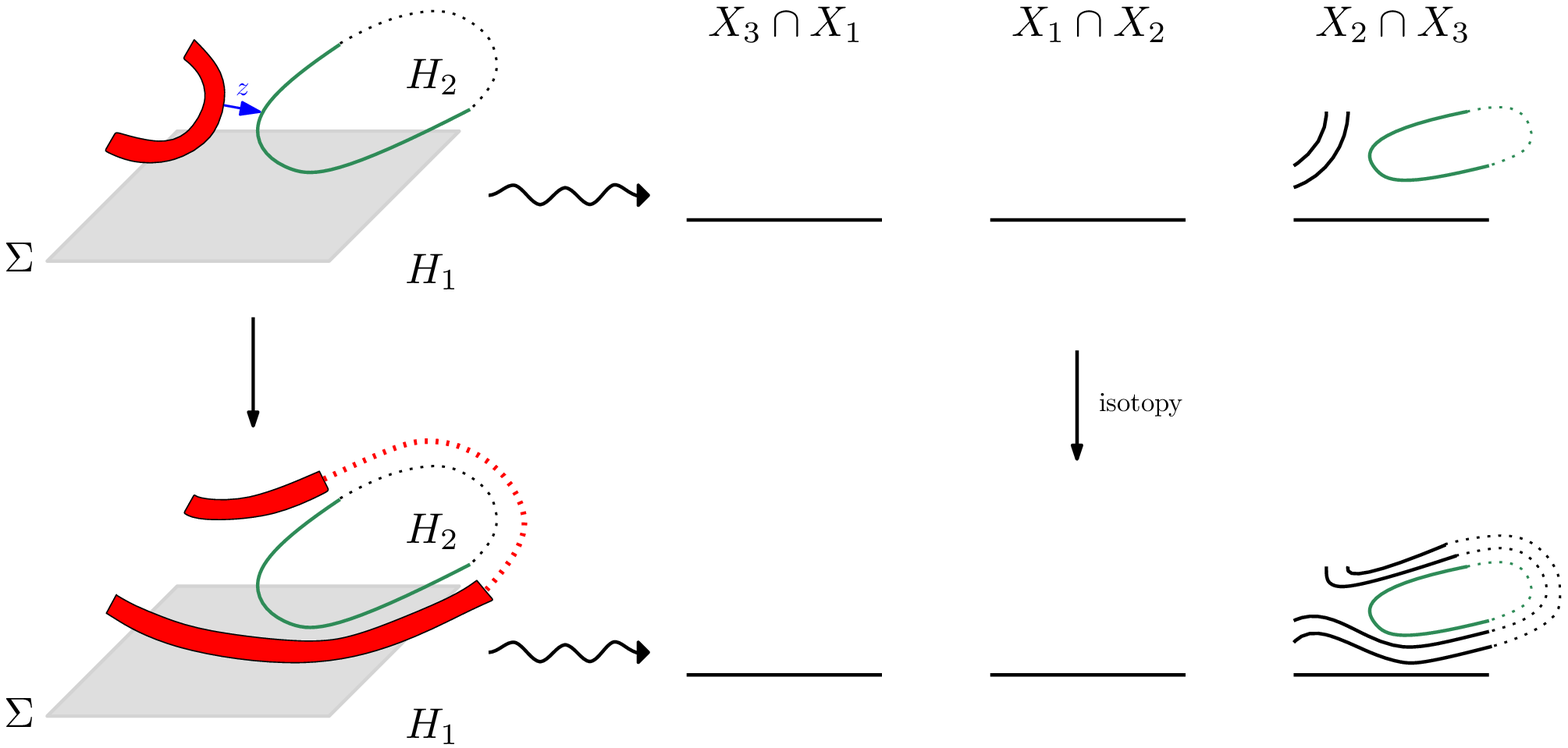}
    \caption{{\bf{Left:}} The singular banded unlink $\D''$ is obtained from $\D$ by a 2--handle/band slide. {\bf{Right:}} We show that $\Sigma(\D'')$ (bottom) may be obtained from $\Sigma(\D)$ (top) by $\tri$--regular isotopy.}
    \label{fig:2handlebandslidebridge}
\end{figure}

{\bf{9. 2--handle/band swim.}}

Suppose $\D'$ is obtained from $\D$ by swimming a 2--handle through a band. Isotope $\D'$ so that the swim looks like the one in Figure~\ref{fig:2handlebandswimbridge}. By Corollary~\ref{cor:isotopicdiagrams}, this can be achieved by (de)perturbations and $\tri$--regular isotopy of $S$. 
Now performing the swim along $z$ yields a diagram $\D''$ that is related to $\D'$ by isotopy; by Corollary~\ref{cor:isotopicdiagrams} the surface $\Sigma(\D'')$ can be transformed into $S'$ by perturbation and $\tri$--regular isotopy. Finally, we observe that $\Sigma(\D'')$ is obtained from the (perturbed) surface $S$ by $\tri$--regular isotopy supported in $X_2$ and $X_3$.

\begin{figure}
    \centering
    \includegraphics[width=.9\textwidth]{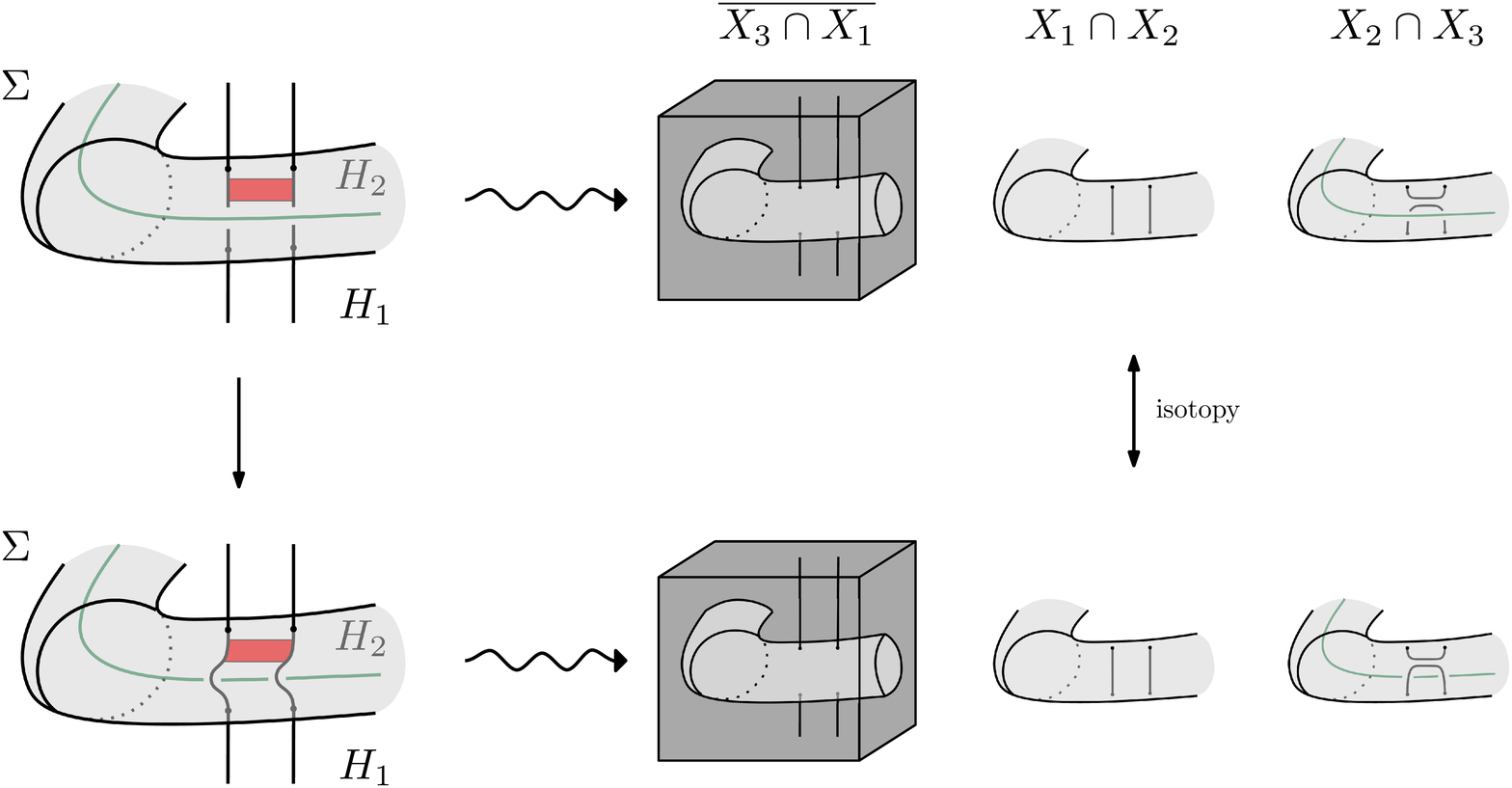}
    \caption{{\bf{Left:}} The singular banded unlink $\D''$ is obtained from $\D$ by a 2--handle/band swim. {\bf{Right:}} We show that $\Sigma(\D'')$ (bottom) may be obtained from $\Sigma(\D)$ (top) by $\tri$--regular isotopy.}
    \label{fig:2handlebandswimbridge}
\end{figure}

{\bf{10. Dotted circle/band slide}}
This follows from Theorem~\ref{theorem:bandlinkisotopy}, as slides over dotted circles are simply isotopies of the banded link $(L,B)$ in $M_{3/2}$.

{\bf{11. Dotted circle/link slide}}
Again, this follows from Theorem~\ref{theorem:bandlinkisotopy}, as slides over dotted circles are simply isotopies of the banded link $(L,B)$ in $M_{3/2}$.
\end{proof}

\FloatBarrier

\section{Some example applications}\label{sec:examples}
In this (comparatively short) section, we give a few sample applications of the diagrammatic theory of singular banded unlink diagrams.

\subsection{Calculating the Kirk invariant}\label{sec:kirk}
In \cite{schneiderman}, Schneiderman and Teichner classified all 2--component spherical links in $S^4$ up to link homotopy using the Kirk invariant $\sigma_i(F_1, F_2):= \lambda(F_i, F'_i)$. Here $i\in\{1,2\}$, $F'_i$ is a parallel push off of $F_i$, and $\lambda(F_i, F_i')$ is Wall's intersection invariant. Furthermore, $F_i$ denotes an  oriented immersed 2--sphere in $S^4$, with $F_1$ and  $F_2$  disjoint. The Kirk invariant takes values in $\mathbb{Z}[\mathbb{Z}]=\mathbb{Z}[x^{\pm}]$. 

Schneiderman--Teichner showed that the set of all 2--component spherical links in $S^4$ up to link homotopy is a free $R$--module, where $R = \mathbb{Z}[z_1, z_2]/(z_1 z_2)$ is freely generated by the Fenn--Rolfsen link $FR$ 
depicted in Figure~\ref{fig:FennRolfsen}. 

\begin{figure}
    \centering
\begingroup%
  \makeatletter%
  \providecommand\color[2][]{%
    \errmessage{(Inkscape) Color is used for the text in Inkscape, but the package 'color.sty' is not loaded}%
    \renewcommand\color[2][]{}%
  }%
  \providecommand\transparent[1]{%
    \errmessage{(Inkscape) Transparency is used (non-zero) for the text in Inkscape, but the package 'transparent.sty' is not loaded}%
    \renewcommand\transparent[1]{}%
  }%
  \providecommand\rotatebox[2]{#2}%
  \newcommand*\fsize{\dimexpr\f@size pt\relax}%
  \newcommand*\lineheight[1]{\fontsize{\fsize}{#1\fsize}\selectfont}%
  \ifx\svgwidth\undefined%
    \setlength{\unitlength}{187.95090082bp}%
    \ifx\svgscale\undefined%
      \relax%
    \else%
      \setlength{\unitlength}{\unitlength * \real{\svgscale}}%
    \fi%
  \else%
    \setlength{\unitlength}{\svgwidth}%
  \fi%
  \global\let\svgwidth\undefined%
  \global\let\svgscale\undefined%
  \makeatother%
  \begin{picture}(1,0.70258739)%
    \lineheight{1}%
    \setlength\tabcolsep{0pt}%
    \put(0.7493042,0.00944695){\color[rgb]{0,0,0}\makebox(0,0)[lt]{\lineheight{1.25}\smash{\begin{tabular}[t]{l}$F_1$\end{tabular}}}}%
    \put(0.69270714,0.65454489){\color[rgb]{0,0,0}\makebox(0,0)[lt]{\lineheight{1.25}\smash{\begin{tabular}[t]{l}$F_2$\end{tabular}}}}%
    \put(0,0){\includegraphics[width=\unitlength,page=1]{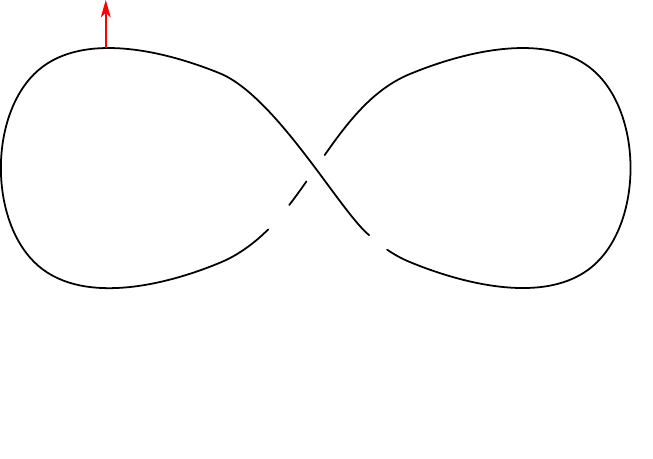}}%
    \put(0.17524399,0.68119459){\color[rgb]{1,0,0}\makebox(0,0)[lt]{\lineheight{1.25}\smash{\begin{tabular}[t]{l}$w_1$\end{tabular}}}}%
    \put(0.15762007,0.59012845){\color[rgb]{0,0,1}\makebox(0,0)[lt]{\lineheight{1.25}\smash{\begin{tabular}[t]{l}$w_2$\end{tabular}}}}%
    \put(0,0){\includegraphics[width=\unitlength,page=2]{FennRolfsen_orient.pdf}}%
    \put(0.21348181,0.49256871){\color[rgb]{1,0,0}\makebox(0,0)[lt]{\lineheight{1.25}\smash{\begin{tabular}[t]{l}$w_1$\end{tabular}}}}%
    \put(0.19702105,0.39400893){\color[rgb]{0,0,1}\makebox(0,0)[lt]{\lineheight{1.25}\smash{\begin{tabular}[t]{l}$w_2$\end{tabular}}}}%
    \put(0,0){\includegraphics[width=\unitlength,page=3]{FennRolfsen_orient.pdf}}%
  \end{picture}%
\endgroup%

    \caption{The Fenn--Rolfsen link. At the indicated points with arrows, a positive basis of the normal bundle is $(w_1,w_2)$, where $w_1$ is the drawn arrow pointing upward and $w_2$ points out of the page toward the reader.}
    \label{fig:FennRolfsen}
\end{figure}

In this subsection, we show how to compute the Kirk invariant of $FR$. This computation can be adapted to compute Wall's self-intersection invariant for general 2--component spherical links in arbitrary closed orientable 4--manifolds. Since $FR$ has a symmetry between its two components that reverses the orientation on one component, we have $\sigma_2=-\sigma_1$ and thus only compute $\sigma_1$.

Consider the singular banded unlink diagram of $FR=F_1\sqcup F_2$ as in Figure~\ref{fig:FennRolfsen}. Choose a basepoint $p$ far away from $FR$ and an arc $\gamma$ from $p$ to a point $q$ in $F_1$. Take a pushoff $F'_1$ of $F_1$ that transversely intersects $F_1$; simultaneously push off $\gamma$ to obtain an arc $\gamma'$ from $p$ to a point $q'$ of $F'_1$.

We thus have two parallel arcs $\gamma'$ and $\gamma$ from $p$ to $F'_1$ and from $p$ to $F_1$, respectively (as in Figure~\ref{fig:wall1}). Now delete a neighborhood of $F_2$ as in Figure~\ref{fig:wall1'}.

\begin{figure}
    \centering
    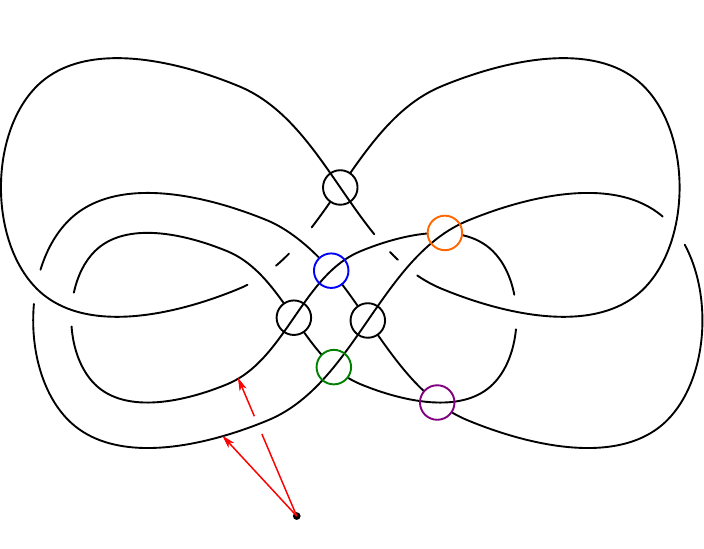
    \caption{A parallel pushoff $F'_1$ of $F_1$ that intersects $F_1$ transversely in 4 points yielding vertices $v_1, v_2, v_3, v_4$ in the singular banded unlink diagram. The intersections respectively have signs $s_{v_1}=1$, $s_{v_2}=-1$, $s_{v_3}=-1$, $s_{v_4}=1$.}\label{fig:wall1}
\end{figure}

\begin{figure}
\begingroup%
  \makeatletter%
  \providecommand\color[2][]{%
    \errmessage{(Inkscape) Color is used for the text in Inkscape, but the package 'color.sty' is not loaded}%
    \renewcommand\color[2][]{}%
  }%
  \providecommand\transparent[1]{%
    \errmessage{(Inkscape) Transparency is used (non-zero) for the text in Inkscape, but the package 'transparent.sty' is not loaded}%
    \renewcommand\transparent[1]{}%
  }%
  \providecommand\rotatebox[2]{#2}%
  \newcommand*\fsize{\dimexpr\f@size pt\relax}%
  \newcommand*\lineheight[1]{\fontsize{\fsize}{#1\fsize}\selectfont}%
  \ifx\svgwidth\undefined%
    \setlength{\unitlength}{185.09382834bp}%
    \ifx\svgscale\undefined%
      \relax%
    \else%
      \setlength{\unitlength}{\unitlength * \real{\svgscale}}%
    \fi%
  \else%
    \setlength{\unitlength}{\svgwidth}%
  \fi%
  \global\let\svgwidth\undefined%
  \global\let\svgscale\undefined%
  \makeatother%
  \begin{picture}(1,0.66526124)%
    \lineheight{1}%
    \setlength\tabcolsep{0pt}%
    \put(0,0){\includegraphics[width=\unitlength,page=1]{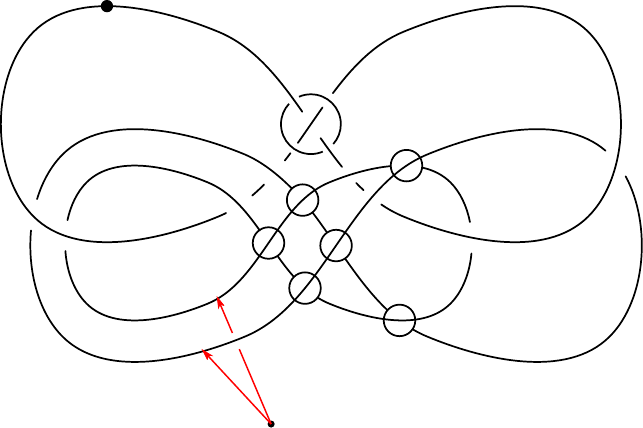}}%
    \put(0.54054262,0.4658748){\color[rgb]{0,0,0}\makebox(0,0)[lt]{\lineheight{1.25}\smash{\begin{tabular}[t]{l}$0$\end{tabular}}}}%
  \end{picture}%
\endgroup%

    \caption{We delete a neighborhood of $F_2$. The resulting singular banded unlink diagram of $F_1\cup F'_1$ is in a Kirby diagram with one 1--handle and one 2--handle.}\label{fig:wall1'}
\end{figure}

Pick a vertex $v$ between the diagrams of $F_1$ and $F'_1$, and choose arcs $\eta,\eta'$ contained in $F_1$ and $F'_1$ (respectively), from $q$ and $q'$ (respectively) to $v$. Let $C_v$ be the based loop obtained by concatenating $\gamma$, $\eta$, $-\eta'$, $-\gamma'$. There are four vertices $v_1,v_2,v_3,v_4$ shared between the diagrams of $F_1$, and $F'_1$; see Figure~\ref{fig:wall2} for potential loops $C_{v_i}$ for all $i=1,2,3,4$. Note that each loop might pass through the other intersections in the singular banded unlink diagram, but we always can perturb each loop a little bit on the actual surface $FR$ to miss the intersections. 

\begin{figure}
    \centering
    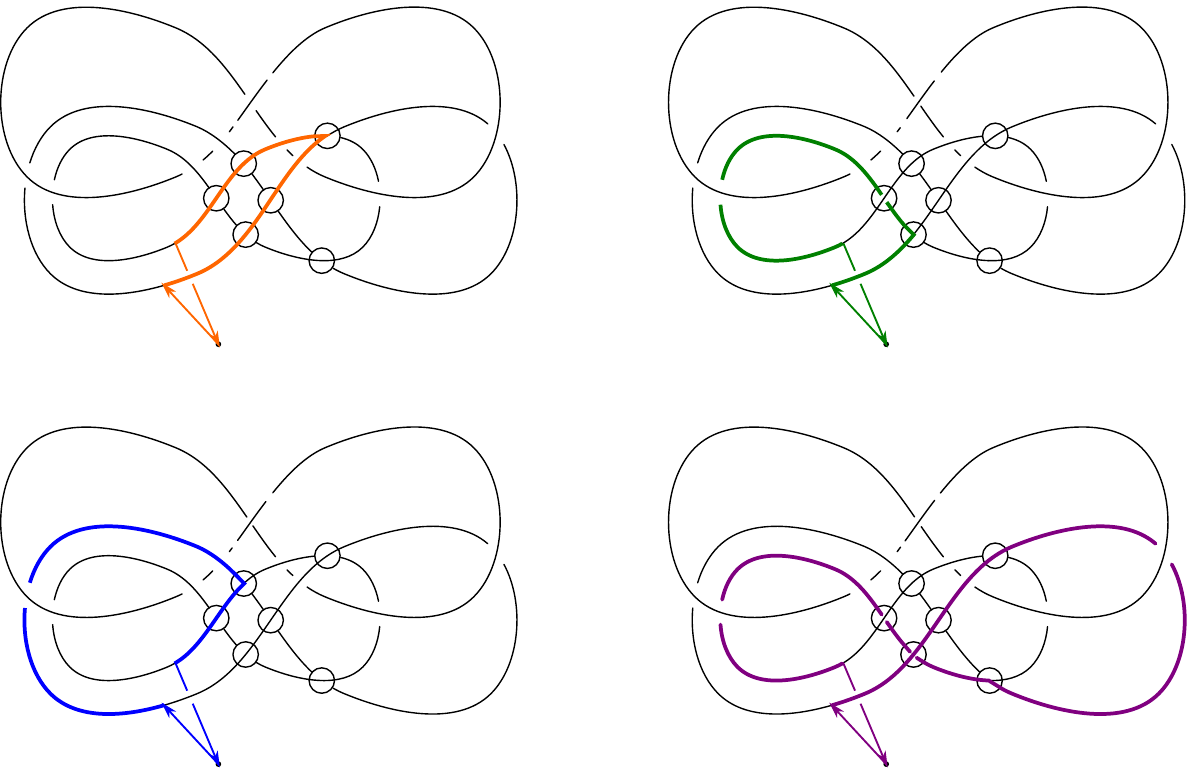
    \caption{The loops $C_{v_1},C_{v_2},C_{v_3}$, and $C_{v_4}$ respectively represent the elements $0,1,-1$, and $0$ in $H_1(S^4\setminus F_2)=\mathbb{Z}$.}\label{fig:wall2}
\end{figure}

Now each loop $C_{v_i}$ represents some element of $H_1(S^4 - F_2)=\mathbb{Z}$. In addition, each vertex has a sign $s_{v_i}\in\{-1,+1\}$ given by the sign of the corresponding intersection of $F_1$ and $F'_1$, which agrees with the sign of the crossing when the marking is resolved negatively. The values of $[C_{v_i}]$ and $s_{v_i}$ are as follows:

\hfil\begin{tabular}{c|rr}
i&$s_{v_i}$&$[C_{v_i}]$\\\hline
1&1&0\\
2&-1&1\\
3&-1&-1\\
4&1&0
\end{tabular}

The Kirk invariant $\sigma_1$ is then given by $$\sigma_1(FR)=\sum_{i=1}^4 s_{v_i}x^{[C_{v_i}]}=-x+2-x^{-1}.$$

The above computation generalizes for any singular banded unlink diagram of a 2--component spherical link $(F_1,F_2)$ in $S^4$; use whiskers from a basepoint $p$ to $F_1$ and a parallel pushoff $F'_1$ intersecting $F_1$ in $v_1,\ldots, v_n$ to form a loop $C_{v_i}$ for each $v_i$ representing $[C_{v_i}]\in H_1(S\setminus F_2)=\mathbb{Z}$. Then $\sigma_1(F_1,F_2)=\sum_{i=1}^n s_{v_i} x^{[C_{v_i}]}$.

\subsection{Immersed surfaces and stabilization}\label{sec:stabilization}
Hosokawa and Kawauchi \cite{hosokawa} showed that any pair of embedded oriented surfaces in $S^4$ become isotopic after some number of {\emph{stabilizations}}.

\begin{definition}\label{def:stabilizationsurface}
Let $F$ be a connected, self-transversely immersed genus $g$ oriented surface in $S^4$. Let $\gamma$ be an arc with endpoints on $F$ and which is normal to $F$ near $\partial \gamma$, but with the interior of $\gamma$ disjoint from $F$. Frame $\gamma$ so that $\gamma\times D^2$ is a 3--dimensional 1--handle with ends on $F$, and so that surgering $F$ along this 1--handle yields an oriented genus $(g+1)$ surface $F'$. Then we say $F'$ is obtained from $F$ by {\emph{stabilization.}}
\end{definition}

\begin{remark}
In Definition~\ref{def:stabilizationsurface}, there are two distinct ways to frame $\gamma$ to obtain a 3--dimensional 1--handle with ends on $F$. However, one of these choices will yield a non-orientable surface after surgery, so in fact the framing of $\gamma$ need not be specified.
\end{remark}

More generally, Baykur and Sunukjian \cite{baykur} extended this result for any pair of homologous embedded oriented surfaces in a closed orientable 4--manifold, and Kamada \cite{kamada1999unknotting} extended it to immersed oriented surfaces in $S^4$ using singular braid charts. In this subsection, we extend these above results in full generality, i.e.\ for any pair of homologous immersed surfaces in a closed orientable 4--manifold. 

\begin{theorem}\label{thm:stablyequiv}
Let $F$ and $F'$ be oriented self-transversely immersed surfaces in a closed, orientable 4--manifold $X$ which are homologous and have the same number of transverse double points of each sign. Then $F$ and $F'$ become isotopic after a sequence of stabilizations.
\end{theorem}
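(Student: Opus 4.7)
The plan is to combine Corollary~\ref{htpytheorem} with a reduction from ``homologous'' to ``homotopic after stabilization'', and then to realize the remaining finger and Whitney moves as ambient isotopies of suitably stabilized surfaces.

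First, I would reduce to the case in which $F$ and $F'$ are homotopic as maps. Since $F$ and $F'$ are homologous in the closed oriented $4$--manifold $X$, the $2$--cycle $F-F'$ bounds in $X$, so (after pushing $F$ and $F'$ into the two ends of $X\times I$) the singular submanifold $F\sqcup\overline{F'}$ bounds a generically immersed oriented $3$--manifold $W$ in $X\times I$. Choose a handle decomposition of $W$ with respect to the height function on $X\times I$. Cross-sections then yield a sequence of intermediate immersed surfaces, where consecutive surfaces differ by: a $1$--handle of $W$ (a stabilization along a framed arc in $X$), a $2$--handle of $W$ (turning $W$ upside down, a stabilization of the top surface), a $0$--ha a $3$--handle of $W$ (birth/death of an oriented $2$--sphere, which can be absorbed into a stabilization plus an isotopy), or passing through a self-intersection of $W$ (producing a pair of canceling double points, i.e.\ a finger or Whitney move). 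After prepending stabilizations to $F$ and appending stabilizations to $F'$ following this cobordism, we obtain surfaces $\widetilde F$ and $\widetilde F'$ which are related by a homotopy through immersions, hence are homotopic.

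Second, $\widetilde F$ and $\widetilde F'$ inherit from $F$ and $F'$ the hypothesis of equal numbers of positive and negative self-intersections, since stabilization does not alter self-intersection counts. By Hirsch--Smale, two homotopic immersions of an oriented surface into an orientable $4$--manifold with the same signed self-intersection count are regularly homotopic. Applying Corollary~\ref{htpytheorem}, the diagrams $\D(\widetilde F)$ and $\D(\widetilde F')$ are related by a sequence of singular band moves, finger moves, and Whitney moves (no cusp moves are needed). Each singular band move corresponds to an ambient isotopy of the realizing surface by Theorem~\ref{maintheorem}, so what remains is to absorb the finger and Whitney moves into further stabilizations.

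Third, I would prove the key local lemma: if $G_2$ is obtained from $G_1$ by a finger move (or Whitney move), then some stabilization of $G_1$ is ambient isotopic to some stabilization of $G_2$. Working in a small $4$--ball containing the finger move, one has two newly-created canceling double points together with a local embedded Whitney disk. Performing a stabilization on $G_2$ along an arc that pierces this Whitney disk produces a tube through which the pair of double points can be canceled by ambient isotopy; on the $G_1$ side, the corresponding stabilization is realized by a small parallel tube in the same $4$--ball. Iterating the lemma along the sequence from the previous paragraph yields the desired isotopy between stabilizations of $F$ and $F'$.

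The main obstacle I expect is the first step, namely passing rigorously from ``homologous'' to ``homotopic after stabilization'' while keeping the resulting moves inside the catalog of stabilizations, finger moves, and Whitney moves. The handle-by-handle analysis of $W$ is the natural tool here, mirroring the Baykur--Sunukjian argument in the embedded setting; the presence of self-intersections of $W$ contributes precisely the finger/Whitney pairs that the balanced-intersection-count hypothesis allows us to absorb. Once this step is in hand, the remaining arguments follow by direct application of the diagrammatic machinery of Sections~\ref{sec:singularbandedunlinkdiagrams} and~\ref{sec:isotopy}.
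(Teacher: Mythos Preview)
Your approach is genuinely different from the paper's, and while the overall strategy is plausible, it is considerably more involved and has real gaps.

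The paper bypasses both your Step~1 and Step~3 entirely. Its key lemma (Lemma~\ref{lem:stablyembed}) is purely local and diagrammatic: near each vertex of a singular banded unlink diagram of $F$, one stabilizes along a short arc and then uses singular band moves (an intersection/band pass and slide) to split off a $U_\pm$ summand. Iterating, $F$ becomes stably isotopic to $\hat F\#(\#_p U_+)\#(\#_n U_-)$ with $\hat F$ \emph{embedded}. Doing the same to $F'$ yields $\hat F'\#(\#_p U_+)\#(\#_n U_-)$ with the same singular summands (since $p,n$ match by hypothesis). Now $\hat F$ and $\hat F'$ are homologous embedded surfaces, and Baykur--Sunukjian finishes the proof. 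No cobordism $W$, no finger/Whitney absorption lemma, and no appeal to Corollary~\ref{htpytheorem} is needed.

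By contrast, your Step~1 is a genuine obstacle, as you yourself note. Finding an immersed oriented $3$--manifold $W\subset X\times I$ with $\partial W = F\sqcup\overline{F'}$ when $F,F'$ are only immersed is not the standard Thom--Pontryagin statement, and the handle-by-handle translation into stabilizations plus finger/Whitney moves requires care about framings and about how self-intersections of $W$ interact with the level sets (your sentence there is also garbled at ``a $0$--ha a $3$--handle''). Your Step~3 lemma---that a finger move becomes an isotopy after a single stabilization through the Whitney disk---is believable (it is essentially a Norman-trick argument) but is not proved, and you would need it to hold compatibly on both sides. The paper's route avoids all of this by isolating the double points first rather than trying to carry them through a homotopy.
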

To prove Theorem~\ref{thm:stablyequiv}, we rely on the following diagrammatic lemma.

\begin{lemma}\label{lem:stablyembed}
Let $F$ be an oriented self-transversely immersed surface in a closed, orientable 4--manifold. Suppose $F$ has $p$ positive and $n$ negative self-intersections. After some number of stabilizations, $F$ becomes isotopic to the connected-sum of an embedded surface with $p$ copies of $U_+$ and $n$ copies of $U_-$, where $U_\pm$ denotes the result of performing a cusp move to the embedded unknotted 2--sphere to create a $\pm$ self-intersection.
\end{lemma}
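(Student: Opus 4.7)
The plan is to use the $\star$ move of Figure~\ref{fig:seungwonmove} to isolate each self-intersection of $F$ into a disjoint local gadget, and then read off the desired decomposition $F \sim F_0 \# pU_+ \# nU_-$ (possibly after stabilizing $F$) directly from the resulting singular banded unlink diagram.

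First I would fix a singular banded unlink diagram $\D(F) = (\K, L, B)$ of $F$, with vertices $v_1, \ldots, v_{p+n}$ corresponding to the self-intersections and having signs $s_1, \ldots, s_{p+n} \in \{+, -\}$. The $\star$ move (Figure~\ref{fig:seungwonmove}), realized by a sequence of cup moves and intersection/band slides and hence by singular band moves, takes a vertex and relocates it onto two new unknot components; by Theorem~\ref{maintheorem}, this corresponds to an ambient isotopy of $F$. Applying $\star$ at each vertex in turn produces a diagram $\D'$ in which each vertex $v_i$ is contained in a disjoint ``gadget'' consisting of two new unknot components meeting at $v_i$, connected to the rest of the diagram by two new bands; these $p+n$ gadgets are pairwise disjoint.

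Next I interpret the realizing surface $F' := \Sigma(\D')$ (which is isotopic to $F$) via the structure of $\D'$. The portion of $\D'$ outside the gadgets determines an embedded surface $F_0$, while each gadget together with its two connecting bands realizes two small 2--spheres meeting at a single self-intersection of sign $s_i$, attached to $F_0$ via two saddle bands. A direct Euler characteristic calculation using $\chi(\Sigma) = |L^-| + |L^+_B| - |B|$, together with a count of components as each band is surgered, shows that the two bands merge the gadget's spheres into $F_0$ while contributing only the single self-intersection point to the resulting component; topologically, this is the connect-sum operation $F_0 \leadsto F_0 \# U_{s_i}$. Iterating over all $p+n$ vertices yields the desired decomposition $F \sim F_0 \# pU_+ \# nU_-$.

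The main technical obstacle I expect is to ensure that the ``main'' sub-diagram of $\D'$ (the vertex-free part) is itself a valid singular banded unlink diagram, so that $F_0$ is an honest embedded surface in $X$. This can fail if the now-removed vertex resolutions were essential for making $L^-$ or $L^+_B$ unlinks, or if the second of the two connecting bands in a gadget is configured so as to add an extra handle rather than merely merge components. In both cases the remedy is to stabilize $F$---adding a handle to the surface corresponds diagrammatically to additional cup/cap structure, which can be absorbed into $F_0$ to increase its genus in a controlled way. This is precisely the role of the ``some number of stabilizations'' allowed in the statement of the lemma, and only finitely many are ever required since there are only $p+n$ vertices and correspondingly finitely many linking and handle obstructions to address.
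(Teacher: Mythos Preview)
There is a real gap in the argument. The $\star$ move is a composite of singular band moves and hence an \emph{isotopy} of $F$; applying it at every vertex produces another diagram $\D'$ of $F$ itself, not of any stabilization. Your proposed reading of $\D'$---vertex-free main part describing an embedded $F_0$, each gadget contributing a $U_\pm$ summand---would therefore prove that $F$ is \emph{already} isotopic to $F_0 \# pU_+ \# nU_-$ with zero stabilizations. But you give no argument that the vertex-free sub-diagram $(\widehat L, \widehat B)$ is itself a banded unlink diagram: knowing that $(L')^+_{B'}$ is an unlink in $h^{-1}(5/2)$ tells you nothing about $\widehat L_{\widehat B}$ once the two gadget bands are deleted, and in general it will not be one. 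You acknowledge this, but then write that ``the remedy is to stabilize $F$.'' That sentence is precisely the content of the lemma; an Euler-characteristic or component count cannot supply it. You must say \emph{which} tube to add and \emph{why} that particular tube makes the vertex split off as a $U_\pm$ summand---as written, the proposal defers exactly the step that needs proving.

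The paper's argument reverses your order of operations, and this is what makes it go through. It fixes a single vertex $v$ and \emph{first} stabilizes $F$ along an explicit short arc $\gamma$ drawn in the diagram adjacent to $v$ (Figure~\ref{fig:tubing}), adding one handle near $v$. It then performs one intersection/band pass and one intersection/band slide; after these the vertex visibly sits on a split local summand recognizable as $U_\epsilon$, and the remaining diagram is manifestly a banded unlink diagram for a surface $F_1$ with one fewer self-intersection, exhibiting the stabilized surface as $F_1 \# U_\epsilon$. Iterating over the remaining vertices (one stabilization each) completes the proof. The stabilization here is not a patch for a failed decomposition but the mechanism that makes the decomposition exist.
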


\begin{proof}[Proof of Lemma~\ref{lem:stablyembed}]
Let $(\mathcal{K},L,B)$ be a singular banded unlink diagram of $F_0:=F$. Suppose that $F$ has $k=p+n>0$ self-intersections. Fix a vertex $v_0$ of $L$. Stabilize $F_0$ as in Figure~\ref{fig:tubing}, i.e.\ along an arc in $h^{-1}(3/2)$ that lies close to $v_0$. Call the resulting surface $G_0$. Now perform singular band moves as in Figure~\ref{fig:tubing} to see that $
G_0$ is isotopic to a connect sum $F_1\# U_{\epsilon^0}$, where $\epsilon^0$ is the sign of $v_0$, and $F_1$ is a self-transverse immersed surface with $k-1$ self-intersections.

If $k-1>0$ then repeat this argument on $F_1$ near another vertex $v_1$, stabilizing $F_1$ to obtain a surface $G_1$ that is isotopic to $F_2\#U_{\epsilon^1}$, where $F_2$ has $k-2$ self-intersections. Note $F$ is then stably isotopic to $F_2\#U_{\epsilon^1}\# U_{\epsilon^0}$.

Repeat inductively to find that $F$ is stably isotopic to $F_k\#(\#_p U_+)\#(\#_n U_-)$ for $F_k$ an embedded surface, as desired.
\end{proof}

\begin{figure}
    \centering
    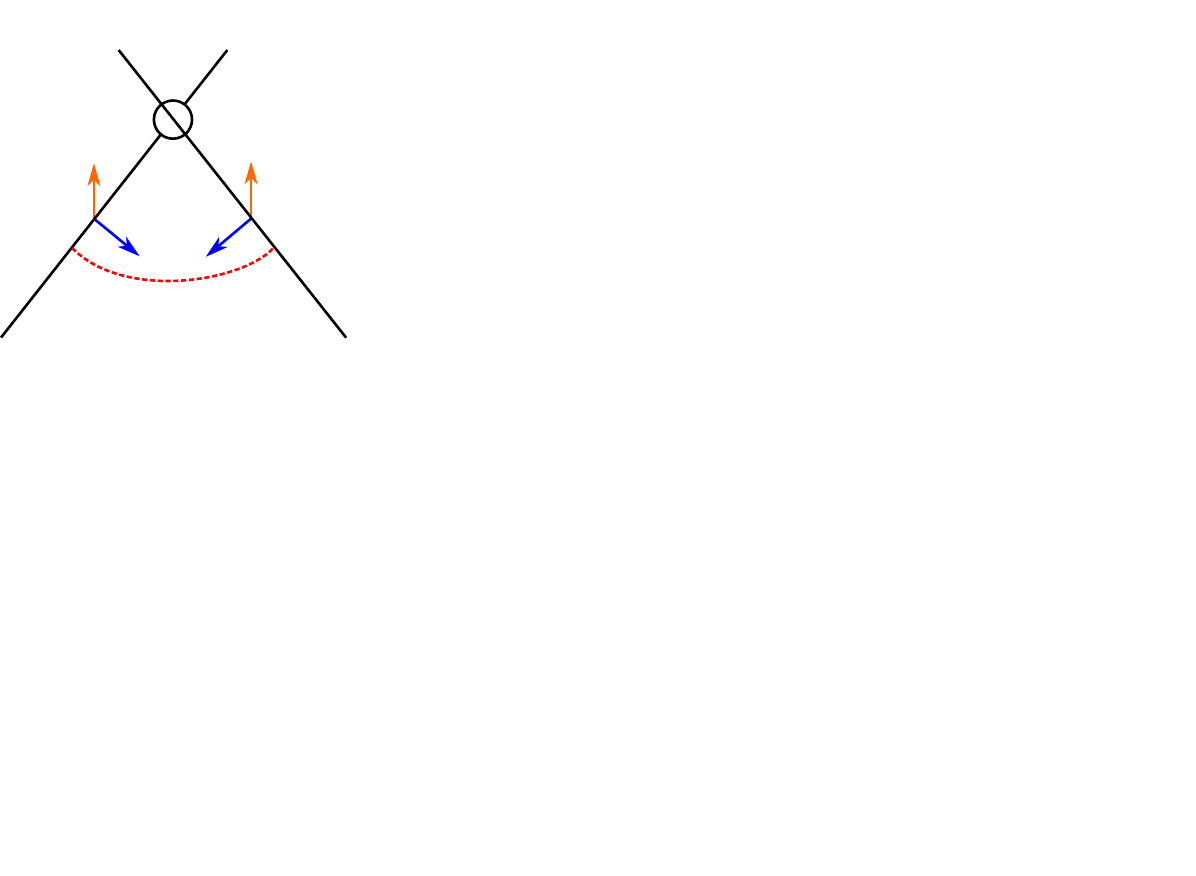
    \caption{{\bf{Top left:}} $F_i$ is an oriented surface with $k-i>0$ transverse self-intersections. Here we draw part of a singular banded unlink diagram for $F_i$ near a vertex $v_i$ representing a self-intersection of $F_i$. (In this drawing, it is a negative self-intersection. Changing the marking at $v_i$ yields a positive self-intersection.) We draw a positive normal basis $(w_1,w_2)$ along each local sheet of $F_i$ and indicate an arc $\gamma$ along which we may stabilize $F_i$. {\bf{From left to right following the arrows:}} We stabilize $F_i$ to obtain a surface $G_i$, and then isotope $G_i$ to realize a connect sum of a surface $F_{i+1}$ with $U_{\epsilon^i}$, where $\epsilon^i$ is the sign of the self-intersection represented by $v_i$.}
    \label{fig:tubing}
\end{figure}

\begin{proof}[Proof of Theorem~\ref{thm:stablyequiv}]
By Lemma~\ref{lem:stablyembed}, $F$ may be stabilized to a surface isotopic to $\hat{F}\#\left(\#_p U_+\right)\#\left(\#_n U_-\right)$ where $\hat{F}$ is an embedded surface and $p$ and $n$ are (respectively) the numbers of positive and negative  self-intersections of $F$. Applying the lemma also to $F'$ (recalling that $F'$ also has $p$ positive and $n$ negative self-intersections), we find that after suitable stabilizations $F'$ becomes isotopic to \[\hat{F'}\#\left(\#_p U_+\right)\#\left(\#_n U_-\right)\] for some embedded surface $\hat{F'}$. Since $U_\pm$ is nullhomologous, $\hat{F}$ and $\hat{F'}$ are homologous to $F$ and $F'$ and hence to each other. Then by \cite{baykur}, we know that $\hat{F}$ and $\hat{F'}$ (and hence $F$ and $F'$) are stably isotopic.
\end{proof}

\subsection{Unknotting 2--knots with regular homotopies}\label{sec:cassonwhitney}

In \cite{joseph2020unknotting}, Joseph, Klug, Ruppik, and Schwartz introduced the notion of the \emph{Casson--Whitney number} of a 2--knot, which is half the minimal number of finger and Whitney moves needed to change a given 2--knot to an unknot. They showed that the Casson--Whitney number of any non-trivial twist spin of a 2--bridge knot is one; i.e.\ that any non-trivial twist spin of a 2--bridge knot can be unknotted via one finger move followed by one Whitney move. In this subsection, we explicitly realize such a regular homotopy via singular banded unlink diagrams.

\begin{theorem}\cite{joseph2020unknotting}
The Casson--Whitney number of the $n$--twist spin ($|n|\neq 1$) $\tau^n K$ of a 2--bridge knot $K$ is one.
\end{theorem}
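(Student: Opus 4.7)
The plan is to exhibit a singular banded unlink diagram $\D_0$ for $\tau^n K$ in the empty Kirby diagram of $S^4$, perform one finger move to obtain a diagram $\D_1$, apply a sequence of singular band moves (as in Figures~\ref{fig:oldisomoves} and~\ref{fig:newisomoves}) converting $\D_1$ into a diagram $\D_2$ in which the two vertices introduced by the finger move form a canceling (Whitney) pair, and then apply one Whitney move to obtain a diagram for the unknotted 2-sphere $U$. By Corollary~\ref{htpytheorem}, such a sequence realizes a regular homotopy from $\tau^n K$ to $U$ using exactly one finger and one Whitney move, and so the Casson--Whitney number of $\tau^n K$ is at most $1$. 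Since the nontrivial twist spins $\tau^n K$ in the hypothesis are not themselves isotopic to $U$ (e.g., $\pi_1(S^4\setminus \tau^n K)\neq 1$ for $|n|\geq 2$ and $K$ nontrivial), the Casson--Whitney number is exactly $1$.

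For the starting diagram $\D_0$, I would use the classical banded unlink description of twist spins: starting from a 2-bridge presentation of $K$ as the plat closure of a 2-braid, one obtains a banded unlink diagram whose underlying link is an unlink and whose bands record a movie implementing the $n$-fold rotation of $K$ about its bridge axis. The 2-bridge hypothesis makes this diagram particularly simple, with the $n$ full twists concentrated in a short movie segment involving only two strands. For the finger move, I would choose a short arc joining the two bridges of $K$ just before the twisting segment, producing $\D_1$ with exactly two new vertices of opposite sign located near the twist region.

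The crux of the argument is to transport these two vertices through the $n$-fold twist using the intersection/band slides, passes, and swims of Figure~\ref{fig:newisomoves}, together with the ordinary band moves of Figure~\ref{fig:oldisomoves}. The 2-bridge hypothesis is essential here: because only two strands are involved, each elementary twist can be traversed by a single intersection/band slide (in the spirit of the classical proof that a $(2,n)$-torus knot is unknotted by changing a single crossing), and induction on $n$ reduces the analysis to a single elementary twist, which can be verified by direct inspection of a small local diagram. The outcome is a diagram $\D_2$ in which the twisting has been absorbed and the two vertices cobound a standard, clean, correctly framed Whitney disk embedded in $S^4$.

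The main obstacle will be carrying out this inductive reduction explicitly and ensuring that the combinatorial moves preserve the structure of the rest of the diagram. Once $\D_2$ is in hand, a single Whitney move (the reverse of a finger move as in Figure~\ref{fig:newhtpymoves}) cancels both vertices to produce an embedded banded unlink diagram $\D_3$, which can be simplified by cup, cap, and band-slide moves into the standard banded unlink diagram of $U$. By Corollary~\ref{htpytheorem}, the resulting diagrammatic sequence witnesses the required regular homotopy from $\tau^n K$ to $U$ using exactly one finger and one Whitney move, completing the proof.
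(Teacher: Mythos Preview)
Your overall architecture (finger move, sequence of singular band moves, Whitney move, invoke Corollary~\ref{htpytheorem}) matches the paper, but your inductive strategy is aimed at the wrong parameter and this is a genuine gap. You propose to induct on $n$ by ``transporting the two vertices through the $n$-fold twist.'' Pushing the pair of vertices across the $n$-twist box does not simplify anything: the two mirror copies of the 2-bridge tangle for $K$ are untouched, and at the end you have the same diagram with the vertices relocated. There is no reason the vertices should then bound a clean Whitney disk, and your analogy with unknotting a $(2,n)$-torus knot by one crossing change does not apply, since the knottedness of $\tau^n K$ is carried by the tangles for $K$, not by the $n$-twist box.

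The paper instead inducts on the rational-tangle complexity of $K$. Writing $K$ in normal form $K(a_1,b_1,\dots,a_m,b_m)$ with all $a_i,b_i$ even, the finger move is performed adjacent to the $\pm a_1$ boxes, and an explicit sequence of singular band moves (intersection/band slides, passes, and swims, together with ordinary band moves) decreases $|a_1|$ by one. Iterating kills $a_1$; a second explicit sequence then decreases $|b_1|$; and repeating through $a_m,b_m$ converts $K$ into the unknot. At that point the diagram is $\tau^n(\text{unknot})$ decorated with a canceling pair of vertices, and the single Whitney move finishes. So the 2-bridge hypothesis is used through the continued-fraction structure of $K$, not through the spinning parameter $n$; your proposed induction on $n$ does not reach a base case that is unknotted.
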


\begin{proof}
First, as in \cite{joseph2020unknotting}, we assume that the 2--bridge knot $K$ is in normal form \cite{conway1970enumeration} with the number of half-twists in each twist region even, as in Figure~\ref{fig:2-bridgecombine}. (That is, using the standard correspondence between 2--bridge link diagrams and continued fraction expansion, we arrange for a diagram of $K$ to correspond to a continued fraction $(a_1,b_1,\ldots,a_m,b_m)$ of all even integers. We write $K=K(a_1,b_1,\ldots, a_m,b_m)$.

Apply a finger move to the diagram of $\tau^n K$ in Figure~\ref{fig:2-bridgecombine} to obtain the first frame of Figure~\ref{fig:2-bridge1} (the visible twists are contained in the $\pm a_1$ twist boxes). In Figure~\ref{fig:2-bridge1} and Figure~\ref{fig:2-bridge2}, we show how to perform singular band moves with the result of decreasing $|a_1|$ by one. Repeating this sequence, we eventually arrange for $a_1$ to become $0$.

\begin{figure}
    \centering
    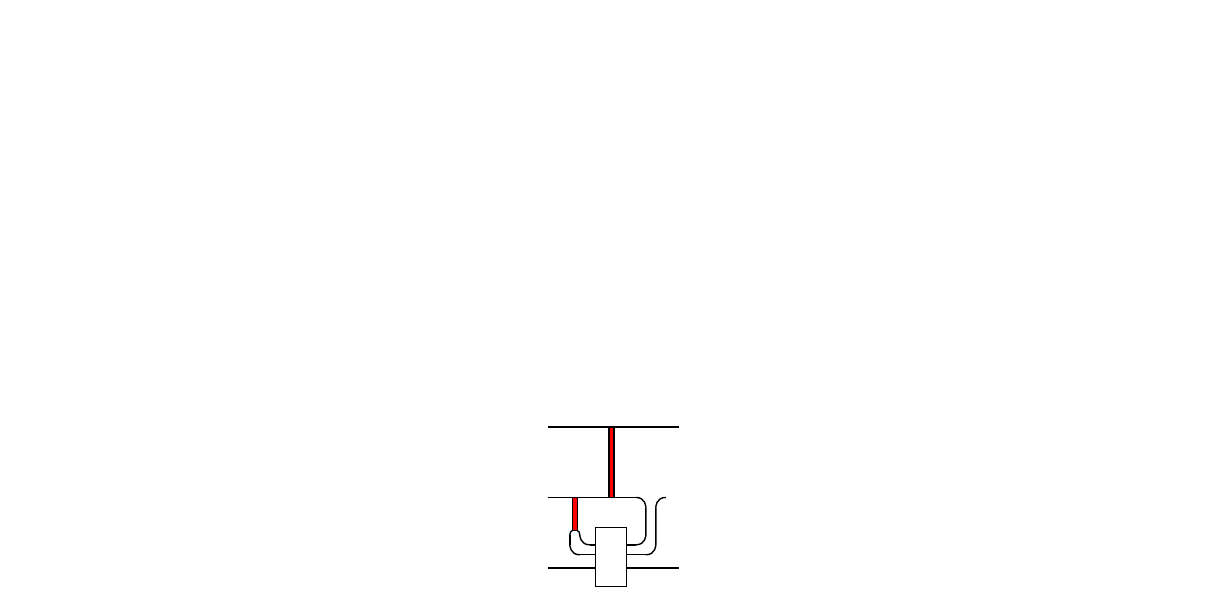
    \caption{\textbf{Top}: A 2--bridge knot $K$ in normal form. Here, $a_i$ and $b_i$ indicate signed numbers of whole twists (so each box has an even number of half-twists). \textbf{Bottom}: The $n$-twist spin $\tau^n K$ of $K$.}
   \label{fig:2-bridgecombine}
\end{figure}

\begin{figure}
    \centering
    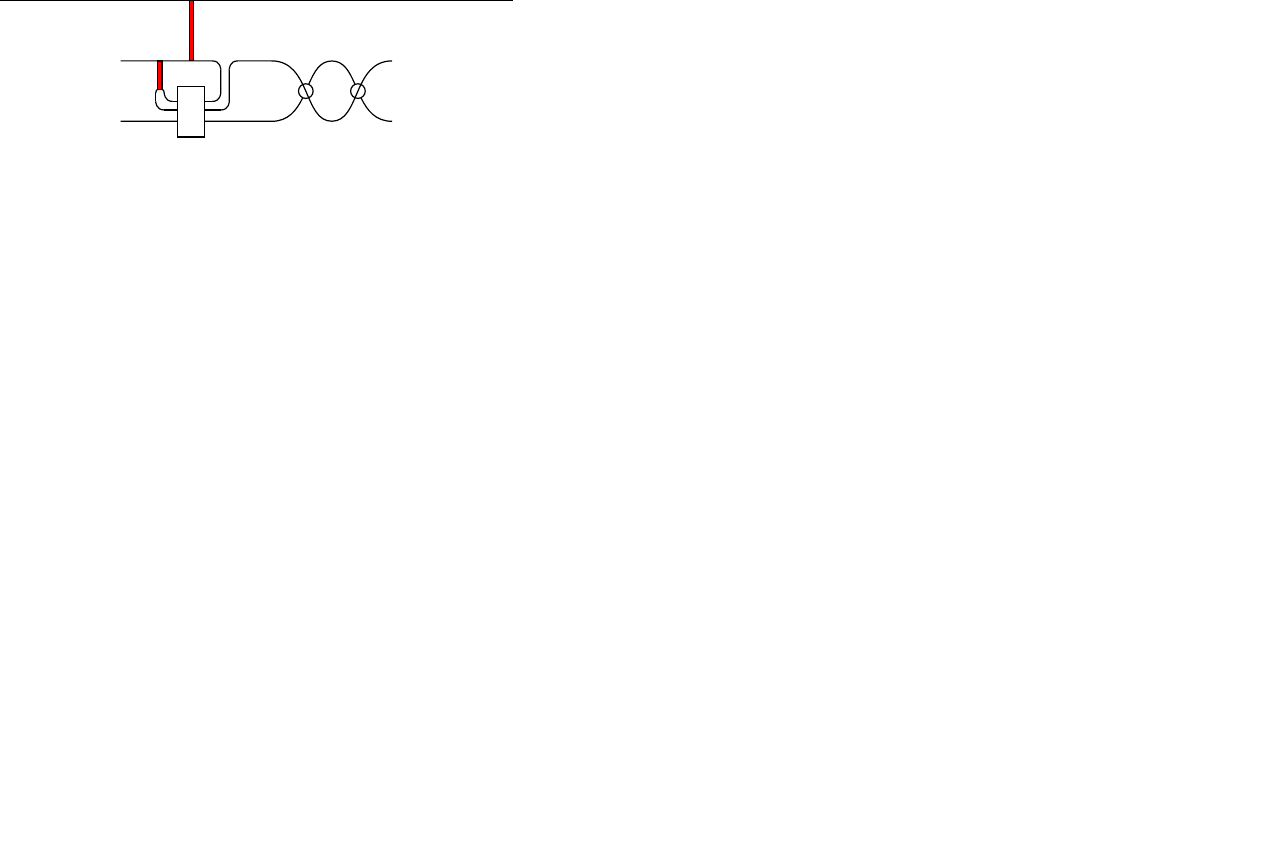
    \caption{The first frame is (a portion of the diagram) obtained from Figure~\ref{fig:2-bridgecombine} (bottom) by a finger move. We begin applying singular band moves with the goal of decreasing $|a_1|$ by one. In the last frame we indicate three band/intersection passes that yield the first frame of Figure~\ref{fig:2-bridge2}.}
   \label{fig:2-bridge1}
\end{figure}

In Figure~\ref{fig:2-bridge3}, we give another sequence of band moves (now assuming $a_1=0$) that decrease $|b_1|$ by one. Repeating this sequence, we eventually arrange for $a_1=b_1=0$.

\begin{figure}
    \centering
    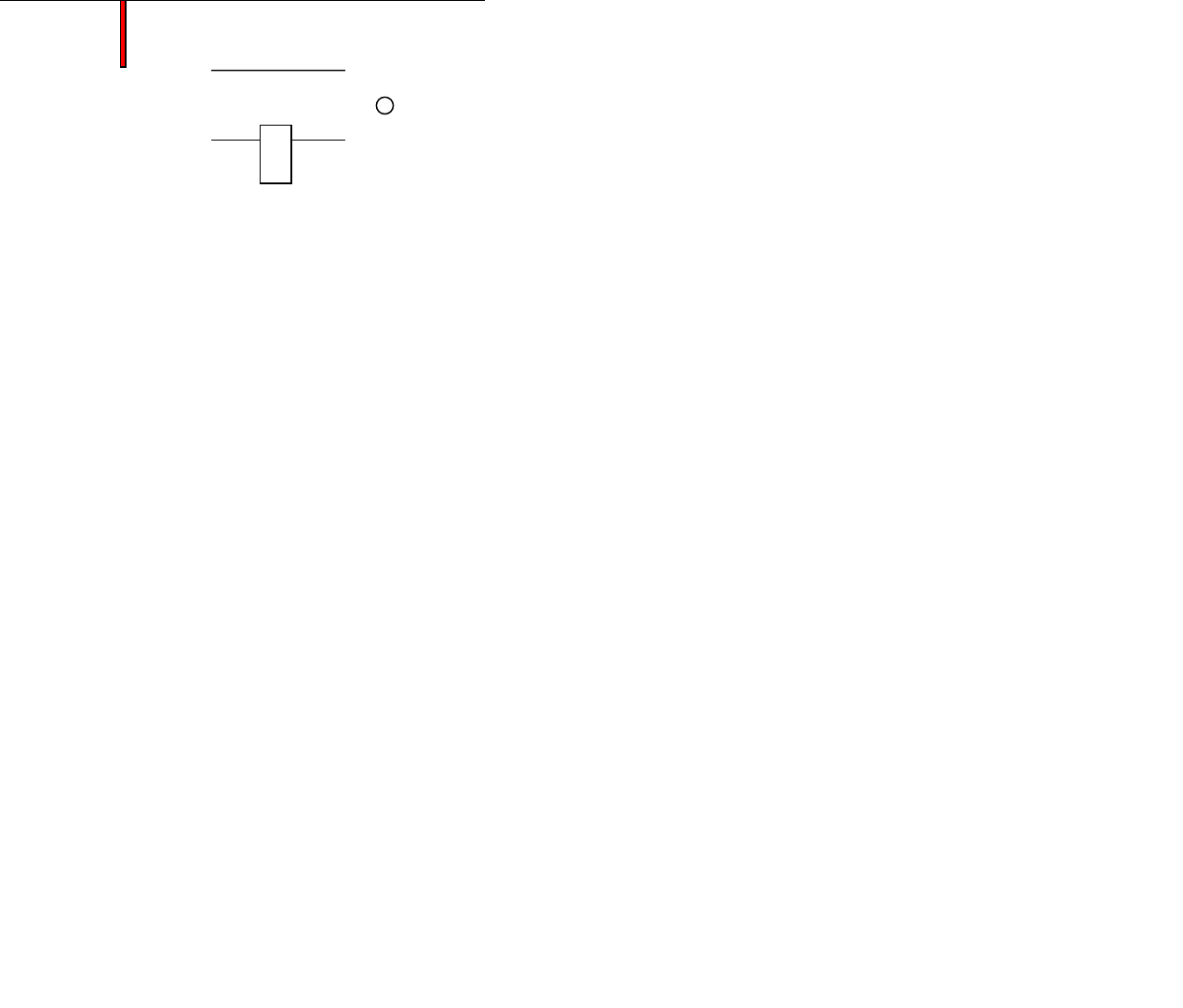
    \caption{Continuing from Figure~\ref{fig:2-bridge1}, we perform more singular band moves. In the last frame, the two vertices can be removed by a Whitney move, yielding the diagram from Figure~\ref{fig:2-bridgecombine} (bottom) but with $|a_1|$ decreased by one.}
   \label{fig:2-bridge2}
\end{figure}

We repeat these sequences of band moves to undo the twist boxes labelled $\pm a_2$, $\pm b_2$,$\ldots$, $\pm a_m$, $\pm b_m$, and then finally apply a Whitney move to remove the two vertices and obtain a singular banded unlink diagram for the $n$-twist spin of the unknot. This is an unknotted sphere, so we conclude that the Casson--Whitney number of $\tau^n K$ is one.
\begin{figure}
    \centering
    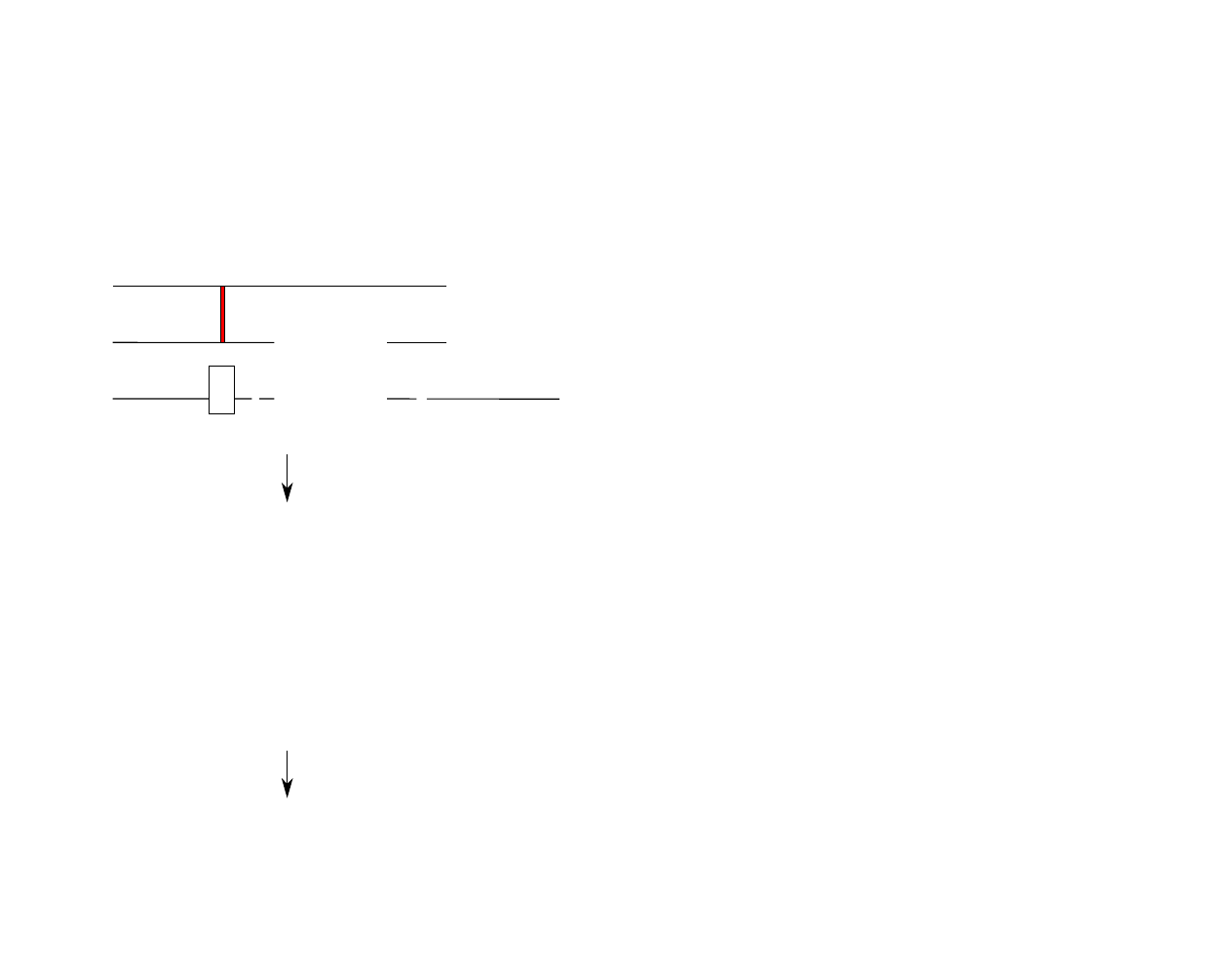
    \caption{The first frame agrees with the last frame of Figure~\ref{fig:2-bridge2} after $|a_1|$ is decreased to zero. We can then perform singular band moves to the diagram to decrease $|b_1|$ by one.}
   \label{fig:2-bridge3}
\end{figure}

\end{proof}

\bibliographystyle{plain}
\bibliography{biblio.bib}

\begin{thebibliography}{10}

\bibitem{arnold}
{V. I.} {Arnol'd}, {A. N.} {Varchenko}, and {S. M.} {Guse\u{\i}n-Zade}.
\newblock {\em {\cyr Osobennosti differentsiruemykh otobrazheni\u{i}}}.
\newblock ``Nauka'', Moscow, 1982.
\newblock {\cyr Klassifikatsiya kriticheskikh tochek, kaustik i volnovykh
  frontov}. [Classification of critical points, caustics and wave fronts].

\bibitem{baykur}
R.~\.{I}nan\c{c} Baykur and Nathan Sunukjian.
\newblock Knotted surfaces in 4-manifolds and stabilizations.
\newblock {\em J. Topol.}, 9(1):215--231, 2016.

\bibitem{carter1993reidemeister}
J.~Scott Carter and Masahico Saito.
\newblock Reidemeister moves for surface isotopies and their interpretation as
  moves to movies.
\newblock {\em J. Knot Theory Ramifications}, 2(3):251--284, 1993.

\bibitem{carter1998knotted}
J.~Scott Carter and Masahico Saito.
\newblock {\em Knotted surfaces and their diagrams}, volume~55 of {\em
  Mathematical Surveys and Monographs}.
\newblock American Mathematical Society, Providence, RI, 1998.

\bibitem{conway1970enumeration}
John~H Conway.
\newblock An enumeration of knots and links, and some of their algebraic
  properties.
\newblock In {\em Computational problems in abstract algebra}, pages 329--358.
  Elsevier, 1970.

\bibitem{fq}
Michael~H. Freedman and Frank Quinn.
\newblock {\em Topology of 4-manifolds}, volume~39 of {\em Princeton
  Mathematical Series}.
\newblock Princeton University Press, Princeton, NJ, 1990.

\bibitem{freedman1982topology}
Michael~Hartley Freedman et~al.
\newblock The topology of four-dimensional manifolds.
\newblock {\em J. Differential Geom}, 17(3):357--453, 1982.

\bibitem{gay2016trisecting}
David Gay and Robion Kirby.
\newblock Trisecting 4-manifolds.
\newblock {\em Geom. Topol.}, 20(6):3097--3132, 2016.

\bibitem{hatcher}
Allen Hatcher and John Wagoner.
\newblock {\em Pseudo-isotopies of compact manifolds}.
\newblock Soci\'{e}t\'{e} Math\'{e}matique de France, Paris, 1973.
\newblock With English and French prefaces, Ast\'{e}risque, No. 6.

\bibitem{hayashi}
Chuichiro Hayashi and Koya Shimokawa.
\newblock Heegaard splittings of trivial arcs in compression bodies.
\newblock {\em J. Knot Theory Ramifications}, 10(1):71--87, 2001.

\bibitem{hirsch}
Morris~W. Hirsch.
\newblock Immersions of manifolds.
\newblock {\em Trans. Amer. Math. Soc.}, 93:242--276, 1959.

\bibitem{hirschbook}
Morris~W. Hirsch.
\newblock {\em Differential topology}, volume~33 of {\em Graduate Texts in
  Mathematics}.
\newblock Springer-Verlag, New York, 1994.
\newblock Corrected reprint of the 1976 original.

\bibitem{hosokawa}
Fujitsugu Hosokawa and Akio Kawauchi.
\newblock Proposals for unknotted surfaces in four-spaces.
\newblock {\em Osaka Math. J.}, 16(1):233--248, 1979.

\bibitem{bandpaper}
Mark~C Hughes, Seungwon Kim, and Maggie Miller.
\newblock Isotopies of surfaces in 4--manifolds via banded unlink diagrams.
\newblock {\em Geometry \& Topology}, 24(3):1519--1569, 2020.

\bibitem{joseph2020unknotting}
Jason Joseph, Michael Klug, Benjamin Ruppik, and Hannah Schwartz.
\newblock Unknotting numbers of 2-spheres in the 4-sphere.
\newblock {\em arXiv preprint arXiv:2007.13244}, 2020.

\bibitem{kamada19932}
Seiichi Kamada.
\newblock {$2$}-dimensional braids and chart descriptions.
\newblock In {\em Topics in knot theory ({E}rzurum, 1992)}, volume 399 of {\em
  NATO Adv. Sci. Inst. Ser. C Math. Phys. Sci.}, pages 277--287. Kluwer Acad.
  Publ., Dordrecht, 1993.

\bibitem{kamada1999unknotting}
Seiichi Kamada.
\newblock Unknotting immersed surface-links and singular 2-dimensional braids
  by 1-handle surgeries.
\newblock {\em Osaka journal of mathematics}, 36(1):33--49, 1999.

\bibitem{kamada2002braid}
Seiichi Kamada.
\newblock {\em Braid and knot theory in dimension four}, volume~95 of {\em
  Mathematical Surveys and Monographs}.
\newblock American Mathematical Society, Providence, RI, 2002.

\bibitem{kamada2018presentation}
Seiichi Kamada, Akio Kawauchi, Jieon Kim, and Sang~Youl Lee.
\newblock Presentation of immersed surface-links by marked graph diagrams.
\newblock {\em Journal of Knot Theory and Its Ramifications}, 27(10):1850052,
  2018.

\bibitem{kss}
Akio Kawauchi, Tetsuo Shibuya, and Shin'ichi Suzuki.
\newblock Descriptions on surfaces in four-space. {I}. {N}ormal forms.
\newblock {\em Math. Sem. Notes Kobe Univ.}, 10(1):75--125, 1982.

\bibitem{KK}
Cherry Kearton and Vitaliy Kurlin.
\newblock All 2-dimensional links in 4-space live inside a universal
  3-dimensional polyhedron.
\newblock {\em Algebr. Geom. Topol.}, 8(3):1223--1247, 2008.

\bibitem{laudenbachsurles}
F.~Laudenbach.
\newblock Sur les {$2$}-sph\`eres d'une vari\'{e}t\'{e} de dimension {$3$}.
\newblock {\em Ann. of Math. (2)}, 97:57--81, 1973.

\bibitem{lp}
Fran\c{c}ois Laudenbach and Valentin Po\'{e}naru.
\newblock A note on {$4$}-dimensional handlebodies.
\newblock {\em Bull. Soc. Math. France}, 100:337--344, 1972.

\bibitem{propr}
Jeffrey Meier, Trent Schirmer, and Alexander Zupan.
\newblock Classification of trisections and the generalized property {R}
  conjecture.
\newblock {\em Proc. Amer. Math. Soc.}, 144(11):4983--4997, 2016.

\bibitem{meier2017bridge}
Jeffrey Meier and Alexander Zupan.
\newblock Bridge trisections of knotted surfaces in {$S^4$}.
\newblock {\em Trans. Amer. Math. Soc.}, 369(10):7343--7386, 2017.

\bibitem{meier2018bridge}
Jeffrey Meier and Alexander Zupan.
\newblock Bridge trisections of knotted surfaces in 4-manifolds.
\newblock {\em Proc. Natl. Acad. Sci. USA}, 115(43):10880--10886, 2018.

\bibitem{maggiepatrick}
Maggie {Miller} and Patrick {Naylor}.
\newblock {Trisections of non-orientable 4-manifolds}.
\newblock {\em arXiv preprint arXiv:2010.07433}, 2020.

\bibitem{roseman1998reidemeister}
Dennis Roseman.
\newblock Reidemeister-type moves for surfaces in four-dimensional space.
\newblock In {\em Knot theory ({W}arsaw, 1995)}, volume~42 of {\em Banach
  Center Publ.}, pages 347--380. Polish Acad. Sci. Inst. Math., Warsaw, 1998.

\bibitem{schneiderman}
Rob Schneiderman and Peter Teichner.
\newblock The group of disjoint 2-spheres in 4-space.
\newblock {\em Ann. of Math. (2)}, 190(3):669--750, 2019.

\bibitem{smale}
Stephen Smale.
\newblock A classification of immersions of the two-sphere.
\newblock {\em Trans. Amer. Math. Soc.}, 90:281--290, 1958.

\bibitem{swenton}
Frank~J. Swenton.
\newblock On a calculus for 2-knots and surfaces in 4-space.
\newblock {\em J. Knot Theory Ramifications}, 10(8):1133--1141, 2001.

\end{thebibliography}

\end{document}